\def\url@leostyle{%
	\@ifundefined{selectfont}{\def\UrlFont{\sf}}{\def\UrlFont{\small\ttfamily}}}
\numberwithin{equation}{section}
          \def\dudtau{\frac{\del u}{d\tau}}
          \def\dudt{\frac{\del u}{dt}}
          \newcommand{\nc}{\newcommand}
          \nc{\DMO}{\DeclareMathOperator}	
          \nc{\commentout}[1]{}
          \nc{\newnotation}{\nomenclature}
          \nc{\wrap}{\cW}
          \nc{\Tw}{\mathsf{Tw}}
          \nc{\loc}{\mathsf{Loc}}
          \nc{\Top}{Top}
          \nc{\emb}{\mathsf{emb}}
          \nc{\ind}{\mathsf{Ind}}
          \nc{\Ind}{\mathsf{Ind}}
          \nc{\Loc}{\mathsf{Loc}}
          \nc{\Cob}{\mathsf{Cob}}
          \nc{\mul}{\mathsf{Mul}}
          \nc{\fat}{\mathsf{fat}}
          \nc{\cob}{\mathsf{Cob}}
          \nc{\coh}{\mathsf{Coh}}
          \nc{\Liouaut}{\Aut}
          \nc{\Liouauto}{{\Aut^o}}
          \nc{\Liouautb}{\Aut^{b}}
          \nc{\Liouautgr}{\Aut^{gr}}
          \nc{\Liouautgrb}{\Aut^{gr,b}}
          \nc{\idem}{\mathsf{Idem}}
          \nc{\sets}{\mathsf{Sets}}
          \nc{\near}{\mathsf{near}}
          \nc{\sing}{\mathsf{Sing}}
          \nc{\Sing}{\mathsf{Sing}}
          \nc{\perf}{\mathsf{Perf}}
          \nc{\block}{\mathsf{block}}
          \nc{\ssets}{\mathsf{sSets}}
          \nc{\cmpct}{\mathsf{cmpct}}
          \nc{\compact}{\mathsf{cmpct}}
          \nc{\pwrap}{\mathsf{PWrap}}
          \nc{\coder}{\mathsf{Coder}}
          \nc{\bimod}{\mathsf{Bimod}}
          \nc{\grmod}{\mathsf{GrMod}}
          \nc{\Morita}{\mathsf{Morita}}
          \nc{\morita}{\mathsf{Morita}}
          \nc{\spaces}{\mathsf{Spaces}}
          \nc{\pwrms}{\mathsf{PWrFuk}_{M,S}}
          \nc{\pwrmf}{\mathsf{PWrFuk}_{M,F}}
          \nc{\pwrapmf}{\mathsf{PWrFuk}_{M,F}}
          \nc{\fuk}{\mathsf{Fukaya}}
          \nc{\infwr}{\mathsf{InfWr}}
          \nc{\fukaya}{\mathsf{Fukaya}}
          \nc{\autml}{\mathsf{Aut}_{M,\Lambda}}
          \nc{\fukml}{\mathsf{Fukaya}_{M,\Lambda}}
          \nc{\fukmle}{\mathsf{Fukaya}_{M,\Lambda,\epsilon}}
          \nc{\fukmod}{\wrfukcompact(M)\modules}
          \nc{\lag}{\mathsf{Lag}}
          \nc{\lagm}{\lag_M}
          \nc{\lago}{\lag^o}
          \nc{\lagml}{\lag_{M,\Lambda}} 
          \nc{\lagmle}{\lag_{M,\Lambda,\epsilon}}
          \nc{\Fun}{\mathsf{Fun}}
          \nc{\fun}{\mathsf{Fun}}
          \nc{\vect}{\mathsf{Vect}}
          \nc{\chain}{\mathsf{Chain}}
          \nc{\chainn}{Chain}
          \nc{\wrfuk}{\mathsf{WrFukaya}}
          \nc{\wrfukcompact}{\mathsf{WrFukaya}_{\mathsf{cmpct}}}
          \nc{\pwrfuk}{\mathsf{PWrFukaya}}
          \nc{\inffuk}{\mathsf{InfFuk}}
          \nc{\pwrfukml}{\mathsf{PWrFukaya}_{M,\Lambda}}
          \nc{\inffukml}{\mathsf{InfFuk}_{M,\Lambda}}
          \nc{\nattrans}{\mathsf{NatTrans}}
          \nc{\corres}{\mathsf{Corres}}
          \nc{\fukep}{\fukaya_\Lambda(M,\epsilon)}
          \nc{\fukepop}{\fukaya_\Lambda(M,\epsilon)^{\op}}
          \nc{\lagep}{\lag_\Lambda(M,\epsilon)}
          \DMO{\cyl}{cyl} 
          \nc{\dbcoh}{D^b\mathsf{Coh}}
          \nc{\corr}{\mathsf{Corr}}
          \nc{\cat}{\mathsf{Cat}}
          \nc{\Cat}{\mathsf{Cat}}
          \nc{\ainfty}{\mathsf{A}_\infty}
          \nc{\inftycat}{\mathcal{C}\!\operatorname{at}_\infty}
          \nc{\inftyCat}{\mathcal{C}\!\operatorname{at}_\infty}
          \nc{\inftyGpd}{\mathcal{G}\!\operatorname{pd}_\infty}
          \nc{\Ainftycat}{\mathcal{C}\!\operatorname{at}_{A_\infty}}
          \nc{\dgcat}{\mathcal{C}\!\operatorname{at}_{dg}}
          \nc{\ainftycat}{\mathcal{C}\!\operatorname{at}_{A_\infty}}
          \nc{\stablecat}{\mathcal{C}\!\operatorname{at}_\infty^{\Ex}}
          \DMO{\im}{im}
          \DMO{\ev}{ev}
          \DMO{\stable}{Ex}
          \DMO{\inj}{inj}
          \DMO{\fib}{fib}
          \DMO{\conf}{Conf}
          \DMO{\chains}{Chains}
          \DMO{\cochains}{Cochains}
          \DMO{\cone}{Cone}
          \DMO{\Map}{Map}
          \DMO{\ran}{Ran}
          \DMO{\rot}{Rot}
          \DMO{\leg}{Leg}
          \DMO{\imm}{imm}
          \DMO{\adj}{adj}
          \DMO{\symp}{Symp}
          \DMO{\tree}{Tree}
          \DMO{\cube}{Cube}
          \DMO{\deep}{deep}
          \DMO{\back}{back}
          \DMO{\Hoch}{Hoch}
          \DMO{\front}{front}
          \DMO{\flow}{Flow}
          \DMO{\floer}{Floer}
          \DMO{\Maps}{Maps}
          \DMO{\exact}{exact}
          \DMO{\excess}{Excess}
          \DMO{\Decomp}{Decomp}
          \DMO{\decomp}{Decomp}
          \DMO{\collar}{collar}
          \DMO{\yoneda}{Yoneda}
          \DMO{\hamspace}{Ham}
          \DMO{\sympspace}{Symp}
          \DMO{\holomaps}{Holomaps}
          \DMO{\comp}{Comp}
          \DMO{\crit}{Crit}
          \DMO{\test}{{test}}
          \DMO{\sign}{sign}
          \DMO{\topp}{top}
          \DMO{\indx}{Index}
          \DMO{\Break}{Break} 
          \DMO{\zero}{zero} 
          \DMO{\ob}{Ob}
          \DMO{\gr}{Gr} 
          \DMO{\Gr}{Gr} 
          \DMO{\cl}{Cl} 
          \DMO{\grlag}{GrLag}
          \DMO{\Pin}{Pin}
          \DMO{\Graph}{Graph}
          \DMO{\pin}{Pin}
          \DMO{\gap}{Gap}
          \DMO{\Ex}{Ex}
          \DMO{\id}{id}
          \DMO{\End}{End}
          \DMO{\sym}{Sym}
          \DMO{\aut}{Aut}
          \DMO{\Aut}{Aut}
          \DMO{\haut}{hAut}
          \DMO{\hAut}{hAut}
          \DMO{\DK}{DK} 
          \DMO{\poly}{poly} 
          \DMO{\diff}{Diff}
          \DMO{\coll}{coll}
          \DMO{\dist}{dist} 
          \DMO{\coker}{coker} 
          \nc{\kernel}{\ker} 
          \DMO{\sspan}{span}
          \DMO{\hocolim}{hocolim}	
          \DMO{\holim}{holim}
          \DMO{\sk}{sk}
          \DMO{\ho}{ho}
          \DMO{\fin}{fin}
          \DMO{\tor}{Tor}
          \DMO{\ext}{Ext}
          \DMO{\ret}{Ret}
          \DMO{\ham}{Ham}
          \DMO{\con}{con}
          \DMO{\leaf}{leaf}
          \DMO{\supp}{supp}
          \DMO{\edge}{edge}
          \DMO{\colim}{colim}
          \DMO{\edges}{edges}
          \DMO{\Image}{image}
          \DMO{\roots}{roots}
          \DMO{\height}{height}
          \DMO{\finmod}{FinMod}
          \DMO{\leaves}{leaves}
          \DMO{\planar}{planar}
          \DMO{\vertices}{vertices}
          \nc{\lagg}{\lag^{\cG}}
          \nc{\iso}{\mathsf{Iso}}
          \nc{\Set}{\mathsf{Set}}
          \nc{\Ass}{\mathsf{ \bf Ass}}
          \nc{\Mod}{\mathsf{Mod}}
          \nc{\modules}{\mathsf{Mod}}
          \nc{\sset}{\mathsf{sSet}}
          \nc{\liou}{\mathsf{Liou}}
          \nc{\poset}{\mathsf{Poset}}
          \nc{\trno}{T^*\RR^n_{\geq 0}}
          \nc{\spectra}{\mathsf{Spectra}}
          \nc{\tensorfin}{\tensor^{\fin}}
          \nc{\lagptg}{\lag_{pt,pt}^{\cG}}
          \nc{\Fin}{\mathcal{F}\mathsf{in}}
          \nc{\lagnl}{\lag_{N,\Lambda}}
          \nc{\lagmlg}{\lag_{M,\Lambda}^{\cG}}
          \nc{\lagsplit}{\lag^{\mathsf{split}}}
          \nc{\lagktimes}{(\lag^{\dd k})^\times}
          \nc{\lagplanar}{\lag^{\times,\planar}}
          \nc{\Cont}{\text{\rm Cont}}
          \nc{\Ham}{\text{\rm Ham}}
          \nc{\Dev}{\text{\rm Dev}}
          \nc{\Lin}{\text{\rm Lin}}
          \nc{\Int}{\text{\rm Int}}
          \nc{\Hom}{\text{\rm Hom}}
          \nc{\Chord}{\text{\rm Chord}}
          \nc{\nbhd}{\mathcal{N}\text{\rm{bhd}}}
          \nc{\onef}{1_{\fukaya}}
          \nc{\smsh}{\wedge}
          \nc{\un}{\underline}
          \nc{\xto}{\xrightarrow}
          \nc{\xra}{\xto}
          \nc{\tensor}{\otimes}
          \nc{\del}{\partial}
          \nc{\dd}{\diamond}
          \nc{\tri}{\triangle}
          \nc{\bb}{\Box}
          \nc{\into}{\hookrightarrow}
          \nc{\onto}{\twoheadrightarrow}
          \nc{\contains}{\supset}
          \nc{\transverse}{\pitchfork}
          \nc{\uncirc}{\underline{\circ}}
          \nc{\Jbar}{\overline{J}}
          \nc{\Fbar}{\overline{F}}
          \nc{\delbar}{\overline{\del}}
          \nc{\thetabar}{\overline{\theta}}
          \nc{\omegabar}{\overline{\omega}}
          \nc{\Liou}{\text{\rm Liou}}
          \nc{\Yhat}{\widehat{Y}}
          \nc{\trbar}{\overline{T^*\RR}}
          \nc{\tr}{T^*\RR}
          \nc{\tsa}{Ts\cA}
          \nc{\tsb}{Ts\cB}
          \nc{\cmbar}{\overline{\cM}}
          \nc{\crbar}{\overline{\cR}}
          \nc{\vece}{ {\vec \epsilon}}	
          \nc{\vecd}{ {\vec \delta}}
          \nc{\ov}{\overline}
          \DMO{\op}{op}
          \nc{\opp}{ ^{\op}}
          \nc{\hiro}{\textcolor{blue}}
          \nc{\YG}{\textcolor{orange}}
          \nc{\eqn}{\begin{equation}}
          \nc{\eqnn}{\begin{equation}\nonumber}
          \nc{\eqnd}{\end{equation}}
          \nc{\enum}{\begin{enumerate}}
          \nc{\enumd}{\end{enumerate}}
          \nc{\beastar}{\begin{eqnarray*}}
          \nc{\eeastar}{\end{eqnarray*}}
          \def\cA{\mathcal A}\def\cB{\mathcal B}\def\cC{\mathcal C}\def\cD{\mathcal D}
          \def\cF{\mathcal F}\def\cG{\mathcal G}\def\cH{\mathcal H}
          \def\cJ{\mathcal J}\def\cL{\mathcal L}
          \def\cM{\mathcal M}\def\cO{\mathcal O}\def\cP{\mathcal P}
          \def\cR{\mathcal R}\def\cS{\mathcal S}
          \def\cW{\mathcal W}\def\cX{\mathcal X}
          \def\BB{\mathbb B}\def\CC{\mathbb C}\def\DD{\mathbb D}
          \def\NN{\mathbb N}
          \def\QQ{\mathbb Q}\def\RR{\mathbb R}
          \def\ZZ{\mathbb Z}
          \nc{\Euc}{\mathsf{Euc}}
          \nc{\mfld}{\mathsf{Mfld}}
          \nc{\DTop}{\mathsf{DTop}}
          \nc{\simp}{\mathsf{Simp}}
          \nc{\Ainftycatt}{A_\infty Cat}
          \nc{\dgcatt}{dg Cat}
          \nc{\StableCat}{StableCat}
          \nc{\subdivision}{\mathsf{subdiv}}
          \nc{\Kan}{\mathcal{K}\mathsf{an}}
          \def\dudtau{\frac{\del u}{\del\tau}}
          \def\dudt{\frac{\del u}{\del t}}
          \theoremstyle{definition}
          \newtheorem{theorem}{Theorem}[subsection]
          \newtheorem{proposition}[theorem]{Proposition}
          \newtheorem{prop}[theorem]{Proposition}
          \newtheorem{lemma}[theorem]{Lemma}
          \newtheorem{warning}[theorem]{Warning}
          \newtheorem{cor}[theorem]{Corollary}
          \newtheorem{corollary}[theorem]{Corollary}
          \newtheorem{construction}[theorem]{Construction}
          \newtheorem{conjecture}[theorem]{Conjecture}
          \newtheorem{definition}[theorem]{Definition}
          \newtheorem{defn}[theorem]{Definition}
          \newtheorem{notation}[theorem]{Notation}
          \newtheorem{example}[theorem]{Example}
          \newtheorem{choice}[theorem]{Choice}
          \newtheorem{recollection}[theorem]{Recollection}
          \newtheorem{remark}[theorem]{Remark}
          \newtheorem{figurelabel}[theorem]{Figure} 
          \newtheorem{convention}[theorem]{Convention}
          \newtheorem{sublem}[theorem]{Sublemma}
\title{Continuous and coherent actions on wrapped Fukaya categories}
\author[$\dagger$]{Yong-Geun Oh}
\author[$\star$]{Hiro Lee Tanaka}
\affil[$\dagger$]{Center for Geometry and Physics (IBS), Pohang, Korea \&
Department of Mathematics, POSTECH, Pohang Korea.}
\affil[$\star$]{Department of Mathematics, Texas State University}
\begin{document}

\maketitle

\begin{abstract}
We establish the continuous functoriality of wrapped Fukaya categories with respect to Liouville automorphisms, yielding a way to probe the homotopy type of the automorphism group of a Liouville sector. These methods prove Liouville and monotone cases of a conjecture of Teleman from the 2014 ICM. In the case of a cotangent bundle, we show that the Abouzaid equivalence between the wrapped category and the $\infty$-category of local systems intertwines our action with the action of diffeomorphisms of the zero section. In particular, our methods yield a typically non-trivial map from the rational homotopy groups of Liouville automorphisms to the rational string topology algebra of the zero section.
\end{abstract}

\tableofcontents

\clearpage

\section{Introduction}
Liouville manifolds, and Liouville sectors more generally, are fruitful objects of study in symplectic geometry. They give rise to interesting  geometric questions whose solutions are amenable to flexible, and often {\em topological}, techniques. However, there have been fundamental questions about Liouville sectors whose answers are wanting in the literature.

One question concerns the study of automorphisms of Liouville sectors. For compact symplectic manifolds the automorphism groups $\symp$ and $\ham$ enjoy enticing formal properties---$\ham$ is simple, for example, and its group isomorphism class is a complete invariant of a compact symplectic manifold. Our knowledge of these groups is quite sparse in most examples. The same is true concerning automorphisms of Liouville sectors, though it should be mentioned that the community has identified non-trivial collections of automorphisms in many examples (often by manipulating Dehn twists about Lagrangian spheres), and progress has been made in studying spaces of {\em compactly supported} symplectic automorphisms of various Liouville manifolds. See for example~\cite{keating-stabilizations, keating-free-subgroups, dimitroglou-evans-exotic-spheres, evans-mapping-class-stein}.

A second question concerns what continuous dependence is enjoyed by wrapped Fukaya categories of Liouville sectors.
While it is known that stop removal and open inclusions of Liouville sectors induce functors on their wrapped Fukaya categories~\cite{gps,gps-2}, it has remained unclear to what extent wrapped categories display a dependence on families of embeddings---that is, on isotopies. When Liouville sectors are stabilized, results of~\cite{last-stable} prove that stabilized embedding spaces act coherently on wrapped Fukaya categories.

We prove here that such dependencies exist with as much coherence as one could hope for. This of course broadens the use of Floer-theoretic invariants in studying automorphism groups.

\begin{theorem}\label{theorem. main theorem informal}
Let $M$ be a Liouville sector, and let $\Liouauto(M)$ denote the topological group of Liouville automorphisms (Definition~\ref{defn:Liouaut}). Then $\Liouauto(M)$ acts coherently on the wrapped Fukaya category of $M$. That is, there exists an $A_\infty$ homomorphism
	\eqnn
	\Liouauto(M) \to \Aut(\cW(M))
	\eqnd
from the space of Liouville automorphisms of $M$ to the space of automorphisms of the wrapped Fukaya category of $M$.
\end{theorem}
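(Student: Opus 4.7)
The plan is to construct, for every singular simplex $\sigma: \Delta^n \to \Liouauto(M)$, a coherent family of $A_\infty$ autoequivalences $F_\sigma$ of $\cW(M)$, compatible with face and degeneracy maps. This data is precisely what is required for a simplicial (hence topological/$A_\infty$) homomorphism from the singular complex of $\Liouauto(M)$ into a suitable simplicial model for $\Aut(\cW(M))$ in which an $n$-simplex is an $n$-parameter coherent family of autoequivalences. Setting up the latter as a simplicial group (or Kan complex with the monoidal composition) is the first ingredient; this is the natural target for the continuous action.

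For the base case $n=0$, given a single $\phi \in \Liouauto(M)$, I would define $F_\phi = \phi_*$ by pushing forward all Floer-theoretic data: Lagrangian branes with their gradings and Pin structures, wrapping Hamiltonians, admissible almost complex structures, and Floer perturbation data. Because $\phi$ preserves the Liouville form, these pushforwards are again admissible, and the moduli spaces of pseudoholomorphic disks transform equivariantly under $\phi$. Hence the $A_\infty$ structure maps are literally intertwined by $\phi_*$ with respect to the pushed-forward data, and a continuation-map argument identifies this with an autoequivalence of $\cW(M)$ as defined by any preferred system of choices.

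For $n \geq 1$, the simplex $\sigma$ provides a family of Liouville automorphisms $\{\phi_t\}_{t \in \Delta^n}$. I would build the $n$-th coherence datum $F_\sigma$ by introducing Floer data parameterized by $\Delta^n$: at each vertex the data specializes to the pushforward structure $\phi_{v}^*$ chosen in step one, and over the interior it interpolates smoothly (and generically transversely). The relevant parameterized moduli spaces of pseudoholomorphic polygons carry codimension-one boundary strata of two types, namely the restriction of $\sigma$ to codimension-one faces of $\Delta^n$ and the usual Floer-theoretic breaking/bubbling. Counting these in balanced dimension produces the chain-level operations, and the identification of the two kinds of boundary strata is exactly the simplicial identity combined with the $A_\infty$ relations for the composite functor. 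This is the same mechanism by which one-parameter families of data yield chain homotopies, extended to higher simplices.

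The main obstacle is perturbation-theoretic coherence: constructing, inductively over the skeleta of $\sing(\Liouauto(M))$, Floer data that are simultaneously transverse, compatible with restriction to faces, compatible with degeneracies, and admissible for the wrapping construction in the parameterized setting. The standard argument -- that the space of extensions of a given admissible choice on $\partial \Delta^n$ to all of $\Delta^n$ is nonempty and contractible -- is what drives the induction, but arranging this for the full singular complex of $\Liouauto(M)$, including at Lagrangians whose wrapping speeds and perturbations must evolve continuously with $t$, is where the real work lies. Once these choices are in place, checking that $\sigma \mapsto F_\sigma$ respects simplicial composition reduces to the usual gluing analysis for the parameterized moduli and yields the asserted $A_\infty$ homomorphism $\Liouauto(M) \to \Aut(\cW(M))$.
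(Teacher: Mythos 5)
Your plan is the ``direct'' one---coherent, simplex-by-simplex Floer data over $\sing(\Liouauto(M))$ acting on a fixed $\cW(M)$---and it is exactly the approach the paper is designed to avoid; as written it has genuine gaps rather than merely deferred routine work. The central one is that you propose to act directly on the \emph{wrapped} category. A Liouville automorphism does not preserve any chosen system of wrapping data (cofinal wrapping sequences or quadratic Hamiltonians), and the wrapped morphism complexes are filtered (homotopy) colimits over such choices; so already at $n=0$ your $\phi_*$ lands in a category built from pushed-forward choices, and the ``continuation-map argument'' identifying it with an autoequivalence for a preferred system of choices is a comparison functor, not an equality. For $n\geq 1$ you then need a full hierarchy of coherent homotopies interleaving the parametrized moduli counts with these comparison/colimit structures, and no mechanism is given for producing it. The paper sidesteps precisely this: over each simplex it only defines a \emph{directed, unwrapped} category $\cO_j$ (finite wrapping indices, morphisms via parallel transport, operations counting holomorphic sections of the Liouville bundle using Savelyev's maps $\nu_\beta$ from the universal disk family to the simplex), for which inclusions of simplices induce strict functors; wrapping is then imposed formally by localizing along continuation elements, and the morphism complexes of the localization are computed as filtered colimits (Lemma~\ref{lemma. hom is wrapped cohomology}), giving local triviality and hence the action by the universal property of localization.

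Two further steps in your outline would fail or are missing as stated. First, strict compatibility with all face \emph{and degeneracy} maps of the full singular complex cannot be arranged by a skeletal induction whose only input is contractibility of the space of extensions: contractible choices give homotopy coherence, not the strict simplicial identities your target presupposes. The paper avoids this by working with the category of simplices of the classifying space $B\Liouaut(M)$ (with only injective structure maps) and recovering the homotopy type by a second localization, $\simp(B\Liouaut(M)) \to \BB\Liouaut(M)$. Second, your families $\{\phi_t\}$ are merely continuous and the automorphisms are not compactly supported; to even write down the parametrized pseudoholomorphic-curve equations one needs smooth families (handled in the paper by diffeological smooth approximation, Warning~\ref{warning. smooth simplices} and Section~\ref{section. smooth approximation}) together with the bundle-level $C^0$ and energy estimates of~\cite{oh-tanaka-liouville-bundles}; Gromov compactness for moving, non-compact boundary conditions in such families is not automatic and is nowhere addressed in your sketch.
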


The word ``continuous'' in the title of this paper refers to the fact that the map in Theorem~\ref{theorem. main theorem informal} can be realized as a map of topological spaces---i.e., is continuous. So for example, it induces a map on homotopy groups (Corollary~\ref{cor. induced map on HH*}). The word ``coherent'' refers to the fact that the map in Theorem~\ref{theorem. main theorem informal} is a map of $A_\infty$-algebras in spaces---the coherence is exhibited in the compatibility of the higher $A_\infty$-homotopies.

Our methods apply equally well to the compact monotone setting (see Section~\ref{subsection. hamiltonian setting}). Thus, the present work establishes (in the Liouville and monotone settings) a widespread expectation in the community: that well-behaved actions on $M$ induce continuous actions on the Fukaya category of $M$. For example, this is stated as a conjecture in Teleman's 2014 ICM talk~\cite[Conjecture 2.9]{teleman-icm}. One key departure from Teleman's proof proposal is that we do not employ Lagrangian correspondences. Teleman also sought to utilize a theorem equating group actions on dg-categories with certain maps of $E_2$-algebras; in our construction, the map of $E_2$-algebras is exhibited {\em after} exhibiting the group action. (See Corollary~\ref{cor. E-2 map} and Section~\ref{subsection. hamiltonian setting}.) Moreover, our proof avoids a great deal of Floer-theoretic set-up by utilizing the categorical technique of localizations, twice. (See Section~\ref{section. proof outline}.)

Before we go on, we should be explicit about what decorations we put on $M$---this affects what kind of category $\cW(M)$ is. For example, Theorem~\ref{theorem. main theorem informal} as stated is only true for $\cW(M)$ being two-periodically graded and linear over $\ZZ/2\ZZ$. This is because a Liouville automorphism $\phi \in \Liouauto(M)$ does not ``know'' how to respect any choice of grading $gr$, nor of background class $b \in H^2(M;\ZZ/2\ZZ)$---and such data would be required to form a $\ZZ$-graded and $\ZZ$-linear Fukaya category. One should thus also consider the natural automorphism group consisting of Liouville automorphisms equipped with data respecting these decorations (Section~\ref{section.automorphisms with decorations}).

\begin{notation}\label{notation. liouautgrb}
We will write
	\eqnn
	\Liouauto(M), \qquad \Liouautgrb(M),\qquad \Liouaut(M)
	\eqnd
to respectively denote the group of Liouville automorphisms, of Liouville automorphisms with data respecting a chosen $gr$ and $b$, and of Liouville automorphisms with data respecting some unspecified decorative choices.

Any statement about the non-superscripted $\Liouaut$ is true for all possible decorations, while a statement about $\Liouautgrb$ is only true for the wrapped category of a Liouville sector $M$ equipped with a grading and background class, and a statement about $\Liouauto(M)$ is true for the undecorated case.

We will write $\cW(M)$ for the wrapped category, suppressing dependence on decorations.
\end{notation}

Then our methods also yield:

\begin{theorem}\label{theorem. main theorem informal decorated}
For any Liouville sector $M$ equipped with decorations, there exists an $A_\infty$ homomorphism
	\eqnn
	\Liouaut(M) \to \Aut(\cW(M))
	\eqnd
from the space of Liouville automorphisms of $M$ (equipped with data respecting chosen decorations) to the space of automorphisms of the wrapped Fukaya category of $M$.
\end{theorem}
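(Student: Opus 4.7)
The plan is to realize the $A_\infty$-homomorphism not by directly chasing Floer data through compositions of Liouville automorphisms, but by constructing an auxiliary topological category on which $\Liouaut(M)$ acts \emph{strictly}, and then identifying $\cW(M)$ as a localization of that category. The decorated version differs from Theorem~\ref{theorem. main theorem informal} only in that the input category now consists of decorated Lagrangians; the localization machinery is the same in both cases.

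First I would introduce a topological category $\lagml$ whose objects are decorated exact Lagrangian branes in $M$---carrying the grading, background-class, and any further data specified by the chosen decorations---and whose morphism spaces parametrize continuous families of such decorated branes (isotopies, plus additional input needed to define wrapped Floer morphisms). The point of working with decorated branes is precisely that pushforward $L \mapsto \phi_* L$ is well-defined on all the auxiliary data exactly when $\phi$ respects the decorations; this is the defining property of $\Liouaut(M)$ in the sense of Notation~\ref{notation. liouautgrb}. One therefore gets a strict continuous action $\Liouaut(M) \times \lagml \to \lagml$ in which composition and identities are preserved on the nose, and the same discussion applies verbatim with $\Liouautgrb$ or $\Liouauto$ in place of $\Liouaut$.

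Next I would realize $\cW(M)$ as an $\inftycat$-localization of $\lagml$ at a class $W$ of morphisms that the wrapped functor tautologically inverts. Because pushforward by a Liouville automorphism preserves $W$ (exactness, gradings, and wrapping dynamics are all preserved by a decoration-respecting $\phi$, up to the allowed equivalences), the universal property of localization converts the strict action on $\lagml$ into a coherent action on $\cW(M)$, which is precisely an $A_\infty$-homomorphism $\Liouaut(M) \to \Aut(\cW(M))$.

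The principal obstacle, and the reason the paper's outline invokes localization \emph{twice}, is that the comparison functor $\lagml \to \cW(M)$ cannot itself be built strictly: the Floer data needed to compute morphism complexes in $\cW(M)$ is not canonically $\Liouaut(M)$-equivariant, and any two admissible choices only agree through a contractible space of interpolations. My plan is therefore first to localize $\lagml$ at the equivalences coming from changes of auxiliary Floer data---producing a Floer-data-free input $\infty$-category on which the strict action still lives---and then to localize at the remaining wrapped equivalences to reach $\cW(M)$. The core verification at each step is that the class being inverted is $\Liouaut(M)$-stable; once that is checked, composing the two localizations with the strict pushforward action delivers the desired $A_\infty$-map in all three variants of Notation~\ref{notation. liouautgrb} simultaneously.
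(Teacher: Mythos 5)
Your proposal rests on two claims that are not established and cannot be taken for granted: (i) that there is a topological category $\lagml$ of decorated branes carrying a \emph{strict} continuous action of $\Liouaut(M)$ by pushforward, and (ii) that $\cW(M)$ is an $\infty$-categorical localization of $\lagml$ at a $\Liouaut(M)$-stable class $W$. Claim (ii) is essentially a hard theorem of its own---identifying the Fukaya category with a localization of a geometrically defined category of Lagrangians, isotopies, and ``additional input needed to define wrapped Floer morphisms'' is nowhere proved in this paper or its references, and your intermediate step (``first localize at changes of auxiliary Floer data'') does not actually define the category on which the strict action is supposed to survive, nor compare the result to the $R$-linear category $\cW(M)$. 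Claim (i) already fails for the decorated groups that Theorem~\ref{theorem. main theorem informal decorated} is about: by definition $\Liouautgr(M)$, $\Liouautb(M)$, $\Liouautgrb(M)$ are homotopy pullbacks, so a point of $\Liouaut(M)$ is an automorphism $\phi$ \emph{together with homotopies} relating $\phi^*$ of the grading/background data to the chosen data---not an automorphism preserving decorations on the nose. Pushing forward a decorated brane therefore involves these homotopies, and the action on decorated objects is only coherently defined; that coherence is precisely what the theorem is supposed to produce, so assuming a strict action is circular.

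This is also not the route the paper takes, and the two localizations you invoke are not the paper's two. The paper never needs a strict action on a single category: it constructs a strict functor $\cO\colon \simp(B\Liouaut(M)) \to \Ainftycatt$ by doing family Floer theory in the Liouville bundles $j^*E$ over smooth simplices $j\colon |\Delta^n_e| \to B\Liouaut(M)$ (objects in fibers over vertices, morphisms via parallel transport, $\mu^d$ by counting holomorphic sections through Savelyev's maps $\nu_\beta$, with the $C^0$ and energy estimates of the companion paper); the fact that $j$ maps to $B\Liouaut(M)$ rather than $B\Liouauto(M)$ is exactly how the homotopical data attached to decorations is fed into the choices of auxiliary data. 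The two localizations are then (a) $\cW_j := \cO_j[C^{-1}]$, inverting non-negative continuation elements, and (b) inverting all morphisms of $\simp(B\Liouaut(M))$ to reach $\BB\Liouaut(M)$, which is legitimate only because of the local triviality statement (Proposition~\ref{prop. local triviality}), itself proved via the filtered-colimit computation of wrapped morphism complexes (Lemma~\ref{lemma. hom is wrapped cohomology}). None of this geometric and computational machinery has a counterpart in your outline, and without it (or a proof of your claims (i) and (ii)) the proposal does not yield the theorem.
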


In Theorem~\ref{theorem. main theorem informal decorated}, the wrapped category on the right depends on the decorations, but we suppress this dependence from the notation (following Notation~\ref{notation. liouautgrb}).

\begin{remark}
Often, this decorated automorphism group is not too homotopically different from $\Liouauto(M)$---for example, when the decorations are a grading $gr$ and a background class $b \in H^2(M;\ZZ/2\ZZ)$, the map $\Liouautgrb(M) \to \Liouauto(M)$ induces an isomorphism on $\pi_{\geq 3}$ (Proposition~\ref{prop. Liouautgrb homotopy groups}).
\end{remark}

Now take based loop spaces and apply Dunn additivity to obtain:

\begin{cor}\label{cor. E-2 map}
There exists a map of $E_2$-algebras
	\eqnn
	\Omega \Liouaut(M) \to \Omega \aut(\cW(M)).
	\eqnd
\end{cor}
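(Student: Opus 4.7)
The plan is to deduce this corollary directly from Theorem~\ref{theorem. main theorem informal decorated} by invoking the Dunn additivity theorem, which identifies $E_2$-algebras in spaces with $E_1$-algebras in $E_1$-algebras.

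First, I would note that Theorem~\ref{theorem. main theorem informal decorated} produces a map of $A_\infty$-algebras (equivalently, $E_1$-algebras) in pointed topological spaces. Applying the based loop space functor $\Omega$ to source and target yields pointed spaces $\Omega\Liouaut(M)$ and $\Omega\aut(\cW(M))$. Since $\Omega$ is a right adjoint (to reduced suspension) it preserves products, and hence preserves $E_1$-structures; thus the induced map
	\eqnn
	\Omega \Liouaut(M) \to \Omega \aut(\cW(M))
	\eqnd
is a morphism of $E_1$-algebras with respect to the multiplications inherited from the original $E_1$-structures on $\Liouaut(M)$ and $\aut(\cW(M))$.

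Second, for any pointed space $X$, the loop space $\Omega X$ carries an independent canonical $E_1$-structure coming from loop concatenation, and any map of pointed spaces induces a map of loop spaces that respects concatenation. Hence our map is also a map of $E_1$-algebras for the loop-concatenation structures. When $X$ is itself an $E_1$-algebra, the two $E_1$-structures on $\Omega X$ are compatible up to coherent homotopy---the homotopy-coherent Eckmann--Hilton phenomenon---so they assemble into an $E_1$-algebra object in the $\infty$-category of $E_1$-algebras. By Dunn additivity, such an object is precisely an $E_2$-algebra, and the above map therefore upgrades to a map of $E_2$-algebras, as claimed.

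The only substantive point is the compatibility of the two $E_1$-structures on $\Omega X$, which is the classical Eckmann--Hilton argument promoted to a homotopy-coherent statement by Dunn (and re-proved in the $\infty$-categorical framework by Lurie). There is no further Floer-theoretic input: all the substantive content lives in Theorem~\ref{theorem. main theorem informal decorated}, and the corollary is a purely formal consequence.
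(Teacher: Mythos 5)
Your argument is correct and is essentially the paper's own proof: the paper deduces Corollary~\ref{cor. E-2 map} from Theorem~\ref{theorem. main theorem informal decorated} precisely by taking based loop spaces and invoking Dunn additivity, exactly as you do. Your write-up merely spells out the standard details (the looped pointwise $E_1$-structure, loop concatenation, and their homotopy-coherent interchange assembling into an $E_2$-structure), which is the intended content of the one-line argument in the paper.
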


\begin{remark}
By generalities concerning $A_\infty$-categories, one can identify the endomorphisms of the identity functor of any $A_\infty$-category $\cC$ with the (topological space associated to the) non-positive truncation of the Hochschild cochains of $\cC$.\footnote{Given any cochain complex $C$, the non-positive truncation $\tau^{\leq 0} C$ is the cochain complex whose $i$th group is given by $C^i$ if $i < 0$, by $0$ if $i >0$, and by $\ker d^0$ if $i = 0$. The space associated to a non-positive cochain complex is constructed via the Dold-Kan correspondence.} See for example Section~1.3.3 of~\cite{tanaka-Aoo-units}. The based loop space at the identity functor $\id_{\cC}$ is precisely the space of those invertible endomorphisms---i.e., natural {\em equivalences} of the identity functor. Put another way, the based loop space $\Omega \aut(\cW(M))$ is the space of units of the Hochschild cochain algebra. The $E_2$-algebra structure of $\Omega \Aut(\cW(M))$ coincides with the $E_2$-algebra structure inherited from the Hochschild cochains of $\cW(M)$.
\end{remark}

By taking homotopy groups of domain and target, we have the following:

\begin{cor}\label{cor. induced map on HH*}
The above maps induce group homomorphisms
	\eqnn
	\pi_{k+1} \Liouaut(M)
	\to
	\begin{cases}
	HH^0(\cW(M))^\times & k = 0 \\
	HH^{-k}(\cW(M)) & k \geq 1.
	\end{cases}
	\eqnd
That is, the homotopy groups of $\Liouaut(M)$ map to the Hochschild cohomology groups of the wrapped Fukaya category.
\end{cor}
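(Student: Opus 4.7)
The plan is to simply extract the consequence of Theorem~\ref{theorem. main theorem informal decorated} on the level of homotopy groups, using the identification of the loop space $\Omega \aut(\cW(M))$ with the units of the Hochschild cochain algebra that is already recalled in the remark preceding this corollary. Since an $A_\infty$-homomorphism of grouplike $A_\infty$-spaces induces an honest group homomorphism on each homotopy group (by passing to $\pi_0$ of the iterated based loop space), Theorem~\ref{theorem. main theorem informal decorated} immediately yields group homomorphisms $\pi_{k+1}\Liouaut(M) \to \pi_{k+1}\aut(\cW(M))$ for every $k \geq 0$. The remaining task is to reinterpret the target.

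First I would fix a basepoint on both sides: on the domain, the identity Liouville automorphism; on the target, the identity autoequivalence $\id_{\cW(M)}$, which the $A_\infty$-homomorphism preserves. Then I would invoke the shift $\pi_{k+1}\aut(\cW(M)) \cong \pi_k\,\Omega\aut(\cW(M))$, based at $\id_{\cW(M)}$. Next I would use the two facts recalled immediately before the corollary: (i) the space of endomorphisms of $\id_{\cW(M)}$ is the Dold-Kan realization of the non-positive truncation $\tau^{\leq 0} CC^*(\cW(M))$ of the Hochschild cochain complex, and (ii) $\Omega \aut(\cW(M))$ is the subspace of those endomorphisms which are natural equivalences, i.e.\ the union of the components of the endomorphism space whose $\pi_0$-class is invertible in $HH^0(\cW(M))$.

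Combining these, the Dold-Kan formula $\pi_i(\mathrm{DK}(\tau^{\leq 0} C)) = H^{-i}(C)$ yields $\pi_i \Omega\aut(\cW(M)) = HH^{-i}(\cW(M))$ for $i \geq 1$, while at $i = 0$ one only keeps the invertible components, producing $HH^0(\cW(M))^\times$. Substituting into the shifted identity gives the case distinction in the statement, and the group structure is the one coming from composition of loops, which under the Dold-Kan equivalence corresponds to addition in $HH^{-k}$ for $k \geq 1$ and to multiplication in the unit group $HH^0(\cW(M))^\times$ for $k = 0$.

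The corollary is thus essentially a repackaging, and there is no genuine obstacle once Theorem~\ref{theorem. main theorem informal decorated} is in hand; the only point that deserves care is the matching of group structures, particularly the observation that the loop-concatenation group structure on $\pi_1\aut(\cW(M))$ is multiplicative (hence lands in the unit group $HH^0(\cW(M))^\times$ rather than in the additive group $HH^0(\cW(M))$), which is exactly what distinguishes the $k=0$ case from the $k \geq 1$ cases.
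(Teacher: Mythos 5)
Your proposal is correct and follows essentially the same route as the paper: the paper also obtains the corollary by taking homotopy groups of the map from Theorem~\ref{theorem. main theorem informal decorated} (equivalently of the looped map in Corollary~\ref{cor. E-2 map}) and identifying $\pi_{k}\,\Omega\aut(\cW(M))$ with Hochschild cohomology via the identification of endomorphisms of $\id_{\cW(M)}$ with the truncated Hochschild cochain complex, citing the T\"oen-type result (Theorem~3.3.1 of~\cite{oh-tanaka-localizations}) for this last step rather than spelling out the Dold--Kan bookkeeping as you do. Your attention to the multiplicative versus additive group structures at $k=0$ versus $k\geq 1$ matches the paper's subsequent remark.
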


\begin{remark}
Here, $HH^0(\cW(M))^\times$ indicates the units of the multiplication of degree 0 Hochschild cohomology elements, and the homomorphism is with respect to this multiplication in degree 0. For $k \geq 1$, the group structure on the target is additive. That the homotopy groups of $\Aut(\cW(M))$ may be identified with certain Hochschild cohomology groups is an $A_\infty$-categorical version of  a result of T\"oen~\cite{toen-homotopy-theory-of-dg-cats}; see Corollary~1.12 of~\cite{tanaka-Aoo-units}.
\end{remark}

\begin{remark}\label{remark. seidel rep generalization}
To put the above corollary in context, recall that the Hochschild cohomology of the Fukaya category of $M$ is isomorphic to the quantum cohomology ring of $M$ in various contexts:
when $M$ is monotone~\cite{sheridan-fano}, and when $M$ is toric~\cite{afooo}, for example.
So one can view the above results as an analogue of the Seidel homomorphism $\pi_1 \ham(M) \to QH(M)^\times$~\cite{seidel-representation}, but generalized to higher homotopy groups of the appropriate analogue of $\ham$ in the Liouville setting. Indeed, one of our main geometric constructions is inspired by work of Savelyev who generalized the Seidel homomorphism to higher homotopy groups of $\ham$ in the monotone setting~\cite{savelyev}. We also conjecture a connection with the work of Lekili and Evans~\cite{lekili-evans}; see Conjecture~\ref{conjecture. LE equivalent to our map} below.
\end{remark}

Moreover, if a Lie group $G$ acts smoothly on $M$ by Liouville automorphisms respecting relevant decorations, we have an induced homomorphism $G \to \Liouaut(M)$, hence an $E_2$-algebra map $\Omega G \to \Omega \Aut (\cW(M))$. By noting that this map factors through the Hurewicz map for $\Omega G$ by adjunction (Notation~\ref{notation. dg nerve and adjoint}), we conclude that the map $C_* \Omega G \to \Hoch^*(\cW(M))$ to the Hochschild cochain complex is a map of $E_2$-algebras in chain complexes. Thus

\begin{cor}
Let $G$ act smoothly on $M$ by Liouville automorphisms respecting relevant structures. Then the action induces a map of Gerstenhaber algebras
	\eqnn
	H_*(\Omega G)
	\to
	HH^*(\cW(M)).
	\eqnd
\end{cor}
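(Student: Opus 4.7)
The plan is to assemble the pieces already in hand. From the smooth action $G \times M \to M$ by Liouville automorphisms respecting the chosen decorations, I would first extract a continuous homomorphism of topological groups $G \to \Liouaut(M)$; smoothness ensures continuity, while compatibility with decorations guarantees that the image lies in the decorated automorphism group. Applying the based loop functor—and using that $\Omega G$ is an $E_2$-algebra by Dunn additivity, just as in the discussion preceding Corollary~\ref{cor. E-2 map}—yields an $E_2$-map of spaces $\Omega G \to \Omega \Liouaut(M)$. Composing with the $E_2$-map of Corollary~\ref{cor. E-2 map} produces the desired $E_2$-map
\[
\Omega G \to \Omega \Aut(\cW(M)).
\]

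Next, I would pass from spaces to chain complexes. The singular chains functor $C_*$ is lax symmetric monoidal (via Eilenberg--Zilber) and therefore sends $E_2$-algebras in spaces to $E_2$-algebras in chain complexes; applied to the above map it yields an $E_2$-map $C_*(\Omega G) \to C_*(\Omega \Aut(\cW(M)))$. The remark preceding the corollary identifies $\Omega \Aut(\cW(M))$ with the unit space of the Hochschild cochain algebra, which, via Dold--Kan, is the space associated to the non-positive truncation $\tau^{\leq 0}\Hoch^*(\cW(M))$. The chains--Dold--Kan (Hurewicz) adjunction then produces a canonical $E_2$-map
\[
C_*(\Omega G) \to \Hoch^*(\cW(M))
\]
in chain complexes, as already flagged in the paragraph preceding the corollary.

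Finally, I would take homology. By a classical theorem of F.~Cohen, the homology of an $E_2$-algebra in chain complexes carries a natural Gerstenhaber algebra structure—a commutative Pontryagin-type product together with the Browder/Gerstenhaber bracket of degree $+1$—and a map of $E_2$-algebras in chain complexes descends to a map of Gerstenhaber algebras on homology. Applied here, this produces the claimed map
\[
H_*(\Omega G) \to HH^*(\cW(M)).
\]
The Gerstenhaber structure on $H_*(\Omega G)$ recovered in this way is the standard one on the homology of a double loop space, and on $HH^*(\cW(M))$ it is the classical Gerstenhaber structure on Hochschild cohomology, which is the content of the Deligne conjecture applied to $\cW(M)$.

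The main obstacle, as I see it, is purely formal rather than geometric: one must verify that the $E_2$-algebra structure on $\Hoch^*(\cW(M))$ inherited via our identification of $\Omega \Aut(\cW(M))$ with the units of Hochschild cochains agrees with the standard Gerstenhaber structure on Hochschild cohomology of an $A_\infty$-category. This amounts to the well-known comparison between the $E_2$-algebra of derived endomorphisms of the identity functor of a (suitably enhanced) category and the Hochschild cochain complex with its usual $E_2$-structure, and should follow from the $\infty$-categorical machinery already invoked by the authors in connection with T\"oen's theorem and Theorem~3.3.1 of~\cite{oh-tanaka-localizations}.
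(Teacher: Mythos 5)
Your proposal is correct and follows essentially the same route as the paper: the induced homomorphism $G \to \Liouaut(M)$, based loops plus Dunn additivity together with Corollary~\ref{cor. E-2 map}, passage to chains via the Hurewicz/dg-nerve adjunction (Notation~\ref{notation. dg nerve and adjoint}) to obtain the $E_2$-map $C_*\Omega G \to \Hoch^*(\cW(M))$, and then homology. Your closing remarks about F.~Cohen's theorem and the Deligne-conjecture comparison of $E_2$-structures simply make explicit steps the paper leaves implicit, so there is no gap.
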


We suspect that this $E_2$-map is precisely the sought-after $E_2$-map in Teleman's ICM address; see for example Theorem 2.5 and the surrounding discussion in~\cite{teleman-icm}, and see also Conjecture~\ref{conjecture. LE equivalent to our map} below.

The latter half of this work applies the above results to the special case when $M = T^*Q$ is a cotangent bundle. We fix an orientation on $Q$ and we choose $b \in H^2(M;\ZZ/2\ZZ)$ to be pulled back from the second Stiefel-Whitney class of $Q$. We also fix the canonical grading on $T^*Q$.

Then a result of Abouzaid~\cite{abouzaid-loops} states that the resulting wrapped Fukaya category $\cW(T^*Q)$ is equivalent to the $A_\infty$-category $\loc(Q)$ of local systems on the zero section $Q$. By abstract non-sense, $\loc(Q)$ is equivalent to $\Mod(C_*(\Omega Q))$, modules over the $A_\infty$-algebra of chains on the based loop space. We thus have a quasi-isomorphism of chain complexes from the Hochschild chain complex to chains on the free loop space:
	\eqnn
    C^{\Hoch}_*(\cW(T^*Q)) \simeq
    C^{\Hoch}_*(\Mod(C_*\Omega Q)) \simeq  C_*(\cL Q).
    \eqnd
Here, the second equivalence is due to Goodwillie~\cite{goodwillie-cyclic-homology} and Burghelea-Fiedorowicz~\cite{burghelea-fiedorowicz-ii}. Finally, by a generalization of Poincar\'e duality, a choice of fundamental class on $Q$ results in an isomorphism of graded commutative algebras\footnote{See for example Malm~\cite{malm-thesis-arxiv}; this is in fact an isomorphism of BV algebras with the string topology operations on the right-hand side, and the natural BV structure on Hochschild cohomology on the left-hand side.}
	\eqnn
    HH^*(C_*\Omega Q) \simeq H_{\ast + \dim Q}(C_*\cL Q)
    \eqnd
from the Hochschild cohomology groups of the $A_\infty$-algebra $C_* \Omega Q$ to the shifted homology of the free loop space.
We thus have the following:

\begin{cor}
Let $Q$ be compact and orientable, and choose an orientation of $Q$. The above maps induce group homomorphisms
	\eqn\label{eqn. ham to LQ}
	\pi_{k+1} \Liouautgrb(T^*Q)
    \to
    \begin{cases}
    H_{\dim Q}(\cL Q)^\times & k = 0 \\
    H_{k + \dim Q}(\cL Q) & k \geq 1
    \end{cases}
	\eqnd
That is, the homotopy groups of $\Liouautgrb(T^*Q)$ map to the homology of the free loop space of the zero section.
\end{cor}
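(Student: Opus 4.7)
The plan is to obtain this corollary as a direct composition of Corollary~\ref{cor. induced map on HH*} with the sequence of equivalences displayed immediately before the statement. First I would specialize Corollary~\ref{cor. induced map on HH*} to $M = T^*Q$, equipped with the canonical grading and with the background class pulled back from $w_2(Q)$; this choice is possible precisely because an orientation of $Q$ has been fixed. This yields, for each $k \geq 0$, a group homomorphism
$$
\pi_{k+1}\Liouautgrb(T^*Q) \longrightarrow
\begin{cases}
HH^0(\cW(T^*Q))^\times & k=0, \\
HH^{-k}(\cW(T^*Q)) & k\geq 1.
\end{cases}
$$

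Next I would transport the target of this map along Abouzaid's quasi-equivalence $\cW(T^*Q)\simeq \Mod(C_*\Omega Q)$~\cite{abouzaid-loops}. Since Hochschild cohomology is an invariant of $A_\infty$-categories under quasi-equivalence, and such equivalences preserve the unit group in $HH^0$, the target becomes $HH^{-k}(C_*\Omega Q)$, respectively $HH^{0}(C_*\Omega Q)^\times$. The chain-level equivalence $C^{\Hoch}_*(\Mod(C_*\Omega Q))\simeq C_*(\cL Q)$ of Goodwillie~\cite{goodwillie-cyclic-homology} and Burghelea--Fiedorowicz~\cite{burghelea-fiedorowicz-ii}, combined with the Poincar\'e-duality isomorphism of Malm~\cite{malm-thesis-arxiv} using the fundamental class of the oriented $Q$, then converts $HH^{-k}(C_*\Omega Q)$ into $H_{k+\dim Q}(\cL Q)$. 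Composing with this conversion delivers the stated maps.

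The only point beyond formal composition is multiplicativity in degree zero: since Malm's identification is an isomorphism of graded commutative algebras sending the Hochschild cup product to the Chas--Sullivan loop product on $H_{\dim Q}(\cL Q)$, it carries units to units, and the $k=0$ map lands in $H_{\dim Q}(\cL Q)^\times$ as claimed. I do not anticipate any genuinely hard step here: the substantive content is already in Corollary~\ref{cor. induced map on HH*}, and the present statement merely repackages its target via the standard chain of identifications between the wrapped Fukaya category of a cotangent bundle, modules over chains on the based loop space, and the string topology of the free loop space.
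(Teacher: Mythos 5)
Your proposal is correct and follows essentially the same route as the paper: the corollary is obtained there exactly by specializing Corollary~\ref{cor. induced map on HH*} to $M=T^*Q$ with the chosen grading and background class, then transporting the target through the Abouzaid equivalence $\cW(T^*Q)\simeq \Mod(C_*\Omega Q)$ and the Goodwillie/Burghelea--Fiedorowicz and Malm (Poincar\'e duality) identifications with $H_{*+\dim Q}(\cL Q)$, with the same remark about units in degree zero. No gaps to report.
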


(As before, when $k = 0$, the group structure on the target is multiplicative with respect to the string multiplication on $C_*\cL Q$, and when $k \geq 1$, the group structure is the additive one on homology.)

Note that because $\Liouautgrb \to \Liouauto$ induces an isomorphism on homotopy groups in degrees $\geq 3$ (Proposition~\ref{prop. Liouautgrb homotopy groups}), the corollary also allows us to map these higher homotopy groups of $\Liouauto$ to the homology of the free loop space of $Q$.

To state our final results, we note that there is a natural action of the diffeomorphism group $\diff(Q)$ on $\loc(Q)$. On the other hand, any diffeomorphism induces a Liouville automorphism of $T^*Q$. This induced map can be lifted to naturally respect the choices of $b$ and $gr$, so we have a map $\diff(Q) \to \Liouautgrb(T^*Q)$.

    \begin{theorem}\label{theorem. diff and ham compatible}
    The diagram
    	\eqnn
        \xymatrix{
        \diff(Q) \ar[r] \ar[d] & \Liouautgrb(T^*Q) \ar[d]^{\text{Thm}~\ref{theorem. main theorem informal}} \\
        \aut( \loc(Q)) \ar[r]^{\sim} & \aut( \cW(T^*Q) )
        }
        \eqnd
    commutes up to homotopy. Here, the bottom arrow is induced by the Abouzaid equivalence~\cite{abouzaid-loops} and the left vertical arrow is induced by the action of $\diff(Q)$ on $\loc(Q)$.
    \end{theorem}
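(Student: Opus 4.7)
The plan is to reduce the homotopy commutativity of the square to a $\diff(Q)$-equivariance statement for Abouzaid's equivalence, and then to verify that equivariance by a naturality computation on the generating family of cotangent fibers. Given $\phi \in \diff(Q)$, the cotangent lift $(d\phi^{-1})^* \in \Liouautgrb(T^*Q)$ sends the fiber $T^*_q Q$ to $T^*_{\phi(q)} Q$, while the standard $\diff(Q)$-action on $\loc(Q) \simeq \Mod(C_*\Omega Q)$ sends the point-like local system at $q$ to the one at $\phi(q)$. Because Abouzaid's equivalence is built to identify $T^*_q Q$ with the point-like local system at $q$, the two composites around the square agree on this generating family of objects.

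First I would upgrade this object-level agreement to agreement on morphisms and higher $A_\infty$-operations by unpacking Abouzaid's comparison $CF^*(T^*_{q_0} Q, T^*_{q_1} Q) \simeq C_{-*}\,\mathrm{Path}(q_0, q_1)$. This identification is defined via moduli of holomorphic strips with boundary on the two fibers together with an evaluation map to path space; a diffeomorphism $\phi$ acts on both sides by pullback, and the evaluation map is tautologically $\diff(Q)$-equivariant. The same argument applied to the moduli spaces governing higher $A_\infty$-operations, and to the Yoneda-type construction producing the functor $\cW(T^*Q) \to \Mod(\End(T^*_{q_0} Q)) \simeq \loc(Q)$, shows that Abouzaid's functor intertwines the cotangent-lift action on $\cW(T^*Q)$ with the pullback action on $\loc(Q)$ at the level of strict (non-coherent) $A_\infty$-functors for any single $\phi$.

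To promote this per-element match to a homotopy between continuous $A_\infty$-algebra maps $\diff(Q) \to \aut(\cW(T^*Q))$, I would realize both composites as restrictions of a common coherent action built by the machinery of Theorem~\ref{theorem. main theorem informal decorated}, exploiting the localization techniques of Section~\ref{section. proof outline}. Specifically, one forms a larger category whose objects package $(T^*Q, Q)$ together with the families of basepoints and auxiliary data used in Abouzaid's construction, and observes that $\diff(Q)$ acts coherently on this enriched datum in such a way that its image in $\aut(\cW(T^*Q))$ factors through both sides of the square via the same coherent functor. The main obstacle will be showing that the choices entering Abouzaid's construction, such as the perturbation and Hamiltonian data and the chosen model for the based loop space, can be organized $\diff(Q)$-coherently within our localization framework; contractibility of the relevant spaces of such choices is the reason localization is the right tool. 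Once this is achieved, the homotopy commutativity of the square becomes a formal consequence of the universal property characterizing our coherent action.
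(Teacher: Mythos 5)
Your overall intuition---that the theorem should follow from a $\diff(Q)$-equivariance property of Abouzaid's construction, made coherent by localization techniques---matches the spirit of the paper, but as written the proposal has a genuine gap at exactly the point where the real work lies. The step ``form a larger category packaging $(T^*Q,Q)$ with basepoints and auxiliary data, observe that $\diff(Q)$ acts coherently on it, and conclude by contractibility of choices and the universal property'' is an assertion, not an argument: per-element (strict) equivariance of Abouzaid's functor for each fixed $\phi$ does not upgrade to a homotopy between the two $A_\infty$-maps $\diff(Q)\to\aut(\cW(T^*Q))$ without actually constructing the coherent family. The paper does this by building, simplex by simplex over $\simp(B\diff(Q))$, a family of local-system categories $\Tw C_*\cP_j$ and a bundle version of the Abouzaid functor as a strict natural transformation $\cO\circ\DD\Rightarrow \Tw C_*\cP$ (Proposition~\ref{prop. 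O to cP natural}), using the same Savelyev-type maps from universal families of disks to simplices that underlie the action itself. Your proposal contains no analogue of this family construction, and ``contractibility of the spaces of choices'' is precisely what would have to be organized coherently---it is not supplied by the localization formalism for free.

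Two further ingredients that your plan omits are essential and nontrivial. First, to descend the (family) Abouzaid functor from the directed categories $\cO_j$ to the wrapped categories $\cW_j$ one must show that non-negative continuation elements are sent to \emph{equivalences} of twisted complexes of path chains; this is the paper's main geometric theorem (Theorem~\ref{thm. continuation maps are equivalences of twisted complexes}), requiring the deformation of the wrapping isotopy to one that is compactly supported relative to the test brane, uniform energy bounds, and a parametrized-moduli homotopy argument. Without it there is no map out of $\cW\circ\DD$ at all (Corollary~\ref{cor. natural transformation}), so your ``common coherent action'' cannot be compared to the bottom row of the square. Second, the bottom arrow of the square is Abouzaid's equivalence for the \emph{quadratically} wrapped category, whereas the action of Theorem~\ref{theorem. main theorem informal} lives on the localization-style $\cW$; reconciling the two requires the comparison of cofinal linear wrappings with quadratic wrapping (Proposition~\ref{prop:quad=cofinal}, together with Lemma~\ref{lemma. hom is wrapped cohomology} and the compatibility of the non-wrapped and quadratic Abouzaid maps), which feeds into the proof that the natural transformation is an equivalence (Theorem~\ref{theorem. cW to cP}). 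Finally, identifying the combinatorially defined family $\Tw C_*\cP$ with the standard $\diff(Q)$-action on $\loc(Q)$ is itself a statement that must be proved (Proposition~\ref{prop. P to Loc}, via the relative-nerve/straightening argument); your proposal takes it as given. A minor additional caveat: strict equivariance of the single-$\phi$ Abouzaid functor requires transporting Floer data along $\phi$, so even the ``per-element'' claim only holds after such a transport, which is exactly where the coherence problem re-enters.
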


Informally, Theorem~\ref{theorem. diff and ham compatible} states that the Abouzaid equivalence $\cW(T^*Q) \simeq \loc(Q)$ can be made equivariant with respect to the natural $\diff(Q)$ action on both categories, and that this action factors through our construction from Theorem~\ref{theorem. main theorem informal decorated}.

\begin{corollary}\label{corollary. survival}
Let $[\alpha] \in \pi_{k+1}\diff(Q)$ and suppose the image of $[\alpha]$ is non-zero under the map $\pi_{k+1}\diff(Q) \to HH^{-k}(C_*\Omega Q )$ from~\eqref{eqn. ham to LQ}. Then the image of $[\alpha]$ is non-zero in $\pi_{k+1}\Liouautgrb(T^*Q).$ In particular, such $[\alpha]$ detect non-trivial elements in the homotopy groups of $\Liouautgrb(T^*Q)$.
\end{corollary}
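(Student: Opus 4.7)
The plan is to deduce Corollary~\ref{corollary. survival} as a formal consequence of Theorem~\ref{theorem. diff and ham compatible}: once the $\diff(Q)$-to-Hochschild map is known to factor through $\pi_{k+1}\Liouautgrb(T^*Q)$, the conclusion follows by contrapositive. All the substantive work has already been carried out by the preceding constructions; the remaining step is essentially a diagram chase through the diagram in Theorem~\ref{theorem. diff and ham compatible}.

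First I would unwind the map appearing in the hypothesis. By naturality of the homotopy-group functor, the map
\[
\pi_{k+1}\diff(Q) \to HH^{-k}(C_*\Omega Q)
\]
of~\eqref{eqn. ham to LQ} factors as (a) the homomorphism $\phi \colon \pi_{k+1}\diff(Q) \to \pi_{k+1}\Liouautgrb(T^*Q)$ induced by the natural group homomorphism $\diff(Q) \to \Liouautgrb(T^*Q)$ recorded just before Theorem~\ref{theorem. diff and ham compatible}, followed by (b) the map of~\eqref{eqn. ham to LQ} itself, and (c) the identification $HH^{-k}(\cW(T^*Q)) \simeq HH^{-k}(C_*\Omega Q)$ from Abouzaid's equivalence. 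That this three-step composition coincides with the $\diff(Q)$-action on $\loc(Q)$ (read off via the T\"oen-type identification of $\pi_*\aut$ with Hochschild cohomology, as in Corollary~\ref{cor. induced map on HH*}) is precisely the content of Theorem~\ref{theorem. diff and ham compatible} after passing to $\pi_{k+1}$.

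Then the conclusion is immediate by contrapositive: if $\phi([\alpha])$ were trivial in $\pi_{k+1}\Liouautgrb(T^*Q)$, then its further image under (b) and (c) would also be trivial, contradicting the hypothesis. Hence $\phi([\alpha])$ is non-trivial, which is what the statement asserts. I do not expect any genuine obstacle here; the only subtlety worth verifying is that in the $k=0$ case, where the group $HH^0(C_*\Omega Q)^\times$ is multiplicative rather than additive, ``non-zero'' must be interpreted as ``not equal to the identity'', and the contrapositive works identically because the identity element of $\pi_1\Liouautgrb(T^*Q)$ is sent to the identity under the group homomorphism of~\eqref{eqn. ham to LQ}.
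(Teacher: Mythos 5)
Your proposal is correct and is essentially the paper's own (implicit) argument: Corollary~\ref{corollary. survival} is deduced from Theorem~\ref{theorem. diff and ham compatible} precisely by the factorization of the map $\pi_{k+1}\diff(Q)\to HH^{-k}(C_*\Omega Q)$ through $\pi_{k+1}\Liouautgrb(T^*Q)$, followed by the contrapositive. Your remark about the multiplicative interpretation in the $k=0$ case is a fine point but does not change the argument.
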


Now let
	\eqnn
	\haut(Q)
	\eqnd
denote the space of those continuous maps $f: Q \to Q$ that happen to be homotopy equivalences.
By taking based loops, and noting that the action of $\diff$ on $\loc$ factors through $\haut$, we have the following:

\begin{cor}\label{corollary. diff haut}
The following diagram commutes:
	\eqnn
	\xymatrix{
	\pi_{k+1}\diff(Q) \ar[r] \ar[d] & \pi_{k+1}\Liouautgrb(T^*Q) \ar[d] \\
	\pi_{k+1}\haut(Q) \ar[r] & H_{\dim Q + k} (\cL Q).
	}
	\eqnd
\end{cor}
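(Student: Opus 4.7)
The approach is to combine Theorem~\ref{theorem. diff and ham compatible} with the homotopy invariance of the functor $\loc$. Since $\loc(Q)$ depends only on the weak homotopy type of $Q$ (for instance, through the equivalence $\loc(Q) \simeq \Mod(C_*\Omega Q)$), the action map $\diff(Q) \to \aut(\loc(Q))$ factors through the natural map $\diff(Q) \to \haut(Q)$. Combined with Theorem~\ref{theorem. diff and ham compatible}, this produces a homotopy-commutative square
$$
\xymatrix{
\diff(Q) \ar[r] \ar[d] & \Liouautgrb(T^*Q) \ar[d] \\
\haut(Q) \ar[r] & \aut(\cW(T^*Q))
}
$$
whose bottom arrow is the composite $\haut(Q) \to \aut(\loc(Q)) \xrightarrow{\sim} \aut(\cW(T^*Q))$ via the Abouzaid equivalence.

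Next, I would take based loops at the identity of each automorphism space and pass to homotopy groups. Using the identification of $\pi_* \aut$ with (units of) Hochschild cohomology (Theorem~3.3.1 of \cite{oh-tanaka-localizations}) together with $\cW(T^*Q) \simeq \Mod(C_*\Omega Q)$ and the isomorphism $HH^{-k}(C_*\Omega Q) \simeq H_{k+\dim Q}(\cL Q)$ recalled earlier in the introduction, the right-hand vertical recovers the map appearing in the preceding corollary, while the bottom horizontal produces the canonical map $\pi_{k+1}\haut(Q) \to H_{k+\dim Q}(\cL Q)$ that the statement designates as its bottom arrow. Commutativity of the target square of homotopy groups is then a direct diagram chase from the homotopy-commutative square above, using naturality of Hurewicz and of the Goodwillie--Burghelea--Fiedorowicz/Poincar\'e duality identifications.

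The main obstacle is the first step: producing the factorization $\diff(Q) \to \haut(Q) \to \aut(\loc(Q))$ as a map of spaces (indeed, of $A_\infty$-algebras) rather than merely on $\pi_0$. This requires a model for $\loc$ that is manifestly functorial for self-homotopy equivalences of $Q$, together with the verification that the Abouzaid equivalence intertwines the resulting $\haut(Q)$-actions on $\loc(Q)$ and $\cW(T^*Q)$. The former is standard via $Q \mapsto \Mod(C_*\Omega Q)$ and functoriality of based-loop spaces, while the latter is essentially the content of Theorem~\ref{theorem. diff and ham compatible} extended from $\diff(Q)$ to $\haut(Q)$; once extracted, the remainder of the argument is formal.
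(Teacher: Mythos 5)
Your proposal is correct and is essentially the paper's own (largely implicit) argument: the corollary is obtained precisely by noting that the $\diff(Q)$-action on $\loc(Q)$ factors through $\haut(Q)$, invoking Theorem~\ref{theorem. diff and ham compatible}, taking based loops, and identifying $\pi_*\aut$ with Hochschild cohomology and $HH^{*}(C_*\Omega Q)$ with $H_{*+\dim Q}(\cL Q)$. One minor correction to your closing remark: no extension of Theorem~\ref{theorem. diff and ham compatible} from $\diff(Q)$ to $\haut(Q)$ is needed, since your bottom arrow is by definition the $\haut(Q)$-action on $\loc(Q)$ transported through the Abouzaid equivalence, so the commutativity of your square already follows from the factorization $\diff(Q)\to\haut(Q)\to\aut(\loc(Q))$ together with the theorem as stated.
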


\begin{example}
Let $Q = S^1$ and take $R=\ZZ$. We must contemplate the composite
	\eqnn
    \ZZ \cong \pi_0 \Omega \diff(S^1) \simeq \pi_0 \Omega \haut(S^1) \to (HH^0(C_* \Omega S^1))^{\times} \cong \ZZ \times \ZZ/2\ZZ.
    \eqnd
To understand the last map, we note the equivalence $C_* \Omega S^1 \simeq \ZZ[x^{\pm 1}]$ as an $A_\infty$ algebra. The 0th Hochschild cohomology is (the center of) this ring, and its units are the monomials of the form $\pm x^i$ for $i \in \ZZ$; the homomorphism $\ZZ \cong \pi_1 \haut(S^1) \to (HH^0)^\times$ is given by $i \mapsto x^i$.

In particular, Theorem~\ref{theorem. diff and ham compatible} shows that $\pi_1 \diff(S^1) \to \pi_1 \Liouautgrb(T^* S^1)$ is an injection. While less is known about the space of diffeomorphisms of tori $T^n$, we regardless produce non-trivial elements in $\pi_1\Liouautgrb(T^* T^n)$ straightforwardly using the same method.
\end{example}

As pointed out to us by Sylvan, because both the inclusion $Q \to T^*Q$ and the projection $T^*Q \to Q$ are homotopy equivalences, we have a factorization
	\eqn\label{eqn. obvious factorization}
	\diff(Q) \to \Liouauto(T^*Q) \to \haut(Q).
	\eqnd
(It does, however, take a little bit of effort to render the second map in~\eqref{eqn. obvious factorization} a map of of $A_\infty$ spaces.) It is thus natural to try to detect homotopy groups of $\Liouauto(T^*Q)$ by identifying elements that survive the map $\diff(Q) \to \haut(Q)$. For example, a result of Felix-Thomas~\cite{felix-thomas} states that when $Q$ is simply-connected, the map
	\eqnn
	\pi_{k+1}\haut(Q) \tensor \QQ \to H_{\dim Q + k}(\cL Q; \QQ)
	\eqnd
is an injection. So we can immediately deduce non-trivial elements in the rational homotopy groups of $\Liouauto(T^*Q)$ if we can detect non-trivial elements of $\pi_\bullet(\diff(Q)) \tensor \QQ$ that survive in the string algebra. Indeed, this gives a shorter proof of Corollary~\ref{corollary. survival}.

\subsection{Future directions}

\subsubsection{Spectral versions} \label{section. hurewicz factorization}
When $Q$ is compact and oriented, the map $\Omega \haut(Q) \to C_*(\cL Q)[-n]$ (with homological shifting conventions) has another description. At the level of homotopy/homology groups, it is equal to the composition
	\eqnn
	\pi_\ast \Omega \haut(Q)
	\to
	H_\ast \Omega \haut(Q)
	\xra{[Q]}
	H_\ast \Omega\haut(Q) \tensor H_n(Q)
	\xra{\ev}
	H_{\ast + n}(\cL Q).
	\eqnd
The first arrow is the Hurewicz map, the next is tensoring with a choice of fundamental class, and the last is induced by the evaluation map $\Omega \haut(Q) \times Q \to \cL Q$. The verification of this presentation is independent of any Floer theory. (See also~\cite{felix-thomas}.)

In other words, outside of $\pi_0$, the information our actions obtains about $\Liouaut$ factors through the Hurewicz map of $\haut$, and in fact of $\Liouaut$ as well. This is not satisfying, and at the very least, one would hope for a construction that factors through the stable homotopy type (which sees more about the topology of a space than its homology). This is another good reason to seek situations when we can define wrapped Fukaya categories over the sphere spectrum.

\subsubsection{Relation to Teleman and Lekili-Evans}\label{subsection. hamiltonian setting}
Let us change settings for a bit and consider a compact monotone $X$. The methods of the present work carry over straightforwardly to this setting---in fact, the cumbersome need to verify some of our $C^0$ estimates vanishes. We would like to emphasize that the essential geometry of such a construction was established in the work of Savelyev~\cite{savelyev} and is not due to us. The methods there result in a functor from $\simp(B\Ham(X))$ to $\Ainftycat$ (see Section~\ref{section. proof outline}), and by realizing the homotopy type of $B\Ham(X)$ as a localization of $\simp(B\Ham(X))$, we have the following analogue of Theorem~\ref{theorem. main theorem informal}:

\begin{theorem}\label{theorem. hamiltonian version}
One has a map of $A_\infty$ algebras
	\eqnn
	\Ham(X) \to \Aut(\fukaya(X)).
	\eqnd
\end{theorem}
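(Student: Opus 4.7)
The plan is to follow the same blueprint as the proof of Theorem~\ref{theorem. main theorem informal decorated}, with Savelyev's parametrized moduli of holomorphic sections replacing the Liouville-flavored geometric input used there. It suffices to construct a pointed map of spaces $B\Ham(X)\to B\Aut(\fukaya(X))$ whose basepoint hits $\fukaya(X)$; a map of $A_\infty$-algebras in spaces is then recovered by looping once, using the same correspondence between grouplike $A_\infty$-spaces and their deloopings invoked in the Liouville case.

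First, following Savelyev~\cite{savelyev}, I would build a simplicial functor
$$F \colon \simp(B\Ham(X)) \to \Ainftycat$$
sending the basepoint to $\fukaya(X)$. An $n$-simplex of $B\Ham(X)$ corresponds to a Hamiltonian fibration $E\to\Delta^n$ with monotone fiber $X$; parametrized moduli of pseudoholomorphic sections of such fibrations with moving Lagrangian boundary data assemble into the higher coherence data required to specify a simplex of $\Ainftycat$, generalizing the Seidel representation (the case $n=1$) to all simplicial dimensions. Because $X$ is compact and monotone, the $C^0$-estimates that consume much of the Liouville portion of the paper are not needed; the relevant moduli are automatically compact, and Savelyev's transversality and gluing arguments supply the $A_\infty$-identities up to coherent homotopy.

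Second, I would verify that $F$ sends every edge of $\simp(B\Ham(X))$ to a quasi-equivalence. This is the familiar fact that the continuation/Seidel functor attached to a Hamiltonian path is invertible: the reverse path supplies an inverse, and a bigon filling supplies the homotopy. By the universal property of $\infty$-categorical localization, $F$ then descends through $\simp(B\Ham(X))[W^{-1}]$, where $W$ is the collection of all edges. The localization of the simplex category of any Kan complex $K$ at all edges is canonically equivalent to $K$ itself, and applying this to $K=B\Ham(X)$ produces a pointed map $B\Ham(X)\to \Ainftycat^{\sim}$ which, by the path-connectedness of the image, factors through the component of $\fukaya(X)$ and hence through $B\Aut(\fukaya(X))$. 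Looping once yields the desired $A_\infty$-algebra map $\Ham(X)\to\Aut(\fukaya(X))$.

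The main obstacle is packaging rather than geometry. Savelyev already provides the Floer-theoretic structure at each individual simplex, so the real task is to organize these moduli coherently enough to furnish a strict map out of the simplex category $\simp(B\Ham(X))$ compatibly with faces and degeneracies. This is precisely the same organizational challenge confronted in the Liouville case, and the key technical move that circumvents naive strictification---iterated use of $\infty$-categorical localization, as outlined in Section~\ref{section. proof outline}---transfers verbatim. The only real input that must be handled afresh is a parametrized version of the monotone compactness and transversality package underlying Savelyev's construction, both of which are standard in the literature.
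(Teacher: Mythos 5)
Your proposal is correct and follows essentially the same route as the paper: Savelyev's parametrized moduli give the functor $\simp(B\Ham(X)) \to \Ainftycat$ sending the basepoint to $\fukaya(X)$, edges go to equivalences via continuation/reversal, and realizing $B\Ham(X)$ as the localization of $\simp(B\Ham(X))$ yields the $A_\infty$ map $\Ham(X)\to\Aut(\fukaya(X))$, exactly as the paper argues (with the $C^0$-estimate issues disappearing in the compact monotone case). The only cosmetic difference is that in this setting just one localization is needed, since there is no wrapping step $\cO \to \cW$ to perform.
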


Then any Lie group $G$ with a Hamiltonian action on $X$ enjoys a continuous homomorphism $G \to \Ham(X)$. Composing with the theorem above, we have:

\begin{cor}
A Hamiltonian action of a Lie group $G$ on a monotone $X$ induces a map of $A_\infty$-algebras $G \to \Aut(\fukaya(X))$.
\end{cor}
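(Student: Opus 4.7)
The plan is to derive the corollary as an immediate formal consequence of Theorem~\ref{theorem. hamiltonian version}, with all the geometric and Floer-theoretic content already absorbed into that theorem. First, from the Hamiltonian action $G \times X \to X$ I would extract a continuous homomorphism of topological groups $\rho\colon G \to \Ham(X)$, sending each $g \in G$ to the symplectomorphism $x \mapsto g \cdot x$. That the image lies in $\Ham(X)$ rather than merely $\symp(X)$ is precisely the content of the hypothesis that the action is Hamiltonian; for connected $G$ this is automatic once one picks a moment map or, more weakly, a path of Hamiltonian diffeomorphisms from each $g$ back to the identity. Continuity of $\rho$ follows from smoothness of the action together with the standard topology on $\Ham(X)$ inherited from (compactly supported) smooth maps.

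Next, I would observe that a continuous homomorphism of topological groups is in particular a strict map of $A_\infty$-algebras in spaces (with all higher coherence components being trivial), so $\rho$ may be regarded as an $A_\infty$-algebra homomorphism in spaces. Composing with the $A_\infty$-algebra map furnished by Theorem~\ref{theorem. hamiltonian version} yields
    \eqnn
    G \xra{\rho} \Ham(X) \to \Aut(\fukaya(X)),
    \eqnd
and since composition of $A_\infty$-homomorphisms is an $A_\infty$-homomorphism, this composite is the desired map.

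There is really no hard step: once Theorem~\ref{theorem. hamiltonian version} is in hand, everything reduces to bookkeeping. The only mildly delicate point is ensuring that $\rho$ is encoded as a morphism in the same model of $A_\infty$-algebras in spaces used to formulate Theorem~\ref{theorem. hamiltonian version}; but since strict homomorphisms of topological monoids model strict (and hence a fortiori $A_\infty$) morphisms in any reasonable such model, this is routine. If one wished to be slightly more refined and pass to a specific model (e.g.\ the dg nerve after taking singular chains, as in Notation~\ref{notation. dg nerve and adjoint}), then one would simply apply the appropriate functorial construction to $\rho$ before composing.
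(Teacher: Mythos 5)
Your proposal is correct and matches the paper's argument: the paper likewise notes that a Hamiltonian action yields a continuous homomorphism $G \to \Ham(X)$, regards it as an $A_\infty$-map, and composes with Theorem~\ref{theorem. hamiltonian version}. The only difference is that you spell out the (routine) model-theoretic bookkeeping that the paper leaves implicit.
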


The above proves~\cite[Conjecture 2.9]{teleman-icm} in the monotone setting. The next result establishes (in the monotone setting) a conjectural $E_2$-algebra map sought in Theorem~2.5 of loc. cit., though our methods do not utilize Lagrangian correspondences.

\begin{cor}\label{cor. Ham E2 map}
Any Hamiltonian action of $G$ on a monotone $X$ induces a map
	\eqn\label{eqn. LE our version}
	C_*(\Omega G) \to \Hoch^{*}(\fukaya(X)).
	\eqnd
of $E_2$-algebras.
\end{cor}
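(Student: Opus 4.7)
The plan is to repeat, almost verbatim, the $E_2$-argument that produces the Liouville map $C_{*}\Omega G \to \Hoch^{*}(\cW(M))$ (stated just before Corollary~\ref{cor. Ham E2 map} in its Liouville analog), but with Theorem~\ref{theorem. hamiltonian version} substituted for Theorem~\ref{theorem. main theorem informal decorated}. A Hamiltonian action of $G$ on a compact monotone $X$ gives a continuous group homomorphism $G \to \Ham(X)$, and composing with Theorem~\ref{theorem. hamiltonian version} supplies a map of $A_\infty$-algebras in spaces
\[
G \to \Aut(\fukaya(X)).
\]

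Next I would take based loops at the identity functor and apply Dunn additivity, promoting this to a map of $E_2$-algebras in spaces
\[
\Omega G \to \Omega \Aut(\fukaya(X)).
\]
As explained in the remark following Corollary~\ref{cor. E-2 map}, the target is naturally identified with the space of units of the Hochschild cochain algebra of $\fukaya(X)$, and the $E_2$-structure on this loop space coincides with the $E_2$-structure inherited from $\Hoch^{*}(\fukaya(X))$.

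To descend from spaces to chain complexes I would use the adjunction between the space-of-units functor and (a suitable truncation of) the singular chains functor, as in Notation~\ref{notation. dg nerve and adjoint}. Applied to the display above, this adjunction produces the sought $E_2$-algebra map in chain complexes
\[
C_{*}(\Omega G) \to \Hoch^{*}(\fukaya(X)).
\]
As in the Liouville case, a direct consequence of using this adjunction is that the resulting map factors through the Hurewicz map on $\Omega G$.

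The main obstacle in this statement does not lie in any of the formal manipulations above, which are parallel to the Liouville case and require no additional geometric input; rather, it lies in Theorem~\ref{theorem. hamiltonian version}, which is taken as given. Once that $A_\infty$-functoriality has been established — which, as the authors emphasize, is a straightforward transcription of the Liouville construction to the monotone setting, simplified by the disappearance of the $C^{0}$-estimates — the categorical passage from an $A_\infty$-action on $\fukaya(X)$ to an $E_2$-algebra map into Hochschild cochains is uniform in both settings and requires no further Floer-theoretic input.
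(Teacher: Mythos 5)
Your proposal is correct and takes essentially the same route as the paper: compose $G \to \Ham(X)$ with Theorem~\ref{theorem. hamiltonian version}, take based loops via Dunn additivity to get the $E_2$-map $\Omega G \to \Omega\Aut(\fukaya(X))$, identify the target with the units of $\Hoch^*(\fukaya(X))$, and pass formally to chain complexes. The only (harmless) divergence is in the justification of that last formal step: the paper's proof invokes the lax symmetric monoidality of $\Sigma^\infty$ and of smashing with $H\ZZ$, whereas you use the chains/units adjunction of Notation~\ref{notation. dg nerve and adjoint} (exactly as the paper itself does for the Liouville analogue in the introduction), and both mechanisms are valid.
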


\begin{proof}
By taking based loop spaces, we have a map of $E_2$-algebras
	\eqnn
	\Omega G \to \Omega \Aut(\fukaya(X)).
	\eqnd
By passing to chain complexes, we obtain an $E_2$-algebra map. Let us explain this a little further, as it is formal based on $\infty$-categorical arguments: The passage from spaces to spectra (otherwise known as $\Sigma^\infty$) is lax symmetric monoidal, and so is the passage from spectra to chain complexes (otherwise known as tensoring/smashing with $H\ZZ$).
\end{proof}

Of course, it is unsatisfactory not to compare the methods of the present work to the methods proposed in other works. To this end, recall that Lekili and Evans~\cite{lekili-evans} construct a map
	\eqn\label{eqn. LE map}
	\hom_{\cW(T^*G)}(T^*_e G, T^*_e G)
	\to
	\hom_{\fuk(X^- \times X)}(\Delta_X,\Delta_X).
	\eqnd
The construction is as follows: One notes that the graph of the Hamiltonian action $\Gamma \subset G \times X \to X$ quantizes to a Lagrangian $C \subset T^*G \times X^- \times X$, essentially by lifting $(g, x, g(x))$ to the value of the moment map at $g(x)$. A famous paradigm expects that Lagrangian correspondences give rise to functors and bimodules; Lekili and Evans realize this paradigm in this example by constructing a functor from (the full subcategory consisting of the cotangent fiber of) the wrapped Fukaya category of $T^*G$ to the Fukaya category of $X^- \times X$. It is trivial to verify that $C$ geometrically composes with the cotangent fiber $T^*_e G$ at the identity to yield the diagonal $\Delta_X \subset X^- \times X$; the map \eqref{eqn. LE map} is the induced map of $A_\infty$-algebras encoding this functor.

Importantly, taking cohomology on both sides of~\eqref{eqn. LE map}, we obtain a map
	\eqnn
	H_*(\Omega G) \to HH^*(\fukaya(X))
	\eqnd
from the homology of the based loop space of $G$ to the Hochschild cohomology of the Fukaya category of $X$. \footnote{Here, we are identifying the endomorphisms of the diagonal with Hochschild cohomology of the Fukaya category for monotone symplectic manifolds---this equivalence usually passes through quantum cohomology of $X$. See also Remark~\ref{remark. seidel rep generalization}.} We have seen a map like this before! We conjecture the following:

\begin{conjecture}\label{conjecture. LE equivalent to our map}
As $A_\infty$ algebra maps, the map~\eqref{eqn. LE our version} is homotopic to the map~\eqref{eqn. LE map}.
\end{conjecture}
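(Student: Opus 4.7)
The plan is to exhibit both maps as specializations of a single geometric input, namely the ``moment Lagrangian correspondence'' $C \subset T^*G \times X^- \times X$ determined by the graph of the Hamiltonian action, and then to interpolate between the two specializations via a one-parameter family of (quilted) moduli problems. Because both sides are maps of $A_\infty$-algebras over chain complexes, it suffices to produce an $A_\infty$-homotopy between them; passing through $\Omega$ and $\Sigma^\infty \wedge H\ZZ$ as in the proof of Corollary~\ref{cor. Ham E2 map} will then automatically upgrade the homotopy to one of $E_2$-algebra maps.

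First, I would make both maps concrete at the chain level. For the paper's map~\eqref{eqn. LE our version}, a cycle $\gamma \in C_k(\Omega G)$ becomes, under $G \to \Ham(X)$, a $k$-simplex of Hamiltonian loops on $X$; the resulting Hochschild cochain counts holomorphic strips in $X$ with boundary conditions that move along $\gamma$. This is the Savelyev-style description of the construction of Section~\ref{subsection. hamiltonian setting} at the chain level. For the Lekili--Evans map~\eqref{eqn. LE map}, I would use the Ma'u--Wehrheim--Woodward quilted-strip formalism to present the bimodule map $\hom(T^*_e G,T^*_e G) \to \hom(\Delta_X,\Delta_X)$ explicitly as a count of quilted configurations: a strip in $T^*G$ with boundary on $T^*_e G$, seamed via $C$ to a strip in $X^- \times X$ with boundary on $\Delta_X$. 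Under Abouzaid's equivalence, a chain in $C_*(\Omega G)$ is represented by (a family of) Hamiltonian chords in $T^*G$ ending on the cotangent fiber, and the seamed configuration becomes a moving-boundary problem in $X$ governed by $C$.

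Second, the geometric comparison: after collapsing the $T^*G$ side of the quilted strip to a point, the seam through $C$ becomes literally the graph of the corresponding based loop in $G$ acting on $X$, and the quilted moduli space degenerates to precisely the moving-boundary moduli space computing the paper's cochain. I would realize this collapse as a one-parameter family of quilted moduli spaces indexed by the ``length'' of the $T^*G$ seam: at length $\infty$, the standard Wehrheim--Woodward strip-shrinking argument identifies the count with the Lekili--Evans map, while at length $0$ the $T^*G$ factor trivializes and the count reduces to the Savelyev-style moving-boundary problem underlying our construction. Summing over boundary strata of this parametrized moduli space produces a chain-level null-homotopy of the difference of the two $A_\infty$-algebra maps.

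The main obstacle is coherence: one must assemble these interpolating moduli spaces into a genuine $A_\infty$-homotopy of $A_\infty$-algebra maps, not merely a chain homotopy on underlying complexes. This requires verifying consistent choices of perturbation data across the parametrized family and checking that the degenerations at the two extremes match, on the nose, with the perturbation schemes used in (i) the Ma'u--Wehrheim--Woodward construction of the Lekili--Evans functor and (ii) the simplicial construction of Section~\ref{subsection. hamiltonian setting}. The compactness and transversality are standard by now, but the bookkeeping of coherent $A_\infty$-data across the strip-shrinking family is the technical heart of the argument. Once this is in place, restricting to the based-loop $E_1$-structure yields the homotopy of $E_2$-algebra maps asserted by Conjecture~\ref{conjecture. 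LE equivalent to our map}.
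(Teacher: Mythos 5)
The statement you are addressing is not proved in the paper at all: Conjecture~\ref{conjecture. LE equivalent to our map} is stated precisely as an open conjecture, so there is no argument of the authors to compare against. Your sketch is a reasonable research program, and it is essentially the strategy one would expect (degenerate the Ma'u--Wehrheim--Woodward quilted strips through $C$ by strip-shrinking until the $T^*G$ side collapses and the seam condition becomes a Savelyev-style moving boundary condition in $X$). But as written it is a proposal, not a proof, and the places where you defer to ``bookkeeping'' are exactly the content of the conjecture.

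Concretely, three gaps. First, the paper explicitly warns that part of the conjecture is the identification of the two codomains: the Lekili--Evans map~\eqref{eqn. LE map} lands in $\hom_{\fuk(X^-\times X)}(\Delta_X,\Delta_X)$, while~\eqref{eqn. LE our version} lands in $\Hoch^*(\fukaya(X))$, and one needs a natural \emph{chain-level} $A_\infty$-algebra equivalence between these intertwining the comparison. Your proposal never addresses this; the usual identification passes through quantum cohomology or a product-equals-bimodules theorem, neither of which is available here at the chain level without substantial work. Second, the analogous identification on the domain side (wrapped endomorphisms of $T^*_eG$ versus $C_*\Omega G$, via Abouzaid's equivalence) must be made compatibly with \emph{both} constructions at the chain level, and your assertion that the paper's map admits an ``on the nose'' Savelyev-style moving-boundary description at the chain level is itself unestablished: the paper's map is built through two categorical localizations (of $\cO$ and of $\simp(B\Ham(X))$), so extracting a direct holomorphic-curve formula for the resulting Hochschild cochain is a nontrivial step, not a recollection. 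Third, the strip-shrinking degeneration you invoke is known to produce figure-eight bubbling and related phenomena in general, so even the chain homotopy at the two endpoints of your one-parameter family, let alone the coherent $A_\infty$-homotopy and its $E_2$-upgrade, requires hypotheses and analysis you have not supplied. In short: plausible outline, but the conjecture remains open and your proposal does not close it.
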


That is, a map constructed from the geometry of correspondences and quilts in~\cite{lekili-evans} is equivalent to a map constructed using the geometry of bundles and the algebra of localizations here.

We warn the reader that the codomains of the two maps must be identified; part of the conjecture is that the equivalence is intertwined by a natural chain-level, $A_\infty$-algebra equivalence between Hochschild cochains and endomorphisms of the diagonal.

In particular, a proof of Conjecture~\ref{conjecture. LE equivalent to our map} would prove (by Corollary~\ref{cor. Ham E2 map}) that the correspondences-style construction~\eqref{eqn. LE map} can be lifted to an $E_2$-algebra map.

\subsubsection{Other future directions}

We enumerate natural avenues of pursuit through a sequence of remarks:

\begin{remark}[Other notions of automorphisms]
While we have taken $\Liouaut(M)$ and $\Liouauto(M)$ to be natural notions of automorphism group of a Liouville sector $M$, it is not clear that these are the only such natural choice (even up to homotopy equivalence). In another direction, one could contemplate the automorphism group of the skeleton of a Liouville manifold; the correct notion of automorphism  presumably depends both on the stratified homotopy type of the skeleton, and on some tubular or infinitesimal differential-geometric data attached to the skeleton (to be able to recover the equivalence type of its wrapped Fukaya category, for example).
\end{remark}

\begin{remark}[Filtered enhancements]
When $M=T^*Q$, all our computations ``factor'' through the homotopy type of $M \simeq Q$; this is unsurprising given that the wrapped Fukaya category of $T^*Q$ depends only on the homotopy type of $Q$ and our techniques incurably pass through wrapped Floer constructions. However, we suspect that by filtering automorphism groups in a way compatible with the action filtration on Floer complexes, one can glean far richer symplectic data.
\end{remark}

\begin{remark}[Is symplectic geometry helping differential topology, or vice versa?]
Even for the case of $M= T^*Q$ with $Q$ compact and oriented, we note that the factorization $\pi_\ast \diff(Q) \to \pi_\ast \haut(Q) \to H_{\ast + \dim Q}(\cL Q)$ from Corollary~\ref{corollary. diff haut} is difficult to study using tools of homotopy theory. It is not yet clear in which direction information will naturally flow in the future: Whether the higher homotopy groups of $\Liouaut$ are amenable to Floer-theoretic techniques (and hence yield information about the relation between $\diff$ and $\haut$), or whether the homotopy theory will yield information about Liouville automorphisms. Of course, in the near future---because symplectic techniques are newer---we would expect to see information from Floer theory provide new information about diffeomorphism spaces.
\end{remark}

\begin{remark}\label{remark. genus g computations}
At present, the relation between $\diff$ and $\haut$ is best understood rationally, and the best-understood examples are the even-dimensional ``genus $g$'' manifolds $(S^n \times S^n)^{\# g}$ obtained by taking the connect sum of $S^n \times S^n$ $g$ times. Even better understood are the manifolds obtained by removing a small open disk from $(S^n \times S^n)^{\# g}$. The obvious low-hanging fruit is to detect rational homotopy groups of $\Liouaut$ of the cotangent bundles of these manifolds (with boundary).
\end{remark}

\begin{remark}
Let us state two further reasons we take an interest in the continuous functoriality of Fukaya categories. First, and independently of any symplectic considerations, many useful algebraic invariants now have concrete geometric interpretations. The study of symplectic geometry has benefitted from the congruence of the geometry of symplectic phenomena with the geometry of algebraic phenomena. For example, the circle action on framed $E_2$-algebras, known as the BV operator in characteristic 0, is visible in the Fukaya categories having Calabi-Yau like properties simply by geometric rotation of Reeb orbits. Another example is the ability to encode the paracyclic structure of the s-dot construction of Fukaya categories using configurations of points on the boundary of a disk~\cite{tanaka-paracyclic}. Our Corollary~\ref{cor. E-2 map} follows this storyline.

Second, the moduli space of embeddings $M \to M'$ also encodes a part of the moduli space of Lagrangian correspondences, or more generally of bimodules between Fukaya categories. Via mirror symmetry, understanding this moduli space on the A model is expected to yield insights about the moduli space of bimodules between derived categories of sheaves on the B model.
\end{remark}

\subsection{Outline of proofs}\label{section. proof outline}
We make heavy use of $\infty$-categorical machinery. We refer the reader to \cite{oh-tanaka-localizations} and~\cite{tanaka-Aoo-units} for background and Section~\ref{section. A oo localizations} for the results we utilize.

\begin{choice}\label{choice. base ring R}
We fix a base ring $R$. We let
	\eqnn
	\Ainftycat
	\eqnd
denote the $\infty$-category of $A_\infty$-categories over $R$. (See Notation~1.4.4 of~\cite{tanaka-Aoo-units}.) Informally, its objects are $R$-linear $A_\infty$-categories, and two objects $\cA$ and $\cB$ enjoy a {\em space} of morphisms between them. When $\cA$ is sufficiently cofibrant (which is the case for when $\cA$ is a wrapped Fukaya category) this space can be described as having vertices given by functors, edges given by homotopies of functors, and higher simplices given by homotopies between homotopies (Theorem~1.5 of~\cite{tanaka-Aoo-units}). In $\Ainftycat$, all $A_\infty$ equivalences are also invertible up to homotopy.

By choosing appropriate structures on $M$ and appropriate brane structures on our Lagrangians, we assume $\cW(M)$ is $R$-linear. (For example, with the usual relative Pin structures on our branes, one can take $R$ to be $\ZZ$. When gradings are chosen, we can take $\Ainftycat$ to consist of usual $A_\infty$-categories, while when we have no gradings, we must demand that our $A_\infty$-categories and functors are 2-periodic.)

We let $\aut(\cW(M))$ denote the space of $R$-linear automorphisms of $\cW(M)$.
\end{choice}

Given a group $G$, one can construct a category $\BB G$ as follows: $\BB G$ has a unique object, and the endomorphisms of this object are defined to be $G$. If $G$ has a topology, one can demand that the category ``remembers'' the topology on this space of endomorphisms---or, at least, remember the homotopy type of $G$. We call the resulting construction the {\em classifying $\infty$-category} of $G$. One can perform this construction even more generally when $G$ is not a group, but is an $A_\infty$-algebra in spaces.

We are interested in the case $G=\Liouaut(M)$, the space of Liouville automorphisms of $M$ respecting the choices made on $M$ in Choice~\ref{choice. base ring R}. (See Section~\ref{section.automorphisms with decorations} for examples.)
We let $\BB\Liouaut(M)$ denote the classifying $\infty$-category of $\Liouaut(M)$.

The following is a precise formulation of Theorem~\ref{theorem. main theorem informal} and Theorem~\ref{theorem. main theorem informal decorated}.

\begin{theorem}\label{theorem. main theorem}
There exists a functor
	\eqn\label{eqn. main theorem functor}
	\BB\Liouaut(M) \to \Ainftycat
	\eqnd
sending a distinguished base point of the domain to the wrapped Fukaya category $\cW(M)$.
\end{theorem}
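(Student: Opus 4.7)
The plan is to build the functor by combining two applications of the localization technology of~\cite{oh-tanaka-localizations}, in keeping with the outline in Section~\ref{section. proof outline}. First, I would replace the classifying $\infty$-category $\BB\Liouaut(M)$ on the source side by the simplex category $\simp(B\Liouaut(M))$ of its nerve---its simplices are chains of composable Liouville automorphisms, together with the simplicial data recording the topology of $\Liouaut(M)$. The canonical functor $\simp(B\Liouaut(M)) \to \BB\Liouaut(M)$ exhibits $\BB\Liouaut(M)$ as the localization of $\simp(B\Liouaut(M))$ obtained by inverting every edge, so by the universal property of localization it is enough to produce a functor $\simp(B\Liouaut(M)) \to \Ainftycat$ that sends every edge to an $A_\infty$-equivalence and that carries the unique vertex to something equivalent to $\cW(M)$.

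Second, on the target side, I would realize $\cW(M)$ as a localization of a ``pre-wrapped'' model $\pwrfuk(M)$ in which Liouville automorphisms and isotopies between them act tautologically, by pushing forward Lagrangian branes together with the auxiliary Floer-theoretic data they carry. With such a model in hand, a $k$-simplex $\sigma$ of $B\Liouaut(M)$ determines---in the spirit of Savelyev's generalization of the Seidel representation---a Liouville fibration $E_\sigma \to \Delta^k$ whose monodromy records $\sigma$; Lagrangian branes in $M$ sweep out Lagrangian branes in $E_\sigma$, and parameterized holomorphic curve counts supply the higher $A_\infty$-operations defining the action of $\sigma$. Because the pre-wrapped model absorbs the relabeling by Liouville automorphisms strictly, this data assembles into a strict functor $\simp(B\Liouaut(M)) \to \Ainftycat$ sending the vertex to $\pwrfuk(M)$. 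Post-composing with the localization from $\pwrfuk(M)$ to $\cW(M)$ then yields a functor which sends every edge to an $A_\infty$-equivalence and hits $\cW(M)$ at the vertex, so the universal property of the first localization produces the desired functor $\BB\Liouaut(M) \to \Ainftycat$.

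The main obstacle I expect is the coherence required in the parameterized Floer theory: one must choose Hamiltonians, almost complex structures, and perturbation data that vary coherently with $\sigma$, restrict properly along $\del\Delta^k$, and intertwine appropriately with the actions of individual Liouville automorphisms along edges. Engineering a pre-wrapped model flexible enough to absorb all these choices strictly---while still being a localization of $\cW(M)$---is the bulk of the technical work, and is presumably the step at which the paper's decision to avoid Lagrangian correspondences pays off (the pre-wrapped model is an intrinsically defined category of Lagrangians in $M$ itself rather than in the product $M^- \times M$). Once this is in place, the two localization arguments combine formally to yield the functor of Theorem~\ref{theorem. main theorem}.
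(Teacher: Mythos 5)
Your overall architecture is the paper's: replace $\BB\Liouaut(M)$ by a category of simplices of the classifying space whose localization recovers $\BB\Liouaut(M)$, build a strict functor to $\Ainftycatt$ by parameterized Floer theory over Liouville bundles pulled back over simplices (using Savelyev-style maps from the universal family of disks to the simplex), localize at non-negative continuation elements to pass from the directed/pre-wrapped family categories to wrapped ones, and then invoke the universal property of the source localization. However, there is a genuine gap at the decisive step. You assert that ``post-composing with the localization from $\pwrfuk(M)$ to $\cW(M)$ yields a functor which sends every edge to an $A_\infty$-equivalence,'' but this is not formal. First, the phrase does not literally parse: the values of your strict functor on higher simplices are family categories (the analogues of $\cO_j$), not $\pwrfuk(M)$ itself, so one must localize each $\cO_j$ at its own class of continuation morphisms and use naturality of localization to get the functor $j \mapsto \cW_j$ (this is Proposition~\ref{prop. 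W functor}). Second, and more seriously, the face-inclusion functors $\cO_j \to \cO_{j'}$ are very far from equivalences, and the claim that the induced functors $\cW_j \to \cW_{j'}$ \emph{are} equivalences is exactly the local triviality statement (Proposition~\ref{prop. local triviality}) that the paper must prove before the universal property of the source localization can be applied. Its proof is not automatic: one needs a way to compute morphism complexes in the localization, which the paper obtains by the filtered-colimit formula for homs in $\cO_j[C^{-1}]$ (Recollection~\ref{recollection. A oo facts}\eqref{item. filtered localizatiom hom}) together with the cofinal-wrapping colimit model of wrapped Floer cohomology of Abouzaid--Seidel (Lemma~\ref{lemma. hom is wrapped cohomology}, which also uses that continuation maps are equivalences in that model, via~\cite{bko:wrapped}); this identifies $\hom_{\cW_j}$ with wrapped Floer cohomology computed in a single fiber, hence independent of the simplex. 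Your proposal treats this as a consequence of localization rather than as a theorem to be proved, and without it the argument does not close.

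A secondary point: your simplex category, described as ``chains of composable Liouville automorphisms,'' is not quite the one needed. The geometric construction of the $A_\infty$-operations requires an honest smooth $M$-bundle over each (extended) simplex, so the paper works with \emph{smooth} maps $|\Delta^n_e| \to B\Liouaut(M)$ in a diffeological sense (Warning~\ref{warning. smooth simplices}), and the fact that localizing this category of smooth simplices still recovers $\sing(B\Liouaut(M)) \simeq \BB\Liouaut(M)$ is itself an input (Recollection~\ref{recollection. smooth approximation}\eqref{item. localization of BLiou}). Relatedly, one does not need, and should not attempt, a strict action of the topological group on a single pre-wrapped category: the directed categories $\cO_j$ with their wrapping indices and chosen cofinal sequences are what make both the curve counts and the localization computation work, with the vertex value recovering $\cW(M)$ as in Example~\ref{example. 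GPS W over a point}.
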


A functor induces a map of morphisms spaces, and in particular, of endomorphism spaces. Because every morphism in the domain is invertible, the functor sends morphisms in the domain to equivalences in the codomain. Thus, we conclude:

\begin{cor}[Precise form of Theorem~\ref{theorem. main theorem informal} and Theorem~\ref{theorem. main theorem informal decorated}.]
One has a map
	\eqnn
	\Liouaut(M) \to \aut(\cW(M))
	\eqnd
of group-like $A_\infty$-spaces.
\end{cor}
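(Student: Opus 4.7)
The plan is to realize $\BB\Liouaut(M)$ as a localization of a simplicial model, construct the required functor on that simplicial model using Floer theory on families, and then descend via the universal property of localization. Concretely, let $\sing\Liouaut(M)$ denote the singular complex of the decorated automorphism space; its $k$-simplices are continuous families $\{\phi_t\}_{t\in\Delta^k}$ of (decorated) Liouville automorphisms of $M$. Because $\Liouaut(M)$ is a group-like $A_\infty$-monoid in spaces, $\sing\Liouaut(M)$ is a group-like simplicial monoid, and one forms the classifying simplicial category $\BB(\sing\Liouaut(M))$. The homotopy type of the $\infty$-category $\BB\Liouaut(M)$ is recovered as the Dwyer-Kan localization of $\BB(\sing\Liouaut(M))$ at all its edges (which are paths in a topological space). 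It therefore suffices to construct a functor $\BB(\sing\Liouaut(M)) \to \Ainftycat$ sending the unique object to $\cW(M)$ and every edge to a quasi-equivalence.

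For each $k$-simplex $\sigma = \{\phi_t\}$, I would build a Liouville fiber bundle $E_\sigma \to \Delta^k$ whose fibers are copies of $M$ and whose parallel transport data is dictated by $\sigma$, along the lines of the Seidel/Savelyev construction (cf.\ Remark~\ref{remark. seidel rep generalization}). A brane $L \subset M$ extends across $\sigma$ to a Lagrangian in $E_\sigma$, and counts of pseudo-holomorphic sections of $E_\sigma$ with such Lagrangian boundary data (equivalently, Floer polygons with moving boundary conditions parametrized by $\Delta^k$) provide the $A_\infty$-operations of interest: a $0$-simplex produces an autoequivalence of $\cW(M)$, a $1$-simplex a natural transformation between two such, a $2$-simplex a homotopy between compositions, and higher simplices higher coherence data. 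The standard degeneration behavior of these parametrized moduli spaces along the faces $\partial_i \Delta^k \hookrightarrow \Delta^k$, together with concatenation of bundles along a shared face, yields the simplicial identities and the composition law, assembling the data into a functor from $\BB(\sing\Liouaut(M))$ into a strict model of $A_\infty$-categories. Passing to $\Ainftycat$ requires inverting quasi-equivalences on the target side, which is the first of the ``two localizations'' alluded to in the introduction.

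For the second localization, note that every edge $\{\phi_t\}_{t\in\Delta^1}$ is a path of Liouville automorphisms, and the associated $A_\infty$-functor between the two endpoint autoequivalences is of continuation type and so is a quasi-equivalence, hence invertible in $\Ainftycat$. The universal property of localization from \cite{oh-tanaka-localizations}, applied now on the source side, produces the desired functor
\eqnn
\BB\Liouaut(M) \simeq \BB(\sing\Liouaut(M))[\text{edges}^{-1}] \to \Ainftycat
\eqnd
sending the basepoint to $\cW(M)$. The main obstacle is analytic rather than categorical: establishing the $C^0$ and energy bounds that control the parametrized moduli of pseudo-holomorphic sections of $E_\sigma \to \Delta^k$ uniformly in $\sigma$, and compatibly under the face and degeneracy maps, so that the counts satisfy the simplicial and $A_\infty$-identities on the nose. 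The introduction explicitly flags these estimates as the cost of working in the Liouville (rather than compact monotone) setting, where they would be automatic as in Theorem~\ref{theorem. hamiltonian version}; verifying them coherently across the simplicial skeleta is where the bulk of the Floer-theoretic work must live.
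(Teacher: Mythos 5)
Your overall strategy---replace the classifying object by a combinatorial model, run family Floer theory over simplices via Liouville bundles and Savelyev-type section counts, and descend through the universal property of localization---has the right shape, and your final descent step (a functor to $\Ainftycat$ sending the basepoint to $\cW(M)$, from which the map of group-like $A_\infty$-spaces is read off on endomorphisms) matches how the paper deduces this corollary from Theorem~\ref{theorem. main theorem}. But the model you choose reintroduces exactly the difficulty the paper's machinery is built to avoid. You index the construction by simplices of the group itself and assert that the parametrized counts ``assemble into a functor from $\BB(\sing\Liouaut(M))$ into a strict model of $A_\infty$-categories,'' i.e.\ that each automorphism yields an honest endofunctor of the fixed category $\cW(M)$, each family of automorphisms a natural transformation, and so on, coherently. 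This is the hard strictification problem: an automorphism $\phi$ does not preserve the auxiliary choices with which $\cW(M)$ is defined (the countable brane collections, cofinal wrapping sequences and Floer data of Choice~\ref{choice. cofinal wrappings}), so $\phi$ induces no strict endofunctor of $\cW(M)$, only a comparison through zig-zags of quasi-equivalences, and you give no mechanism for making these comparisons simplicially coherent over all of $\sing\Liouaut(M)$. The paper sidesteps this by indexing over $\simp(B\Liouaut(M))$---whose only morphisms are face inclusions---and attaching to each simplex a \emph{category} $\cO_j$ (branes in the fibers over the vertices) rather than a functor; strict functoriality is then essentially tautological (Proposition~\ref{prop. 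O functor}), and the group action is extracted only at the very end by localizing the index category (Recollection~\ref{recollection. smooth approximation}). Relatedly, your ``two localizations'' are not the paper's: you never invoke the continuation-map localization $\cW_j=\cO_j[C^{-1}]$ (Definition~\ref{defn. cW_j}), which is how the wrapped category is defined here, while inverting ``all edges'' of $\BB(\sing\Liouaut(M))$ is vacuous, since in the classifying category of a group-like monoid the edges are single automorphisms and are already invertible (they are not paths, as you state).

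Two further steps would fail as written. The claim that the transformation attached to an edge is ``of continuation type and so is a quasi-equivalence'' is unjustified: a path of Liouville automorphisms moves a conical brane by an isotopy that is in general neither non-negative nor non-positive, so wrapped continuation elements are not directly available; in the paper the required equivalences (Proposition~\ref{prop. local triviality}) are obtained only after localization, via the colimit computation of wrapped morphism complexes (Lemma~\ref{lemma. hom is wrapped cohomology}), not by a direct continuation argument. Finally, for the decorated group $\Liouaut(M)$---defined by homotopy pullbacks in Section~\ref{section.automorphisms with decorations}---there is no strict topological group, so ``$\sing\Liouaut(M)$ is a group-like simplicial monoid'' and its strict classifying category need justification; working with smooth simplices of $B\Liouaut(M)$, as the paper does, is also what supplies the homotopical data needed to extend auxiliary Floer data over simplex interiors in the decorated case.
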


\begin{remark}
``Group-like $A_\infty$-spaces'' may seem a mouthful.
The intuition is that the map $\Liouaut(M) \to \aut(\cW(M))$ is morally a continuous group homomorphism.
\end{remark}

To construct the functor~\eqref{eqn. main theorem functor} we use an algebraic version of the following tautology: The barycentric subdivision of a space is homotopy equivalent to the original space. This saves us an enormous amount of analytic legwork.

To explain this, let us denote by $B\Liouaut(M)$ the classifying space.\footnote{Note that font distinguishes the $\infty$-category $\BB\Liouaut(M)$ from the space $B\Liouaut(M)$.} Let $\simp(B\Liouaut(M))$ denote the following category (in the classical sense): An object is a continuous map
	\eqnn
	j: |\Delta^n| \to B\Liouaut(M)
	\eqnd
from an $n$-simplex to the classifying space. A morphism from $j$ to $j'$ is the data of an injective poset map $[n] \to [n']$ such that the induced composition $|\Delta^n| \to |\Delta^{n'}| \xra{j'} B\Liouaut(M)$ is equal to $j$. To see why we call this a model for a barycentric subdivision, we encourage the reader to fix some $j$ and enumerate all the objects admitting a map to $j$. We note that the only invertible morphisms are the identity morphisms.

Then the algebraic version of the above tautology is as follows: If we localize $\simp(B\Liouaut(M))$---i.e., if we invert all morphisms of $\simp(B\Liouaut(M))$---one obtains $\BB \Liouaut(M)$. (See Section~\ref{section. smooth approximation}.)
Thus, by the universal property of localizations, to construct a functor  $\BB\Liouaut(M) \to \Ainftycat$ as in \eqref{eqn. main theorem functor}, we need only construct a functor
	\eqn\label{eqn. cW}
	\cW: \simp(B\Liouaut(M)) \to \Ainftycat
	\eqnd
for which every morphism of the domain is sent to an equivalence of $A_\infty$-categories.

\begin{warning}\label{warning. smooth simplices}
Our geometric constructions require the composition $|\Delta^n| \to B\Liouaut(M) \to B\Liouauto(M)$ to be {\em smooth}, not just continuous; we ignore this for now. See Section~\ref{section. smooth approximation} for more.
\end{warning}

Let us outline the construction of $\cW_j$. There is a tautological $M$-bundle over $B\Liouaut(M)$.
Given a smooth map $j: |\Delta^n| \to B\Liouaut(M)$, we obtain a smooth $M$-bundle over $|\Delta^n|$ by pulling back along $j$.
We define an $A_\infty$-category denoted by
	\eqnn
	\cO_j
	\eqnd
whose objects are branes living in the fibers above the vertices of the $n$-simplex $|\Delta^n|$, and morphisms are defined by choosing parallel transports over the edges of the $n$-simplex and intersecting branes. The $A_\infty$-operations are defined by counting certain holomorphic maps into the $M$-bundle compatible with connections. We refer readers to \cite{oh-tanaka-liouville-bundles}, Section~\ref{section. continuation maps}, and Section~\ref{subsection. unwrapped} for details.

Then, given a smooth map $j: |\Delta^n| \to B\Liouaut(M)$,
	\eqnn
	\cW_j
	\eqnd
is defined by localizing $\cO_j$  along non-negative continuation maps. This localization idea adapts Abouzaid-Seidel's unpublished work (as utilized in~\cite{gps}) to the bundle setting.
There are some subtleties to point out:
\begin{itemize}
	\item We are forced to utilize continuation maps defined by counting holomorphic disks with {\em one} boundary puncture (not strips).
	\item That one can define a ``family'' wrapped Fukaya category over $|\Delta^n|$ for $n \neq 0$ relies on Savelyev's observation~\cite{savelyev} that one can operadically map the tautological family of holomorphic disks to $|\Delta^n|$. This also specifies which holomorphic maps we are counting to define the $A_\infty$ operations.
	\item As usual, one needs to choose auxiliary data (connections and almost-complex structures) to count holomorphic curves. Our definition of $\cO_j$ is inductive on the dimension of the domain $|\Delta^n|$ of $j$, and the inductive step requires us to extend data defined on $\del |\Delta^n|$ to the interior of $|\Delta^n|$. We use that $j$ maps to $B\Liouaut(M)$, and not $B\Liouauto(M)$, so that $j$ encodes already the homotopical information needed to extend auxiliary data to the interior.
	 \item To obtain the functor \eqref{eqn. cW}, one must then verify that inclusions of simplices define $A_\infty$-functors $\cO_j \to \cO_{j'}$, and that the induced maps $\cW_j \to \cW_{j'}$ are equivalences. This requires a way to compute morphisms in the wrapped categories, and we do so using methods analogous to~\cite{gps}: A ``sequential colimit'' construction of wrapped Floer cochains recovers the morphisms in $\cW_j$ (Lemma~\ref{lemma. hom is wrapped cohomology}).
\end{itemize}

This concludes the summary of the construction of the functor $\cW$ in~\eqref{eqn. cW}, and hence of the functor in Theorem~\ref{theorem. main theorem}.

\begin{remark}
Note that the construction involves two distinct localizations---one to pass from a family of non-wrapped ``directed'' Fukaya categories $\cO$ to a family of wrapped Fukaya categories $\cW$, and the other to pass from a category $\simp(B\Liouaut(M))$ to the classifying $\infty$-category $\BB(\Liouaut(M))$.

Both localizations are used to avoid difficult or tedious analytical constructions.
\end{remark}

Now we explain the proof of Theorem~\ref{theorem. diff and ham compatible} which, in the example of $M = T^*Q$, verifies that the above construction yields non-trivial actions.

Fix a smooth manifold $Q$.
$B\diff(Q)$ has a tautological $Q$-bundle over it, and we can pull the $Q$-bundle back along any simplex $j: |\Delta^n| \to B\diff(Q)$. By assigning to each $j$ the $A_\infty$-category of local systems on the pulled back bundle, we obtain a functor\footnote{The notation $\Tw C_* \cP$ is an artifact of a particular presentation of the $A_\infty$-category of local systems we utilize in Section~\ref{section. chains on CP}. This presentation turns out to play well with  the construction from~\cite{abouzaid-loops}, so we will lug around the notation for this pay-off.}
	\eqnn
	\Tw C_* \cP: \simp(B\diff(Q)) \to \Ainftycat.
	\eqnd
On the other hand, there is a natural inclusion $\diff(Q) \to \Liouautgrb(T^*Q)$, and this induces a map of their classifying spaces (and hence of their categories of simplices). Call this induced map $\DD$, and consider the composite
	\eqnn
	\simp(B\diff(Q)) \xra{\DD} \simp(B\Liouautgrb(T^*Q)) \xra{\cO} \Ainftycat.
	\eqnd
We thus have two functors $\cO \circ \DD$ and $\Tw C_* \cP$ from $\simp(B\diff(Q))$ to $\Ainftycat$. In Section~\ref{subsec:familyAbouzaid}, we construct a natural transformation from the latter to the former.
This natural transformation is a non-wrapped,
family-friendly version of the construction Abouzaid utilized in~\cite{abouzaid-loops}.

The next step is to show that the natural transformation $\cO \circ \DD \to \Tw C_* \cP$ factors through the localization $\cW \circ \DD$. This requires us to prove the following geometric theorem,
which we state in greater generality (beyond the case of the zero section of the cotangent bundle):

\begin{theorem}[Theorem~\ref{thm. continuation maps are equivalences of twisted complexes}]\label{thm. continuation is equivalence in Tw CP general}
Let $M$ be a Liouville sector, and $c: L_0 \to L_1$ a continuation element associated to a non-negative isotopy. We also fix a compact test brane $X \subset M$. Then the map of twisted complexes
	\eqnn
	c_* : (L_0 \cap X, D_0) \to (L_1 \cap X, D_1)
	\eqnd
---induced by the Abouzaid functor from $\cO(M)$ to $\Tw C_*\cP(X)$---is an equivalence.
\end{theorem}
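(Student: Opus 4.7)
The plan is to show that $c_*$ is invertible up to homotopy in $\Tw C_* \cP(X)$ by a direct analysis of its linear and higher $A_\infty$-components, organized by an action filtration on the twisted complex $(L_i \cap X, D_i)$.

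First, I would decompose the non-negative isotopy $\phi_t: L_0 \rightsquigarrow L_1$ as a composition of many ``short'' non-negative isotopies, so that $c = c_N \circ \cdots \circ c_1$ up to homotopy in $\cO(M)$. Since $A_\infty$-equivalences compose and the Abouzaid functor preserves composition up to coherent homotopy, it suffices to prove each $c_{i,*}$ is an equivalence. For each short step, after a compactly-supported perturbation (kept non-negative), one arranges that either (a) the pair $(L_t, X)$ is transverse for all $t$ in the short interval, or (b) exactly one birth/death of intersections in $L_t \cap X$ occurs. In case (a), the set $L_t \cap X$ deforms continuously in $X$, yielding a canonical bijection of generators together with a small family of connecting paths representing a class in $C_*\cP(X)$; in case (b), a pair of intersection points is created or destroyed in a standard local model.

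Next, I would make $c_*$ explicit in both cases. In case (a), the leading part of $c_*$ is the generator-wise identification weighted by the small connecting paths in $C_*\cP(X)$, with remaining components strictly lowering action. An upper-triangular argument with respect to the action filtration then shows $c_*$ is invertible at the chain level. In case (b), the two newly created generators form an acyclic pair inside the twisted differential $D_1$ (the Whitney/Morse cancellation picture familiar from Cerf theory), and $c_*$ restricts to an identification on the persistent generators; a direct chain-level homotopy inverse can be written down by hand. In particular, no global reversibility of the isotopy is needed—even though $\phi_t^{-1}$ is not non-negative, the target $\Tw C_*\cP(X)$ is symmetric enough to invert what was produced.

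The main obstacle, and the place where geometry enters beyond combinatorics, is controlling the higher $A_\infty$-components of $c_*$ and not merely its linear part. One must verify that the action-filtered picture above extends to include them, i.e.\ that every higher term of the $A_\infty$-morphism $c_*$ is strictly action-decreasing relative to the leading identification. This is where the $C^0$-estimates and energy bounds used elsewhere in the paper enter: the compactness of $X$ together with the non-negativity of the isotopy confine all relevant holomorphic curves contributing to $c_*$ to a uniformly bounded neighborhood of $X$, yielding uniform energy bounds and hence the desired action-lowering behavior of all higher components. With this control in place, the upper-triangular invertibility of $c_*$ is immediate, and one concludes that $c_*$ is an equivalence of twisted complexes.
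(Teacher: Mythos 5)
Your route (subdivide the isotopy, then treat each short piece by an action-filtration/upper-triangularity argument in the transverse case and a Gaussian-elimination argument at birth-deaths) is genuinely different from the paper's, but as written it has real gaps precisely at the steps that carry the analytic weight. First, a morphism of twisted complexes has no ``higher $A_\infty$-components'': $c_*=\cF^1(c)$ is a single matrix of chains in $C_*\cP(X)$, and what your argument actually needs is (i) a uniform positive lower bound on the energy of every solution contributing to $c_*$ other than the canonical small one near each persisting intersection point, and (ii) the statement that near each such point there is exactly one small solution, counted $\pm 1$, whose boundary path is close to constant (hence invertible in $C_*\cP(X)$). Neither follows from the $C^0$-estimates and energy bounds you invoke --- those give compactness and \emph{upper} bounds on energy, not the lower bound on non-canonical solutions nor the count of the canonical ones; you need an implicit-function-theorem/uniqueness-of-small-solutions argument. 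Second, and more seriously, your case (b) is asserted rather than proved: showing that the two generators created at a birth are joined by exactly one small strip, so that the corresponding entry of $D_1$ is an invertible path with coefficient $\pm1$, and that the continuation map across the birth-death is upper-triangular with invertible diagonal, requires a local Fredholm/gluing analysis at a degenerate (tangential) intersection. That is the crux of your approach and it is at least as hard as the theorem itself. (You would also need the standard but unproven compatibility $c\simeq \mu^2$-composite of the $c_i$ to justify the subdivision step.)

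For comparison, the paper avoids all of this: it first identifies $\cF^1(c^\chi_{\cL})$ with the pushforward of a moduli space of half-disks (equivalently, continuation strips with moving boundary), then uses the maximum principle to confine all such curves to a fixed compact set $K\supset X$ and a primitive-based energy identity to get bounds uniform in the isotopy. This lets one deform $\cL$, through a parameterized moduli space, to an isotopy $\cL'$ that agrees with $\cL$ near $X$ but is constant outside a large radius, without changing the class of the induced morphism. Since $\cL'$ is then reversible by a \emph{compactly supported} isotopy, composing with the reverse produces two-sided homotopy inverses directly, with no subdivision, no birth-death analysis, and no action filtration. If you want to pursue your route, the missing ingredients are exactly the low-energy uniqueness statements and the birth-death local model; they would have to be supplied in full.
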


By the universal property of localization, we conclude that the natural transformation $\cO \circ \DD \to \Tw C_* \cP$ induces a natural transformation
	\eqn\label{eqn. W to CP}
	\cW \circ \DD \to \Tw C_* \cP.
	\eqnd
The remaining key step is to prove that this natural transformation is in fact a natural equivalence. This is a family version of the Abouzaid equivalence, and we prove it in Theorem~\ref{theorem. cW to cP}. We utilize Abouzaid's original result in our proof, and in particular, we must relate the quadratic definition of the wrapped complex to the localization definition. This is accomplished through a combination of Proposition~\ref{prop:quad=cofinal} and a standard result in computing morphism complexes of localizations (Recollection~\ref{recollection. A oo facts}\eqref{item. filtered localization hom}).

As a consequence we have a diagram of $\infty$-categories
	\eqnn
	\xymatrix{
	\BB\diff(Q) \ar[rr] \ar[dr]_-{\Tw C_* \cP} && \BB\Liouautgrb(T^*Q) \ar[dl]^{\cW}\\
	& \Ainftycat.
	}
	\eqnd
The homotopy commutativity of the diagram is exhibited by the natural equivalence \eqref{eqn. W to CP}.   Note that the left-hand map sends a distinguished object of $\BB\diff(Q)$ to $\Tw C_* \cP$.

The final step in proving Theorem~\ref{theorem. diff and ham compatible} is taken in Proposition~\ref{prop. P to Loc}. There, we show that this left-hand functor is naturally equivalent to the functor exhibiting the natural $\diff(Q)$ action on $\loc(Q) \simeq \Tw C_* \cP$. Then, because the right-hand functor sends a vertex to $\cW(Q)$, Theorem~\ref{theorem. diff and ham compatible} follows by taking based loops of each $\infty$-category in the above commutative triangle. (Equivalently, by recording the effect that the functors have on endomorphism spaces.)

\begin{remark}
A {\em consequence} of Theorem~\ref{theorem. cW to cP} is that the localization-style definition of the wrapped Fukaya category of a cotangent bundle is equivalent to Abouzaid's quadratic definition in~\cite{abouzaid-loops}, but this equivalence is not proven by writing an explicit, analytically defined functor from one to the other. In the present work, it is rather obtained by comparing both $A_\infty$-categories to the $A_\infty$-category of local systems, and even relies on the (independent) generation results of Abouzaid and of Ganatra-Pardon-Shende~\cite{gps-2}. Our proof that $\cW \simeq \Tw C_* \cP$ differs from that of~\cite{gps-microlocal} in that we compute the equivalence of endomorphism algebras in a way that passes through and relies on Abouzaid's construction, while~\cite{gps-microlocal} does not.

Of course, in principle one need not pass through the local system category; for more general Liouville sectors, it is possible to write down a comparison functor from the cofinally wrapped category to the quadratically wrapped category directly, but we do not do this here.
\end{remark}

\subsection{Conventions}
Here we make explicit our Fukaya-categorical conventions.

\begin{remark}
We follow the conventions of almost all the literature concerning wrapped categories, with one notable exception. The work~\cite{gps} reads boundary branes {\em clockwise} with respect to the boundary of a disk, while we use the more standard counterclockwise reading (Convention~\ref{convention. labels for mu^k}\eqref{item. ordering boundary arcs}).

This does result in some (very minor) differences in the algebra, which we enumerate here for the reader's convenience:
\enum
	\item In our work, a key filtered colimit will be of the form $CF^*(X,Y_0) \to CF^*(X,Y_{1}) \to \ldots$, induced by a sequence of morphisms $Y_0 \to Y_{1} \to \ldots$ (see Lemma~\ref{lemma. hom is wrapped cohomology}). In~\cite{gps}, however, the analogous colimit is of the form $CF^*(X_{0},Y) \to CF^*(X_{1},Y) \to \ldots$, induced by maps $\ldots \to X_{1} \to X_0$.
	\item When we define our directed $A_\infty$-category $\cO$, we will define $\hom_{\cO_j}( (i,L,w) , (i', L', w'))$ to be zero if $w > w'$; this is opposite the convention utilized in~\cite{gps}.
\enumd
\end{remark}

\begin{convention}[Strip coordinates and positive/negative punctures]\label{convention. strip coordinates}\label{convention. positive negative punctures}
We will often denote an element of the infinite strip $\RR \times [0,1]$ by $(\tau,t)$.
Every boundary-punctured holomorphic disk $S$ will be equipped with strip-like ends---i.e., holomorphic embeddings
	\eqnn
	\epsilon: [0,\infty) \times [0,1] \to S,
	\qquad
	\epsilon: (-\infty,0] \times [0,1] \to S,
	\eqnd
(called positive and negative, respectively)
that converge as $\tau \to \pm \infty$ to the boundary punctures of $S$. We will call these boundary punctures positive and negative accordingly.

In all our applications, there will exactly one negative boundary puncture of $S$, while all other boundary punctures will be positive.
\end{convention}

\begin{figure}[ht]
    \eqnn
			\xy
			\xyimport(8,8)(0,0){\includegraphics[width=2in]{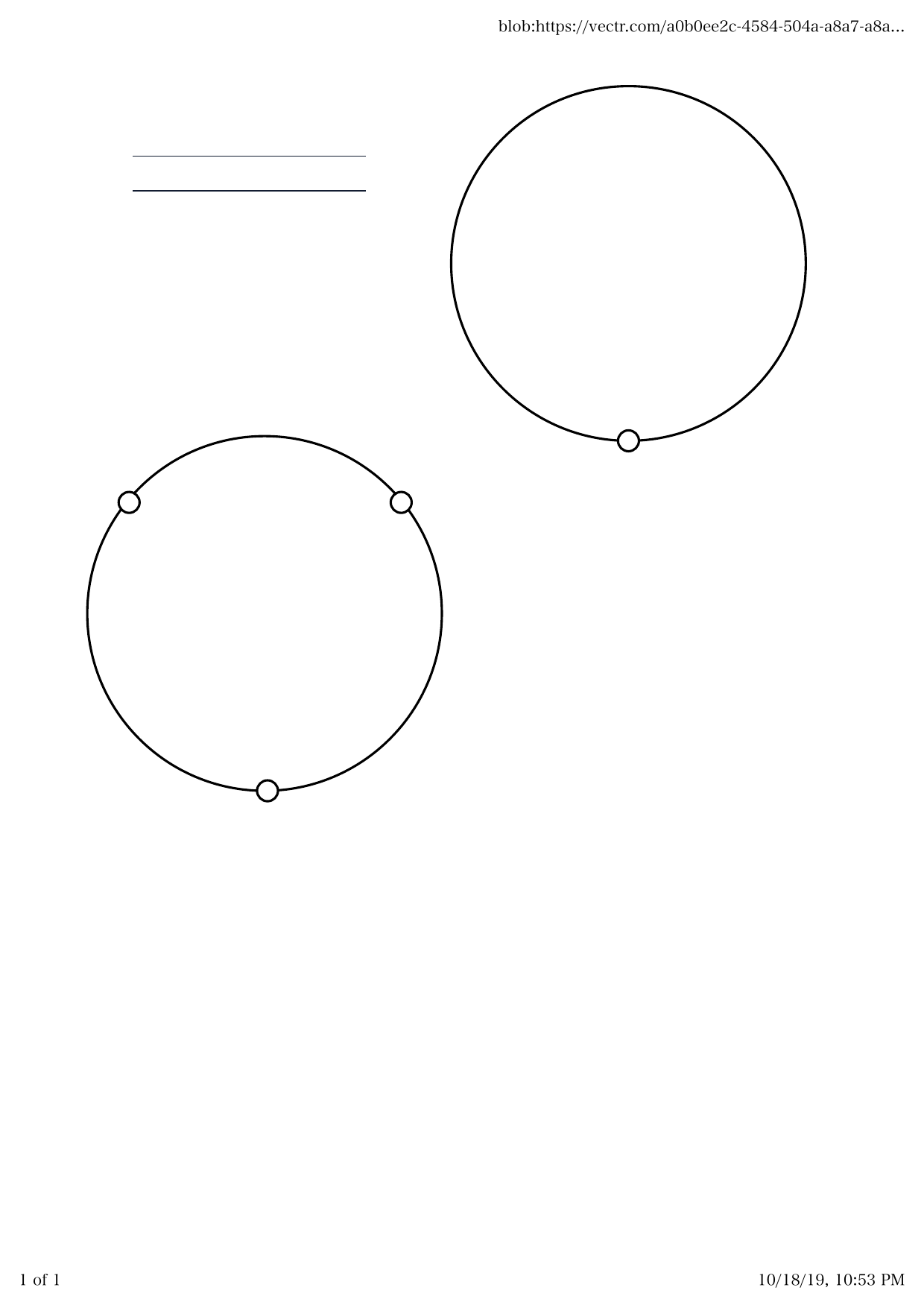}}
				,(4.1,1)*{-\infty}	
				,(1.65,6)*{+\infty}	
				,(6.1,6)*{+\infty}
				,(7.5,2)*{L_0}	
				,(5.8,7.9)*{L_1}	
				,(2.3,7.9)*{L_{k-1}}	
				,(0.5,2)*{L_k}
				,(4,8)*{\ldots}
			\endxy
    \eqnd
    \begin{figurelabel}\label{image.labeled disk}
    The labeling of a holomorphic disk contributing to a $\mu^k$ operation as in~\eqref{eqn. mu^k order}.
    \end{figurelabel}
\end{figure}

\begin{convention}[Morphisms and $\mu^k$]\label{convention. labels for mu^k}

The operations $\mu^1,\mu^2, \ldots$ of a Fukaya category will be defined as usual by counting (pseudo)holomorphic maps
	\eqnn
	u: S \to M
	\eqnd
satisfying certain boundary conditions.

\enum
\item (Generating chords.) The generators of the Floer cochain complex $CF^*(L_0,L_1)$ are in bijection with certain chords $[0,1] \to M$ from $L_0$ to $L_1$. In particular, in strip-like end coordinates, the limiting chords
	\eqnn
	\lim_{\tau \to \pm \infty} u\circ \epsilon(\tau,-)
	\eqnd
will be read as a morphism from the brane labeled at time $t=0 \in [0,1]$ to the brane labeled at time $t=1 \in [0,1]$.

Our chords will almost always be {\em constant} in our applications, but it will be healthy to conjure this convention as a rule of thumb.
\item (Inputs and outputs.) The {\em negative} puncture defines an output, while the positive punctures are inputs. More precisely, a strip-like end of a negative puncture will be decorated by boundary conditions giving rise to an output of the $\mu^k$ operations, and the positive strip-like ends will be decorated by boundary conditions encoding the inputs of the $\mu^k$ operations.
\item (Ordering the boundary arcs.) \label{item. ordering boundary arcs} The branes decorating the boundary arcs of $S$ will be read in an order given by the boundary orientation of $\del S$ induced from the standard holomorphic orientation of $S$---in particular, when we draw a picture of $S$, we read the brane labels counterclockwise.
\item (Orienting the boundary arcs.) Likewise, when we equip a boundary arc of $S$ with a moving boundary condition, the ``positivity'' of the moving boundary isotopy of a brane will be with respect to the boundary orientation of the arc. (See for example Definition~\ref{defn. non negative wrapping}.)
\item (Composition order.) Counting holomorphic disks with boundary conditions $L_0,\ldots,L_k$ (read counter-clockwise from the negative puncture) gives rise to the operation
	\eqn\label{eqn. mu^k order}
	\mu^k:
	CF(L_{k-1},L_k) \tensor \ldots \tensor CF(L_0,L_1)
	\to
	CF(L_0,L_k).
	\eqnd
\enumd
For a summary of these conventions, see Figure~\ref{image.labeled disk}.
\end{convention}

\begin{example}
Fix a brane $L \subset M$ and fix an isotopy from $L$ to $L'$. If the isotopy is non-negative (Definition~\ref{defn. non negative wrapping}), one  obtains an element of $CF(L,L')$ by counting holomorphic disks with a single boundary puncture, and with moving boundary condition dictated by the isotopy; this is detailed in~\cite{oh-tanaka-liouville-bundles}. See also Figure~\ref{figure. continuation disk}.
\end{example}

\begin{figure}[ht]
	\eqnn
    			\xy
    			\xyimport(8,8)(0,0){\includegraphics[width=2in]{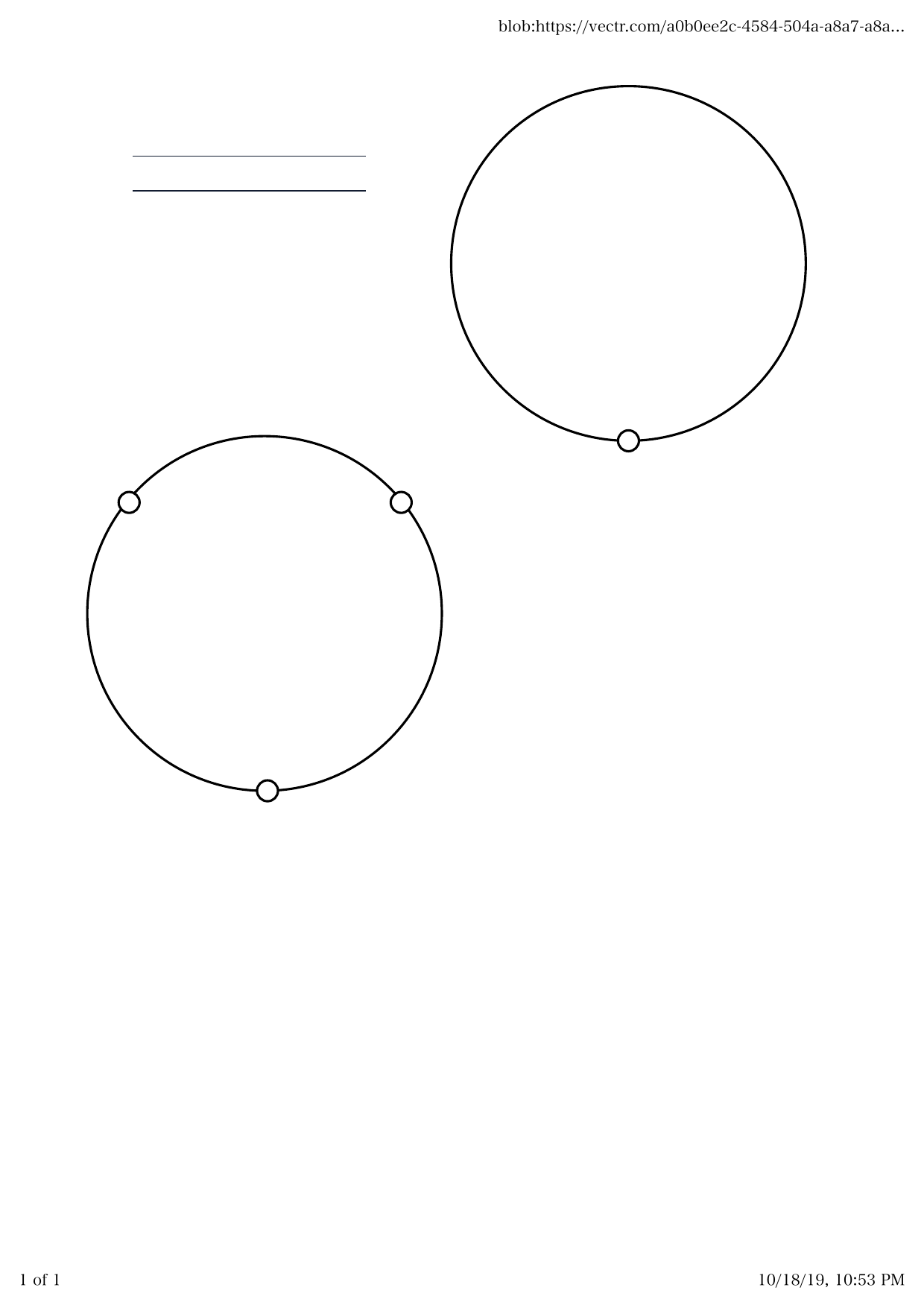}}
				,(4.1,1)*{-\infty}
				,(7.5,2)*{L}		
				,(0.4,2)*{L'}
				,(4,8.2)*{\cL}
    			\endxy
    \eqnd
    \begin{figurelabel}\label{figure. continuation disk}
    A holomorphic disk with one boundary puncture and with a moving boundary condition given by a non-negative isotopy $\cL$. The count of such disks gives rise to an element of $CF(L,L')$.
    \end{figurelabel}
\end{figure}

\begin{example}
Fix a brane $L \subset M$ and fix an isotopy $\cL$ from $L$ to $L'$. Fix also a brane $K \subset M$. If the isotopy is non-negative (Definition~\ref{defn. non negative wrapping}), one  obtains a chain map $CF(K,L) \to CF(K,L')$ by counting holomorphic strips with moving boundary condition at $t=1$. See Figure~\ref{figure. continuation strip}; we also refer to~\cite{oh-tanaka-liouville-bundles} for more details.
\end{example}

\begin{figure}[ht]
	\eqnn
    			\xy
    			\xyimport(8,8)(0,0){\includegraphics[width=3in]{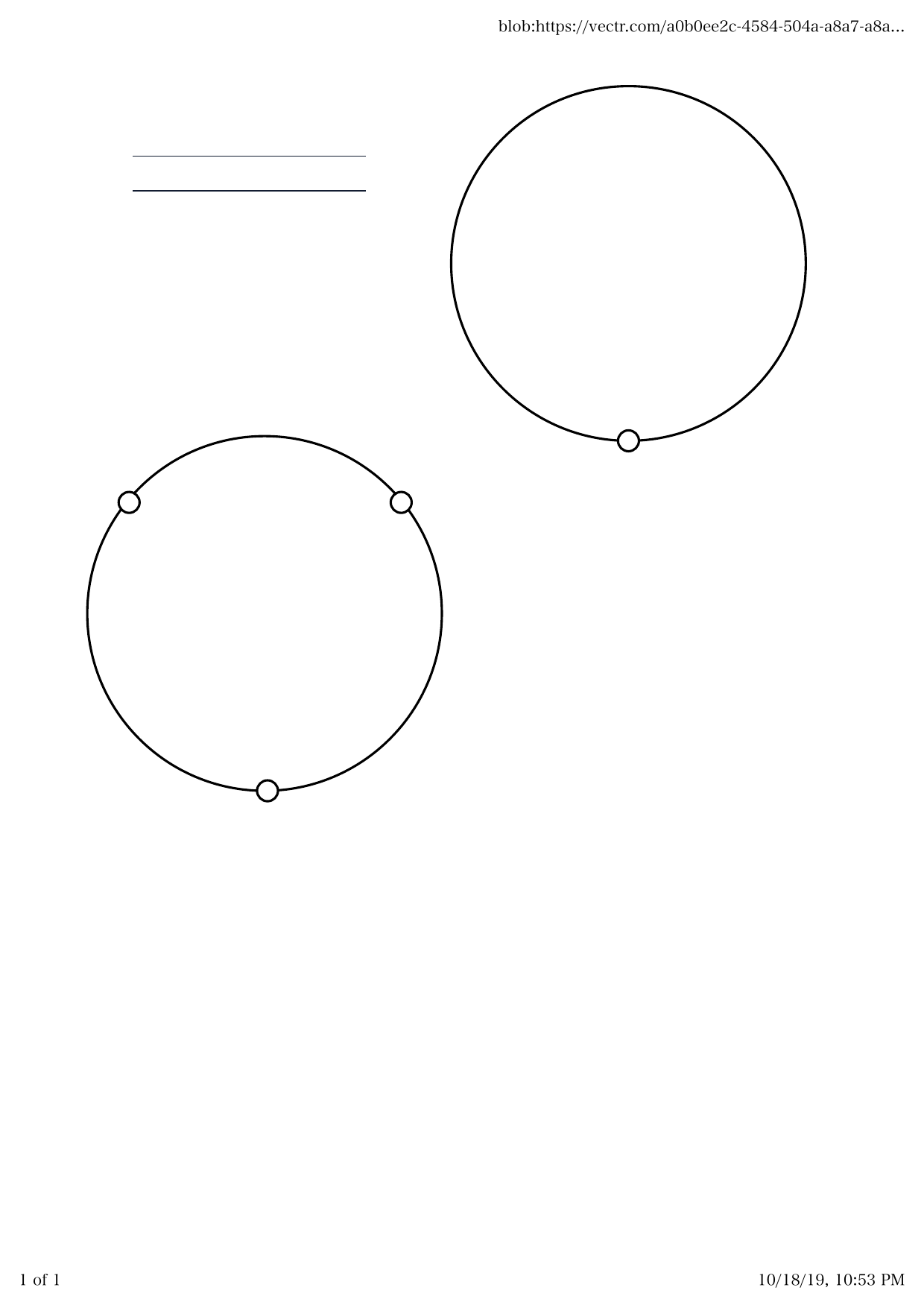}}
					,(-0.5,4.1)*{-\infty}
					,(8.4,4.1)*{+\infty}
					,(4,0.5)*{K}
					,(4,7.5)*{\cL}
					,(7.5,7.2)*{L}
					,(0.6,7.2)*{L'}
    			\endxy
    \eqnd
    \begin{figurelabel}\label{figure. continuation strip}
    A holomorphic strip with moving boundary condition $\cL$ at $t=1$ and fixed boundary condition $K$ at $t=0$, counts of which define a continuation map $CF(K,L) \to CF(K,L')$. Note that the moving boundary condition places $L$ near $\tau = \infty$, and places $L'$ near $\tau = -\infty$. (In particular, the isotopy evolves in the $-{\del/\del \tau}$ direction.)
    \end{figurelabel}
\end{figure}

\subsection{Acknowledgments}
We would like to thank
Gabriel Drummond-Cole,
Rune Haugseng,
Sheel Ganatra,
Sander Kupers,
John Pardon,
Yasha Savelyev, and
Zack Sylvan
for helpful conversations.

The first author is supported by the IBS project IBS-R003-D1.
The second author was supported by
IBS-CGP in Pohang, Korea and
the Isaac Newton Institute in Cambridge, England,
during the preparation of this work. This material is also based upon work supported by the National Science Foundation under Grant No. DMS-1440140 while the second author was in residence at the Mathematical Sciences Research Institute in Berkeley, California, during the Spring 2019 semester.

\clearpage
\section{Geometric background}
We assume the reader is familiar with the definition of Liouville sector. (Background is given in~\cite{gps} and in~\cite{oh-tanaka-liouville-bundles}.) Informally, a Liouville sector is an exact symplectic manifold $M$ with boundary, with two important features: (i) Outside a compact subset of $M$, the Liouville flow induces an isomorphism of $M$ with the symplectization of a compact, contact manifold with boundary, and (ii) The boundary $\del M$ satisfies a ``barrier condition'' guaranteeing that holomorphic curves with boundary bounded away from $\del M$ will have interiors bounded away from $\del M$.

\begin{notation}\label{notation. liouville notation}
\begin{enumerate}
\item \label{item. Liouville flow Z} We let $Z$ denote the Liouville vector field on $M$.
\item \label{item. Liouville form theta} We let $\theta$ denote the Liouville form on $M$.
\item \label{item. del infty M} We let $\del_\infty M$ denote the contact boundary of $M$. This is a contact manifold with boundary, and is well-defined up to co-oriented contact diffeomorphism.
\item\label{item. r, s, iota} We will often choose a proper exact embedding $\iota: \del_\infty M \times \RR_{\geq 0} \to M$ from the symplectization of the contact boundary. We will let  $r$ denote the $\RR_{\geq 0}$ coordinate, and set $r=e^s$,  i.e., $s = \log r$.
\end{enumerate}
\end{notation}

\begin{defn}[Liouville automorphisms]
\label{defn:liouville-isomorphism-M}
\label{defn:liouville-automorphism}
Let $M_i $,
$i = 0,\, 1$, be Liouville sectors. A {\em Liouville isomorphism} from $M_0$ to $M_1$ is a diffeomorphism $\phi: M_0 \to M_1$ satisfying
	\eqnn
	\phi^*\theta_1 = \theta_0 + df
	\eqnd
for some compactly supported smooth function $f: M_0 \to \RR$.
If $M_0 = M_1$, we call $\phi$ a Liouville {\em automorphism}.
\end{defn}

\begin{definition}\label{defn:Liouaut}
Let $M$ be a Liouville sector.
We let
	\eqnn
	\Liouauto(M)
	\eqnd
denote the topological group of Liouville automorphisms of $M$. 

We endow $\Liouauto(M)$ with the smallest topology satisfying the following properties:
\enum
\item The topology contains the weak Whitney topology inherited from $C^\infty(M,M)$, and
\item The map $\Liouauto(M) \to \aut(\del_\infty M)$ to the space of smooth contact diffeomorphisms of $\del_\infty M$ is continuous.
\enumd
\end{definition}

We also endow $\Liouauto(M)$ with a diffeology, so that we may speak of {\em smooth} maps into $\Liouauto(M)$. We impose the smallest diffeology satisfying the following:
\enum
\item It contains the subspace diffeology inherited from the diffeology of $C^\infty(M,M)$,
\item The map $\Liouauto(M) \to \aut(\del_\infty M)$ to the space of smooth contact diffeomorphisms of $\del_\infty M$ is smooth.
\enumd
We note that because $\del_\infty M$ is compact, we can unambiguously endow the contact automorphism space $\aut(\del_\infty M)$ with the usual group diffeology inherited from the diffeomorphism group of $\del_\infty M$.

\begin{remark}
Having endowed $\Liouauto(M)$ with a diffeological group structure, the Milnor classifying space $B\Liouauto(M)$ is naturally endowed with a diffeological space structure, by a construction of Christensen-Wu~\cite{christensen-wu-smooth-classifying-spaces}. 
\end{remark}

\begin{remark}
The details of the above topology and diffeology will not play an explicit role in this paper; all that will matter is that the results of Section~\ref{section. smooth approximation} are true thanks to the above topology and diffeology.
\end{remark}

\subsection{Liouville bundles}

\begin{defn}[Liouville bundle]\label{defn. liouville bundle}
Fix a Liouville sector $M$. A {\em Liouville bundle} with fiber $M$ is the choice of a smooth $M$-bundle $p: E \to B$, together with a smooth reduction of the structure group from $\diff(M)$ to $\Liouauto(M)$.
\end{defn}

Our main examples will be the universal Liouville bundle $E\Liouauto(M) \to B\Liouauto(M)$ (where the smoothness of reduction of structure group is understood in the diffeological sense) and those Liouville bundles pulled back along smooth maps $j: |\Delta^n_e| \to B\Liouauto(M)$.

\subsection{Branes and cofinal sequences of wrappings}\label{section. cofinal wrappings}

Let $M$ be a Liouville sector.

\begin{defn}
A subset $A \subset M$ is called {\em conical near infinity} if for some (and hence all) $\theta \in [\theta]_{\liou}$, and for some compact subset $K$, the complement $A \setminus K$ is closed under the positive Liouville flow.
\end{defn}

There are standard decorations one should put on Liouville sectors and their Lagrangians to obtain a $\ZZ$-graded, $\ZZ$-linear Fukaya category---for example, gradings and Pin structures. We assume these structures to be chosen throughout. To that end:

\begin{defn}\label{defn. branes}
Let $M$ be a Liouville sector.
A {\em brane} is a conical-near-infinity Lagrangian $L \subset M$ equipped with the relevant decorations.
\end{defn}

\begin{example}\label{example. compact brane}
So for example, if $L$ is a compact Lagrangian, then $L$ (when equipped with the appropriate decorations) is a compact brane. Note also that our branes have no boundary---by definition of Lagrangian (submanifold), $L$ is locally diffeomorphic to Euclidean space, and hence boundaryless.
\end{example}

\begin{defn}[Non-negative isotopy]\label{defn. non negative wrapping}
Now fix an exact Lagrangian isotopy $j: L \times [0,1]_t \to M$ through conical-near-infinity Lagrangians. (In particular, this induces an isotopy of Legendrians inside $\del_\infty M$.) We say this is a {\em non-negative wrapping}\footnote{In~\cite{gps}, this notion is called a positive wrapping (see Definition 3.20 of loc. cit.).}, or a {\em non-negative isotopy} (of the Lagrangians) if for some (and hence any) choice of Liouville form $\theta$ on $M$, we have the following outside a compact subset of $L$:
	\eqnn
	\theta (Dj(\del_t)) \geq 0.
	\eqnd
Put another way, the flow of $L$ in $\del_\infty M$ is non-negative with respect to the contact form induced by $\theta$.
\end{defn}

\begin{defn}[Cofinal sequence of non-negative wrappings]\label{defn. cofinal wrapping}
Now suppose one has chosen a sequence of conical-near-infinity branes
	\eqnn
	L^{(0)}, L^{(1)}, \ldots
	\eqnd
together with a non-negative wrapping from $L^{(i)}$ to $L^{(i+1)}$ for every $i$. We say this is a {\em cofinal sequence of non-negative wrappings} if the following holds: For any non-negative wrapping of $L^{(0)}$ to another conical-near-infinity brane $L'$, there exists
	\enum
	\item $w \in \ZZ$ and
	\item  a non-negative wrapping from $L'$ to $L^{(w)}$
	\enumd
such that the composite isotopy
	\eqnn
	L^{(0)} \to L' \to L^{(w)}
	\eqnd
is homotopic to the composite isotopy
	\eqnn
	L^{(0)} \to L^{(1)} \to \ldots \to L^{(w)}
	\eqnd
through non-negative isotopies.
\end{defn}

\begin{remark}
The non-negativity of the wrapping allows us to define so-called continuation elements
(see Section~\ref{section. continuation maps}); these yield in particular cohomology classes
	\eqnn
	c \in HF^*(L^{(i-1)},L^{(i)}).
	\eqnd
For any brane $K$ transversal to the $L^{(i)}$, we will hence be able to define a sequence of cohomology groups
	\eqnn
	\ldots \to HF^*(K,L^{(i-1)}) \xrightarrow{c_*} HF^*(K,L^{(i)}) \to \ldots
	\eqnd
by using the (cohomology-level) $\mu^2$ operation.
The directed limit (i.e., colimit) of this sequence will be isomorphic to the cohomology of the morphism complexes in our family wrapped categories (Lemma~\ref{lemma. hom is wrapped cohomology}).
\end{remark}

\subsection{Liouville automorphisms for decorations}\label{section.automorphisms with decorations}

We now describe automorphisms groups of Liouville manifolds that ``respect'' particular decorations that are extrinsic to the data of $[\theta]$, focusing on the examples of gradings and background  class $b \in H^2(M;\ZZ/2\ZZ)$. The methods here carry over to other decorations we anticipate will be of use in Floer theory, especially when one must trivialize more than $\det^2(TM)$.

\subsubsection{For gradings}
A grading on $M$ is the data of a trivialization $\det^2(TM) \cong \CC \times M$ as a complex line bundle. One may equivalently encode this data in the following homotopy-coherent diagram:
	\eqnn
	\xymatrix{
	&& EU(1) \simeq \ast \ar[d] \\
	M \ar[rr]_-{\det^2(TM)} \ar[urr] && BU(1) \simeq K(\ZZ,2).
	}
	\eqnd
Explicitly, the line bundle $\det^2(TM)$ is classified by a map to $BU(1) \simeq K(\ZZ,2)$, and a trivialization is given by the data of a null-homotopy of this map. We emphasize that when we say the above diagram is homotopy coherent, we are not merely asserting the existence of a null-homotopy; the diagram represents a choice of null-homotopy.

A Liouville automorphism equipped with data respecting this null-homotopy is not merely a Liouville automorphism $\phi: M \to M$; it is also the data of a higher homotopy coherent diagram:
	\eqnn
	\xymatrix{
		&&&\ast \ar[dd] \\
	M \ar[dr]_-{\phi} \ar[drrr] \ar[urrr]  \\
		& M \ar[rr] \ar[uurr] && BU(1)
	}
	\eqnd
The space of such data is encoded as a homotopy fiber product:

\begin{defn}[$\Liouautgr$]
We let $\Liouautgr(M)$ denote the space of Liouville automorphisms of $M$ respecting gradings. It is defined to be the homotopy pullback:
	\eqn\label{eqn. decorated Liouaut gr}
	\xymatrix{
	\Liouautgr(M) \ar[r] \ar[d]
		& \hom_{\Top_{/(\ast \to BU(1))}}(M,M) \ar[d] \\
	\Liouauto(M) \ar[r]
		& \hom_{\Top_{/BU(1)}}(M,M)
	}
	\eqnd
\end{defn}

\begin{remark}
Let us explain the maps in~\eqref{eqn. decorated Liouaut gr}.

$\Top_{/BU(1)}$ is the slice $\infty$-category of topological spaces equipped with a map to $BU(1)$. $\hom_{\Top_{/BU(1)}}$ is the morphism space in this $\infty$-category.
Likewise, $\hom_{\Top_{/(\ast \to BU(1))}}$ is the morphism space in the slice $\infty$-category of spaces equipped with a map to $BU(1)$ along with a null-homotopy of said map. aLet us also explain the bottom horizontal arrow. This is most efficiently encoded by first observing a homotopy equivalence
	\eqnn
	\Liouauto(M) \xleftarrow{\sim} \Liouaut^{\text{comp}} (M)
	\eqnd
from the space of those Liouville automorphisms of $M$ equipped with a homotopy between choices of $\omega$-compatible almost complex structures. The forgetful map is a homotopy equivalence because the space of almost-complex structures compatible with $\omega$ is contractible. On the other hand, $\Liouaut^{\text{comp}}(M)$ has a natural map to $\hom_{\Top_{/BU(1)}}(M,M)$ where the choice of almost-complex structure $J$ on $TM$ defines a map from $M$ to $BU(1)$ (classifying $\det^2(TM)$), and the homotopy between $J$ and $\phi^*J$ determines the homotopy between $M \to BU(1)$ and the composite $M \xra{\phi} M \to BU(1)$. In summary, the bottom horizontal arrow is determined by considering the composite
	\eqnn
	\Liouauto(M) \xleftarrow{\sim} \Liouaut^{\text{comp}} \to \hom_{\Top_{/BU(1)}}(M,M)	
	\eqnd
and choosing a homotopy inverse (together with a homotopy exhibiting the homotopy inverseness---this is a choice in a contractible space of choices) to the left-hand arrow.
\end{remark}

\begin{remark}\label{remark. Liouautgr is a group}
All the spaces in~\eqref{eqn. decorated Liouaut gr} are endomorphism spaces of particular $\infty$-categories. For example, $\Liouauto(M)$ is the endomorphism space of $M$, considered as an object of the topologically enriched category of Liouville sectors with morphisms being Liouville isomorphisms. $\Liouautgr(M)$ is the endomorphism space of a corresponding fiber product category; thus it is an $A_\infty$-space (and in fact, group-like). Because the forgetful map $\Liouautgr(M) \to \Liouauto(M)$ is now seen to arise from a functor, it gives rise to a map of $A_\infty$-algebras (in the $\infty$-category of spaces).
\end{remark}

We are interested in studying $\Liouauto(M)$, so it will be useful to know how far away the homotopy type of $\Liouautgr(M)$ is from that of $\Liouauto(M)$.

Because the forgetful map is a homomorphism (see Remark~\ref{remark. Liouautgr is a group}), it suffices to compute the fiber over the identity map $\phi = \id_M: M \to M$. So it suffices to compute the homotopy fiber of the map

$$
\hom_{\Top_{/(\ast \to BU(1))}}(M,M) \to \hom_{\Top_{/BU(1)}}(M,M)
$$

over the identity morphism. This homotopy fiber is straightforwardly seen to be homotopy equivalent to
	\eqnn
	\hom(M, \Omega BU(1)) \simeq \hom(M, U(1)).
	\eqnd
That is, we have a fiber sequence
	\eqnn
	\hom(M,U(1)) \to \Liouautgr(M) \to \Liouauto(M).
	\eqnd
The fiber is clearly 1-connected (i.e., has no homotopy groups in degrees $\geq 2$), so from the Serre long exact sequence of homotopy groups, we conclude:

\begin{prop}\label{prop. Liouaut gr homotopy groups}
The forgetful map $\Liouautgr(M) \to \Liouauto(M)$ induces an isomorphism on homotopy groups $\pi_k$ for $k \geq 3$. The induced map on $\pi_2$ is an injection.
\end{prop}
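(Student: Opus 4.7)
The proof proposal is essentially visible in the discussion immediately preceding the statement, so the plan is to make that outline precise.

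First, I would exploit the fact, noted in Remark~\ref{remark. Liouautgr is a group}, that the forgetful map $\Liouautgr(M) \to \Liouauto(M)$ is a map of group-like $A_\infty$-spaces, arising from a functor between $\infty$-categories whose endomorphism spaces compute the two automorphism groups. Consequently, it is (up to homotopy) a fibration of based loop spaces, and translation by group elements identifies the homotopy fiber over any point with the homotopy fiber over the identity. It therefore suffices to compute the homotopy fiber $F$ of this map over $\id_M$ and show $\pi_k(F) = 0$ for all $k \geq 2$; the proposition then follows from the long exact sequence of homotopy groups for the fibration $F \to \Liouautgr(M) \to \Liouauto(M)$.

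Next, I would compute $F$ by unwinding the homotopy pullback square~\eqref{eqn. decorated Liouaut gr}. Taking homotopy fibers of the two horizontal arrows over $\id_M$ and the image of $\id_M$ respectively, one obtains
\[
F \simeq \fib\bigl(\hom_{\Top_{/(\ast \to BU(1))}}(M,M) \to \hom_{\Top_{/BU(1)}}(M,M)\bigr).
\]
The target classifies maps $M \to M$ over $BU(1)$, while the source classifies such maps together with a compatibility of the two null-homotopies of $\det^2 TM$. The standard fact about slice $\infty$-categories over a pointed object is that such a compatibility is parametrized by a map $M \to \Omega BU(1)$. Thus $F \simeq \Map(M, \Omega BU(1)) \simeq \Map(M, U(1))$, as claimed in the discussion above.

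Finally, since $U(1) \simeq K(\ZZ,1)$, the mapping space $\Map(M, U(1))$ has homotopy groups
\[
\pi_0 \Map(M,U(1)) \cong H^1(M;\ZZ), \qquad \pi_1 \Map(M,U(1)) \cong H^0(M;\ZZ),
\]
and $\pi_k \Map(M, U(1)) = 0$ for all $k \geq 2$ (using, for instance, that $\Map(M, K(\ZZ,1))$ is itself an Eilenberg-MacLane-type space whose higher homotopy vanishes, or directly the adjunction $\pi_k \Map(M, K(\ZZ,1)) \cong H^{1-k}(M;\ZZ)$ for $k \leq 1$ and zero otherwise). Feeding this into the long exact sequence
\[
\cdots \to \pi_k(F) \to \pi_k(\Liouautgr(M)) \to \pi_k(\Liouauto(M)) \to \pi_{k-1}(F) \to \cdots
\]
gives an isomorphism $\pi_k(\Liouautgr(M)) \to \pi_k(\Liouauto(M))$ whenever both $\pi_k(F)$ and $\pi_{k-1}(F)$ vanish, i.e.\ whenever $k \geq 3$, and injectivity on $\pi_2$ because $\pi_2(F) = 0$.

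The only step with any subtlety is the identification $F \simeq \Map(M, U(1))$; the rest is a formal long-exact-sequence argument. The subtlety there is purely $\infty$-categorical bookkeeping about double slice categories, and I would handle it by modeling $\hom_{\Top_{/BU(1)}}(M,M)$ as $\Map(M,M) \times_{\Map(M,BU(1))} \{\det^2 TM\}$ and similarly for the pointed slice, at which point the fiber computation reduces to comparing two null-homotopies of the same map, which is classically $\Map(M, \Omega BU(1))$.
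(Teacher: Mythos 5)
Your proposal is correct and follows essentially the same route as the paper: use the group(-like) structure to reduce to the fiber over $\id_M$, identify that fiber via the pullback square~\eqref{eqn. decorated Liouaut gr} with $\hom(M,\Omega BU(1)) \simeq \hom(M,U(1))$, note its homotopy groups vanish in degrees $\geq 2$, and conclude by the long exact sequence of the fibration. The only quibble is a phrasing slip ("fibers of the two horizontal arrows" — you mean that the fiber of the left vertical map agrees with the fiber of the right vertical map in the homotopy pullback), but the displayed identification and the rest of the argument are exactly what the paper does.
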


\subsubsection{For a background class \texorpdfstring{$b$}{b}}
Likewise, fix an element $b \in H^2(M;\ZZ/2\ZZ)$. This is classified by a map $\tilde b: M \to K(\ZZ/2\ZZ, 2)$, well-defined up to homotopy. We fix $\tilde b$ once and for all.

\begin{defn}[$\Liouautb(M)$]
We let $\Liouautb(M)$ denote the space of Liouville automorphisms of $M$ equipped with data respecting $b$. It is defined via the following homotopy pullback square:
	\eqnn
	\xymatrix{
	\Liouautb(M) \ar[r] \ar[d]
		& \hom_{\Top_{/K(\ZZ/2\ZZ,2)}}( (M,\tilde b) , (M,\tilde b))\ar[d] \\
	\Liouauto(M) \ar[r]
		& \hom_{\Top}(M,M)
	}
	\eqnd
\end{defn}

Informally, an element of $\Liouautb(M)$ is the data of a Liouville automorphism $\phi: M \to M$, together with a homotopy from $\tilde b \circ \phi$ to $\tilde b$. The same observation as in Remark~\ref{remark. Liouautgr is a group} shows $\Liouautb(M)$ is a group-like $A_\infty$-algebra in spaces, and $\Liouautb(M) \to \Liouauto(M)$ lifts to an $A_\infty$ map.

Moreover, the fiber of this map can be computed as the fiber of the map
$$
\hom_{\Top_{/K(\ZZ/2\ZZ,2)}}((M,\tilde b),(M,\tilde b)) \to \hom_{\Top}(M,M)
$$
which is given by
	\eqnn
	\hom_{\Top}(M, \Omega K(\ZZ/2\ZZ,2)) \simeq \hom_{\Top}(M, \RR P^\infty).
	\eqnd
As before, we conclude:

\begin{prop}\label{prop. Liouaut b homotopy groups}
The forgetful map $\Liouautb(M) \to \Liouauto(M)$ induces an isomorphism on homotopy groups $\pi_k$ for $k \geq 3$. The induced map on $\pi_2$ is an injection.
\end{prop}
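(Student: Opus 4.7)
The plan is to mirror, line for line, the argument established for $\Liouautgr(M)$ in Proposition~\ref{prop. Liouaut gr homotopy groups}, adapting only the coefficient calculation at the end. First I would observe that $\Liouautb(M)$ is a group-like $A_\infty$-space and that the forgetful map $\Liouautb(M) \to \Liouauto(M)$ is an $A_\infty$-homomorphism. This follows from exactly the categorical reasoning of Remark~\ref{remark. Liouautgr is a group}: $\Liouautb(M)$ is the endomorphism space of $(M, \tilde b)$ inside the fiber product $\infty$-category whose objects are Liouville sectors decorated with a lift to $\Top_{/K(\ZZ/2\ZZ,2)}$, and the forgetful map is induced by a functor to the corresponding undecorated endomorphism space. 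Being a homomorphism of group-like $A_\infty$-spaces, its homotopy fibers over every component are equivalent, so it suffices to compute the fiber over $\id_M$.

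Second, I would compute this fiber. Since homotopy pullbacks preserve fibers, the fiber over $\id_M$ is identified with the fiber of the right vertical map in the defining square over the point $\id_M \in \hom_{\Top}(M,M)$. A point over $\id_M$ is precisely a self-homotopy of the classifying map $\tilde b : M \to K(\ZZ/2\ZZ,2)$, and the space of such self-homotopies is (as a based mapping space at the constant loop at $\tilde b$)
$$
\hom_{\Top}(M,\,\Omega K(\ZZ/2\ZZ,2)) \;\simeq\; \hom_{\Top}(M, K(\ZZ/2\ZZ,1)) \;=\; \hom_{\Top}(M, \RR P^\infty).
$$
By standard Eilenberg-MacLane identifications, its homotopy groups are
$$
\pi_k \hom_{\Top}(M, K(\ZZ/2\ZZ,1)) \;\cong\; H^{1-k}(M;\ZZ/2\ZZ),
$$
which vanish for all $k \geq 2$.

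Finally, I would feed this into the long exact sequence of homotopy groups of the fiber sequence
$$
\hom_{\Top}(M, \RR P^\infty) \;\to\; \Liouautb(M) \;\to\; \Liouauto(M).
$$
Vanishing of $\pi_k$ of the fiber for $k \geq 2$ forces the connecting maps at levels $k$ and $k-1$ to be zero for all $k \geq 3$, yielding isomorphism on $\pi_k$ for $k \geq 3$, and vanishing of $\pi_2$ of the fiber gives injectivity on $\pi_2$. No new analytic or symplectic input is needed; the only subtle point worth flagging is the identification of the fiber with a based loop mapping space (rather than an unbased one), which is precisely where the chosen base homotopy around $\tilde b$ enters, but this is routine once the slice $\infty$-category morphism space is unwound.
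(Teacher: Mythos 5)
Your proposal is correct and follows essentially the same route as the paper: compute the homotopy fiber of the forgetful map over $\id_M$ as $\hom_{\Top}(M,\Omega K(\ZZ/2\ZZ,2)) \simeq \hom_{\Top}(M,\RR P^\infty)$, note its homotopy groups vanish in degrees $\geq 2$, and conclude via the long exact sequence of the fiber sequence, exactly as the paper does by mirroring the argument for $\Liouautgr(M)$. The only difference is that you spell out the Eilenberg--MacLane coefficient computation and the based-loop identification, which the paper leaves implicit.
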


\subsubsection{For both}

\begin{defn}[$\Liouautgrb(M)$]
We define $\Liouautgrb(M)$ as the homotopy pullback
	\eqnn
	\xymatrix{
	\Liouautgrb(M) \ar[r] \ar[d] & \Liouautgr(M) \ar[d] \\
	\Liouautb(M) \ar[r] & \Liouauto(M).
	}
	\eqnd
\end{defn}

\begin{prop}\label{prop. Liouautgrb homotopy groups}
The forgetful map $\Liouautgrb(M) \to \Liouauto(M)$ induces an isomorphism on homotopy groups $\pi_k$ for $k \geq 3$. The induced map on $\pi_2$ is an injection.
\end{prop}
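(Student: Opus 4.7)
The plan is to reduce this proposition directly to the two preceding ones by exploiting the homotopy pullback square that defines $\Liouautgrb(M)$. First, I would observe that by the standard behavior of homotopy pullbacks under base change, the map $\Liouautgrb(M) \to \Liouautb(M)$ has the same homotopy fiber (over any basepoint) as the map $\Liouautgr(M) \to \Liouauto(M)$, namely $\hom(M,U(1))$. Dually, $\Liouautgrb(M) \to \Liouautgr(M)$ has fiber $\hom(M,\RR P^\infty)$.

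Next, the map in question factors as
\eqnn
\Liouautgrb(M) \longrightarrow \Liouautb(M) \longrightarrow \Liouauto(M),
\eqnd
and the strategy is to show that each arrow is an isomorphism on $\pi_k$ for $k \geq 3$ and an injection on $\pi_2$; the conclusion then follows because these properties are closed under composition. The right arrow already satisfies both conclusions by Proposition~\ref{prop. Liouaut b homotopy groups}. For the left arrow, I would apply the long exact sequence of homotopy groups associated to its fiber sequence, whose fiber $\hom(M,U(1))$ is $1$-truncated (its homotopy groups $\pi_i$ agree with $H^{1-i}(M;\ZZ)$, hence vanish for $i \geq 2$) by exactly the same computation appearing in the proof of Proposition~\ref{prop. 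Liouaut gr homotopy groups}. Vanishing of $\pi_k$ and $\pi_{k-1}$ of the fiber for all $k \geq 3$ yields the isomorphism statement, and vanishing of $\pi_2$ of the fiber yields injectivity on $\pi_2$.

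The only step that requires a bit of care is verifying that the homotopy pullback really does give a fiber sequence in the world of group-like $A_\infty$-spaces in the form I want, so that the long exact sequence of homotopy groups applies. This is not a serious obstacle: each of $\Liouauto(M)$, $\Liouautgr(M)$, $\Liouautb(M)$, and $\Liouautgrb(M)$ arises as an endomorphism space of an object in a suitable $\infty$-category, the forgetful maps are induced by functors (as in Remark~\ref{remark. Liouautgr is a group}), and the defining square is a homotopy pullback of such $A_\infty$-spaces; thus each forgetful map is a fibration up to homotopy and admits the expected long exact sequence on homotopy groups. With this set up, the conclusion is a two-line diagram chase in the long exact sequences. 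I do not expect any genuinely hard step; the substance of the proposition is entirely carried by the previous two computations of fibers.
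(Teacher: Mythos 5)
Your proposal is correct and is essentially the paper's argument: the paper's entire proof reads ``Combine Proposition~\ref{prop. Liouaut gr homotopy groups} and~\ref{prop. Liouaut b homotopy groups},'' and your factorization through $\Liouautb(M)$, with base change in the defining homotopy pullback identifying the fiber of $\Liouautgrb(M)\to\Liouautb(M)$ with the $1$-truncated space $\hom(M,U(1))$, is precisely the natural way to make that combination explicit. No gaps.
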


\begin{proof}
Combine Proposition~\ref{prop. Liouaut gr homotopy groups} and~\ref{prop. Liouaut b homotopy groups}.
\end{proof}

\clearpage
\section{Imported results}
Our paper depends on three results from other papers, so we recall them here for the reader's benefit.

\subsection{Localizations of \texorpdfstring{$A_\infty$}{A-infinity}-categories}\label{section. A oo localizations}
Let $\cA$ be an $A_\infty$-category, and let $H^0 \cA$ denote its 0th cohomology category. We fix a collection of morphisms $C$ in $H^0 \cA$. The {\em localization of $\cA$ along $C$}, denoted $\cA[C^{-1}]$, is a new $A_\infty$-category obtained by freely adjoining inverses to elements of $C$. So a functor out of $\cA[C^{-1}]$ is the same thing as a functor out of $\cA$ sending every element of $C$ to an equivalence (see Recollection~\ref{recollection. A oo facts}\eqref{item. localization universal property} below).

Localizations arise in this paper because they have been shown to be a useful and computable technique for defining wrapped Fukaya categories---this follows idea of Abouzaid-Seidel, as utilized in~\cite{gps}.

\begin{recollection}\label{recollection. A oo facts}
We recall some facts about $A_\infty$-categories and their localizations.
We refer the reader to~\cite{oh-tanaka-localizations} for more precise details.
\enum
\item\label{item. Aoo cat as an oocat}
The $\infty$-category $\Ainftycat$ (see Choice~\ref{choice. base ring R}) can be obtained from the usual category $\Ainftycatt$ of $A_\infty$-categories by formally inverting the functors that are quasi-equivalences. Moreover, functor spaces in $\Ainftycat$ may be computed using known model category techniques. See~\cite{tanaka-Aoo-units}.
\item\label{item. filtered localization hom}
When $\cA$ enjoys certain algebraic properties---properties that $\cO_j$ will satisfy---one can compute $\hom$ complexes in ${\cA[C^{-1}]}$ as a filtered colimit of a diagram built from morphism complexes of $\hom_{\cA}$. (See Section~5.5 of~\cite{oh-tanaka-localizations}.)
\item\label{item. localization universal property}
For any $A_\infty$-category $\cB$, the natural map of functor spaces
	\eqnn
	\hom_{\Ainftycat}(\cA[C^{-1}], \cB) \to \hom_{\Ainftycat}(\cA, \cB)
	\eqnd
is an inclusion of connected components---the essential image may be identified with those functors out of $\cA$ that send elements of $C$ to equivalences in $\cB$. (See Definition~5.2 and Theorem 5.9 of~\cite{oh-tanaka-localizations}.)
\item\label{item. Tw} There is a formal way to enlarge an $A_\infty$-category $\cA$ so that the enlargement contains all mapping cones. This is called the category of twisted complexes of $\cA$, and is denoted $\Tw \cA$.
\item\label{item. Aoo nerve} We will also use that any $A_\infty$-category defines an $\infty$-category by taking the {\em $A_\infty$-nerve}; this is a construction due to the second author~\cite{tanaka-pairing} and Faonte~\cite{faonte} independently, generalizing a dg-construction due to Lurie~\cite{higher-algebra}.
\enumd
\end{recollection}

\subsection{Homotopy types from categories of simplices}\label{section. smooth approximation}

\begin{notation}[Simplices]\label{notation. standard simplices}\label{notation. extended simplices}
Fix an integer $d \geq 0$. We let $|\Delta^d|$ denote the standard topological $d$-dimensional simplex, given by the subset of those $(t_0,\ldots,t_d) \in \RR^{d+1}$ satisfying $t_i \geq 0$ and $\sum t_i = 1$. More generally, given any linear order $A$, we let $|\Delta^A|$ denote the subset of $\RR^A$ given by those $(t_a)_{a \in A}$ satisfying $t_a \geq 0$ and $\sum_{a \in A} t_a =1$.
{}
We will sometimes refer to $|\Delta^A|$ as the geometric realization of $A$.
{}
The {\em extended} $d$-simplex is the space $|\Delta^d_e| \subset \RR^{d+1}$ of those $(t_0,\ldots,t_d) \in \RR^{d+1}$ satisfying $\sum t_i = 1$. It is abstractly homeomorphic to $\RR^d$>
\end{notation}

When dealing with infinite-dimensional entities such as $B\Liouaut$ or $\Liouaut$, one must specify what we mean by a smooth map $j$.  We utilize the framework of diffeological spaces, and in particular, we will study smooth maps from $|\Delta^n_e| \cong \RR^n$ to $\widehat{B\Liouaut(M)}$, where the ``hat'' notation denotes that we have put a diffeological space structure on $B\Liouauto(M)$. We refer the reader to~\cite{oh-tanaka-smooth-approximation} for details.

\begin{notation}\label{notation. simp C oo}
We define a category $\simp(\widehat{B\Liouauto(M)})$.
Objects are smooth simplices $j: |\Delta^n_e| \to \widehat{B\Liouauto(M)}$ (for $n \geq 0$) and morphisms are simplicial maps $|\Delta^n_e| \to |\Delta^{n'}_e|$ that are compatible with $j$ and $j'$.
\end{notation}

\begin{recollection}\label{recollection. smooth approximation}
Here are the facts we will need. All of these are detailed in~\cite{oh-tanaka-smooth-approximation}.
\enum
\item \label{item. localization of BLiou}
The localization of $\simp$ along all morphisms is an $\infty$-category homotopy equivalent to the classifying space $B\Liouauto(M)$. We remind the reader that there is a natural way to convert a topological space into an $\infty$-category, called the singular complex functor; the resulting $\infty$-category is denoted $\sing(B\Liouauto(M))$. The more precise result is that the map $\simp(\widehat{B\Liouauto(M)}) \to \sing(B\Liouauto(M))$ is a localization. 

The same result holds for the group $\diff(Q)$ as well, so that $\sing(B\diff(Q)) \simeq \BB \diff(Q)$ is a localization of $\simp(\widehat{B\diff(Q)})$.

\item \label{item. pullback}
$\widehat{B\Liouaut(M)}$ has a tautological smooth $M$-bundle over it, and one may pull back this bundle along smooth maps $j$. If the domain of $j$ is a smooth manifold in the usual sense, the pullback is a smooth bundle in the usual sense. (See for example~\cite{christensen-wu-smooth-classifying-spaces}.)

\item \label{item. localization of subdiv}
Finally, for any simplicial set $S$, on has the barycentric subdivision $\subdivision(S)$. One has a natural map $\subdivision(S) \to S$, and this map induces an equivalence of localizations---that is, of the $\infty$-groupoids obtained by inverting all morphisms in $\subdivision(S)$ and in $S$. In particular, if $S$ is already a Kan complex, the map $\subdivision(S) \to S$ exhibits $S$ as the Kan completion of $\subdivision(S)$.  
\enumd
\end{recollection}

\subsection{Continuation maps and Floer theory in Liouville bundles}\label{section. continuation maps}
Our main constructions rely on counting certain holomorphic curves in Liouville {\em bundles}. Let us summarize the foundations we laid for such counts.

\begin{recollection}\label{recollection. continuation maps}
All the results below can be found in~\cite{oh-tanaka-liouville-bundles}:
\enum
\item\label{item. gromov compactness for bundles}
\label{item. definition of non-wrapped Fukaya category} The usual Gromov compactness results hold when analyzing holomorphic curves in Liouville bundles. In fact, one can perform straightforward generalizations of the usual $C^0$ estimates in the Liouville setting (so that holomorphic curves remain in an a compact subset determined by the boundary conditions) and of the usual energy estimates (so that the usual finite energy assumptions may be applied to encode how nodal curves may develop). This was utilized in \cite{oh-tanaka-liouville-bundles} to construct a \emph{non-wrapped}
Fukaya category associated to a Liouville bundle $E \to B$; the main theorem is that this non-wrapped Fukaya category is indeed an $A_\infty$-category. We apply this in the present work by associating an $A_\infty$-category $\cO_j$ to each simplex $j: |\Delta^n| \to B(\Aut(M))$; we will recall more in Section~\ref{subsection. unwrapped}.
\item\label{item. continuation maps respect composition} As mentioned after Warning~\ref{warning. smooth simplices}, we utilize two types of continuation maps in the present work: those defined by counting holomorphic strips, and those defined by counting holomorphic disks with one boundary puncture.
The count of continuation {\em strips} is homotopic to the count of continuation {\em once-punctured disks} when the latter is post-composed by the $\mu_2$ operation.
\enumd
\end{recollection}

\clearpage
\section{The wrapped Fukaya categories}

We fix a Liouville bundle $E \to B$.
The first agenda of this section is to define the wrapped Fukaya category $\cW_j$ associated to a smooth simplex $j: |\Delta_e^n| \to B$ (Definition~\ref{defn. cW_j}).
We do this by localizing the non-wrapped Fukaya categories $\cO_j$ along non-negative continuation maps. (The intuition is that in any wrapped Fukaya category, a non-negative finite wrapping should induce an equivalence.) We denote the resulting $A_\infty$-category by $\cW_j$. This is a bundle version of an idea originally due to Abouzaid and Seidel; see also~\cite{gps}, where we learned of the idea, and whose notation we largely follow.

We then prove that the assignment $j \mapsto \cW_j$ is locally constant (i.e., forms a local system of wrapped Fukaya categories), meaning any inclusion of simplices $j \subset j'$ induces an equivalence of $A_\infty$-categories (Proposition~\ref{prop. local triviality}). This allows us to prove Theorem~\ref{theorem. main theorem}.

\subsection{Non-wrapped Fukaya categories in Liouville bundles}
\label{subsection. unwrapped}

In this subsection, we briefly recall the definition of $\cO_j$ given in \cite{oh-tanaka-liouville-bundles}.
While we will apply our constructions to the universal example of the universal bundle over $B = \widehat{B \Liouauto(M)}$, we state our constructions with the generality of an arbitrary base $B$.

\begin{choice}[$\cL_b$ and cofinal wrapping sequences.]\label{choice. cofinal wrappings}
For every point $b \in B$, we choose a countable collection $\cL_b$ of eventually conical branes in the fiber $E_b$ such that the following holds: For every eventually conical brane $L' \subset E_b$, there exists an element $L \in \cL_b$ such that $L$ admits a non-negative wrapping\footnote{
Definition~\ref{defn. non negative wrapping}.
}
 to $L'$, or $L'$ admits a non-negative wrapping to $L$.

Then, for every $L \in \cL_b$, we choose a cofinal wrapping sequence\footnote{
Definition~\ref{defn. cofinal wrapping}
}
	\eqnn
    L = L^{(0)} \to L^{(1)} \to \ldots.
    \eqnd
Finally, because this totality of choices is a countable collection, we may assume that if $L^{(w)}$ and ${L'}^{(w')}$ are in the same fiber, then they are either transverse, or $L=L'$ and $w=w'$.
\end{choice}

\begin{notation}[The wrapping index $w$]
In the cofinal wrapping sequence of Choice~\ref{choice. cofinal wrappings}, we will often denote the superscript index by $(w)$. The $w$ stands for ``wrapping index.''
\end{notation}

Fix a smooth map $j: |\Delta_e^n| \to B$. (Note $|\Delta_e^n|$ is an extended simplex as in Definition~\ref{notation. extended simplices}.)

Then we consider the non-wrapped Fukaya category $\cO_j$ associated to $j$ whose definition given
inductively on $n$---we first define $\cO_j$ for all $j$ having domain of dimension $\leq n$, then for those $j$ with domain having dimension $n+1$. (See \cite[Section 6]{oh-tanaka-liouville-bundles} for the details.)

\begin{notation}[$b_i$ and $\cL_{b_i}$]
For every $0 \leq i \leq n$, let $b_i$ be the image of the $i$th vertex of $|\Delta_e^n|$ under $j$. Recall we have chosen a countable collection of branes and a cofinal wrapping sequence of these (Choice~\ref{choice. cofinal wrappings}). In particular, $\cL_{b_i}$ denote the countable collection of branes associated to $b_i$.
\end{notation}

\begin{defn}[Objects]\label{defn. obj of O}
An object of $\cO_j$ is a triplet $(i, L, w)$ where
$$
i \in \{0, \ldots, n\}, \quad L \in \cL_{b_i}, \quad w \in \ZZ_{\geq 0}.
$$
\end{defn}

\begin{notation}[$L^{(w)}$]
One can informally think of the triplet $(i,L,w)$ as the brane $L^{(w)}$ inside $E_{b_i}$. For this reason, we will soon denote an object simply by $L^{(w)}$, omitting $i$. (See for example Definition~\ref{defn. cO_j}.)
\end{notation}

\begin{notation}[Parallel transport $\Pi$]\label{notation. parallel transport}
Fix a pair of objects $(L_0,i_0,w_0)$ and $(L_1,i_1,w_1)$. The integers $i_0$ and $i_1$ define a simplicial map $\beta: |\Delta^1| \to |\Delta^n| \subset |\Delta_e^n|$ sending the initial vertex of $|\Delta^1|$ to $i_0$ and the final vertex to $i_1$.

We let $h = j \circ \beta$. One also has an underlying ordered pair of branes $\vec L = (L_0, L_1)$.
By choosing Floer data $\Theta_{\vec L}$ (see \cite[Section 5.4]{oh-tanaka-liouville-bundles}), we have a parallel transport taking the initial fiber of $h^*E$ (i.e., the fiber above the initial vertex of $|\Delta^1|$) to the final fiber of $h^*E$.

We let $\Pi_{i_0, i_1}$ denote this parallel transport.
\end{notation}

\begin{defn}[Morphisms]\label{defn. hom groups}
For given two objects $(i_0, L_0, w_0)$ and $(i_1, L_1, w_1)$ of $\cO_j$,
we define the graded abelian group
	\eqnn
	\hom_{\cO_j}( (i_0, L_0, w_0) , (i_1, L_1, w_1))
	\eqnd
to be
	\eqnn
    \begin{cases}
    \bigoplus_{x \in \Pi_{i_0, i_1}(L_0^{(w_0)}) \cap L_1^{(w_1)}} \mathfrak{o}_{x}[-|x|]. & w_0 < w_1 \\
    R & (i_0, L_0, w_0) = (i_1, L_1, w_1) \\
    0 & \text{otherwise}.
    \end{cases}
    \eqnd
Here, $\Pi_{i_0,i_1}$ is the parallel transport map (Notation~\ref{notation. parallel transport}).
We also note that $\mathfrak{o}_{x}$ is the orientation $R$-module of rank one associated to the intersection point $x$, and $|x|$ is the Maslov index associated to the brane data.
\end{defn}

\begin{remark}
We have rendered $\cO_j$ to be {\em directed} in the $w$ index; this means that the morphism complex from $(i, L, w)$ to $(i', L', w')$ will be zero unless $w < w'$, or $(i, L, w) = (i',L',w')$ (in which case the morphism complex is just the ground ring $R$ in degree 0).
\end{remark}

\begin{remark}
The set $x \in \Pi_{i_0, i_1}(L_0) \cap L_1$ is also in bijection with the set of flat sections of $h^*E \to |\Delta^1|$ (with respect to $\Theta_{(L_0,L_1)}$) beginning at $L_0^{(w_0)}$ and ending at $L_1^{(w_1)}$. (See Notation~\ref{notation. parallel transport}.)
\end{remark}

Higher operations $\mu^d$ for $d \geq 1$ are then defined as follows.

\begin{defn}[$\mu^d$ for the non-wrapped categories]\label{defn. mu^d counting}
As usual, fix a smooth map $j: |\Delta_e^n| \to B$.
For $d \geq 1$, fix a collection
	\eqnn
	\vec{L} = \{(i_0, L_0, w_0) , \ldots, (i_d, L_d, w_d)\}.
	\eqnd
We may assume $w_0 < \ldots < w_d$ by Definition~\ref{defn. hom groups} (otherwise $\mu^d$ is forced to be 0) .

Note that the integers $i_0, \ldots, i_d$ induce a simplicial map $\beta: |\Delta^d| \to |\Delta^n| \subset |\Delta_e^n|$ by sending the $a$th vertex of $|\Delta^d|$ to the $i_a$th vertex of $|\Delta^n|$. (This assignment, of course, need not be order-preserving.)
For a given collection of intersection points
	\eqnn
    x_a \in \Pi_{i_{a-1}, i_{a}}\left(L_{a-1}^{(w_{a-1})}\right) \cap L_{a}^{(w_{a})}
    \qquad
    (a = 1, \ldots, d)
	\eqnd
and
	\eqnn
    x_0 \in \Pi_{i_0, i_d}\left(L_0^{(w_{0})}\right) \cap L_d^{(w_{d})},
    \eqnd
we define
	\eqnn
	\cM(x_d, \ldots, x_1; x_0)
	\eqnd
	to be the moduli space of holomorphic sections $u$
	\eqnn
		\xymatrix{
			& & & & E \ar[d] \\
		\cS_r \ar[r]_{\subset} \ar[urrrr]^{u} & \overline{\cS}^{\circ}_{d+1} \ar[r]_{\nu_\beta} & |\Delta^{d}| \ar[r]_-{\beta} & |\Delta^n| \subset |\Delta_e^n| \ar[r]_j & B
		}
    \eqnd
satisfying the obvious boundary conditions.

\begin{remark}\label{remark. nu beta}
The map $\nu_{\beta}: \overline{\cS}_{d+1}^\circ \to |\Delta^n|$ is defined in \cite[Subsection 5.3]{oh-tanaka-liouville-bundles}. These are degree one maps from the $(d+1)$st universal family of disks to the $d$-simplex and their existence is due originally to Savelyev~\cite{savelyev}.
\end{remark}

As usual, the brane structures on the $L^{(w)}$ allow us to orient these moduli spaces, and predict their dimension based on the degrees of the $x_a$. We define
	\eqnn
    \mu^d(x_d, \ldots, x_1)
    =
    \sum_{x_0} \#{\cM(x_d,\ldots,x_1;x_0)} x_0
    \eqnd
where the number $\# \cM$ is counted with sign. In case our branes are not $\ZZ$-graded, we as usual we declare the $x_0$ coefficient of $\mu^d$ to be zero when there is no zero-dimensional component of $\cM(x_d,\ldots,x_a;x_1)$.
\end{defn}

\begin{remark}\label{remark. depends only on j o beta}
Given an ordered $(d+1)$-tuple of objects in $\cO_j$ with underlying branes $\vec L$, consider the induced map $\beta: |\Delta^d| \to |\Delta^n| \subset |\Delta_e^n|$. The $A_\infty$-operations are defined by moduli spaces depending only on $h = j \circ \beta$.
\end{remark}

\begin{defn}\label{defn. cO_j}
Fix $j: |\Delta_e^n| \to B$. We let $\cO_j$ denote the $A_\infty$-category where
\begin{itemize}
	\item an object is as in Definition~\ref{defn. obj of O},
    \item $\hom_{\cO_j}$ is as in Definition~\ref{defn. hom groups}, and
    \item The operations $\mu^d$ are as in Definition~\ref{defn. mu^d counting}.
\end{itemize}
\end{defn}

\begin{remark}
When a $\mu^d$ operation involves an element of an endomorphism hom-complex $\hom_{\cO_j}(L,L) = R$, the operation is fully determined by demanding that the generator of the base ring $R$ (Choice~\ref{choice. base ring R}) be a strict unit.
\end{remark}

We then have the following result from \cite{oh-tanaka-liouville-bundles}:
\begin{theorem}[\cite{oh-tanaka-liouville-bundles}]
$\cO_j$ is an $A_\infty$-category.
\end{theorem}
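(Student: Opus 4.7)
The plan is to verify the $A_\infty$-relations
\[
\sum_{m,n} (\pm)\, \mu^{d-m+1}\bigl(x_d,\ldots,x_{n+m+1},\mu^m(x_{n+m},\ldots,x_{n+1}),x_n,\ldots,x_1\bigr) = 0
\]
by the standard strategy of identifying the boundary of one-dimensional components of the moduli spaces $\cM(x_d,\ldots,x_1;x_0)$ from Definition~\ref{defn. mu^d counting}. Because $\cO_j$ is directed in the $w$ index and the endomorphism complexes are strictly $R$ in degree $0$, the unit axioms and the degenerate cases involving $w_a = w_{a+1}$ follow formally once the non-degenerate $A_\infty$-relations are established; one need only arrange that the chosen Floer data treats the strict-unit generator correctly (standard).

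First I would address transversality and compactness for $\cM(x_d,\ldots,x_1;x_0)$. Generic choice of the Floer data $\Theta_{\vec L}$ (cf. Notation~\ref{notation. parallel transport}) makes the linearized Cauchy--Riemann operator surjective, so the moduli spaces are smooth manifolds of the expected dimension. For compactness, one invokes Recollection~\ref{recollection. continuation maps}\eqref{item. gromov compactness for bundles}: the exactness of the fibers rules out sphere and disk bubbling; the barrier condition at $\partial M$ keeps interiors away from the sector boundary; and the Liouville $C^0$-estimate (applied fiberwise, using that parallel transport preserves the Liouville structure up to exact perturbation) confines sections to a compact subset of $E$ determined by the boundary branes $L_a^{(w_a)}$. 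Thus the $1$-dimensional components admit Gromov-type compactifications whose ideal boundary consists entirely of broken configurations at the boundary punctures of $\overline{\cS}_{d+1}^\circ$.

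The crucial step is then to match these broken configurations with the terms of the $A_\infty$-relation, and this is where the operadic map $\nu_\beta: \overline{\cS}_{d+1}^\circ \to |\Delta^d|$ from Remark~\ref{remark. nu beta} (after Savelyev) does its work. The codimension-one boundary strata of $\overline{\cS}_{d+1}^\circ$ correspond to nodal disks $\Sigma' \vee \Sigma''$ obtained by splitting at a boundary puncture, i.e.\ to a pair of arities $(d-m+1, m)$ and a choice of input block $\{n+1,\ldots,n+m\}$. The property that must be used is that on each such stratum, $\nu_\beta$ factors (up to the natural reparametrization) as the product $\nu_{\beta'} \times \nu_{\beta''}$, where $\beta'$ and $\beta''$ are the simplicial maps obtained by restricting the vertex assignment $\{0,\ldots,d\}\to\{0,\ldots,n\}$ of $\beta$ to the outer and inner blocks respectively. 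Combined with the compatibility of parallel transport along composed edges (which follows from the way $\Theta_{\vec L}$ is defined), each broken configuration is in bijection with a pair of holomorphic sections contributing to one $\mu^{d-m+1}(\cdots,\mu^m(\cdots),\cdots)$ term. A signed count of the boundary of a compact oriented $1$-manifold is zero, which yields the desired relation.

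The main obstacle is the third paragraph above: verifying rigorously the operadic compatibility of $\nu_\beta$ with boundary strata, together with the matching of the Floer data and parallel transport under edge composition, so that the boundary bijection genuinely preserves signs. This is precisely the technical heart of \cite{oh-tanaka-liouville-bundles}, and I would import it wholesale; the present proof is then an application of that machinery to the specific categories $\cO_j$. The orientation/sign analysis, while notationally heavy, is parallel to the classical case once one checks that parallel transport along $\Pi_{i_0,i_1}$ acts by orientation-preserving maps on the relevant orientation lines $\mathfrak{o}_x$, which follows from the brane decorations being transported consistently by the Liouville automorphism structure group.
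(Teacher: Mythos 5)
Your proposal is correct and follows essentially the same route as the source: the present paper does not prove this statement but imports it from~\cite{oh-tanaka-liouville-bundles}, whose argument is exactly the boundary analysis of one-dimensional components of the section moduli spaces, resting on the fiberwise $C^0$ and energy estimates (Recollection~\ref{recollection. continuation maps}) and on the operadic compatibility of Savelyev's maps $\nu_\beta$ with the boundary strata of $\overline{\cS}_{d+1}^\circ$ (Remark~\ref{remark. nu beta}). Since you correctly identify these as the technical heart and import them from the same reference, your outline matches the intended proof.
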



\begin{construction}[Functors $\cO_{j} \to \cO_{j'}$]\label{construction. non wrapped functor}
Now suppose we have a map $\alpha: [n] \to [n']$, a smooth map $j': |\Delta_e^{n'}| \to B$, and consider the induced diagram
	\eqnn
    \xymatrix{
    |\Delta_e^n| \ar[rr]^\alpha \ar[dr]^j
    	&& |\Delta_e^{n'}| \ar[dl]^{j'} \\
        & B.
    }
    \eqnd
(Equivalently, consider a morphism in the category $\simp(B)$.) Then there are induced assignments as follows:
\enum
\item An object $(i, L, w)$ of $\cO_j$ is sent to the object $(\alpha(i), L, w)$ inside $\cO_{j'}$. Here, we are identifying the fiber of $(j')^*E$ above $\alpha(i)$ with the fiber of $j^*E$ above $i$ in the obvious way.
\item If $\alpha$ is an injection, then compatibility of the Floer data
over the above diagram (see $(\Theta2)$ of \cite[Section 5.4]{oh-tanaka-liouville-bundles}
 gives an isomorphism of graded abelian groups
	\eqnn
    \hom_{\cO_j}( (i_0,L_0,w_0) , (i_1, L_1, w_1)) \to \hom_{\cO_{j'}}( (\alpha(i_0), L_0, w_0) , (\alpha(i_1), L_1, w_1)).
    \eqnd
Otherwise, factoring $\alpha$ as a surjection followed by an injection, we have the same isomorphism by identifying the fiber above a vertex $i' \in |\Delta^{n'}|$ with a fiber above any point in the preimage $\alpha^{-1}(i')$.
\enumd
We call this assignment $\alpha_*$.
\end{construction}

\begin{prop}\label{prop. O functor}
The assignment $\alpha_*$ from Construction~\ref{construction. non wrapped functor} is an $A_\infty$-functor $\cO_{j'} \to \cO_{j}$.

In fact, the assignments $j \mapsto \cO_j$ and $\alpha \mapsto \alpha_*$ define a functor
	\eqnn
     \simp(B\Liouaut(M)) \to \Ainftycatt
    \eqnd
to the (strict) category of $R$-linear $A_\infty$-categories and $R$-linear functors between them, with the usual (strictly associative) composition  of functors.
\end{prop}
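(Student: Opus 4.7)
The plan is to show that $\alpha_*$ is a \emph{strict} $A_\infty$-functor, meaning its only nontrivial Taylor component is the map $\alpha_*^1$ on hom-complexes described in Construction~\ref{construction. non wrapped functor}, and that this component intertwines $\mu^d$ on the nose for every $d \geq 1$. By Remark~\ref{remark. depends only on j o beta}, the operation $\mu^d$ on a $(d+1)$-tuple with index data $i_0,\ldots,i_d$ is determined by $j \circ \beta$, where $\beta:|\Delta^d|\to|\Delta_e^n|$ is the simplicial map reading off the $i_a$. After applying $\alpha_*$ the relevant simplicial map is $\beta':|\Delta^d|\to|\Delta_e^{n'}|$ assembled from $\alpha(i_0),\ldots,\alpha(i_d)$, and by construction $\beta' = \alpha\circ\beta$. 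Since the diagram defining $\alpha$ commutes, $j'\circ\beta' = j\circ\beta$, so both $\cO_j$ and $\cO_{j'}$ count sections of the \emph{same} pulled-back bundle $\beta^*j^*E$.

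First handle injective $\alpha$. Here $\beta$ and $\beta'$ are both nondegenerate, and boundary conditions on the two sides are literally identified under $\alpha_*$ (objects $(i,L,w)$ and $(\alpha(i),L,w)$ carry the same brane in the identified fiber). The compatibility axiom $(\Theta 2)$ for Floer data under simplicial inclusions (\cite{oh-tanaka-liouville-bundles}, Section 5.4) forces the Floer data pulled back via $j$ and via $j'$ to agree over the image of $\beta$, so the moduli spaces $\cM(x_d,\ldots,x_1;x_0)$ coincide with their signed orientations. Thus $\alpha_*$ intertwines $\mu^d$ strictly. For general $\alpha$, factor it as a surjection followed by an injection. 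For the surjective piece, $\beta'$ factors through a lower-dimensional face of $|\Delta^{n'}|$, and the degeneracy clause of $(\Theta 2)$ says the Floer data over a degenerate simplex is the pullback of the data on that face; the identification of moduli spaces proceeds as before, giving strict intertwinement. Units are preserved because the endomorphism complex $\hom_{\cO_j}((i,L,w),(i,L,w))=R$ is mapped to $\hom_{\cO_{j'}}((\alpha(i),L,w),(\alpha(i),L,w))=R$ by the identity on $R$, and units are specified to be the generator $1\in R$ on both sides.

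For the second assertion, given composable $\alpha_1:[n]\to[n']$ and $\alpha_2:[n']\to[n'']$ with compatible maps $j,j',j''$, both $(\alpha_2\circ\alpha_1)_*$ and $(\alpha_2)_*\circ(\alpha_1)_*$ send $(i,L,w)$ to $(\alpha_2\alpha_1(i),L,w)$; on morphisms both identify the source and target hom-complexes with the Floer complex determined by Floer data pulled back along the common composite $j''\circ(\alpha_2\circ\alpha_1)\circ\beta$. Because these identifications are built out of parallel transports along simplicial paths that themselves compose strictly (Notation~\ref{notation. parallel transport}), the two composites of functors agree on the nose, giving the claimed functor $\simp(B\Liouaut(M))\to\Ainftycatt$.

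The main obstacle is really not internal to this proposition but is packed into the foundational input: the entire argument rests on having chosen Floer data $\Theta_{\vec L}$ that is \emph{strictly} compatible with every simplicial restriction, not merely compatible up to homotopy. Were the compatibility only homotopical, one would be forced to construct nonzero higher Taylor components $\alpha_*^k$ and to verify the $A_\infty$-functor relations, and strict associativity of composition would also fail. The inductive construction of Floer data over simplices in \cite{oh-tanaka-liouville-bundles} was set up precisely to deliver strict pullback compatibility $(\Theta 2)$, after which the present statement is a bookkeeping consequence of Remark~\ref{remark. depends only on j o beta} together with the strict compositionality of parallel transport.
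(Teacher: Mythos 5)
Your proposal is correct and follows essentially the same route as the paper: invoke Remark~\ref{remark. depends only on j o beta} (plus the strict $(\Theta 2)$ compatibility of Floer data) to see that $\alpha_*$ intertwines each $\mu^d$ on the nose, declare all higher Taylor components zero so the functor is strict, and check strict compositionality directly from Construction~\ref{construction. non wrapped functor}. The paper's proof is simply a terser version of the same argument, with the injective/surjective factorization and the identification of fibers already built into the construction.
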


\begin{proof}
Remark~\ref{remark. depends only on j o beta} shows that the $\mu^d$ operations are respected on the nose, as one is counting sections over two bundles over $|\Delta^d|$ that admit an isomorphism respecting all boundary conditions and choice of $\cJ$ and $\cA$. This shows $\alpha_*$ is a functor of $A_\infty$-categories, simply by defining the functor to have higher homotopies equaling zero.

Now suppose we have a commutative diagram of smooth maps
	\eqnn
    \xymatrix{
    |\Delta_e^n| \ar[r]^{\alpha} \ar[dr]^{j} & |\Delta_e^{n'}| \ar[r]^{\alpha'} \ar[d]^{j'} & |\Delta_e^{n''}| \ar[dl]^{j''} \\
    & B & .
    }
    \eqnd
We must show that $(\alpha' \circ \alpha)_* = \alpha'_* \circ \alpha_*$. This is straightforward from Construction~\ref{construction. non wrapped functor}.
\end{proof}

\begin{remark}
It follows immediately from the isomorphism observed in Construction~\ref{construction. non wrapped functor} itself that when $\alpha: [n] \to [n']$ is a surjection, $\alpha_*$ is an equivalence of $A_\infty$-categories. (i.e., $\alpha_*$ is essentially surjective and induces a quasi-isomorphism of hom-complexes).
\end{remark}

\subsection{Continuation maps for different fibers}

We now consider a Liouville bundle $E \to |\Delta^1|$.
Then for any brane $X \subset E_0$ in the fiber above 0 and any brane $Y \subset E_1$ in the fiber above 1, there are two natural Floer complexes to associate:
\enum
\item The Floer complex whose $\mu^1$ term is defined by counting holomorphic sections of the Liouville bundle $E$. This is $\hom_{\cO}(X,Y)$.
\item The Floer complex obtained by first parallel transporting $X$ to $E_1$ along the edge $|\Delta^1|$, then computing the usual Floer complex $CF^*(\Pi_{0,1} X, Y)$ in $E_1$. (If $\Pi_{0,1} X$ is an object of $\cO$, this chain complex is equivalent to $\hom_{\cO}(\Pi_{0,1} X,Y)$.)
\enumd

There are obvious isomorphisms
	\eqn\label{eqn. parallel transport versus constant}
	\hom_{\cO}(X,Y) \cong CF^*(\Pi_{0,1} X, Y)
	\eqnd
and (for any $X, X' \in E_0$)
	\eqn\label{eqn. parallel transport of CF}
	\hom_{\cO}(X,X') \cong CF^*(\Pi_{0,1} X, \Pi_{0,1} X').
	\eqnd

By considering  a Liouville bundle $E \to |\Delta^2|$ over a two simplex, we have:

\begin{lemma}\label{lemma. parallel transport versus constant}
The maps \eqref{eqn. parallel transport versus constant} and~\eqref{eqn. parallel transport of CF} are quasi-isomorphisms.  Moreover, for any pair $X, X' \in \{X_i\}$, the diagram
	\eqnn
	\xymatrix{
	H^*\hom_{\cO}(X', X) \tensor H^*\hom_{\cO}(X,Y) \ar[r]^-{\mu^2} \ar[d]^{\eqref{eqn. parallel transport of CF} \tensor \eqref{eqn. parallel transport versus constant}}
		& H^*\hom_{\cO}(X', Y) \ar[d]^{\eqref{eqn. parallel transport versus constant}} \\
	HF^*(\Pi_{0,1} X', \Pi_{0,1} X) \tensor HF^*(\Pi_{0,1} X, Y) \ar[r]^-{\mu^2}
		& HF^*(\Pi_{0,1} X', Y)
	}
	\eqnd
commutes.
\end{lemma}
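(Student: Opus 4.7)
The strategy is to realize both identifications \eqref{eqn. parallel transport versus constant} and \eqref{eqn. parallel transport of CF} as chain-level isomorphisms for a carefully chosen Floer datum, and then to appeal to the standard independence of Floer cochains on auxiliary choices (Recollection~\ref{recollection. continuation maps}\eqref{item. gromov compactness for bundles}) to deduce the quasi-isomorphism statement for arbitrary choices. The key geometric input is that parallel transport along $|\Delta^1|$, which was already used to identify the generators of $\hom_{\cO}(X,Y)$ with those of $CF^{*}(\Pi_{0,1} X, Y)$, can also be made to identify the differentials once the almost complex structure near the boundary branes is taken to be parallel.

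Concretely, I would choose an $\omega$-compatible almost complex structure $\cJ$ on the total space $E$ such that, in a neighborhood of the loci swept out by $X \subset E_0$ and $Y \subset E_1$ under parallel transport, $\cJ$ is preserved by the connection defining $\Theta_{\vec L}$. The space of such data is non-empty (construct it by a partition of unity starting from any parallel extension near the boundary) and contractible by an interpolation argument, so it does not change the quasi-isomorphism class of $\hom_{\cO}(X,Y)$. Under this choice, the straightening trivialization of $E \to |\Delta^1|$ by parallel transport sends holomorphic sections of $E$ with boundary on $X$ and $Y$ bijectively to holomorphic strips in the fiber $E_1$ with boundary on $\Pi_{0,1}X$ and $Y$, respecting signs and orientation data inherited from the brane structures. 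Thus \eqref{eqn. parallel transport versus constant} is a chain-level isomorphism for this datum, hence a quasi-isomorphism for any datum. The same argument, applied to the trivial sub-bundle carrying both $X$ and $X'$, establishes that \eqref{eqn. parallel transport of CF} is a quasi-isomorphism.

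For the compatibility with $\mu^2$, I would repeat the argument one dimension up. Let $\beta : [2] \to [1]$ send $0,1 \mapsto 0$ and $2 \mapsto 1$, and pull back $E$ to the $2$-simplex via the induced map $|\Delta^2| \to |\Delta^1|$; call this pulled-back Liouville bundle $\widetilde E \to |\Delta^2|$. By Construction~\ref{construction. non wrapped functor} (and the remark that $\alpha_*$ is an equivalence when $\alpha$ is surjective), the $\mu^2$ of the top row of the diagram equals the $\mu^2$ of the non-wrapped category $\cO_{\widetilde E}$ associated to $\widetilde E$, evaluated on the objects $X', X, Y$. With a parallel-transport-compatible choice of $\cJ$ on $\widetilde E$, the same straightening argument identifies the moduli of holomorphic sections defining this $\mu^2$ with the moduli of holomorphic triangles in $E_1$ with boundary conditions $\Pi_{0,1} X', \Pi_{0,1} X, Y$; the latter count is precisely the $\mu^2$ on the bottom row of the diagram. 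Homotopy commutativity in cohomology follows.

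The main obstacle I anticipate is technical rather than conceptual: verifying that the Floer data $\Theta_{\vec L}$ supplied by the companion paper \cite{oh-tanaka-liouville-bundles} can be chosen to simultaneously be (i) of contact type near infinity so that the $C^0$ estimates secure compactness of the moduli spaces, (ii) parallel with respect to the chosen connection near the support of the boundary branes so that the straightening is a genuine bijection on holomorphic curves, and (iii) regular for transversality. One must also confirm that the space of such choices is non-empty and contractible, so that passing between the parallel-transport-compatible datum and an arbitrarily chosen datum on $E_1$ only introduces quasi-isomorphisms. Granting these essentially standard checks, both assertions of the lemma follow from the two straightening arguments above.
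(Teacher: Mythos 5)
The paper gives no written proof of this lemma---it is explicitly left to the reader as ``standard given the techniques of \cite{oh-tanaka-liouville-bundles}'', with the preceding sentence (``By considering a Liouville bundle $E \to |\Delta^2|$ over a two simplex\ldots'') indicating exactly the route you take: straighten the bundle by parallel transport with compatibly chosen Floer data to identify section moduli with strip moduli in the fiber, and pull back to the $2$-simplex to compare the two $\mu^2$ counts. Your proposal is correct and is essentially the intended argument, with the technical caveats you flag (contact-type, transport-compatible, regular data forming a contractible space of choices) being precisely the ``standard'' checks the paper defers to the companion paper.
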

We leave the proof to the reader,
as it is standard given the techniques of~\cite{oh-tanaka-liouville-bundles}.

We previously knew how to speak of continuation elements of two branes in the same fiber (such as $\Pi_{0,1}X$ and $Y$; see  Section~\ref{section. continuation maps}). Given \eqref{eqn. parallel transport versus constant} and the compatibility of Lemma~\ref{lemma. parallel transport versus constant},  it makes sense to speak of continuation elements between objects of $\cO_j$ in possibly different fibers:

\begin{defn}\label{defn. continuation elements in O_j}
Given a Liouville fibration $E \to B$, fix a smooth map $j: |\Delta^n| \to B$ and consider the non-wrapped Fukaya category $\cO_j$ (Definition~\ref{defn. cO_j}). Fix two objects $(L_0,i_0,w_0)$ and $(L_1,i_1,w_1)$.

A {\em continuation element} from  $(L_0,i_0,w_0)$ to  $(L_1,i_1,w_1)$ is any element of
	\eqnn
	H^*\hom_{\cO_j} ( (L_0,i_0,w_0) , (L_1,i_1,w_1))
	\eqnd
arising from a continuation element of $HF^*(\Pi_{0,1}L_0^{(w_0)},L_1^{(w_1)})$ under the isomorphism \eqref{eqn. parallel transport versus constant}.
\end{defn}

\subsection{Equivalent choices of localizing morphisms}

Fix a smooth map $j: |\Delta_e^n| \to B$ and consider the $A_\infty$-category $\cO_j$ from Definition~\ref{defn. cO_j}.
There are two families of localizing morphisms one can consider:

\begin{defn}[$C$ and $C_\Pi$]\label{defn. C and CPi}
We let $C_\Pi$ denote the collection of morphisms in $\cO_j$ arising as continuation maps $(i, L, w) \to (i', L', w')$ for $w' > w$ (see Definition~\ref{defn. continuation elements in O_j}).

On the other hand, we let $C\subset C_\Pi$ denote the collection of continuation maps with $i=i'$ and $L=L'$.
\end{defn}

It turns out we can localize with respect to either $C$ or $C_\Pi$ from Definition~\ref{defn. C and CPi}, and end up with the same localization.

\begin{lemma}\label{lemma. localize however}
The natural map $\cO_j[C^{-1}] \to \cO_j[C_\Pi^{-1}]$ between localizations is an equivalence of $A_\infty$-categories.
\end{lemma}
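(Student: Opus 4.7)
The plan is to reduce the statement to showing that every morphism of $C_\Pi$ becomes an equivalence in $\cO_j[C^{-1}]$, and then to verify this via the filtered colimit computation of morphism complexes in the localization. So first I would invoke Recollection~\ref{recollection. A oo facts}\eqref{item. localization universal property}: since $C \subset C_\Pi$, the canonical functor $\cO_j[C^{-1}] \to \cO_j[C_\Pi^{-1}]$ exists and is an equivalence precisely when every morphism in $C_\Pi$ is already an equivalence in $\cO_j[C^{-1}]$. Fix then a continuation element $c:(i_0,L_0,w_0)\to(i_1,L_1,w_1)$ in $C_\Pi$; the task is to show $c$ is invertible after inverting only the pure, same-fiber, same-Lagrangian wrappings $C$.

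Next I would compute morphism complexes in $\cO_j[C^{-1}]$ as filtered colimits, using Recollection~\ref{recollection. A oo facts}\eqref{item. filtered localizatiom hom}. Concretely, for any object $K$ of $\cO_j$ one obtains
$$\hom_{\cO_j[C^{-1}]}(K,(i,L,w)) \;\simeq\; \colim_{k\to\infty}\hom_{\cO_j}(K,(i,L,w+k)),$$
with transition maps given by $\mu^2$-composition with the $C$-continuations on the target. The directedness of $\cO_j$ in the wrapping index $w$ and the shape of $C$ (whose morphisms sit in $\ZZ_{\geq 0}$-indexed chains) make the hypotheses of that lemma applicable. By the Yoneda principle inside $\Ainftycat$, showing that $c$ is an equivalence then reduces to showing that $c_*$ induces a quasi-isomorphism on these colimits for every test object $K$.

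The geometric input is cofinality of wrapping sequences. The parallel transport $\Pi_{i_0,i_1}$ (Notation~\ref{notation. parallel transport}) carries the chosen cofinal sequence $L_0^{(w_0)}\to L_0^{(w_0+1)}\to\cdots$ in $E_{b_0}$ to a non-negative wrapping sequence $\Pi L_0^{(w_0)}\to \Pi L_0^{(w_0+1)}\to\cdots$ of the parallel-transported brane in $E_{b_1}$. Applying Definition~\ref{defn. cofinal wrapping} to the cofinal sequence of $L_1$ in the fiber $E_{b_1}$, this transported sequence can be interleaved with $L_1^{(w_1)}\to L_1^{(w_1+1)}\to\cdots$ by a ladder of continuation elements in $C_\Pi$ that are compatible, up to homotopy through non-negative isotopies, with the pure $C$-continuations on either side. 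By Lemma~\ref{lemma. parallel transport versus constant} this compatibility lifts to the cochain level, and the ladder exhibits the source and target filtered colimits as sharing a common cofinal subdiagram; the map $c_*$ on colimits is then induced by this interleaving together with the original $c$, and is therefore a quasi-isomorphism.

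The principal obstacle I anticipate is controlling the coherence of the interleaving ladder at the chain level, since the cofinality condition in Definition~\ref{defn. cofinal wrapping} is only stated up to homotopy through non-negative isotopies, whereas continuation cochain elements depend on geometric choices. My intended workaround is that Recollection~\ref{recollection. A oo facts}\eqref{item. filtered localizatiom hom} only records the filtered homotopy type of the indexing diagram, so it suffices that the ladder is well-defined up to a contractible space of choices---which follows from Lemma~\ref{lemma. parallel transport versus constant} combined with the compatibility of continuation elements with composition recorded in Recollection~\ref{recollection. continuation maps}\eqref{item. continuation maps respect composition}.
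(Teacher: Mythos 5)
Your strategy can be made to work, but it is a genuinely different and much heavier route than the paper's. The paper's proof never computes morphism spaces in the localization: it first observes that for any class $W$ one has $\cA[W^{-1}] \simeq \cA[\overline{W}^{-1}]$, where $\overline{W}$ is the class of morphisms inverted by the localization functor, so it suffices to show $C_\Pi \subset \overline{C}$. Then, given a continuation element $[c_{01}]$ from $(i_0,L_0,w_0)$ to $(i_1,L_1,w_1)$, cofinality of the chosen wrapping sequences produces a zig-zag of continuation elements $[c_{10'}]$ and $[c_{0'1'}]$ back and forth between the two sequences whose two-fold composites $[c_{00'}]$ and $[c_{11'}]$ are same-fiber, same-brane wrapping continuation elements, i.e.\ lie in $C$. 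Since those become invertible in the cohomology category of $\cO_j[C^{-1}]$, a two-out-of-six argument forces $[c_{10'}]$, and hence $[c_{01}]$, to be invertible. The only inputs are this saturation trick, cofinality, and the compatibility of continuation elements with $\mu^2$. Your interleaving ladder is essentially the same zig-zag, but you then push it through filtered colimits and a Yoneda argument, which is where the extra cost (and the delicate coherence issues you flag) comes from.

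The step you should not treat as formal is your invocation of Recollection~\ref{recollection. A oo facts}\eqref{item. filtered localizatiom hom}. As the paper actually uses it (in the proof of Lemma~\ref{lemma. hom is wrapped cohomology}), identifying $\hom_{\cO_j[C^{-1}]}(K,-)$ with the sequential wrapping colimit is not a consequence of the directedness of $\cO_j$ and the shape of $C$ alone: one must first verify that every $c \in C$ induces a quasi-isomorphism after passing to the wrapping colimit, and that verification is geometric---it identifies the colimit with Abouzaid--Seidel wrapped Floer cohomology and invokes the result of~\cite{bko:wrapped} that non-negative continuation elements become isomorphisms there. So your route in effect re-proves Lemma~\ref{lemma. hom is wrapped cohomology} before it can start (there is no circularity, since that lemma's proof does not use the present one, but it is real work you have elided), and your ``contractible space of choices'' remark still needs the cohomology-level commutativity of the ladder squares, which comes from the homotopy-through-non-negative-isotopies clause of Definition~\ref{defn. cofinal wrapping} together with Recollection~\ref{recollection. continuation maps}\eqref{item. continuation maps respect composition} and Lemma~\ref{lemma. parallel transport versus constant}. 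With those points supplied your argument closes, but the two-out-of-six argument in the paper reaches the same conclusion with far less machinery.
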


\begin{proof}
Let $\cA$ be any $A_\infty$-category, and let $W$ be any class of morphisms. Consider the localization functor $\cA \to \cA[W^{-1}]$, and let $\overline{W} \subset \cA$ denote the subcategory of morphisms that are mapped to equivalences under the localization functor. It is immediate that $\cA[W^{-1}] \simeq \cA[\overline{W}^{-1}]$.

So it suffices to show that $\overline{C} \supset C_\Pi$. Suppose
	\eqnn
	[c_{01}] \in H^0\hom_{\cO}( (i_0, L_0, w_0), (i_1, L_1, w_1))
	\eqnd
is a non-negative continuation element. Then choose a large enough wrapping index $w_0'$ of $(i_0, L_0, w_0)$ so that one can find a non-negative continuation element $c_{10'}$ from $(i_1,L_1,w_1)$ to $(i_0, L_0, w_0')$. Then choose a positive enough wrapping index $w_1'$ so that one can find a non-negative continuation element $c_{0'1'}$ from $(i_0,L_0,w_0')$ to $(i_1, L_1, w_1')$. (Note that our assumption that the $(L, w)$ form a cofinal sequence allows for this.) Then one has a commutative diagram
	\eqnn
    \xymatrix{
     & (i_1, L_1, w_1') \\
    (i_0, L_0, w_0') \ar[ur]^{[c_{0'1'}]} \\
     & (i_1, L_1, w_1) \ar[ul]^{[c_{10'}]} \ar[uu]_{[c_{11'}]} \\
    (i_0, L_0, w_0) \ar[ur]^{[c_{01}]} \ar[uu]^{[c_{00'}]}
    }
    \eqnd
in the cohomology category of $\cO$, hence of $\cO[C^{-1}]$ and of $\cO[C_\Pi^{-1}]$. In $\cO[C^{-1}]$, the cohomology category admits a unique inverse to $[c_{00'}]$ and $[c_{11'}]$; then it follows that all the diagonal maps (and in particular, $[c_{01}]$) are equivalences in $\cO[C^{-1}]$. In particular, any $c_{01} \in C_\Pi$ is contained in $\overline{C}$.
\end{proof}

\begin{defn}[$\cW_j$]\label{defn. cW_j}
Fix $j: |\Delta_e^n| \to B$ a smooth map. We let $\cW_j$ denote the localization $\cO_j[C^{-1}]$, and we call it the partially wrapped Fukaya category associated to $j$.
\end{defn}

\begin{example}\label{example. GPS W over a point}
If $n=0$ and $j: |\Delta^0| \to B$ simply chooses a point $b \in B$, then $\cO_j[C^{-1}]$ is equivalent to the partially wrapped Fukaya category associated to the fiber $E_b$ (as in~\cite{gps}).
\end{example}

\subsection{Wrapped complexes as filtered colimits}

Now we compute hom-complexes $\hom_{\cW_j}(X,Y)$ for two branes $X$ and $Y$ in the same fiber.

\begin{remark}
We note that this also computes hom-complexes when the branes are in two different fibers of $E \to B$: Simply find $Y'$ in the same fiber as $X$ and consider a parallel-transport continuation $Y \to Y'$. In $\cW_j$, this map induces an equivalence $Y \simeq Y'$ by Lemma~\ref{lemma. localize however}, so we have $\hom_{\cW_j}(X,Y) \simeq \hom_{\cW_j}(X,Y')$.
\end{remark}

\begin{lemma}\label{lemma. hom is wrapped cohomology}
The hom-complex $\hom_{\cW_j}(X,Y)$ has cohomology given by the wrapped Floer cohomology (computed in the fiber containing $X$ and $Y$) as defined in \cite[Section 3.7]{abouzaid-seidel}.
\end{lemma}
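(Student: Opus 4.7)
The plan is to realize $\hom_{\cW_j}(X,Y)$ as an explicit filtered colimit of Floer cochain complexes in the fiber $E_b$ containing both $X$ and $Y$, thereby matching the Abouzaid--Seidel definition of wrapped Floer cohomology as a directed limit along a cofinal sequence of non-negative wrappings. Concretely, let $Y = Y^{(0)} \to Y^{(1)} \to \cdots$ denote the preferred cofinal sequence of Choice~\ref{choice. cofinal wrappings}, with continuation morphisms $c_w \in C$. The target identification is
\begin{equation*}
\hom_{\cW_j}(X, Y) \;\simeq\; \colim_w \hom_{\cO_j}(X, Y^{(w)}) \;\simeq\; \colim_w CF^*(X, Y^{(w)}).
\end{equation*}

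The first step is to apply Recollection~\ref{recollection. A oo facts}\eqref{item. filtered localizatiom hom}, which expresses hom in a localization as a filtered colimit of hom-complexes in the unlocalized category, indexed by zig-zags through morphisms in $C$. The second step is to show that this abstract diagram admits the sequence $\{Y^{(w)}\}_{w \geq 0}$ as a cofinal subdiagram: the directedness of $\cO_j$ in $w$ (Definition~\ref{defn. hom groups}) rules out backward maps in the colimit system, while the cofinality hypothesis of Definition~\ref{defn. cofinal wrapping} guarantees that any other non-negatively wrapped target is dominated by some $Y^{(w)}$ through a non-negative continuation element. The third step identifies the terms and structure maps of the resulting sequence: since $X$ and $Y^{(w)}$ live in the common fiber $E_b$, the parallel transport in Definition~\ref{defn. hom groups} is the identity and $\hom_{\cO_j}((i,X,0),(i,Y,w))$ coincides on the nose with the ordinary Floer complex $CF^*(X, Y^{(w)})$; moreover, composition with the continuation class $c_w$ agrees up to homotopy with the usual Floer continuation map by Recollection~\ref{recollection. continuation maps}\eqref{item. continuation maps respect composition}. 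Since filtered colimits of chain complexes commute with taking cohomology, the resulting cohomology matches Abouzaid--Seidel's definition of wrapped Floer cohomology in \cite[Section 3.7]{abouzaid-seidel}.

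The main obstacle will be the cofinality step: the abstract filtered colimit diagram of Recollection~\ref{recollection. A oo facts}\eqref{item. filtered localizatiom hom} a priori involves zig-zags through arbitrary continuation morphisms, including those landing on branes that do not lie on the preferred cofinal sequence. To handle this, I would invoke Lemma~\ref{lemma. localize however} to replace $C$ by the larger class $C_\Pi$ of all continuation morphisms without changing the localization, so that parallel-transport-related targets are admissible, and then use the cofinality condition (Definition~\ref{defn. cofinal wrapping}) to produce, for any competing target, a dominating non-negative continuation onto some $Y^{(w)}$ in the preferred sequence. Once that cofinality is established, the remaining identifications are formal and the lemma follows.
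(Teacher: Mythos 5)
Your overall shape is right---and it matches the paper's: both arguments aim to identify $H^*\hom_{\cW_j}(X,Y)$ with $\colim_w H^*\hom_{\cO_j}(X,Y^{(w)}) \cong \colim_w HF^*(X,Y^{(w)})$, which is the Abouzaid--Seidel colimit definition from \cite[Section 3.7]{abouzaid-seidel}. But there is a genuine gap at the step where you pass from the abstract localization formula to the one-sided colimit over wrappings of $Y$. The criterion of Recollection~\ref{recollection. A oo facts}\eqref{item. filtered localizatiom hom}, as the paper uses it, has a hypothesis that must be verified: for every continuation morphism $c\colon L \to L'$ in $C$, composition with $c$ must induce a quasi-isomorphism
\begin{equation*}
\hocolim_i \hom_{\cO_j}(L',Y^{(i)}) \longrightarrow \hocolim_i \hom_{\cO_j}(L,Y^{(i)}).
\end{equation*}
This is an invertibility statement about wrapping in the \emph{first} variable, after taking the colimit in the second variable. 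Your two reductions---``directedness of $\cO_j$ rules out backward maps'' and ``any competing target is dominated by some $Y^{(w)}$''---only concern the second variable (which objects appear as wrappings of $Y$), and do not justify discarding the formally inverted $C$-morphisms acting on the source; those inverses are exactly where the localization can create new morphisms, and no amount of cofinality of the sequence $\{Y^{(w)}\}$ addresses them. Passing from $C$ to $C_\Pi$ via Lemma~\ref{lemma. localize however} is harmless but does not touch this issue either.

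The missing ingredient is genuinely geometric, not formal: it is the statement that a non-negative isotopy $L \to L'$ induces an isomorphism on (colimit-style) wrapped Floer cohomology. The paper supplies it by observing that $\colim_i H^*\hom_{\cO_j}(-,Y^{(i)})$ \emph{is} the Abouzaid--Seidel wrapped cohomology, where continuation maps are known to be isomorphisms by \cite{bko:wrapped} (one can alternatively argue geometrically as in \cite{gps}, using composition of continuation elements and triviality of null-homotopic non-negative loops); Recollection~\ref{recollection. continuation maps}\eqref{item. continuation maps respect composition} is used to see that $\mu^2$ with the continuation element realizes these maps on cohomology. Once that hypothesis is verified, the criterion gives $\hocolim_i\hom_{\cO_j}(X,Y^{(i)}) \simeq \hocolim_i\hom_{\cW_j}(X,Y^{(i)})$, and since the maps $Y^{(i)} \to Y^{(i+1)}$ are already inverted in $\cW_j$, the latter colimit collapses to $\hom_{\cW_j}(X,Y^{(0)})$---which is the paper's conclusion. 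So your proposal needs this Floer-theoretic invariance input inserted at the cofinality step; as written, the formal argument there would not go through.
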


\begin{proof}
This is essentially the same proof as in Lemma~3.37 of~\cite{gps} (which in turn is due to Abouzaid-Seidel's unpublished work). The main difference is that---because of our counterclockwise orientation of the boundary of disks---our continuation maps behave covariantly, as opposed to contravariantly.

Let $Y^{(0)} \to \ldots$ denote a cofinal sequence (Definition~\ref{defn. cofinal wrapping}). Without loss of generality, we may assume $Y = Y^{(0)}$. We first claim that if $c: L \to L'$ is a map in $C$ (in particular, $L$ and $L'$ are in the same fiber), then the map
	\eqn\label{eqn. wrap comparison}
	\hocolim_i \hom_{\cO_j}(L',Y^{(i)}) \to
	\hocolim_i \hom_{\cO_j}(L, Y^{(i)})
	\eqnd
is a quasi-isomorphism.
To verify this claim, note that $\mu^2$ is respected at the level of cohomology (Recollection~\ref{recollection. continuation maps}\eqref{item. continuation maps respect composition}). So we have induced maps of cohomology groups
	\begin{align}
	H^*( \hocolim_i \hom_{\cO_j}(L', Y^{(i)}))
	&	\cong \colim_i H^* \hom_{\cO_j}(L', Y^{(i)}) \nonumber \\
	& 	\to	\colim_i H^* \hom_{\cO_j}(L, Y^{(i)}) \label{eqn. make isom}\\
	& 	\cong H^*( \hocolim_i \hom_{\cO_j}(L, Y^{(i)})). \nonumber
	\end{align}
There are isomorphisms in the lines above---this is because a filtered colimit of cohomology is naturally isomorphic to the cohomology of a filtered homotopy colimit.
Now we note that the colimit of the cohomology groups is a definition for wrapped Floer cohomology (in the fiber containing $L$ and $Y$) employed in Abouzaid-Seidel's paper \cite[Section 3.7]{abouzaid-seidel}. Moreover, in this definition of the wrapped Fukaya category, $L$ and $L'$ are exhibited as equivalent objects by a continuation map (this is proven in~\cite{bko:wrapped}). So the arrow~\eqref{eqn. make isom} is an isomorphism of groups. This shows that~\eqref{eqn. wrap comparison} is a quasi-isomorphism.

Because \eqref{eqn. wrap comparison} is a quasi-isomorphism for any choice of $Y^{(i)}$ and any choice of $c: L \to L'$ in $C$, a general fact about localizations (Recollection~\ref{recollection. A oo facts}\eqref{item. filtered localization hom}) shows that the natural map
	\eqnn
	\hocolim_i \hom_{\cO_j}( X, Y^{(i)})
	\to
	\hocolim_i \hom_{\cO_j[C^{-1}]}( X, Y^{(i)})
	\eqnd
is an equivalence for any $X$. On the other hand, the latter homotopy colimit is indexed by a sequence of quasi-isomorphisms, because the maps $Y^{(i)} \to Y^{(i+1)}$ are already in $C$, being non-negative continuation maps. Thus we have
	\eqnn
	\colim_i H^*\hom_{\cO_j}( X, Y^{(i)})
	\cong \colim_i H^*\hom_{\cO_j[C^{-1}]}( X, Y^{(i)})
	\cong H^*\hom_{\cO_j[C^{-1}]}( X, Y^{(0)})
	\eqnd
while the left-hand side is the colimit definition of wrapped Floer cohomology.
This completes the proof.
\end{proof}

\subsection{Wrapped Functoriality}

We first note that $\alpha_*$  (Construction~\ref{construction. non wrapped functor}) respects $C$ (Definition~\ref{defn. C and CPi}). That is, consider a diagram
	\eqnn
    \xymatrix{
    |\Delta_e^n| \ar[rr]^\alpha \ar[dr]^j
    	&& |\Delta_e^{n'}| \ar[dl]_{j'} \\
        & B.
    }
    \eqnd
where $\alpha$ is induced by an order-preserving injection $[n] \to [n']$. By definition, the continuation maps $C$ of $\cO_j$ are sent to (some of the) continuation maps $C'$ of $\cO_{j'}$, so we have an induced functor on the localizations
	\eqn\label{eqn. map of localizations}
    \alpha_*: \cW_j \to \cW_{j'}.
    \eqnd
(Note that we have abused notation by using $\alpha_*$ again.)

Moreover, because $\cO: \simp(B) \to \Ainftycatt$ is a functor respecting each $C$, the naturality of localizations implies the following:

\begin{proposition}\label{prop. W functor}
The assignment $j \mapsto \cW_j$ and $\alpha \mapsto \alpha_*$ induces a functor of $\infty$-categories
	\eqnn
    \cW: N(\simp(B)) \to \Ainftycat.
    \eqnd
\end{proposition}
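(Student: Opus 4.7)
My plan is to promote the strict $1$-functor of Proposition~\ref{prop. O functor} to an $\infty$-functor $\cO: N(\simp(B)) \to \Ainftycat$, and then localize pointwise in a coherent fashion using the universal property of $A_\infty$-localizations (Recollection~\ref{recollection. A oo facts}\eqref{item. localization universal property}). The first step is immediate: Proposition~\ref{prop. O functor} provides a strict functor $\simp(B) \to \Ainftycatt$, whose nerve is a functor $N(\simp(B)) \to N(\Ainftycatt)$, and composing with the canonical $\infty$-functor $N(\Ainftycatt) \to \Ainftycat$---which formally inverts the $A_\infty$-equivalences (Recollection~\ref{recollection. A oo facts}\eqref{item. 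Aoo cat as an oocat})---yields the desired $\cO$.

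Next I would construct the morphisms $\alpha_*: \cW_j \to \cW_{j'}$ via the localization universal property. By the observation recorded just before the statement, every $\alpha$ in $\simp(B)$ satisfies $\alpha_*(C_j) \subset C_{j'}$, so the composite
\[ \cO_j \xrightarrow{\alpha_*} \cO_{j'} \to \cW_{j'} \]
sends every morphism of $C_j$ to an equivalence of $\cW_{j'}$. By Recollection~\ref{recollection. A oo facts}\eqref{item. localization universal property}, this composite factors, uniquely up to a contractible space of choices, through the localization $\cO_j \to \cW_j$, producing $\alpha_*: \cW_j \to \cW_{j'}$. The relation $(\alpha' \circ \alpha)_* \simeq \alpha'_* \circ \alpha_*$ at the level of the homotopy category follows from the same uniqueness, because both composites agree after precomposition with the localization map $\cO_j \to \cW_j$.

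To obtain a genuine $\infty$-functor---rather than only an assignment on objects and on morphisms of the homotopy category---I would work in the functor $\infty$-category $\Fun(N(\simp(B)), \Ainftycat)$ and characterize $\cW$ by a global universal property: it is the universal functor under $\cO$ whose value at each $j$ inverts $C_j$. Such a $\cW$ exists because the inclusion of ``pointwise $C$-local functors'' into $\Fun(N(\simp(B)), \Ainftycat)$ admits a left adjoint, and left adjoints apply pointwise in functor $\infty$-categories by standard $\infty$-categorical machinery (compare the discussion in~\cite{oh-tanaka-localizations}). The main obstacle is exactly this last step---packaging the contractible choices at each simplex into coherent higher homotopies for a single $\infty$-functor---but once this is carried out, both desired properties $\cW(j) = \cW_j$ and $\cW(\alpha) = \alpha_*$ are forced by the universal characterization, and no further geometric input is required.
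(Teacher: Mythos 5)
Your set-up is the paper's: Proposition~\ref{prop. O functor} provides the strict functor $\cO: \simp(B) \to \Ainftycatt$, each $\alpha_*$ carries $C_j$ into $C_{j'}$, and the paper then deduces the proposition from ``the naturality of localizations.'' Your construction of $\alpha_*$ on localizations via Recollection~\ref{recollection. A oo facts}\eqref{item. localization universal property} and the check of composition in the homotopy category are fine, and you correctly isolate the coherence step as the real content. But your justification of that step does not work as stated. There is no full subcategory of ``pointwise $C$-local functors'' inside $\Fun(N(\simp(B)),\Ainftycat)$: whether a functor $G$ is ``local'' is not a property of $G$ alone, because the localizing classes $C_j$ are data attached to the objects $\cO_j$ (a marking), not to the ambient $\infty$-category $\Ainftycat$. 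Relatedly, the principle you invoke---that left adjoints act pointwise on functor $\infty$-categories---is the statement that postcomposition with a \emph{fixed} left adjoint $L:\cD\to\cD'$ is again a left adjoint; here there is no single localization endofunctor of $\Ainftycat$ being applied at every $j$, since the class being inverted varies with $j$. So the reflective localization whose existence you assert is precisely the thing that needs to be produced, and the appeal to it is circular as written.

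The standard repair---and the precise content behind the paper's phrase ``naturality of localizations''---is to remember the markings. Because each $\alpha_*$ satisfies $\alpha_*(C_j)\subset C_{j'}$ on the nose, the assignment $j \mapsto (\cO_j, C_j)$ is a strict functor from $\simp(B)$ to the category of marked $A_\infty$-categories (pairs of an $A_\infty$-category and a class of morphisms, with functors preserving the marking). Localization is a single left adjoint from that marked $\infty$-category to $\Ainftycat$ (its right adjoint equips an $A_\infty$-category with its equivalences as marked morphisms), so postcomposing the nerve of the strict marked diagram with this one left adjoint yields an honest $\infty$-functor $\cW: N(\simp(B)) \to \Ainftycat$ with $\cW(j)\simeq \cO_j[C_j^{-1}]=\cW_j$ and $\cW(\alpha)\simeq\alpha_*$; the mapping-space characterization you wanted then follows from Recollection~\ref{recollection. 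A oo facts}\eqref{item. localization universal property} applied pointwise. With that substitution your argument coincides with the paper's.
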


\begin{remark}
Note that the target of $\cW$ is the $\infty$-category $\Ainftycat$, rather than the (strict) category $\Ainftycatt$. Indeed, the former is where we can articulate the universal property of localizations; see Section~\ref{section. A oo localizations}.
\end{remark}

\begin{prop}[Local triviality]\label{prop. local triviality}
The map $\alpha_*: \cW_j \to \cW_{j'}$ in \eqref{eqn. map of localizations} is an equivalence of $A_\infty$-categories.
\end{prop}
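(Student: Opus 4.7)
The plan is to show $\alpha_*: \cW_j \to \cW_{j'}$ is essentially surjective and fully faithful, which by Recollection~\ref{recollection. A oo facts} suffices to conclude it is an equivalence in $\Ainftycat$. Since morphisms in $\simp(B)$ correspond to injective poset maps $\alpha : [n] \hookrightarrow [n']$, the identity $j = j' \circ \alpha$ realizes $j$ as a face of $j'$, and this is the compatibility we will exploit throughout.

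For fully faithfulness, I would fix two objects $X = (i_0, L_0, w_0)$ and $Y = (i_1, L_1, w_1)$ in $\cO_j$ and appeal to Construction~\ref{construction. non wrapped functor}, which supplies a strict isomorphism of $R$-modules
$$
\hom_{\cO_j}(X, Y) \xrightarrow{\;\cong\;} \hom_{\cO_{j'}}(\alpha_* X, \alpha_* Y),
$$
since the Floer data and parallel transports used to define either side are determined by the common map $j = j' \circ \alpha$ restricted to the relevant edge. This persists when we apply the identification to the entire cofinal wrapping sequence $Y = Y^{(0)} \to Y^{(1)} \to \cdots$ in the fiber above $b_{i_1}$: because Choice~\ref{choice. cofinal wrappings} picks $\cL_b$ and its wrapping sequences intrinsically to each fiber, the sequence used in $\cO_{j'}$ above $b_{\alpha(i_1)} = b_{i_1}$ is identical. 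Thus the filtered system featured in the proof of Lemma~\ref{lemma. hom is wrapped cohomology} computing $\hom_{\cW_j}(X, Y)$ is strictly isomorphic to the corresponding one computing $\hom_{\cW_{j'}}(\alpha_* X, \alpha_* Y)$, and Recollection~\ref{recollection. A oo facts}\eqref{item. filtered localizatiom hom} then produces a quasi-isomorphism on wrapped hom-complexes.

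For essential surjectivity, I would fix an arbitrary object $(i'', L'', w'')$ of $\cO_{j'}$. If $i'' \in \alpha([n])$ then the object is literally in the image of $\alpha_*$. Otherwise, pick any $i \in [n]$ and the simplicial edge from $i''$ to $\alpha(i)$ inside $|\Delta^{n'}|$, and parallel-transport $L''^{(w'')}$ along it to obtain a conical-near-infinity brane $\Pi L''^{(w'')}$ in the fiber above $b_{\alpha(i)}$. By Choice~\ref{choice. cofinal wrappings} some $L \in \cL_{b_{\alpha(i)}}$ admits a non-negative wrapping to or from $\Pi L''^{(w'')}$, and cofinality lets us choose $w$ large enough that a non-negative wrapping $\Pi L''^{(w'')} \to L^{(w)}$ exists in this fiber. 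The associated continuation element lives in $H^* \hom_{\cO_{j'}}((i'', L'', w''), (\alpha(i), L, w))$ and so defines a morphism in $C_\Pi$, which by Lemma~\ref{lemma. localize however} is invertible in $\cW_{j'}$. The target $(\alpha(i), L, w)$ is in the image of $\alpha_*$, giving the sought equivalence.

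The main technical hurdle is verifying that the filtered systems from Lemma~\ref{lemma. hom is wrapped cohomology} really do agree strictly across the two categories---this hinges on the fiber-local nature of Choice~\ref{choice. cofinal wrappings}, so that $\cL_{b_{i_1}}$ and its associated cofinal wrapping sequences are shared between $\cO_j$ and $\cO_{j'}$ without extra choices. Once that is granted, the argument is essentially formal: Construction~\ref{construction. non wrapped functor} already gives strict compatibility of morphism spaces, Lemma~\ref{lemma. localize however} ensures all continuation equivalences we invoke in $\cW_{j'}$ exist, and Recollection~\ref{recollection. A oo facts}\eqref{item. filtered localizatiom hom} converts strict agreement of directed systems into a quasi-isomorphism of localized hom-complexes.
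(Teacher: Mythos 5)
Your proposal is correct and follows essentially the same route as the paper: essential surjectivity via continuation elements in $C_\Pi$ becoming invertible (the argument of Lemma~\ref{lemma. localize however}), and full faithfulness via the identification of wrapped hom-complexes with the colimit over a cofinal wrapping sequence (Lemma~\ref{lemma. hom is wrapped cohomology}), on which $\alpha_*$ acts as the identity. The paper merely adds the organizational remark that one may first reduce to the case where $j$ is a $0$-simplex, but your direct argument supplies the same content in more detail.
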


\begin{proof}
We note that it suffices to prove the proposition when $j: |\Delta^0| \to B$ is a 0-simplex in $B$.

It follows that $\alpha_*$ is essentially surjective from the proof of Lemma~\ref{lemma. localize however}.
Now we must prove that the map is fully faithful (i.e., induces a quasi-isomorphism on morphism complexes). This follows from Lemma~\ref{lemma. hom is wrapped cohomology}, as $\alpha$ induces the obvious identity map on wrapped Floer cohomology groups.
\end{proof}

\subsection{Proof of Theorem~\ref{theorem. main theorem}}
Now we are ready to complete the proof of Theorem~\ref{theorem. main theorem}.

\begin{proof}[Proof of Theorem~\ref{theorem. main theorem}.]
By Proposition~\ref{prop. W functor}, we have a functor
	$$
\simp(B\Liouaut(M)) \to \Ainftycatt.
	$$
Moreover, by Proposition~\ref{prop. local triviality}, every edge in the domain is sent to an equivalence of $A_\infty$-categories; hence by the universal property of localization of $\infty$-categories, we have an induced diagram $\Delta^2 \to \Ainftycat$ as follows:
	\eqnn
	\xymatrix{
	N(\simp(B\Liouaut(M))) \ar[r]^-{\cW} \ar[d] & \Ainftycat \\
	\sing(B\Liouaut(M)) \ar@{-->}[ur] .
	}
	\eqnd
(Informally, the above is a homotopy-commutative diagram of functors between $\infty$-categories.)
Here, the left vertical map is the localization map along every edge of $N(\simp(B\Liouaut(M))$. (Recollection~\ref{recollection. smooth approximation}\eqref{item. localization of BLiou}.) The dashed arrow is the induced map on localizations, and the map we seek, because of the obvious equivalence
	\eqnn
	\BB \Liouaut(M) \simeq \sing(B\Liouaut(M)).
	\eqnd
This completes the proof.
\end{proof}

\clearpage
\section{Local systems and a bundle version of the Abouzaid map}

Fix a Liouville domain $M$ and a compact brane $Q \subset M$.
In~\cite{abouzaid-loops}, Abouzaid constructed a functor from the quadratically wrapped category of $M$ to the category of local systems on $Q$. In this section we construct a bundle version of this functor.

\begin{remark}
For concreteness, we construct this functor for the case of the Liouville bundle over $B\diff(Q)$ with fiber $M = T^*Q$; but one can do this for any Liouville bundle whose structure group has been reduced in such a way as to preserve the set $Q \subset M$.
\end{remark}

Let us outline this section's contents. Fix a smooth fiber bundle $E' \to B'$ with fiber given by a smooth, compact manifold $Q$ (possibly with boundary). To this data we assign a collection of local system categories---specifically, to each $j: |\Delta_e^n| \to B'$, we also associate a dg-category $\cP_j$ whose twisted complex category is quasi-equivalent to a category of local systems on $j^*E'$. In particular, the assignment $j \mapsto \Tw \cP_j$ is a local system of local system categories. Our main interest in this construction is the universal example when $B' = B\diff(Q)$.

Setting $M = T^*Q$, we again utilize the result that the localization of a subdivision recovers the original homotopy type (Recollection~\ref{recollection. smooth approximation}\eqref{item. localization of BLiou}). This implies the existence of a functor of $\infty$-categories
	\eqnn
	\Tw \cP: \BB\diff(Q) \to \Ainftycat.
	\eqnd

Then, the main result of the present section is to show that one can make the diagram
	\eqnn
	\xymatrix{
	\BB\diff(Q) \ar[rr] \ar[dr]^{\Tw \cP} && \BB\Liouautgrb(T^*Q) \ar[dl]_{\cW} \\
	& \Ainftycat
	}
	\eqnd
commute up to a natural transformation from $\cW$ to $\Tw \cP$ (Corollary~\ref{cor. natural transformation}). That is, we have a $\diff(Q)$-equivariant functor from $\cW(T^*Q)$ to $\Tw\cP(Q)$.

To accomplish this goal, we first construct a natural transformation from $\cO$ to $\Tw \cP$ (Proposition~\ref{prop. O to cP natural}). Then, the main aim is to prove that the non-negative continuation maps in $\cO$ are sent to equivalences in the category $\Tw C_* \cP$ of local systems (Theorem~\ref{thm. continuation maps are equivalences of twisted complexes})---this is the most geometrically involved component of our arguments. By the universal property of the localization $\cW$, we conclude that the bundle version of the Abouzaid functor descends to the wrapped  categories (Corollary~\ref{cor. natural transformation}).

We will prove in the next section that this natural transformation is a natural equivalence---i.e., that this is a $\diff(Q)$-equivariant equivalence (Theorem~\ref{theorem. cW to cP}).

\subsection{\texorpdfstring{$C_*\cP$}{C.cP} (families of local system categories)}\label{section. chains on CP}

\begin{remark}
As in Section~\ref{section. smooth approximation}, we can endow the topological space $B\diff(Q)$ with a diffeological space structure. Because $Q$ is compact, $B\diff(Q)$ satisfies smooth approximation. So, for example, if $\sing(B\diff(Q))$ denotes the usual singular complex of continuous simplices $|\Delta^n| \to B\diff(Q)$, and if $\sing^{C^\infty}(B\diff(Q))$ denote the simplicial set whose $n$-simplices are smooth maps $j: |\Delta^n_e| \to B\diff(Q)$ from extended simplices, then the inclusion of $\sing^{C^\infty}(B\diff(Q))$ to $\sing(B\diff(Q))$ is a homotopy equivalence of simplicial sets (Recollection~\ref{recollection. smooth approximation}\eqref{item. localization of BLiou}).
\end{remark}

\begin{notation}[$E_Q$]
Note that $B\diff(Q)$ carries a principal $\diff(Q)$ bundle---the universal one---and hence an associated fiber bundle with fibers $Q$.  We denote this fiber bundle by $E_Q \to B\diff(Q)$.
\end{notation}

\begin{notation}[$Q_a$]
Let $j: |\Delta_e^n| \to B\diff(Q)$ be a smooth map. For any $a \in [n]$, we let $Q_a$ denote the fiber of $j^*E_Q$ above the $a$th vertex of $|\Delta^n| \subset |\Delta_e^n|$.
\end{notation}

Now we give some notation to the Moore path space category modeling the $\infty$-groupoid $\sing(j^*E_Q)$.

\begin{construction}[$\cP_j$]\label{construction. cP_j}
Let $j: |\Delta_e^n| \to  B\diff(Q)$ be a smooth map. We let $\cP_j$ denote the topologically enriched category defined as follows:

We declare the object set to be the disjoint union of fibers
	\eqnn
	\cP_j := \coprod_{a \in [n]} Q_a.
	\eqnd
Given $q_a \in Q_a, q_b \in Q_b$, we declare
	$
	\hom_{\cP_j}(q_a,q_b)
	$
to be the topological space of continuous maps $\gamma: [0,\infty] \to j^*E_Q$ such that $\gamma$ is compactly supported (i.e., constant beyond some finite time $t \in [0,\infty]$) and such that $\gamma(0) = q_a$ and $\gamma(\infty) = q_b$. Composition is defined in the obvious way: If $t_\gamma$ is the smallest time for which $\gamma$ is constant, $\gamma' \circ \gamma$ is defined by setting
	\eqnn
	(\gamma' \circ \gamma)(t)
	=
	\begin{cases}
	\gamma(t) & t \leq t_0 \\
	\gamma'(t - t_0) & t \geq t_0.
	\end{cases}
	\eqnd
\end{construction}

\begin{notation}
Construction~\ref{construction. cP_j} defines a functor
	\eqnn
	\cP: \simp(B\diff(Q)) \to \Cat^{\Top},
	\qquad
	j \mapsto \cP_j
	\eqnd
which we denote (as indicated) by $\cP$. (It has the obvious effect on morphisms.)
Here, $\Cat^{\Top}$ is the category of categories enriched in topological spaces. For the notation $\simp(B\diff(Q))$, see Notation~\ref{notation. simp C oo}.
\end{notation}

\begin{remark}
Given any map $|\Delta_e^n| \to |\Delta_e^{n'}| \to B\diff(Q)$, the induced map $\cP_j \to \cP_{j'}$ is an equivalence\footnote{By an equivalence of topologically enriched categories, we mean an essentially surjective functors whose maps on morphism spaces are weak homotopy equivalences.} of topologically enriched categories; this follows by noting that the inclusion $j^*E_Q \to (j')^*E_Q$ is a homotopy equivalence.
\end{remark}

\begin{notation}[$\Tw C_* \cP$]\label{notation. Tw cP}
Recall that the functor $C_*$ sending a topological space $P$ to its singular chain complex $C_*P$ is lax monoidal. As a result, applying $C_*$ to the morphism spaces to a topologically enriched category $\cD$, we obtain a dg-category $C_* \cD$.

We let $C_* \cP_j$ denote the dg-category associated to $\cP_j$. We denote the composite functor
	\eqnn
	C_* \cP: \simp(B\diff(Q)) \xra{\cP} \Cat^{\Top} \xra{C_*} \dgcatt.
	\eqnd
We also denote by $\Tw C_* \cP$ the composite of $C_* \cP$ with the $\Tw$ functor (Recollection~\ref{recollection. A oo facts}\eqref{item. Tw}).
\end{notation}

\subsection{From Fukaya categories to local systems (the bundled Abouzaid functor)}
\label{subsec:familyAbouzaid}

Given any diffeomorphism $\phi: Q \to Q$, one has an induced exact symplectomorphism $\DD \phi: T^*Q \to T^*Q$ by pushing forward the effect of $\phi$ on cotangent vectors. Because this clearly respects the diffeological smooth structures, and there exist natural lifts of $\DD$ respecting the choices of $gr$ and $b=w_2(Q)$, we have an induced functor
	\eqnn
	\DD: \simp(B\diff(Q)) \to \simp(B\Liouautgrb(T^*Q)).
	\eqnd
Our present goal is to construct a natural transformation from $\cO \circ \DD$ to $\Tw C_* \cP$ (Proposition~\ref{prop. O to cP natural}).\footnote{The functor $\cO: \simp(B\Liouautgrb(T^*Q)) \to \Ainftycatt$ is from Proposition~\ref{prop. O functor}. } We do so by utilizing  a non-wrapped, family-friendly version of a construction we learned from Abouzaid's paper~\cite{abouzaid-loops}.

\begin{remark}
In~\cite{abouzaid-loops}, the author constructs a functor whose domain is a quadratically wrapped Fukaya category---i.e., one whose morphisms are defined by using quadratic Hamiltonians. Here, we instead use our non-wrapped Fukaya categories $\cO_j$ as the domain. Aside from this detail, the main ideas remain unchanged---in particular, the analytic input for the functor utilized in our present work is somewhat simpler.

And, as we show here, the construction carries through successfully when $M$ is a Liouville sector (not
necessarily a Liouville manifold). Let us remark that our bundle-version of the construction carries over to a setting in which the structure group of $\Liouaut(M)$ is reduced in such a way that every fiber admits a distinguished brane $Q$; such is the case we are in, as $\diff(Q)$ preserves the zero section of $T^*Q$.
\end{remark}

\begin{construction}[The Abouzaid functor on objects and morphisms]\label{construction. abouzaid functor on objects}
Fix a smooth map $j: |\Delta_e^n| \to B\diff(Q)$ and consider the associated fibration
$P_j= j^*B\diff(Q) \to |\Delta_e^n|$.
For an integer $a \in \{0,\ldots, n\}$, let $M_a$ be the fiber above the $a$th vertex of $|\Delta^e_n|$, and let $L_a \subset M_a$ be a brane. We assume $L_a$ intersects the zero section $Q_a \subset M_a$ transversally.
To $L_a$ we associate the following object of $\Tw C_*\cP(Q_a)$:
	\eqn\label{eqn. abouzaid functor object}
	\left(
	\bigoplus_{x_a \in L_a \cap Q_a} x_a[-|x_a|], D
	\right).
	\eqnd
That is, as a local system, the object is generated by free $R$-modules in degrees $|x_a|$, subject to a differential $D$. The differential is given as follows: Given two intersection points $x_a, x_a' \in L_a \cap Q_a$, we let
	\eqnn
	\overline{\cH}(x_a, x_a')
	\eqnd
be the compactified moduli space of (possibly broken) holomorphic sections
	\eqnn
	u: \RR \times [0,1] \to \RR \times [0,1] \times M_a
	\eqnd
with boundary on $L_a$ and $Q_a$, converging to $x_a$ and $x_a'$. Given a map $u$, consider the restriction of $u$ to the boundary line $\RR \times \{0\} \subset \RR \times [0,1]$ mapping to $Q_a$. The resulting map $\RR \to M_a$ admits an arc-length parametrization (we use the Riemannian metric induced by the choice of almost-complex structure on $M_a$), and in particular, any $u$ determines an arc-length parametrized path in $Q_a$. This induces a map from $\overline{\cH}(x_a, x_a')$ to the space of paths in $Q_a$ from $x_a$ to $x_a'$, and in particular a map of chain complexes
	\eqnn
	\cF^1: C_*\overline{\cH}(x_a, x_a') \to \hom_{\cP_j}(x_a,x_a').
	\eqnd
Then, one chooses fundamental chain classes for $\overline{\cH}$, and pushes them forward. These elements of the hom-complexes of $C_* \cP_j$ determine the differential $D$. We refer the reader to \cite[(2.27)]{abouzaid-loops} for details.
\end{construction}

\begin{construction}[The rest of the Abouzaid functor]\label{construction. abouzaid functor higher terms}
We now explain how the above assignment on objects extends to an $A_\infty$-functor $\cO_j \to \Tw C_*\cP_j$.

Fix an integer $d \geq 1$. For each $i \in \{0 , 1, \ldots, d\}$, choose also an integer $a_i \in \{0,\ldots,n\}$ and a brane $L_{a_i}$ above the $a_i$th vertex. The choices of $a_i$ determine a unique simplicial map from the $d$-simplex to the $n$-simplex (by sending the $i$th vertex to the $a_i$th vertex). Let us take a cone on this map---specifically, the map
	\eqnn
	|\Delta^{d+1}| \to |\Delta^n|,
	\qquad
	\begin{cases}
	i \mapsto a_i & i \in \{0,\ldots,d\}, \\
	i \mapsto a_{d} & i = d+1.
	\end{cases}
	\eqnd
Given generators $y_{i, i+1} \in \hom_{\cO_j}(L_{a_i},L_{a_{i+1}})$, we let
	\eqnn
	\cH(y_{0,1},\ldots,y_{(d-1),d}; Q_{a_d})
	\eqnd
denote the moduli space of holomorphic sections $u$
	\eqnn
	\xymatrix{
	j^*E|_{\cS_r} \ar[rr] & & j^* E \ar[d] \\
	\cS_r \subset \overline{\cS}_{d+1+1}^\circ \ar[r]^-{\nu} \ar@{-->}[u]^{u} & |\Delta^{d+1}| \to |\Delta^d| \ar[r] & |\Delta^n|
	}
	\eqnd
satisfying the following boundary conditions: On the strip like ends from the $i$th arc to the $(i+1)$st arc, $u$ converges to the parallel transport arc given by $y_{i, i+1}$, while the $(d+1)$st boundary arc---from the $(d+1)$st puncture to the 0th puncture---is constrained by the parallel transport boundary condition from $Q_{a_d}$ to $Q_{a_0}$. We note that $\nu$ is the same map as in Definition~\ref{defn. mu^d counting}.

Given such a $u$, one can measure the arclength of the restriction of $u$ to the $(d+1)$st boundary arc. (For example, by taking its {\em vertical} velocity; i.e., by using the fiberwise Riemannian metrics.) By the assumption of general position, an intersection $x_0 \in L_{a_0} \cap Q_{a_0}$ and $x_d \in L_{a_d} \cap Q_{a_d}$ are not related by parallel transport---we do not have triple intersections. So this guarantees that the vertically measured arclength is non-zero. (We point this out, as if the vertical velocity were zero the arc length parametrization would not be defined, and hence we would not have a map to $\hom_{\cP}$.)

As before we have a map from $\cH$ to the space of paths in $j^*E$ from $Q_{a_0}$ to $Q_{a_d}$, and this extends continuously to the compactification $\overline{\cH}$. Pushing forward fundamental chains, we obtain the desired $A_\infty$ functor maps.
\end{construction}

\begin{remark}
We note one subtlety in the construction; it is somewhat unnatural to utilize the cone map $|\Delta^{d+1}| \to |\Delta^n|$, as then verifying the $A_\infty$ functor relations forces us to use (for example) the fact that for $k \leq d$, the moduli space
	\eqnn
	\cH(y_{0,1},\ldots,y_{(k-1),k}; Q_{a_d})
	\eqnd
is cobordant to the moduli space
	\eqnn
	\cH(y_{0,1},\ldots,y_{(k-1),k}; Q_{a_k})
	\eqnd
so that the usual compactification of the moduli spaces yield the desired $A_\infty$ functor relations.
\end{remark}

\begin{notation}[The Abouzaid functor $\cF$.]\label{notation.Abouzaid cF}
We will denote by
	\eqnn
	\cF
	\eqnd
the $A_\infty$ functor defined in Constructions~\ref{construction. abouzaid functor on objects} and~\ref{construction. abouzaid functor higher terms}. We may write $\cF_j$ to denote the dependence on the simplex $j$, but will largely leave this dependence implicit.
\end{notation}

This construction clearly respects inclusions $|\Delta_e^n| \to |\Delta_e^{n'}|$. As such, we have:

\begin{prop}\label{prop. O to cP natural}
The non-wrapped Abouzaid construction $\cF$ induces a natural transformation
	\eqnn
        \xymatrix@C+2pc{
        \simp(B\diff(Q)) \rrtwocell^{\cO \circ \DD}_{\Tw C_*\cP}{\;\;\;} && \Ainftycatt.
        }
	\eqnd
\end{prop}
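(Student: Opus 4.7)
The plan is to show that for each morphism $\alpha \colon [n]\to [n']$ in $\simp(B\diff(Q))$, giving rise to a diagram $|\Delta_e^n| \to |\Delta_e^{n'}| \xra{j'} B\diff(Q)$ with $j = j' \circ \alpha$, the square
\eqnn
\xymatrix{
\cO_j \circ \DD \ar[r]^-{\cF_j} \ar[d]_{\alpha_*} & \Tw C_*\cP_j \ar[d]^{\alpha_*} \\
\cO_{j'} \circ \DD \ar[r]^-{\cF_{j'}} & \Tw C_*\cP_{j'}
}
\eqnd
strictly commutes as a diagram of $A_\infty$-functors. Since both vertical arrows are strict (they arise from identifications of fibers over vertices and inclusions of path spaces, with no higher homotopy terms, just as in Construction~\ref{construction. non wrapped functor} and the definition of $\cP$), a strict commutativity result yields a natural transformation in $\Ainftycatt$ on the nose, which is stronger than what the statement requires.

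The first step is to unpack the effect of $\alpha_*$ on both sides. On $\cO$, given an object $(i,L,w)$, it goes to $(\alpha(i),L,w)$ and the morphism isomorphisms come from identifying fibers as in Construction~\ref{construction. non wrapped functor}. On $\Tw C_* \cP$, the functor $\alpha_*$ comes from the inclusion $j^*E_Q \hookrightarrow (j')^*E_Q$, which induces an injection of path spaces and hence a fully faithful dg-functor $C_*\cP_j \to C_*\cP_{j'}$. Under these identifications, the object~\eqref{eqn. abouzaid functor object} assigned by $\cF_j$ to $L_a$ matches on the nose the object assigned by $\cF_{j'}$ to $\alpha_*(L_a)$, since $L_a \cap Q_a$ and the Maslov indices are unchanged.

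The key observation for the higher terms is Remark~\ref{remark. depends only on j o beta}: the moduli spaces $\overline{\cH}(y_{0,1},\ldots,y_{(d-1),d};Q_{a_d})$ used in Construction~\ref{construction. abouzaid functor higher terms} depend only on $j \circ \beta$, where $\beta \colon |\Delta^{d+1}| \to |\Delta^n|$ is determined by the choice of objects. Since $\alpha \circ \beta$ produces the analogous map into $|\Delta^{n'}|$ giving the \emph{same} composition $j' \circ \alpha \circ \beta = j \circ \beta$, the moduli space for $\cF_{j'}$ at $(\alpha_*L_{a_0},\ldots,\alpha_*L_{a_d})$ is canonically identified with the one for $\cF_j$. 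Boundary arc arclength parametrizations likewise match under the inclusion $j^*E_Q \hookrightarrow (j')^*E_Q$ of path spaces. So the $A_\infty$-functor coefficients agree provided the choices of fundamental chains on the compactified moduli spaces agree.

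The only real issue is thus coherence of auxiliary choices: one must pick fundamental chain representatives for each $\overline{\cH}$ simultaneously across all $j \in \simp(B\diff(Q))$ so that the pullback $\alpha^*$ of the chain chosen for $\cF_{j'}$ equals the chain chosen for $\cF_j$. This will be the main (and essentially only) obstacle, but it is the same kind of inductive-on-simplex-dimension issue already used in \cite{oh-tanaka-liouville-bundles} and in our inductive definition of $\cO_j$: one chooses fundamental chains first over the $0$-simplices of $\simp(B\diff(Q))$, and then extends to the $n$-simplices using the fact that the space of fundamental chain representatives on a compact oriented manifold-with-boundary is contractible (or uses an $\infty$-categorical lift via the fact that $\simp$ is a localization onto its classifying space). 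Once these choices are made compatibly, strict commutativity of the square holds by inspection, proving that $\cF$ assembles into the desired natural transformation from $\cO \circ \DD$ to $\Tw C_* \cP$.
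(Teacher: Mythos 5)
Your proof is correct and takes essentially the same route as the paper, whose own justification is simply the observation that the construction of $\cF$ ``clearly respects inclusions'' of simplices---i.e., precisely your point that the relevant moduli spaces depend only on the composite map to the base, so the square with the strict functors $\alpha_*$ commutes on the nose. Your explicit handling of the coherence of fundamental-chain (and auxiliary-data) choices, made inductively on simplex dimension, fills in a detail the paper leaves implicit but is consistent with its inductive framework inherited from the construction of $\cO_j$.
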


\subsection{The Abouzaid functor descends to the wrapped category}
Proposition~\ref{prop. O to cP natural} constructs a map from the non-wrapped, directed family of Fukaya categories to families of local system categories. To descend our construction to the {\em wrapped} setting, the main geometric result we must verify is the following.

\begin{theorem}\label{thm. continuation maps are equivalences of twisted complexes}
Let $M$ be a Liouville sector, and $c: L_0 \to L_1$ a continuation element associated to a non-negative isotopy. We also fix a compact test brane $X \subset M$. Then the map on twisted complexes
	\eqnn
	c_* : (X \cap L_0, D_0) \to (X \cap L_1, D_1)
	\eqnd
---induced by the Abouzaid functor from $\cO(M)$ to $\Tw C_*\cP(X)$---is an equivalence.
\end{theorem}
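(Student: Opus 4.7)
The plan is to reduce the non-negative isotopy $\{L_t\}_{t\in[0,1]}$ to a concatenation of small generic sub-isotopies and handle the two elementary cases that arise. Using that $\cF$ is an $A_\infty$-functor (so that $\cF^1(c_k \cdots c_1)$ agrees with $\cF^1(c_k) \circ \cdots \circ \cF^1(c_1)$ up to higher-order $\cF^{\geq 2}$ terms in $\Tw C_*\cP_j(X)$), and that equivalences are closed under composition and cone-taking, I would first subdivide $\{L_t\}$ into finitely many small non-negative sub-isotopies, each of exactly one of two types: (a) $L_t \pitchfork X$ throughout the sub-isotopy, so that $L_0 \cap X \to L_1 \cap X$ is a canonical bijection obtained by tracking intersection points along short embedded paths $\gamma_x \subset X$; or (b) a single transversal birth-death bifurcation with $X$, creating (or destroying) one nearby pair $\{x_-,x_+\}$ of intersection points.

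In case (a), a parametric computation of the moduli spaces $\overline{\cH}$ defining $\cF$ along the family identifies the $(x,\tilde x)$-component of $c_*$ with the class $[\gamma_x] \in C_*\hom_{\cP_j}(x,\tilde x)$, up to contributions of strictly smaller tracking length. Because $D_0$ and $D_1$ are related by the continuous deformation of the same Abouzaid moduli spaces along the isotopy, an inspection of the filtration by tracking length (or equivalently, a direct cone computation) shows that $c_*$ is the explicit chain equivalence implementing this deformation.

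In case (b), I would work in a Darboux chart around the bifurcation locus, modeling $M$ locally as $T^*\RR^n$ with $X$ the zero section and $L_t$ the graph of $df_t$ for a standard saddle-node family $f_t: \RR^n \to \RR$. Outside the chart, $c_*$ acts by the tracking identification of case (a). Inside the chart, a parametric rescaling/gluing argument, analogous to the local surgery computations underlying Abouzaid's original functor in~\cite{abouzaid-loops}, identifies the $x_+$-coefficient of $D_1(x_-)$ with the class of the constant path, exhibiting $\{x_-,x_+\}$ as an acyclic direct summand of $\cF(L_1)$. Combined with the transverse analysis outside the chart, the cone of $c_*$ becomes quasi-isomorphic to this acyclic summand, hence is contractible, and $c_*$ is an equivalence.

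The main obstacle is the local analysis in case (b): extending the standard birth-death surgery model to a family moving with $t$, and controlling the relevant holomorphic disks tightly enough to produce precisely the constant (i.e., null-homotopic) path as the leading contribution to $D_1(x_-)$, rather than a non-trivial loop in $X$. This will require a careful Gromov-compactness and gluing argument in a family, leveraging the compactness of the test brane $X$ to obtain uniform energy and $C^0$ bounds on the disks coalescing at the bifurcation moment.
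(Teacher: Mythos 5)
Your plan is a genuinely different route from the paper's. The paper never subdivides the isotopy or analyzes bifurcations: it first glues the once-punctured continuation disk into the Abouzaid triangle moduli space to identify $\cF^1(c^\chi_\cL)$ with a single count of continuation strips with moving boundary condition and fixed boundary on $X$ (Lemma~\ref{lemma. concatenated moduli is continuation moduli} and Remark~\ref{remark. D2minus moduli is trip moduli}); it then proves a uniform energy bound (Lemma~\ref{lem:action-diff}) and a $C^0$ confinement via the maximum principle, which allow the wrapping isotopy $\cL$ to be cut off outside a large compact set without changing the resulting class (Proposition~\ref{prop:evcL=evcL'}); finally, since the modified isotopy ${\cL'}$ is reversible by a \emph{compactly supported} isotopy, the strip count for the reverse isotopy supplies explicit left and right homotopy inverses. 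This avoids genericity/Cerf arguments, local surgery models, and Gaussian elimination altogether, and it is where the real analytic content of the paper's proof lives: the difficulty in the Liouville-sector setting is controlling curves under a boundary condition that wraps at infinity, and your subdivision does not remove that difficulty (each small piece still moves $L$ non-negatively at infinity, so estimates of the type of Lemma~\ref{lem:action-diff} are still needed piecewise).

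Beyond that, your sketch has genuine gaps at its key step. In case (b), ``exhibiting $\{x_-,x_+\}$ as an acyclic direct summand'' is not what you can conclude from the local model: what is needed is Gaussian elimination in $\Tw C_*\cP(X)$, which requires the $x_+$-coefficient of $D_1(x_-)$ to be \emph{invertible} in $H_0$ of the space of paths from $x_-$ to $x_+$ (after identification, essentially the group ring of $\pi_1(X)$). The Darboux-chart computation only controls strips contained in the chart; strips between $x_-$ and $x_+$ leaving the chart could contribute nontrivial loop classes, and ``constant path plus other group elements'' need not be invertible. Ruling these out requires an action-gap plus monotonicity/energy-confinement argument (all strips between the birth pair have energy equal to the small action difference, hence stay near the chart), which is absent from your outline and is not merely a refinement of the local gluing you flag as the main obstacle. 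Similarly, in case (a), ``filtration by tracking length'' is not a filtration that the Floer moduli spaces respect; transversality of $L_t\pitchfork X$ throughout the piece is not enough to prevent births and deaths of the \emph{strips} defining $D$ and $c_*$, which is exactly the delicate bifurcation analysis one usually replaces by continuation maps. The workable version is to subdivide until each piece is $C^1$-small and use the action filtration together with an energy identity to show $c_*$ is upper triangular with invertible (short-path) diagonal entries. Finally, your reduction tacitly uses that the continuation element of a concatenation is $\mu^2$ of the pieces' continuation elements up to homotopy, and that the isotopy can be perturbed rel endpoints to a Cerf-generic one without changing $[c]$; both are true but should be stated, since they are load-bearing for the whole scheme.
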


The theorem immediately implies:

\begin{cor} Let $M = T^*Q$ and $X = Q$. Then the map on twisted complexes
	$
	c_* : (Q \cap L_0, D_0) \to (Q \cap L_1, D_1)
	$
is an equivalence.
\end{cor}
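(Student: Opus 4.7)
The plan is to exhibit $c_*$ as a homotopy equivalence in $\Tw C_*\cP(X)$ by constructing an explicit homotopy inverse built from the geometry of the isotopy $\{L_t\}_{t\in[0,1]}$ underlying $c$. The key point is that paths in $X$ are invertible morphisms in the Moore-path category $\cP(X)$, and hence in $C_*\cP(X)$, even though the reverse isotopy in $M$ is non-positive and so fails to define a continuation element in $\cO(M)$. The overall strategy is a parametric degeneration: rather than composing the two separate holomorphic curve counts (one defining $c$ as a cycle in $CF^*(L_0,L_1)$, and another defining the Abouzaid matrix coefficients of $c_*$), one merges them into a single one-parameter family of moduli spaces indexed by the isotopy parameter.

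Concretely, for each pair $(x_0, x_1) \in (L_0 \cap X) \times (L_1 \cap X)$ I would form a parametric moduli space $\cN_s$ over $s \in [0,1]$ of holomorphic strips $u : \RR \times [0,1] \to M$ with boundary on $X$ at $t=0$ and on $L_s$ at $t=1$, converging to $x_0$ at one end and to $x_1$ at the other. For generic data this is a smooth manifold with boundary whose codimension-one ends include: strip-breaking at $s=0,1$ producing the moduli defining $D_0$ and $D_1$, and a degeneration at intermediate $s$ in which a punctured disk (defining $c$) bubbles off from a remaining strip (defining the matrix coefficient of $c_*$ at $(x_0, x_1)$). Taking arc-length along the $X$-boundary and pushing forward fundamental chains, this yields the chain-level identity showing that $c_*$ is homotopic in $\Tw C_*\cP(X)$ to an explicit geometric ``trace'' morphism: the coefficient at $(x_0, x_1)$ records the trajectory in $X$ traced out by an intersection of $L_s \cap X$ as it evolves under the isotopy, together with additional summands from generic Morse-type birth-death events.

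One then exhibits a homotopy inverse to this trace morphism. On the ``diagonal'' (intersections that persist throughout the isotopy) the coefficient is a path in $X$, which is invertible in $C_*\cP(X)$ by reversing the path. A birth-death of an intersection pair contributes a strictly contractible summand---a cone on an invertible path---which is equivalent to zero in $\Tw$ and hence does not obstruct invertibility. For a general non-negative isotopy, one subdivides it into small pieces along which the above explicit description applies and composes the resulting inverses, invoking the $A_\infty$-functoriality of the Abouzaid construction on concatenations.

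The hardest step will be the parametric moduli space analysis: one needs compactness and transversality for the one-parameter family of strips with the non-negative moving boundary condition, refining the estimates of \cite{oh-tanaka-liouville-bundles} to this parametric setting, and one must carefully match the bubbling combinatorics at codimension one with the formula for $\cF^1$ from Construction~\ref{construction. abouzaid functor higher terms} (which itself involves a disk with three boundary punctures, so the identification is not a direct isomorphism of moduli spaces but must be established through a gluing argument). The argument must also respect the barrier at $\del M$ so that it applies to arbitrary Liouville sectors, not just Liouville manifolds.
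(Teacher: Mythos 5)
First, note that in the paper this corollary has no independent proof: it is the immediate specialization (take $M=T^*Q$, $X=Q$) of Theorem~\ref{thm. continuation maps are equivalences of twisted complexes}, so the real comparison is with the paper's proof of that theorem. Your sketch attempts an independent, bifurcation-style argument, and it has genuine gaps.

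The first gap is in the parametric moduli space itself. As you define $\cN_s$ (strips with boundary on $X$ at $t=0$ and on the \emph{fixed} Lagrangian $L_s$ at $t=1$), the asymptotics necessarily lie on $X\cap L_s$, not at $x_0\in X\cap L_0$ and $x_1\in X\cap L_1$; and the once-punctured disk defining $c$ carries the \emph{moving} boundary condition over the whole isotopy, so it cannot appear as a bubble in a family in which each member has the static boundary condition $L_s$. The degeneration you want---continuation disk glued to the three-punctured Abouzaid disk versus a single strip-like object---is governed by a gluing/elongation parameter, not by the isotopy parameter $s$; this is exactly the content of Lemma~\ref{lemma. concatenated moduli is continuation moduli} and Remark~\ref{remark. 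D2minus moduli is trip moduli}, where $\cF^1(c^\chi_{\cL})$ is identified with the count of half-disks (equivalently, strips with moving boundary $L_{\rho(1-\tau)}$). So the codimension-one strata you list for $\cN_s$ are not the ones that occur, and the asserted chain-level identity does not follow from the family as described. Moreover, the ``trace morphism'' picture over $s$ is a bifurcation analysis that omits handle slides (isolated index-zero strips at interior values of $s$), which conjugate the twisted-complex differentials and contribute off-diagonal terms; without accounting for these, the claimed explicit description of $c_*$ up to homotopy is incomplete.

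The second gap is compactness, which is not a matter of ``refining the estimates'' but is where the paper's main work lies. For a non-negative isotopy in a Liouville sector the Lagrangian escapes to infinity, and nothing a priori confines the moving-boundary strips or bounds their energy uniformly in the parameter. The paper's proof handles this by (i) the energy identity and uniform bound of Lemma~\ref{lem:action-diff} together with the maximum-principle confinement to a compact $K\supset X$, (ii) deforming $\cL$ to an isotopy $\cL'$ that agrees with $\cL$ near $K$ but is constant outside a larger region, with the chain homotopy $[S]=[S']$ established by the parametrized moduli argument of Proposition~\ref{prop:evcL=evcL'}, and (iii) obtaining a two-sided homotopy inverse from the \emph{compactly supported} isotopy returning $L'(1)$ to $L_0$. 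Your alternative mechanism for invertibility---reversed paths being invertible in $\cP(X)$ plus ``birth-death cones are contractible''---is meant to bypass step (iii), but it is precisely the unproved part: to make it rigorous you would need the uniform parametric energy bounds, the handle-slide bookkeeping, and a filtration argument showing the trace morphism is triangular with invertible diagonal, none of which is supplied. The concatenation/subdivision step at the end is fine in principle (it follows from $\cF$ being an $A_\infty$-functor and compatibility of continuation elements with $\mu^2$), but as it stands the core of the argument would not go through without importing essentially the machinery the paper builds.
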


By the universal property of localization and Lemma~\ref{lemma. localize however}, we also conclude:

\begin{cor}\label{cor. natural transformation}
The natural transformation from Proposition~\ref{prop. O to cP natural} induces a natural transformation from $\cW\circ \DD$ to $\Tw C_* \cP$:
	\eqnn
        \xymatrix@C+2pc{
        \simp(B\diff(Q)) \rrtwocell^{\cW \circ \DD}_{\Tw C_*\cP}{\;\;\;} && \Ainftycat
        }
	\eqnd
By the universal property of $\Tw$, this in turn induces a natural transformation $\Tw \cW  \circ \DD\to \Tw C_* \cP$.
\end{cor}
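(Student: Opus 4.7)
The plan is to descend the natural transformation $\cF: \cO \circ \DD \Rightarrow \Tw C_*\cP$ of Proposition~\ref{prop. O to cP natural} through the pointwise localization $\cO \circ \DD \to \cW \circ \DD$, and then to extend the resulting natural transformation by the universal property of $\Tw$. The only genuinely geometric input is that $\cF$ sends the localizing class $C$ of Definition~\ref{defn. C and CPi} to equivalences in $\Tw C_*\cP$ --- this is the content of Theorem~\ref{thm. continuation maps are equivalences of twisted complexes} --- and everything else is $\infty$-categorical formalism.

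First I would check, for each smooth simplex $j: |\Delta_e^n| \to B\diff(Q)$ and each continuation map $c \in C \subset \cO_j$, that $\cF_j(c)$ is an equivalence in $\Tw C_*\cP_j$. By Definition~\ref{defn. C and CPi}, an element of $C$ is a non-negative continuation between successive wrappings of a single brane inside a single fiber $E_{b_i}$, and the restriction of the bundled Abouzaid functor $\cF_j$ to morphisms living in that fiber is precisely the single-fiber Abouzaid functor to $\Tw C_*\cP(Q_{b_i})$ considered in Theorem~\ref{thm. continuation maps are equivalences of twisted complexes}. Hence the theorem applies fiberwise and yields the desired equivalence. (Lemma~\ref{lemma. localize however} shows automatically that $\cF_j$ also inverts the larger class $C_\Pi$, but this is not needed here.) By the universal property of $A_\infty$-localization (Recollection~\ref{recollection. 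A oo facts}\eqref{item. localization universal property}), each $\cF_j$ then factors, uniquely up to a contractible space of choices, as $\cO_j \to \cW_j \xrightarrow{\tilde\cF_j} \Tw C_*\cP_j$.

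To assemble the family $\{\tilde\cF_j\}$ into an $\infty$-natural transformation $\tilde\cF: \cW\circ\DD \Rightarrow \Tw C_*\cP$, I would apply the universal property of $A_\infty$-localization inside the functor $\infty$-category $\Fun(N(\simp(B\diff(Q))),\Ainftycat)$. The pointwise localization itself assembles, by Proposition~\ref{prop. W functor}, into a natural transformation $\pi: \cO\circ\DD \Rightarrow \cW\circ\DD$, and since the property in Recollection~\ref{recollection. A oo facts}\eqref{item. localization universal property} is a statement about mapping spaces, it is preserved under the cotensor $\Fun(N(\simp(B\diff(Q))),-)$. Hence pre-composition with $\pi$ induces an inclusion of connected components $\hom(\cW\circ\DD,\Tw C_*\cP) \hookrightarrow \hom(\cO\circ\DD,\Tw C_*\cP)$ whose essential image consists of those natural transformations that pointwise invert $C$. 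The previous paragraph places $\cF$ in this essential image, producing $\tilde\cF$ together with a homotopy $\tilde\cF\circ\pi \simeq \cF$. Finally, because each $\Tw C_*\cP_j$ already admits all mapping cones, the universal property of $\Tw$ as the free pretriangulated completion extends $\tilde\cF_j$ uniquely to $\Tw\tilde\cF_j: \Tw\cW_j \to \Tw C_*\cP_j$, naturally in $j$, giving the asserted natural transformation $\Tw\cW\circ\DD \Rightarrow \Tw C_*\cP$. The main obstacle in the argument is the pointwise invertibility assertion, and that obstacle is exactly what Theorem~\ref{thm. continuation maps are equivalences of twisted complexes} removes; the promotion of pointwise factorizations to a coherent $\infty$-natural transformation is a standard and purely formal consequence of the universal property of $A_\infty$-localizations.
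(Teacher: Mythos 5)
Your proposal is correct and takes essentially the same route as the paper: apply Theorem~\ref{thm. continuation maps are equivalences of twisted complexes} fiberwise to see that each $\cF_j$ inverts the localizing class $C$, then descend via the universal property of $A_\infty$-localization (objectwise, assembled over $\simp(B\diff(Q))$) and extend via the universal property of $\Tw$. The only difference is that the paper additionally cites Lemma~\ref{lemma. localize however}, which, as you correctly observe, is not strictly required for this corollary since $\cW_j$ is by definition $\cO_j[C^{-1}]$.
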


\begin{remark}
The functors  $\cW \circ \DD$ and $C_* \cP$ both send morphisms of $\simp(B\diff(Q))$ to equivalences in $\Ainftycat$. Thus they both induce a functor
	\eqnn
	\sing(B\diff(Q)) \simeq \BB \diff(Q) \to \Ainftycat
	\eqnd
by Recollection~\ref{recollection. smooth approximation}\eqref{item. localization of BLiou}. In particular, these exhibit $\diff(Q)$ actions on both $\cW(T^*Q)$ and $C_*\cP(Q)$. Corollary~\ref{cor. natural transformation} says that the map $\Tw\cW(T^*Q) \to \Tw C_* \cP(Q)$ is $\diff(Q)$-equivariant.
\end{remark}

\subsubsection{Toward a proof of Theorem~\ref{thm. continuation maps are equivalences of twisted complexes}}

Because Theorem~\ref{thm. continuation maps are equivalences of twisted complexes} is a statement contained in single fiber of a Liouville bundle (and in particular, is a statement about a single Liouville sector), we now work in a Liouville sector $M$. We will also fix a compact test brane $X \subset M$; this $X$ plays the role of the zero section $Q$ above.
For simplicity of exposition (and without loss of generality), we
will assume that $X$ is connected.

We fix a non-negative Hamiltonian isotopy
	\eqnn
	L(t): = \phi_F^t(L_0),
	\qquad L(0) = L_0,
	\qquad L(1) = L_1
	\eqnd
so that $L_0$ and $L_1$ are transverse to $X$.
By definition, the Hamiltonian vector field $X_F$ is, outside some compact region, equal to $\beta Z$ for some constant $\beta \geq 0$ (Definition~\ref{defn. non negative wrapping}).

\begin{notation}
[$n_\cL^\chi(y)$ and $c^\chi_{\cL}$]
\label{notation. c(cL)}
Let $L(t), \, t \in [0,1]$ be a Hamiltonian
isotopy generated by a Hamiltonian $F$ such that $F(r,y) = \theta r$
on the cylindrical regions of $L_0$ and $L_1$ for a positive constant $\theta > 0$.

In what follows, we denote by $c^\chi_{\cL} \in CF(L_0,L_1)$ the continuation element induced by the isotopy $\cL$ and a choice of elongation function $\chi$. $c^\chi_{\cL}$ is obtained by counting holomorphic disks with one boundary puncture, and with moving boundary condition dictated by $\chi$ and $\cL$. See Figure~\ref{figure. continuation disk} and Section~\ref{section. continuation maps}.
We let $n_\cL^\chi(y)$ denote the number of such disks with output $y$.
\end{notation}

The idea of the proof of Theorem~\ref{thm. continuation maps are equivalences of twisted complexes} is to replace the given isotopy $\cL = \{L(t)\}_{t \in [0,1]}$ by another isotopy ${\cL'} = \{L'(t)\}_{t \in [0,1]}$ satisfying the following properties:
\enum[(i)]
\item \label{item. isotopy doesnt change twisted complex} The isotopy preserves the twisted complexes $(X \cap L_0, D_0)$ and $(X \cap L_1, D_1)$---that is, $(X \cap L(t),D') = (X \cap L'(t),D')$ for $t=0,1$.
\item \label{item. isotopy doesnt change cF^1} $\cF^1(c^{\chi}_{{\cL'}})$  is homotopic to $\cF^1(c^{\chi}_{\cL})$ (these maps have the same domain and codomain in light of \eqref{item. isotopy doesnt change twisted complex}), and
\item \label{item. isotopy is compactly supported} $L'(1)$ can be isotoped back to $L(0)$ via a {\em compactly supported} Hamiltonian isotopy.
\enumd
Property \eqref{item. isotopy is compactly supported} allows us to prove that the map $\cF^1(c^{\chi}_{{\cL'}})$ admits a homotopy inverse, essentially constructed by counting continuation strips induced by the (compactly supported!) reverse isotopy of ${\cL'}$. Theorem~\ref{thm. continuation maps are equivalences of twisted complexes} will then follow from \eqref{item. isotopy doesnt change cF^1} .

By tracing through the definition of the Abouzaid functor in Construction~\ref{construction. abouzaid functor on objects}, we see that the map
$
\cF^1: CF^*(L_0,L_1) \to \Tw C_*\cP(X)
$
is given by
	$$
\cF^1([y]) = \bigoplus_{x, x'} (-1)^{|y| + (|x|+1)(|y| + |x'|)}
\ev_*([\overline{\cH}(x,y,x')])
	$$
for $y \in \cX(L_0, L_1)$ and $x\in X \cap L_0$, $x' \in X \cap L_1$. (See \cite[(2.27)]{abouzaid-loops} for more details on this formula; here, $[\overline{\cH}(x,y,x')]$ is a choice of fundamental class for $\overline{\cH}(x,y,x')$.)
The map $c_*: (X \cap L_0, D_0) \to (X \cap L_1 ,D_1)$ is defined by applying $\cF^1$ to the continuation element $c^\chi_{\cL}$, so to prove Theorem~\ref{thm. continuation maps are equivalences of twisted complexes}, we must prove that $\cF^1 (c^\chi_{\cL})$ is an equivalence in
$\Tw C_*\cP(X)$.

By the definition of $c^\chi_{\cL}$ (Notation~\ref{notation. c(cL)}), we have
\begin{eqnarray}
\cF^1 (c^\chi_{\cL}) & = &\cF^1\left(\sum_{y \in L_0 \cap L_1; |y| = 0} n_\cL^\chi(y) \langle y \rangle\right)\nonumber \\
& = &\sum_{y \in L_0 \cap L_1; |y| = 0} \bigoplus_{x, x'}(-1)^{|y| + (|x|+1)(|y| + |x'|)}n_\cL^\chi(y)
 \ev_*([\overline{\cH}(x,y,x')])\nonumber\\
& = &  \bigoplus_{x, x'}(-1)^{|y| + (|x|+1)(|y| + |x'|)} \ev_*\left(\sum_{y \in L_0 \cap L_1; |y| = 0}n_\cL^\chi(y)
 [\overline{\cH}(x,y,x')]\right). \label{eqn. F1 definition worked out}
 \end{eqnarray}

\begin{figure}[ht]
    \eqnn
			\xy
			\xyimport(8,8)(0,0){\includegraphics[width=2in]{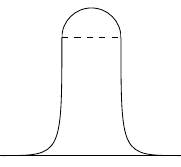}}
					,(4,8)*{\cL}
					,(4,5.8)*{y}
					,(5.8,3.4)*{L_0}
					,(2.3,3.4)*{L_1}
					,(7,0.8)*{x}
					,(1.1,0.8)*{x'}
					,(4,0.2)*{X}
			\endxy
    \eqnd
    \begin{figurelabel}\label{figure.disk-strip-glued}
    The $\cF^1$ term of the Abouzaid functor applied to a continuation element.
    \end{figurelabel}
\end{figure}

We first introduce the following notation.
\begin{notation}
Let $\cM(D^2 \setminus \{z_0\} ; \cL^\chi, y)$ be the moduli space counting once-punctured disks with out put $y$ and boundary dictated by $\cL$ and $\chi$, as in Notation~\ref{notation. c(cL)}. Then let
    \eqn\label{eqn. fiber product hashtag M Hbar}
    \cM(D^2 \setminus \{z_0\} ; \cL^\chi, y) \# \overline{\cH}(x,y,x'):=
    \cM(D^2 \setminus \{z_0\} ; \cL^\chi, y) {}_{\mathop{ev}_{z_0}} \times_{\mathrm{ev}_{z_1}} \overline{\cH}(x,y,x').
    \eqnd
\end{notation}

Then by definition, we have the following
(see Figure~\ref{figure.disk-strip-glued}.):

\begin{prop}
	\eqn\label{eqn. F^1 of c computation 1}
\ev_*\left(\sum_{y \in L_0 \cap L_1; |y| = 0}n_\cL^\chi(y)
[\overline{\cH}(x,y,x')] \right) =
\ev_*\left(
	\sum_{y \in L_0 \cap L_1; |y| = 0}
	[\cM(D^2 \setminus \{z_0\} ; \cL^\chi, y) \# \overline{\cH}(x,y,x')]
	\right).
	\eqnd
\end{prop}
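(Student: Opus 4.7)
The plan is essentially to unwind the definition of the fiber product in \eqref{eqn. fiber product hashtag M Hbar}, using the fact that for $|y|=0$ the continuation moduli space is rigid. First I would observe that when $|y|=0$, the moduli space $\cM(D^2 \setminus \{z_0\}; \cL^\chi, y)$ is a compact $0$-dimensional manifold (compactness follows from the usual energy/$C^0$ estimates recalled in Recollection~\ref{recollection. continuation maps}), hence a finite signed set. By the very definition of $n_\cL^\chi(y)$ in Notation~\ref{notation. c(cL)}, the signed cardinality of this set is $n_\cL^\chi(y)$.

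Next I would examine the two evaluation maps appearing in the fiber product. The map $\ev_{z_0}: \cM(D^2 \setminus \{z_0\}; \cL^\chi, y) \to M$ records the asymptotic value at the (unique) boundary puncture, which by construction is the time-$0$ chord $y \in L_0 \cap L_1$. Similarly, $\ev_{z_1}: \overline{\cH}(x,y,x') \to M$ records the asymptotic value at the $y$-puncture, which is also $y$. Both maps therefore factor through the single point $\{y\}$, so the fiber product collapses to an ordinary Cartesian product:
\eqnn
\cM(D^2 \setminus \{z_0\}; \cL^\chi, y) \# \overline{\cH}(x,y,x') \;=\; \cM(D^2 \setminus \{z_0\}; \cL^\chi, y) \times \overline{\cH}(x,y,x').
\eqnd

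Since the first factor is a finite signed set of total count $n_\cL^\chi(y)$, the product is a disjoint union of $n_\cL^\chi(y)$ signed copies of $\overline{\cH}(x,y,x')$. Choosing fundamental chains compatibly, we obtain the chain-level identity
\eqnn
[\cM(D^2 \setminus \{z_0\}; \cL^\chi, y) \# \overline{\cH}(x,y,x')] \;=\; n_\cL^\chi(y)\cdot [\overline{\cH}(x,y,x')].
\eqnd
Applying $\ev_*$ and summing over those $y \in L_0 \cap L_1$ with $|y|=0$ yields the desired equality~\eqref{eqn. F^1 of c computation 1}.

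The only point requiring a bit of care is sign bookkeeping: one must check that the product orientation on the fiber product (induced by the brane data and the chosen coherent orientations on the moduli spaces) matches the sign appearing in the definition of $n_\cL^\chi(y)$. This is a standard verification in the style of Seidel's sign conventions and presents no essential obstacle; beyond this, the proposition is a tautological restatement of the definition of the fiber product, which is why the paper flags it as holding ``by definition.''
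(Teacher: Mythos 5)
Your argument is correct and coincides with the paper's treatment: the paper records this identity as holding essentially by definition, precisely because both evaluation maps in the fiber product are constant at the chord $y$, so the fiber product collapses to a product of $\overline{\cH}(x,y,x')$ with the rigid, signed count defining $n_\cL^\chi(y)$, exactly as you spell out. The only content beyond unwinding the definition is the sign/orientation bookkeeping you flag, which poses no difficulty.
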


Our next goal is to prove that the righthand side of \eqref{eqn. F^1 of c computation 1} is equal to a chain obtained by pushing forward the fundamental class of a different moduli space. Let us define the moduli space.
\begin{notation}
We denote:
\begin{itemize}
\item The lower semi-disc by
	$$
D^2_-= \{z \in D^2 \mid \text{Im } z \leq 0 \}
	$$
\item by
	\eqnn
	\del_+ D^2_- \cong [-1,1]
	\eqnd
	the part of the boundary of $\del D^2_-$ with $\text{Im }z = 0$, and
\item we write
	\eqnn
	\del_- D^2_- = \del D^2_- \setminus\text{Int } \del_+ D^2_- = \{|z|=1 \, \& \, \text{Im}z \leq 0 \}
	\eqnd
	where
$\text{Int } \del_+ D^2_- \cong (-1,1)$ under the above identification.
\end{itemize}
\end{notation}
\begin{notation}[$\cM(D^2_- ; \cL^\chi, x,x')$]\label{notation. M D minus}
We consider the equation
	\eqn\label{eq:onD2-}
\begin{cases}
u: D^2_- \to M \\
\delbar_J u = 0, \\
\int_{D^2_-} |du|^2 < \infty,\\
u(z) \in X & \text{for } \, z \in \del_- D^2_-\\
u(z) \in L_{\chi(z)}  & \text{for } \, z \in \del_+ D^2_-
\end{cases}
	\eqnd
where $\chi: {\del_+D^2_-} \to [0,1]$ is a monotone function such that $\chi(z) = 1$ and $\chi(z) =0$ near
$-1, \, 1 \in  {\del_+ D^2_-}$ respectively. We let
	\eqnn
	\cM(D^2_- ; \cL^\chi, x,x')
	\eqnd
denote the moduli space of solutions to \eqref{eq:onD2-} for which $u$ limits to $x \in X \cap L_0$ and $x' \in X \cap L_1$ in the obvious way.
\end{notation}
\begin{lemma}\label{lemma. concatenated moduli is continuation moduli}
For all $x, x'$, we have that
	\eqnn
	\ev_*\left(
	\sum_{y \in L_0 \cap L_1; |y| = 0}
	[\cM(D^2 \setminus \{z_0\} ; \cL^\chi, y) \# \overline{\cH}(x,y,x')]
	\right)
	=
	\ev_*\left([
	\cM(D^2_- ; \cL^\chi, x,x')]
	\right].
	\eqnd
\end{lemma}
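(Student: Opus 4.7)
\smallskip

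The plan is to view this as a standard degeneration/gluing statement: the moduli space $\cM(D^2_-; \cL^\chi, x, x')$ on the right-hand side is an ``unbroken'' model whose natural compactification, in a neck-stretching limit, decomposes as the fiber product appearing on the left-hand side. Concretely, a solution $u: D^2_- \to M$ limits, under appropriate stretching of the elongation profile $\chi$ near its transition region on $\del_+ D^2_-$, to a configuration consisting of a once-punctured disk with moving boundary $\cL^\chi$ and output at some $y \in L_0 \cap L_1$, glued at $y$ to a holomorphic map $D^2 \to M$ with boundary conditions on $L_0, L_1, X$ converging to $y$, $x$, $x'$---which is exactly a point of $\cM(D^2 \setminus \{z_0\}; \cL^\chi, y) \,\#\, \overline{\cH}(x,y,x')$.

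To make this rigorous, I would introduce a one-parameter family of elongation functions $\chi_R: \del_+ D^2_- \to [0,1]$, parametrized by $R \in [0,\infty]$, so that $\chi_0 = \chi$ and, for $R$ large, $\chi_R$ has a ``neck'' of length $R$ in which it is approximately constant at some generic value $t_0$. Define the parametric moduli space
\eqnn
\cM^{[0,\infty)}(x,x') := \coprod_{R \in [0,\infty)} \cM(D^2_-; \cL^{\chi_R}, x, x'),
\eqnd
equipped with the obvious projection to $[0,\infty)$. Its fiber over $R=0$ is the right-hand side of the claimed equality. Standard energy bounds (uniform in $R$, by non-negativity of the isotopy and the barrier condition of the Liouville sector) and Gromov compactness show that $\cM^{[0,\infty)}$ admits a compactification $\cM^{[0,\infty]}$ whose fiber over $R = \infty$ is the disjoint union
\eqnn
\coprod_{y \in L_0 \cap L_1,\,|y|=0} \cM(D^2 \setminus \{z_0\}; \cL^\chi, y) \,\#\, \overline{\cH}(x, y, x'),
\eqnd
by the usual bubbling analysis at a strip-like neck---no interior or boundary bubbling can occur at $y$ because $|y| = 0$ and the relevant dimension counts forbid extra components. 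A standard gluing theorem (in the spirit of Abouzaid--Seidel and Floer--Hofer--Salamon) shows that near each point of the $R=\infty$ stratum, $\cM^{[0,\infty]}$ is a topological manifold with corners, and that the boundary stratum at $R=\infty$ is transversally cut out.

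The evaluation map $u \mapsto (u|_{\del_- D^2_-})$ arc-length parametrized extends continuously to $\cM^{[0,\infty]}$: on the broken configuration, the once-punctured disk component does not touch $X$, so the arc-length parametrized path in $X$ comes entirely from the $\overline{\cH}$-component. With this in hand, a compatible choice of fundamental chains for the moduli spaces involved (which we can make by induction on dimension, as usual in the construction of the Abouzaid functor---cf.\ Construction~\ref{construction. abouzaid functor on objects}) gives a fundamental chain $[\cM^{[0,\infty]}(x,x')]$ whose boundary under $\ev_*$ equals (up to signs built into our sign conventions) the difference of the two chains in the lemma. Since we are free to choose the fundamental chain on $\cM(D^2_-; \cL^\chi, x, x')$ so that this cobordism yields literal equality of pushed-forward chains in $\hom_{\cP_j}(x, x')$---rather than merely a chain homotopy---we obtain the lemma.

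The main technical obstacle is the gluing/compactness step at $R = \infty$, since the neck being stretched lies on a boundary arc carrying a moving Lagrangian boundary condition rather than a standard Floer-type strip-like end. This requires adapting the gluing analysis to the setting of elongation-function degenerations, and verifying that energy escapes only through the expected once-punctured-disk bubble. Everything else is bookkeeping, aided by the fact that the lemma ultimately only needs an equality of homology classes of pushforwards for the subsequent invertibility argument in the proof of Theorem~\ref{thm. continuation maps are equivalences of twisted complexes}.
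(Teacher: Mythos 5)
Your overall strategy is the same cobordism argument the paper uses, just run in the opposite direction: the paper starts from the broken side, glues the two components of the fiber product along the strip-like end at $y$ (where the boundary conditions are the \emph{constant} Lagrangians $L_0$, $L_1$) to get solutions on the grafted domain $\Theta_-\#_r Z$ for large $r$, and then deforms that domain-with-boundary-data to $(D^2_-,\cL^\chi)$ through a compact family, producing a one-dimensional moduli space having the fiber product as part of its boundary. Reading that family backwards is exactly a degeneration of $(D^2_-,\cL^\chi)$, so in principle your ``stretch and identify the limit fiber'' plan can be made to agree with the paper's proof.

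However, the specific interpolating family you write down does not degenerate to the configurations on the left-hand side, and this is the heart of the lemma. If $\chi_R$ is approximately \emph{constant at a generic value $t_0$} along a long neck, then in the strip model (Remark on the conformal equivalence $D^2_-\setminus\{\pm1\}\cong\RR\times[0,1]$) you have a long region whose two boundary lines carry $X$ and the fixed Lagrangian $L_{t_0}$; as $R\to\infty$ such solutions break along intersection points (or chords) of $X$ with $L_{t_0}$, yielding two components each with boundary on $X$ --- not a once-punctured disk carrying the whole isotopy attached at some $y\in L_0\cap L_1$ to an element of $\overline{\cH}(x,y,x')$. (Your insistence on a ``generic $t_0$'' suggests you are implicitly anticipating exactly this $X\cap L_{t_0}$-breaking, which contradicts the limit fiber you then claim.) The degeneration that actually produces the left-hand side is the one in which the entire transition region of $\chi$ \emph{concentrates at a single boundary point} of $\del_+D^2_-$, so that the moving boundary condition bubbles off into the once-punctured disk and the main component sees only $X$, $L_0$, $L_1$; this is what the paper's gluing parameter $r$ amounts to in the fixed-domain picture. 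That concentrating limit is precisely the analytically non-standard step (fixed conformal modulus, boundary-condition family whose derivative blows up) that you defer to ``adapting the gluing analysis''; the paper sidesteps it by doing standard gluing at the node $y$ (constant Lagrangian conditions there) first and only then deforming the glued domain to $D^2_-$, where ordinary Gromov compactness with a fixed moving boundary condition applies. So as written your proof has a genuine gap: either the family must be replaced by the concentrating one (and the compactness for degenerating boundary conditions justified), or you should argue from the broken side as the paper does. Your remarks about the evaluation map extending to the compactification, the restriction to $|y|=0$, and the chain-versus-homology issue are fine and consistent with the paper's level of detail.
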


\begin{proof}
We will smoothen corners of the elements in the fiber product
$\cM(D^2 \setminus \{z_0\} ; \cL^\chi, y) \# \overline{\cH}(x,y,x')$ and
construct a compact one dimensional moduli space
such that the fiber product becomes a part of its boundary.
Explanation of this construction is in order.

We start with explicit description of the domain curves of the two
moduli spaces appearing in the fiber product \eqref{eqn. fiber product hashtag M Hbar}.
\begin{notation}[$\Theta_-$ and $Z$]\label{notation. theta minus}
We denote by $\Theta_-$ the domain (equipped with the strip-like coordinates)
of the relevant moduli spaces, and denote by $\Theta_- \# Z$ the nodal curve obtained by the obvious grafting. (See Figure \ref{figure.disk-strip-glued} for the image of the grafted domain.)
We mention that we have conformal equivalences $\Theta_- \cong D^2 \setminus \{z_0\}$ and
$Z \cong D^2 \setminus \{z_0,z_1, z_2\}$. We take the following explicit model for $\Theta_-$:
Consider the domain
$$
\{z \in \mathbb{C} \mid |z| \leq 1, \, \mathrm{Im } z \geq 0\} \cup \{z \in \mathbb{C} \mid |\mathrm{Re } z| \leq 1,
\, \mathrm{Im } z \leq 0\} \label{eq:Theta-}
$$
and take its smoothing around $\mathrm{Im } z = 0$ that keeps the reflection symmetry about the $y$-axis
of the domain. Then we take
\eqn
Z = \{z \in \mathbb{C} \mid 0 \leq \mathrm{Im } z \leq 1\} \setminus \{(0,1)\}. \label{eq:Z}
\eqnd
Again we equip $Z$ with a strip-like coordinate at $z = (0,1)$ that keeps the reflection symmetry.
\end{notation}

\begin{notation}[$\#$ and $\#_r$]
We denote by $\Theta_- \# Z$ the nodal curve obtained from $\Theta_-$ and $Z$
by the obvious grafting. Note that we have conformal equivalences
$\Theta_- \cong D^2 \setminus \{z_0\}$ and $Z \cong D^2 \setminus \{z_0,z_1,z_2\}$.
Using a given strip-like coordinate around $\{(0,1)\}$ in $Z \setminus \{(0,1)\}$,
we construct one-parameter family of the glued domains which we denote by
	$$
\Theta_- {\#}_r Z
	$$
for all  sufficiently large $r > 0$. We perform this grafting while maintaining the above-mentioned reflection symmetry.
\end{notation}

By gluing the defining equations of the two moduli spaces on the glued domain
	$
\Theta_- {\#}_r Z
	$
for all sufficiently large $r > 0$ and then deforming the domain to the lower semi-disc
$D^2_-$ (adjusting the positively moving boundary condition accordingly), we arrive at \eqref{eq:onD2-}.
Thus, by a standard gluing-deformation and compactness argument, the result follows.
\end{proof}

\begin{remark}\label{remark. D2minus moduli is trip moduli}
Note that the moduli space of solutions to~\eqref{eq:onD2-} is equivalent to the moduli space defining a continuation map using holomorphic {\em strips} (not disks) with moving boundary conditions. This is
because under the conformal equivalence $D^2_- \setminus \{1,-1\} \cong D^2 \setminus \{1, -1\} \cong \RR \times [0,1]$,
the equation~\eqref{eq:onD2-} is equivalent to

   \eqn\label{eq:hcL}
        \begin{cases}
        {\frac {\partial u}{\partial\tau}} + J_{(\rho(\tau),t)}
         {\frac {\partial u}{\partial t}}=0 \\
        u(\tau ,0)\in X,\;\; u(\tau ,1)\in L_{\rho(1-\tau)}.
        \end{cases}
    \eqnd
after a suitable choice of nondecreasing elongation function $\rho:\RR \to [0,1]$ satisfying
\eqn\label{eq:rho}
\rho(\tau) =  \begin{cases} 1 \quad & \text{for } \, \tau \geq a \\
0 \quad & \text{for } \, \tau \leq 0
\end{cases}
\eqnd
 for some $a$.
\end{remark}

\begin{remark}
By assumption on the isotopy, the isotopy is non-negative outside a compact set $K$.
 By compactness of $X$, we may enlarge $K$ so that $X \subset \Int K$
without loss of generality.
This allows us to apply the strong maximum principle
to derive
	\eqn\label{eqn. C0 estimate inside K}
\Image u \subset K.
	\eqnd
(See \cite[section 13]{bko:wrapped} for similar application of strong maximum principle.)
\end{remark}

Next we need to establish a uniform energy bound.
Let $h_X: X \to \RR$ and $h_s: L_s \to \RR$ be Liouville primitives of $X$ and $L_s$
respectively. (So for example, $dh_X = \theta|_X$. For a given exact Lagrangian isotopy generated by a Hamiltonian $H$,
a smooth family of Liouville primitives of $L_s$ is given by
	\eqn\label{eq:s primitive}
h_s = h_0 + \int_0^s (\langle \theta, X_{H}\rangle - H)\circ \phi^t_H \  dt.
	\eqnd
(See \cite[Proposition 3.4.8]{oh:book1}, for example.)
Then we have the following energy identity.
\begin{lemma}\label{lem:action-diff} Let $X \subset \Int K$, let $\cL$ be a nonnegative (near infinity) isotopy,  and let
$H$ be a Hamiltonian, linear outside $K$, generating the isotopy $\cL$.  Then
for any finite energy solution $u$ of~\eqref{eq:hcL}  with $u(\pm\infty) \equiv x_\pm$, we have
\begin{eqnarray}
E_J(u)  = -(h_X(x_+) - h_X(x_-)) + (h_0(x_+) - h_1(x_-)) - \int_{-\infty}^\infty \rho'(\tau)H(u(1- \tau,1))\, d\tau.
\label{eq:EJu-continuation2}
\end{eqnarray}
where  $x_- \in X\cap L_1$ and $x_+ \in X\cap L_0$. In particular,
\begin{equation}\label{eq:energybound}
E_J(u) \leq C(X,\cL;K)
\end{equation}
for some constant $C(X,\cL;K)$ depending only on $X,\cL$ and $K$ but independent of $u$.
\end{lemma}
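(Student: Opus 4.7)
The plan is to apply Stokes' theorem to $u^*\omega = d(u^*\theta)$, decomposing the boundary integral into the fixed piece at $t=0$ and the moving piece at $t=1$, and then derive the uniform bound by combining transversality of $X$ with $L_0, L_1$ and the image confinement \eqref{eqn. C0 estimate inside K}. First, since $u$ is $J$-holomorphic and of finite energy, we have $E_J(u) = \int_{\RR\times[0,1]} u^*\omega$; transverse nondegeneracy of the asymptotic intersections together with finite energy give exponential convergence to $x_\pm$ at the ends, which legitimates passing to the limit in Stokes on the truncations $[-T,T]\times[0,1]$. Hence $E_J(u) = \int_{\partial(\RR\times[0,1])} u^*\theta$ with the induced boundary orientation.

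On the $t=0$ boundary, $u(\tau,0) \in X$ and $\theta|_X = dh_X$, so this portion contributes $\pm\bigl(h_X(x_+) - h_X(x_-)\bigr)$. The main computation is on the $t=1$ boundary, where $\gamma(\tau) := u(\tau,1) \in L_{s(\tau)}$ with $s(\tau) := \rho(1-\tau)$. I would decompose
\[
\gamma'(\tau) = s'(\tau) X_H(\gamma(\tau)) + v(\tau), \qquad v(\tau) \in T_{\gamma(\tau)} L_{s(\tau)},
\]
apply $\theta|_{L_s} = dh_s$ to the $v$-component, and combine this with the chain-rule expansion of $\tfrac{d}{d\tau}h_{s(\tau)}(\gamma(\tau))$ together with the formula \eqref{eq:s primitive}, which yields the identity $\partial_s h_s \circ \phi^s_H = -H \circ \phi^s_H$ on the moving Lagrangian. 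After cancellation of the $\theta(X_H)$ contributions this produces
\[
\theta(\gamma'(\tau)) = \frac{d}{d\tau} h_{s(\tau)}(\gamma(\tau)) + s'(\tau) H(\gamma(\tau)).
\]
Integrating in $\tau$, evaluating $s(+\infty) = 0$ and $s(-\infty) = 1$, and performing the change of variables $\sigma = 1-\tau$ (so that $s'(\tau) = -\rho'(1-\tau)$ becomes $\rho'(\sigma)$ with the correct sign) gives the moving-boundary contribution. Assembling both boundary terms with the Stokes signs produces \eqref{eq:EJu-continuation2}.

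For the uniform bound \eqref{eq:energybound}, the primitive differences $h_X(x_+) - h_X(x_-)$ and $h_0(x_+) - h_1(x_-)$ take values in a finite set: by transversality and compactness of $X$, both $X \cap L_0$ and $X \cap L_1$ are finite, so these differences are bounded by a constant depending only on $X$ and $\cL$. For the Hamiltonian integral, $\rho'$ is compactly supported by \eqref{eq:rho}; by \eqref{eqn. C0 estimate inside K} we have $\gamma(\tau) \in K$ for all $\tau$; and continuity of $H$ on the compact set $K$ bounds $|H(\gamma(\tau))|$ uniformly. The main obstacle is the bookkeeping in the moving-boundary computation, specifically deriving $\partial_s h_s = -H$ cleanly from \eqref{eq:s primitive} and then tracking the signs carefully through the change of variables $\sigma = 1-\tau$ so that the Hamiltonian term appears in exactly the form $-\int \rho'(\tau) H(u(1-\tau,1))\,d\tau$ stated in the lemma.
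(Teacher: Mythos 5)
Your proposal is correct and is essentially the paper's own argument: the paper packages the computation as the $\tau$-derivative of the action functional $\cA_{X,L_s}$ rather than Stokes applied to $u^*\omega = d(u^*\theta)$, but after invoking the Cauchy--Riemann equation these are the same calculation, and your key moving-boundary identity $\theta(\gamma'(\tau)) = \frac{d}{d\tau}h_{s(\tau)}(\gamma(\tau)) + s'(\tau)H(\gamma(\tau))$ is exactly the paper's equation obtained from \eqref{eq:s primitive}, with the same use of $\Image u \subset K$ and the compact support of $\rho'$ for the uniform bound. One small caution: the literal statement $\partial_s\bigl(h_s\circ\phi^s_H\bigr) = -H\circ\phi^s_H$ is not what \eqref{eq:s primitive} gives (that derivative equals $(\langle\theta,X_H\rangle - H)\circ\phi^s_H$); the $-H$ emerges only after the tangential $\theta(X_H)$-terms cancel as you indicate, and the remaining sign bookkeeping (as in the paper's own change of variables) is convention-laden but irrelevant to the bound \eqref{eq:energybound}.
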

\begin{proof}
While what follows mostly parallels the proof of \cite[Lemma 7.3]{AOO}, there are two differences:
\begin{itemize}
\item The current case treats the general case of Lagrangian branes in general Liouville sectors,
while \cite[Lemma 7.3]{AOO} treats the case of the zero section $X = Q$ and the fiber $L=0 = T_q^*Q$
for which we can take $h_X = 0 = h_0$. Because of this, the statement of the current lemma is more general
including \cite[Lemma 7.3]{AOO} as a special case.
\item There are differences in the details of the proof due to differences in conventions.
\end{itemize}
Because of these, we include a complete proof here for the reader's convenience.

Consider the following family of the action functional
	\eqn
\mathcal{A}_{X,L_s}(\gamma)= - \int_{[0,1]} \gamma^*\theta + h_X(\gamma(1))- h_s(\gamma(0)),
	\eqnd
where $\gamma \in \mathcal{P}(X,L_{s})$.
Obviously, we have
	$$
\cA_{X,L_0} (x_+) - \cA_{X,L_1} (x_-)
= \int_{-\infty}^\infty \frac{d}{d\tau}\left(\cA_{X,L_{\rho(1-\tau)}}(u(\tau))\right) \, d\tau.
	$$
We derive
\begin{eqnarray}\label{eq:ddtauAL}
\frac{d}{d\tau} (\cA_{X,L_{\rho(1-\tau)}}(u(\tau)))
& = & \frac{d}{d\tau}\left(\int_{[0,1]} - (u(\tau))^*\theta - h_X(u(\tau,0))\right)
- \frac{d}{d\tau}\left(h_{\rho(1-\tau)}(u(\tau,1))\right) \nonumber\\
& = & \int_{[0,1]} \Big|\dudtau \Big|^2_{J_t}\,dt
- \left\langle \theta, \frac{\del u}{\del \tau}(\tau,1) \right\rangle - \frac{d}{d\tau}\left(h_{\rho(1-\tau)}(u(\tau,1))\right)
\end{eqnarray}
using the first variation of the action functional with free boundary
condition (see \cite[Equation (12.1.1)]{oh:book2}),
together with the fact that $u$ is a solution of \eqref{eq:onD2-}.
Therefore by combining the above calculations and integrating over $-\infty < \tau < \infty$, we have obtained
\begin{eqnarray*}
\cA_{X,L_0} (x_+) - \cA_{X,L_1} (x_-) = E_J(u)
 - \int_{-\infty}^\infty \left\langle \theta, \frac{\del u}{\del \tau}(\tau,1) \right\rangle\, d\tau
- (h_1(x_-) - h_0(x_+)).
\end{eqnarray*}

Noticing that the boundary condition $u(\tau,1)\in L_{\rho(1-\tau)}$ implies that
$u(\tau,1)=\phi^{\rho(1-\tau)}_H(v(\tau))$ for some curve $v(\tau) \in L_0$, we can write
	$$
h_{\rho(1-\tau)}(u(\tau,1))= \widetilde h_{\rho(1-\tau)}(v(\tau))
	$$
for $\widetilde h_{\rho(1-\tau)} = h_{\rho(1-\tau)} \circ \phi^{\rho(1-\tau)}_H: L_0 \to \RR$.
Then we compute
\begin{align}
\frac{d}{d\tau} (h_{\rho(1-\tau)}&(u(\tau,1)))  =  d \widetilde h_{\rho(1-\tau)}\left(\frac{d v}{d \tau}\right) -\rho'(1-\tau)\left(\frac{d \widetilde h_s}{d s}\right)\bigg|_{s=\rho(1-\tau)}(v(\tau)) \\
& = d h_{\rho(1-\tau)}\circ d \phi^{\rho(1-\tau)}_H\left(\frac{d v}{d \tau}\right) - \rho'(1-\tau)\left( \langle \theta, X_H \rangle(\phi^{\rho(1-\tau)}_H(v(\tau))) - H(\phi^{\rho(1-\tau)}_H(v(\tau)))\right).\nonumber
\end{align}
It follows from the definitions that
	$$\frac{\del u}{\del \tau}(\tau,1)= d \phi_H^{\rho(1-\tau)}\left(\frac{d v}{d \tau}\right) - \rho'(1-\tau) X_H(u(\tau,1)).$$
Plugging this in the previous equation and using the definition of Liouville primitive we obtain
	\eqnn
\frac{d}{d\tau} (h_{\rho(1-\tau)}(u(\tau,1))) =
\left\langle \theta, \frac{\del u}{\del \tau}(\tau,1) \right\rangle + \rho'(1-\tau)H(u(\tau,1)).
	\eqnd
Substituting this into \eqref{eq:ddtauAL}, we obtain
	$$
\frac{d}{d\tau} (\cA_{H;\rho(1-\tau)}(u(\tau)))= \int_{[0,1]} \Big|\dudt \Big|^2_{J_t}\,dt - \rho'(1-\tau)H(u(\tau,1))
	$$
which is equivalent to
	$$
\int_{[0,1]} \Big|\dudt \Big|^2_{J_t}\,dt = \frac{d}{d\tau} (\cA_{H;\rho(1-\tau)}(u(\tau))) +  \rho'(1-\tau)H(u(\tau,1)).
	$$
By integrating this over $\tau \in \RR$, we obtain
\beastar
E_J(u) & = &
\cA_{X,L_1} (x_-) - \cA_{X,L_0} (x_+)
+ \int_{-\infty}^\infty \rho'(1-\tau)H(u(\tau,1))\, d\tau\\
& = & \cA_{X,L_1} (x_-) - \cA_{X,L_0} (x_+)
- \int_{-\infty}^\infty \rho'(\tau)H(u(1- \tau,1))\, d\tau.
\eeastar
Then we evaluate
\beastar
\cA_{X,L_1} (x_-) - \cA_{X,L_0} (x_+) & = & -(h_1(x_+) - h_X(x_+)) + (h_0(x_-) - h_X(x_-))\\
& = & (h_X(x_+) - h_X(x_-)) - (h_0(x_+) - h_1(x_-)).
\eeastar
Combining the two, we have proved \eqref{eq:EJu-continuation2}.

Finally we prove the uniform energy bound \eqref{eq:energybound}.
Recall the support bound $\Image u \subset K$~\eqref{eqn. C0 estimate inside K}.
Therefore
	$$
|h_1(x_-) - h_0(x_-)| \leq \max_K|\langle \theta, X_{H}\rangle|+\|H\|_K
	$$
where $\|H\|_K : = \sup_{x \in K} |H(x)|$.
Next we get the bound
	$$
\left|\int_{\infty}^\infty \rho'(\tau) H(u(1-\tau,1))\, d\tau\right| \leq
\int_{-\infty}^\infty \rho'(\tau) |H(u(1- \tau,1))|\, d\tau \leq \|H\|_K.
	$$
Combining these with the energy identity \eqref{eq:EJu-continuation2}, we derive
	\eqn\label{eq:EJu<max}
E_J(u) \leq \max_K|\langle \theta, X_{H}\rangle|+ 2\|H\|_K + (\max h_X - \min h_X) = : C(X,\cL, H; K).
\eqnd
Finally we take
\eqn\label{eq:CXcLK}
C(X,\cL; K): = \inf_H \{ C(X,\cL, H; K) \mid L(t) = \phi_H^t(L_0)\, \forall t \in [0,1]\}
\eqnd
where the $\inf$ is taken over all Hamiltonians $H$ that are linear outside $K$.
This finishes the proof.
\end{proof}

\begin{remark}\label{rem:CXcLK} We examine
the nature of the upper bound $C(X,\cL, H; K)$ given in \eqref{eq:EJu<max}.
Clearly the term $(\max h_X - \min h_X)$ does not depend on $\cL$ but only on $X$.
Under the hypotheses of Lemma \ref{lem:action-diff}, we have the support property $\Image u \subset K$
(see \eqref{eqn. C0 estimate inside K}). This implies that we have only to examine the constant
\eqn\label{eq:maxKXHH}
\max_K|\langle \theta, X_{H}\rangle|+ 2\|H\|_K
\eqnd
when we take the infimum of $C(X,\cL, H; K)$ as we vary over $\cL$ \emph{that is linear outside $K$.}
We also remark that, for fixed $K$, the map $\cL \mapsto C(X,\cL; K)$ is continuous with respect to the fine $C^1$ topology on the space of isotopies $\cL$.
\end{remark}

\subsubsection{Proof of Theorem~\ref{thm. continuation maps are equivalences of twisted complexes}}

This section will be occupied by the proof of
Theorem~\ref{thm. continuation maps are equivalences of twisted complexes}. We choose $R > 0$ sufficiently large
so that $K \subset r^{-1}((-\infty,R/2))$.
It is easy to see that we can deform the isotopy $\cL$ to ${\cL'} = \{L'(t)\}$
via $\cL^{\text{\rm para}} = \{\cL_s\}_{s \in [0,1]}$ with $\cL_s= \{L_s(t)\}_{t \in [0,1]}$
and $\cL_0 = \cL$, $\cL_1 = {\cL'}$
so that the following hold:
\begin{itemize}
\item the hypotheses of Lemma \ref{lem:action-diff} holds for all $s \in [0,1]$,  and
\item  we have
	$$
L'(t) \cap r^{-1}((-\infty,R]) = L(t) \cap r^{-1}((-\infty,R]),
	$$
\item
	$$
L'(t) \cap \left(M \setminus r^{-1}((-\infty, 2R])\right)
= L_0 \cap \left(M \setminus r^{-1}((-\infty, 2R])\right),
	$$
\end{itemize}
and we suitably interpolate on the region $r^{-1}([R,2R])$. Therefore it follows from Remark \ref{rem:CXcLK}
that the constant $C(X,\cL, H; K)$ for $\cL_s$ is uniformly bounded over $s \in [0,1]$ by considering
$$
C(X, \cL^{\text{\rm para}};K): = \sup_{s \in [0,1]} C(X,\cL_s;K).
$$
In particular $L'(t) \cap X = L(t) \cap X$ for all $t \in [0,1]$ and
	$$
L'(1) \cap \left(M \setminus r^{-1}((-\infty, 2R])\right)
= L_0 \cap \left( M \setminus r^{-1}((-\infty, 2R])\right).
	$$
Furthermore $L'(t) \equiv L(t)$ on the region $r^{-1}([R/2,R])$ and so
the strong maximum principle can be applied which prevents
any trajectory associated to ${\cL'}$ continued from $\cM(X, \cL)$
from penetrating into $r^{-1}([R/2,R])$. Furthermore we have
	$$
X \cap L(t) = X \cap L'(t)
	$$
and we may assume this intersection to be contained in the compact region $K$.

Define twisted complexes
\begin{eqnarray*}
T &= & (X \cap L_0, D_0), \hskip0.1in {\quad D = \{\ev_*([\cH(X,L_0;x_0,x_1)])\}_{x_0,x_1 \in X \cap L_0},} \\
T'& = & (X \cap L_1, D_1), {\quad D' = \{\ev_*([\cH(X,L_1;x_0',x_1')])\}_{x_0',x_1' \in X \cap L_1}}.
\end{eqnarray*}
and two morphisms between them by
	$$
S =\ev_*\left(\sum_{x \in X \cap L_0, x' \in X \cap L_1} [\cM(X, \cL^\rho;x,x')]\right),
\qquad S' =\ev_*\left(\sum_{x \in X \cap L_0, x' \in X \cap L_1} [\cM(X, {\cL'}^{\rho};x,x')]\right)
	$$
Here we use the moduli space of solutions of \eqref{eq:hcL} with elongated
(moving) Lagrangian boundary and asymptotic boundary conditions.

\begin{proposition}\label{prop:evcL=evcL'}
	$$
[S'] = [S]
	$$
in $\mu^1_{\mathrm{Tw}(C_*\cP)}$-cohomology.
\end{proposition}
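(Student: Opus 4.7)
The plan is to construct a chain-level homotopy between $S$ and $S'$ by counting a parametrized family of solutions to~\eqref{eq:hcL} as the isotopy varies along $\cL^{\text{\rm para}} = \{\cL_s\}_{s \in [0,1]}$. Concretely, for each pair $(x,x') \in (X \cap L_0) \times (X \cap L_1)$, I would define the parametrized moduli space
\[
    \cM(X, \cL^{\text{\rm para}}; x, x') \;=\; \bigcup_{s \in [0,1]} \{s\} \times \cM(X, \cL_s^{\rho}; x, x'),
\]
using a generic choice of $s$-dependent almost complex structures (and, if needed, elongation functions $\rho_s$) so that the total space is a smooth manifold with boundary and corners of the expected dimension one higher than the fiberwise moduli at a generic $s$.

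The compactness step is the place where we cash in the work already done. By Remark~\ref{rem:CXcLK} and the construction of $\cL^{\text{\rm para}}$, the energy identity from Lemma~\ref{lem:action-diff} yields the uniform bound $E_J(u) \leq C(X,\cL^{\text{\rm para}}; K)$ over all $s \in [0,1]$, and the strong maximum principle (via~\eqref{eqn. C0 estimate inside K} and our interpolating construction on $r^{-1}([R/2,R])$) forces $\Image u \subset K$ for every member of the family. Standard Gromov compactness then gives that $\cM(X, \cL^{\text{\rm para}}; x, x')$ compactifies by adding disk/strip bubbling off of the boundary components labeled by $X$, $L(t)$, and $L'(t)$. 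By standard index and transversality arguments, in the zero-dimensional case the endpoints at $s=0, s=1$ contribute the fundamental chains defining $S$ and $S'$, and the codimension-one bubblings at interior $s$ contribute concatenations of the fiberwise continuation moduli with the moduli defining the differentials $D_0$, $D_1$ on the twisted complexes $T$ and $T'$ (the latter are precisely the moduli spaces $\cH(X, L_i; x_0, x_1)$ computed in the proof of Lemma~\ref{lemma. concatenated moduli is continuation moduli}, via the corner-smoothing picture explained in Notation~\ref{notation. theta minus}).

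Pushing forward via the evaluation map to $\hom_{\cP_j}(x, x')$ converts this cobordism into a chain-level identity. Writing $H$ for the push-forward of the fundamental chain of the parametrized moduli space, the boundary contributions give
\[
    \partial H \;=\; S' - S \;\pm\; D_1 \cdot H \;\pm\; H \cdot D_0
\]
with appropriate Koszul signs (following the sign conventions of~\cite{abouzaid-loops} and~\eqref{eqn. F1 definition worked out}). This is exactly the assertion that $\mu^1_{\mathrm{Tw}(C_*\cP)}(H) = S' - S$, so $[S] = [S']$ in cohomology.

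The main obstacle is verifying transversality for the parametrized moduli space while maintaining the support and energy bounds from Section~6.5 above. One cannot perturb the boundary conditions $X, L_s(t)$ themselves (they were chosen to keep $X \cap L_s(t)$ independent of $s$); instead the perturbation must be absorbed into a generic $s$-family of almost complex structures that agrees with the cylindrical structure outside $K$. The linearity of $H$ and the inclusion $X \subset \Int K$ ensure the $C^0$ bound survives such perturbations, and the continuity of $\cL \mapsto C(X,\cL;K)$ in the fine $C^1$ topology (Remark~\ref{rem:CXcLK}) gives the uniform energy bound. Once transversality is in place, the boundary analysis is routine in view of the corner-smoothing argument already carried out in Lemma~\ref{lemma. concatenated moduli is continuation moduli}.
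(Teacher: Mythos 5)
Your plan is essentially the paper's own proof: the paper likewise forms the parametrized moduli space $\cM^{\mathrm{para}}(X,\{\cL_s\};x^-,x^+)$ over the homotopy of isotopies $\cL_s$, invokes the uniform energy bound of Lemma~\ref{lem:action-diff}/Remark~\ref{rem:CXcLK} together with the support bound $\Image u \subset K$ to exclude bubbling and obtain compactness, identifies the codimension-one boundary as the $s=0,1$ ends plus breakings against the $\cH(X,L_0;\cdot,\cdot)$ and $\cH(X,L_1;\cdot,\cdot)$ moduli, and pushes forward by evaluation to conclude $[S']-[S]=\mu^1_{\mathrm{Tw}(C_*\cP)}([\mathfrak{H}])$. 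The only difference is bookkeeping: you package the cobordism as one chain-level identity $\partial H = S'-S \pm D_1\cdot H \pm H\cdot D_0$, while the paper separates the $\mu^1_{\cP}$-term (which vanishes for its zero-dimensional $\mathfrak{H}$) from the two $\mu^2_{\cP}$-terms---the same argument.
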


Assuming this proposition for the moment, we proceed with:

\begin{proof}[Proof of Theorem~\ref{thm. continuation maps are equivalences of twisted complexes}]
We deform $L'(1)$ to $L_0$ via a compactly supported isotopy $\overline{\cL'}$.
Since a compactly supported isotopy can be composed with its inverse so that their
composition (through an isotopy of compactly supported isotopies) is isotopic to the constant isotopy $\widehat L_0$, we obtain
	$$
\bigoplus_{x,x'}\ev_*([\cM(X,{\cL'}\# \overline{\cL'};x,x')]) \sim \bigoplus_{x,x' \in X \cap L_0}
\ev_*([\cM(X,\widehat L_0;x,x']) = id_{ (X \cap L_0, D)}.
	$$
Thus we have
\begin{eqnarray*}
id_{ (X \cap L_0, D)} & \sim & \bigoplus_{x,x'} \ev_*([\cM(X,{\cL'}\# \overline{\cL'};x,x')]) \\
& \sim &
\bigoplus_{x,z} \bigoplus_y\ev_*([\cM(X,{\cL'};x,y)])\cdot\ev_*([\cM(X,\overline{\cL'};y,z)]))\\
& = & \left(\bigoplus_{x,x'} \ev_*([\cM(X,{\cL'};x,x')])\right)\cdot
\left(\bigoplus_{z,z'} \ev_*([\cM(X, \overline{\cL'};z,z')])\right).
\end{eqnarray*}
So the morphism $\left(\bigoplus_{x,x'} \ev_*([\cM(X,{\cL'};x,x')])\right)$ admits a right homotopy inverse. Tracing through the same work for $\overline{\cL'} \# {\cL'}$ shows that the morphism admits a left homotopy inverse as well; that is, the morphism is an equivalence in $\Tw C_*\cP(X)$. On the other hand, we have
\begin{align}
\left(\bigoplus_{x,x'} \ev_*([\cM(X,{\cL'};x,x')])\right)
&= \left(\bigoplus_{x,x'} \ev_*([\cM(X,{\cL'};x,x')])\right) \nonumber \\
&=	\ev_*\left([
	\cM(D^2_- ; \cL^\chi, x,x')]
	\right] \nonumber \\
& =
	\ev_*\left(
	\sum_{y \in L_0 \cap L_1; |y| = 0}
	[\cM(D^2 \setminus \{z_0\} ; \cL^\chi, y) \# \overline{\cH}(x,y,x')]
	\right)  \nonumber\\
&=
	\ev_*\left(\sum_{y \in L_0 \cap L_1; |y| = 0}n_\cL^\chi(y)
	[\overline{\cH}(x,y,x')] \right) \nonumber \\
&=
	\cF^1 (c^\chi_{\cL}) .\nonumber
\end{align}
The first equality follows from the uniform energy bound (depending only on the behavior of $\cL$ and ${\cL'}$ inside $K$), guaranteeing that the count of continuation maps yields equivalent maps. The next equalities are given by
Remark~\ref{remark. D2minus moduli is trip moduli},
by Lemma~\ref{lemma. concatenated moduli is continuation moduli},
by \eqref{eqn. F^1 of c computation 1},
then by \eqref{eqn. F1 definition worked out}.

Combining the above, we have proved
	$
\cF^1(c^\chi_{\cL})
	$
is an equivalence. This finishes the proof of
Theorem~\ref{thm. continuation maps are equivalences of twisted complexes}.
\end{proof}

\begin{proof}[Proof of Proposition \ref{prop:evcL=evcL'}]
We consider one-parameter family $\cL_s$ (i.e. a homotopy of isotopies) with $0 \leq s \leq 1$
which are fixed on $K$ but deforms outside $r^{-1}((-\infty,R/2])$ and consider the parameterized
moduli space
	$$
\cM^{\mathrm{para}}_{(0)}(X, \{\cL_s\};x^-,x^+)
= \coprod_{s \in [0,1]'|x^-|=|x^+|} \{s\} \times \cM(X, \cL_s;x^-,x^+)
	$$
for the pairs $(x^-,x^+)$ with $|x^-| = |x^+|$ with $\cL_0 = \cL$ and $\cL_1 = {\cL'}$.
Here we introduce the moduli spaces
$$
\cM^{\mathrm{para}}_{(k)}(X, \{\cL_s\};x^-,x^+)
$$
in general where the integer $k$ appearing in the subindex $(k)$ of the moduli space stands for
the degree of the relevant operators which is the same as $|x^-| - |x^+|$. This is
one smaller than the dimension of the relevant parameterized moduli space.

We know that:
\begin{itemize}
	\item the relevant Hamiltonians defining $\cL_s$ are only non-linear in a compact region contained in $r^{-1}((-\infty,2R])$,
	\item there exists a uniform energy bound, and
	\item there is no bubbling,
\end{itemize}
so the boundary of
$\cM^{\mathrm{para}}_{(0)}(X, \{\cL_s\};x^-,x^+)$ consists of the types
\begin{eqnarray*}
&{}& \cM^{\mathrm{para}}_{(-1)} (X, \{\cL_s\};x^-,y) \# \cH(X,L_0; y,x^+), \\
&{}& \cH(X,L_1; x^-,z) \# \cM^{\mathrm{para}}_{(-1)}(X, \{\cL_s\};z,x^+), \\
&{}& \cM^{\mathrm{para}}(X, \{\cL_s\};x^-,x^+)|_{s=0}, \\
&{}& \cM^{\mathrm{para}}_{(-1)}(X, \{\cL_s\};x^-,x^+)|_{s=1}.
\end{eqnarray*}
Therefore we have
\begin{eqnarray}\label{eq:homotopy-relation}
&{}&\ev_*([\cM(X, {\cL'};x^-,x^+)]) -\ev_*([\cM(X, \cL;x^-,x^+)]) \nonumber\\
& = & {\bigoplus_{y \in \in X \cap L_0;|y| = |x^+|+1}}\ev_*\left(\cM^{\mathrm{para}}_{(0)}(X, \{\cL_s\};x^-,y) \# \cH(X,L_0;y,x^+)\right)\nonumber\\
&{}& + {\bigoplus_{z \in \in X \cap L_1;|z| =|x^-|-1}}\ev_*\left(\cH(X,L_1;x^-,z) \# \cM^{\mathrm{para}}_{(0)}(X, \{\cL_s\};z,x^+)\right).
\end{eqnarray}

Now we introduce a collection of moduli spaces
	$$
\cM^{\mathrm{para}}_{(-1)}(X, \{\cL_x\})
 =  \{\cM^{\mathrm{para}}(X, \{\cL_s\};x,x')\}_{s\in [0,1], x,x';|x'|=|x|-1}
	$$
and define a 2-cochain
	$$
\mathfrak{H}: = \left\{\ev_*\left([\cM^{\mathrm{para}}_{(-1)}(X, \{\cL_s\};x,x')]\right)\right\}_{s\in [0,1], x,x';|x'|=|x|-1}.
	$$
	
Then {\eqref{eq:homotopy-relation}} gives rise to
\begin{eqnarray} \label{eq:homotopy}
&{}&\ev_*([\cM_{(0)}(X, {\cL'})])- \ev_*([\cM_{(0)}(X, \cL)])\nonumber\\
& = &\ev_*\left([\cM^{\mathrm{para}}_{(-1)}(X, \{\cL_s\})]\right)\cdot\ev_*([\cH(X, L_0)])\nonumber\\
&{}& -\ev_*([\cH(X,L_1)]) \cdot\ev_*\left([\cM^{\mathrm{para}}_{(-1)}(X,\{\cL_s\})]\right)
\end{eqnarray}
which can be rewritten into
	$$
[S'] - [S] = \mu^1_{\mathrm{Tw}(\cP)}([\mathfrak{H}]):
	$$
By the definition of the $\mu^1$ on the twisted complex, we have
	$$
\mu^1_{\mathrm{Tw}(\cP)}(\mathfrak{H}) = \mu^1_{\cP}([\mathfrak{H}]) + \mu^2_{\cP}([\mathfrak{H}], D_0)
+ \mu^2_{\cP}(D_1, [\mathfrak{H}]).
	$$
In the current case, we have $\mu^1_{\cP}([\mathfrak{H}]) = \partial ([\mathfrak{H}]) = 0$ since
$\mathfrak{H}$ is a finite sum of  zero-dimensional chains. This finishes the proof.
\end{proof}

\clearpage
\section{The equivalence  \texorpdfstring{$\Tw\cW \simeq C_* \cP$}{TwW - C.P}}

The main result of this section is

\begin{theorem}\label{theorem. cW to cP}
The natural transformation $\Tw \cW \circ \DD \to \Tw C_* \cP$ from Corollary~\ref{cor. natural transformation} is a natural equivalence. That is, for every smooth $j: |\Delta^n| \to B\diff(Q)$, the map $\Tw \cW_{\DD \circ j} \to \Tw C_*\cP_j$ is an equivalence of $A_\infty$-categories.
\end{theorem}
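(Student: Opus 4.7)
The plan is to reduce to the case of a single $0$-simplex and then identify the resulting functor with Abouzaid's equivalence from~\cite{abouzaid-loops}. First, I would observe that both $\cW \circ \DD$ and $\Tw C_* \cP$ send every morphism of $\simp(B\diff(Q))$ to an equivalence of $A_\infty$-categories: the former by Proposition~\ref{prop. local triviality}, and the latter because an inclusion $|\Delta_e^n| \hookrightarrow |\Delta_e^{n'}|$ covered by compatible $j,j'$ induces a homotopy equivalence $j^*E_Q \to (j')^*E_Q$, hence an equivalence $\cP_j \to \cP_{j'}$ of topologically enriched categories and (after applying $C_*$ and $\Tw$) of $A_\infty$-categories. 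By Recollection~\ref{recollection. smooth approximation}\eqref{item. localization of BLiou}, both functors therefore descend to the connected $\infty$-groupoid $\sing(B\diff(Q)) \simeq \BB\diff(Q)$. A natural transformation between functors out of a connected $\infty$-groupoid is a natural equivalence the moment its component at a single vertex is an equivalence; so it suffices to prove that, for any $j_0: |\Delta^0| \to B\diff(Q)$ selecting a point with fiber $T^*Q$, the induced $A_\infty$-functor $\cF_{j_0}: \cW_{\DD \circ j_0} \to \Tw C_*\cP_{j_0}$ is an equivalence.

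Next, identify the target. The topologically enriched category $\cP_{j_0}$ is the Moore path category on $Q$, so $C_*\cP_{j_0}$ is quasi-equivalent to the based-loop algebra $C_*\Omega Q$, and hence $\Tw C_*\cP_{j_0} \simeq \Mod(C_*\Omega Q) \simeq \loc(Q)$. The claim to verify is therefore that $\cF_{j_0}$ realizes Abouzaid's equivalence $\cW(T^*Q) \simeq \loc(Q)$.

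To make this comparison, I must reconcile two presentations of $\cW(T^*Q)$: our domain $\cO_{j_0}[C^{-1}]$ is defined by inverting non-negative continuation maps, whereas Abouzaid's functor takes as input the \emph{quadratically} wrapped Fukaya category $\cW^{\mathrm{quad}}(T^*Q)$. I would invoke Proposition~\ref{prop:quad=cofinal} (the cofinal-vs-quadratic comparison of hom-complexes) together with Recollection~\ref{recollection. A oo facts}\eqref{item. filtered localizatiom hom}, which expresses $\hom_{\cO_{j_0}[C^{-1}]}(X,Y)$ as the filtered colimit $\hocolim_i \hom_{\cO_{j_0}}(X,Y^{(i)})$ along a cofinal wrapping sequence, as already established in Lemma~\ref{lemma. hom is wrapped cohomology}. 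These combine to give an $A_\infty$-compatible identification of the morphism complexes in our $\cW$ with those of $\cW^{\mathrm{quad}}(T^*Q)$, under which $\cF_{j_0}$ is $A_\infty$-homotopic to Abouzaid's functor on each hom-complex. Fully faithfulness of $\cF_{j_0}$ on the cotangent fibers then follows from Abouzaid's theorem that his functor induces a quasi-isomorphism $\hom_{\cW^{\mathrm{quad}}}(T^*_q Q, T^*_q Q) \simeq C_*\Omega_q Q$, and is propagated to arbitrary objects via the filtered-colimit presentation. Essential surjectivity comes from the generation results of Abouzaid and of Ganatra--Pardon--Shende~\cite{gps-2}: the cotangent fibers generate $\cW(T^*Q)$, and their images (local systems of paths based at the corresponding point of $Q$) generate $\Tw C_* \cP(Q)$.

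The hardest step will be the chain-level matching in the third paragraph. Our bundled Abouzaid functor restricted to one fiber counts holomorphic sections of a trivial bundle with boundary on a cofinal wrapping sequence, while Abouzaid's original functor counts curves with a quadratic Hamiltonian perturbation. Rather than constructing a direct analytic comparison functor between the two presentations of $\cW(T^*Q)$, the proof must route through the filtered-colimit description of the localized hom-complexes and Proposition~\ref{prop:quad=cofinal}, verifying that the two functors agree up to $A_\infty$-homotopy on the generating objects; this is precisely what makes Recollection~\ref{recollection. A oo facts}\eqref{item. filtered localizatiom hom} essential, and what allows the argument to bypass the much heavier direct analytic comparison.
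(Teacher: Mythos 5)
Your proposal follows essentially the same route as the paper's proof: reduce to the generating objects (the cotangent fiber and the corresponding point of $Q$), describe the localized endomorphism complex as a filtered colimit (Lemma~\ref{lemma. hom is wrapped cohomology}), compare with the quadratic wrapping via Proposition~\ref{prop:quad=cofinal}, check compatibility of the bundled (non-wrapped) and quadratic Abouzaid maps, and conclude with Abouzaid's equivalence together with generation on both sides. The only overstatement is your claim of an $A_\infty$-homotopy between the two functors on hom-complexes: the paper deliberately avoids any chain-level $A_\infty$ comparison (Remark~\ref{remark. quadratic to localizing}) and only needs the cohomology-level commutativity of the square~\eqref{eqn. quadratic vs localization square}, supplied by Corollary~\ref{cor. abouzaid constructions compatible cotangent}, to see that the induced map of endomorphism complexes is a quasi-isomorphism.
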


The proof requires some preliminary results that verify:
\enum
	\item An isomorphism between two definitions of wrapped Floer cohomology---one using a colimit indexed over a non-negative sequence of wrappings (as in~\cite{abouzaid-seidel}) and the other using a Hamiltonian quadratic near infinity (as in~\cite{abouzaid-loops}). This is Proposition~\ref{prop:quad=cofinal}.
	\item That quadratically wrapped Floer cohomology does not change under continuation maps of linear-near-infinity non-negative Hamiltonians. This is Lemma~\ref{lemma.linear wrapping leaves quadratic wrapping unchanged}.
	\item A compatibility between the non-wrapped Abouzaid map (Proposition~\ref{prop. O to cP natural}) and the quadratically wrapped Abouzaid map from~\cite{abouzaid-loops} when mediated by the isomorphism from the just-mentioned Proposition~\ref{prop:quad=cofinal}. This compatibility is expressed in Corollary~\ref{cor. abouzaid constructions compatible cotangent}.
\enumd

These ingredients will be mixed in Section~\ref{section. proof of cW cP theorem} to give a proof of Theorem~\ref{theorem. cW to cP}.

As it turns out, the above ingredients can be proven in large generality, so that is what we will do; moreover, we only need to verify these ingredients in a single fiber of a Liouville bundle, so in what follows, we will fix some Liouville sector $M$.

\begin{remark}\label{remark. quadratic to localizing}
Let us comment on the proof of Theorem~\ref{theorem. cW to cP}, which shows that our family of wrapped Fukaya categories is equivalent to a family of local system categories. A non-trivial aspect of proving this equivalence is that Abouzaid's construction in~\cite{abouzaid-loops} utilized {\em quadratic} Hamiltonians to define wrappings; this is in contrast to the definition of $\cW$ in the present work (following~\cite{gps}), which is a result of {\em localizing} with respect to non-negative, {\em linear} Hamiltonian continuation maps. In particular, morphisms of $\cW$ are not so tractable using pure geometry.

Put another way, we must confront the fact that there are multiple definitions of the wrapped Fukaya category in the literature.

While possible, it is non-trivial to write down the analysis (and in particular, compactness arguments) to see that one has an $A_\infty$ {\em algebra} map between the different versions of wrapped endomorphisms, and we do not do this. Instead, we formally conclude that such an algebra map exists by the universal property of localizations---i.e., by making use of category theory. The analytical legwork, via this strategy, is reduced to checking that the underlying  map of endomorphism {\em complexes} is a quasi-isomorphism, which one can do by straightforward arguments invoking the action filtration and relating a cofinal sequence of linear wrappings to a single quadratic wrapping.

We refer the reader also to Section~2.2 of~\cite{sylvan-orlov-and-viterbo} for a separate approach.
\end{remark}

When we do apply the general results for our purposes, we will state this application as a corollary, and we will apply our general results in the following setting:

\begin{choice}[Choice for proving Theorem~\ref{theorem. cW to cP}.]\label{choice. for proving cW to cP}
For any simplex $j: |\Delta^n_e| \to B\diff(Q)$ and for any $0 \leq a \leq n$, we set $M= T^*Q_a$ to be the fiber above the $a$ the vertex of $|\Delta^n_e|$.

We also choose a point $q_a \in Q_a$ in the zero section above the $a$th vertex of $|\Delta^n_e|$. Choose also a cofinal sequence for the cotangent fiber $L = L^{(0)} = T^*_{q_a} Q_a$ (Definition~\ref{defn. cofinal wrapping}).

We can arrange so that a cotangent fiber $T^*_{q_a} Q_a$ and all its cofinal wrappings are transverse to $Q_a$ and have only a single intersection point with $Q_a$, so that the natural transformation induced by the Abouzaid map sends $T^*_{q_a} Q_a$ to the object $q_a \in \cP_j$. We will assume so.
\end{choice}

Below, $N_{A_\infty}$ refers to the $A_\infty$-nerve of an $A_\infty$ category. (See Recollection~\ref{recollection. A oo facts}\eqref{item. Aoo nerve}.)
\begin{lemma}\label{lemma. filtered Aoo}
Fix a Liouville sector $M$ and a cofinal sequence of wrappings for a brane $L = L^{(0)}$ (Definition~\ref{defn. cofinal wrapping}). Then the continuation maps induce a functor of $\infty$-categories $\ZZ_{\geq 0} \to N_{A_\infty}(\cO(M))$ as follows:
	\eqnn
	L^{(0)} \to L^{(1)} \to \ldots.
	\eqnd
\end{lemma}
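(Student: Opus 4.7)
To give a functor of $\infty$-categories $N(\ZZ_{\geq 0}) \to N_{A_\infty}(\cO(M))$ is, upon unwinding the $A_\infty$-nerve construction (Recollection~\ref{recollection. A oo facts}\eqref{item. Aoo nerve}, following \cite{tanaka-pairing,faonte}), to produce for each strictly increasing chain $i_0 < i_1 < \ldots < i_k$ in $\ZZ_{\geq 0}$ a cochain
\eqnn
f_{i_0,\ldots,i_k} \in \hom_{\cO(M)}\bigl(L^{(i_0)},\, L^{(i_k)}\bigr)
\eqnd
of degree $2-k$, satisfying the Maurer--Cartan type equations controlling the composition in $N_{A_\infty}(\cO(M))$; degenerate simplices are handled by the strict units of $\cO(M)$, which enforce $f_{i,i} = \id$ and kill any chain with two consecutive repeated indices.

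My plan is to construct these $f_{i_0,\ldots,i_k}$ by counts of holomorphic disks adapted from the continuation-disk constructions of Section~\ref{section. continuation maps}. For $k=1$, I take $f_{i_0,i_1}$ to be the continuation element (as in Notation~\ref{notation. c(cL)}) associated to the concatenated non-negative isotopy $L^{(i_0)} \to L^{(i_0+1)} \to \ldots \to L^{(i_1)}$, for a fixed elongation profile and a fixed choice of almost complex structure. For $k\geq 2$, the element $f_{i_0,\ldots,i_k}$ is defined by counting $(k+1)$-punctured holomorphic disks with one negative puncture outputting to $\hom_{\cO(M)}(L^{(i_0)},L^{(i_k)})$ and $k$ boundary arcs carrying moving boundary conditions that interpolate---via the homotopy-through-non-negative-isotopies guaranteed by Definition~\ref{defn. cofinal wrapping}---between the direct non-negative wrapping $L^{(i_{\ell-1})} \to L^{(i_\ell)}$ and its factorization through intermediate entries of the cofinal sequence. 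The relevant moduli spaces are organized via a Stasheff-type family of domain disks in the spirit of Savelyev's construction (Remark~\ref{remark. nu beta}).

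The Maurer--Cartan identities are then verified by the standard codimension-one boundary analysis of these compactified moduli spaces: bubbling off a $\mu^d$-disk in $\cO(M)$ contributes the $\mu$-side of the equation, while breaking off a smaller Maurer--Cartan disk at a boundary arc, or a once-punctured continuation disk at the negative puncture, contributes the face-map side. The $C^0$ bound of Lemma~\ref{lem:action-diff} (generalized from the ambient Liouville-bundle setting of~\cite{oh-tanaka-liouville-bundles}) and the usual Gromov compactness ensure no mass escapes to infinity.

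The main obstacle will be the coherent organization of auxiliary data---elongation functions, families of almost complex structures, and perturbations of the moving boundary conditions---over \emph{all} simplices simultaneously, so that the codimension-one boundary strata pair up with the correct signs to give the Maurer--Cartan identity. As with the construction of $\cO_j$ itself (Section~\ref{subsection. unwrapped}), this is done inductively on $k$ using contractibility of the relevant choice spaces, making the resulting functor well-defined up to a contractible space of choices.
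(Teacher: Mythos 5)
Your plan is a genuinely different---and far heavier---route than what is actually needed. The paper's proof is purely formal: the continuation elements supply, for each $i$, an \emph{edge} $L^{(i)} \to L^{(i+1)}$ in $N_{A_\infty}(\cO(M))$, and since $N_{A_\infty}(\cO(M))$ is an $\infty$-category (weak Kan complex) while $\ZZ_{\geq 0}$ is a poset whose nerve is generated by its spine of consecutive edges, the spine inclusion being inner anodyne means this sequence of edges extends to a functor $N(\ZZ_{\geq 0}) \to N_{A_\infty}(\cO(M))$, uniquely up to contractible choice. No new moduli spaces, no coherent choices of elongation functions or almost complex structures, and no Maurer--Cartan verification are required: the higher coherence data exists automatically because $\cO(M)$ is already an $A_\infty$-category. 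The only geometric input is the one the statement already grants you, namely the consecutive continuation elements.

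As a standalone argument, your proposal also has a gap in its present form: the entire content of the lemma would be concentrated in the step you defer, namely the simultaneous, coherent organization of auxiliary data over all simplices so that the compactified moduli spaces of interpolating disks satisfy the Maurer--Cartan identities with correct signs. Asserting this "by induction using contractibility of choice spaces" is precisely the kind of analytic legwork the localization/nerve formalism of this paper is designed to avoid, and it is not carried out. (Two smaller issues: for a chain $i_0 < \cdots < i_k$, the required cochain in the $A_\infty$-nerve has cohomological degree $1-k$, not $2-k$; and your $k=1$ choice of the concatenated continuation element is fine, but then the $k \geq 2$ data must witness its compatibility with $\mu^2$ of consecutive continuation elements---again exactly the coherence that the weak Kan property provides for free, given Recollection~\ref{recollection. continuation maps}\eqref{item. continuation maps respect composition} at the lowest level.) If carried to completion your construction would produce an explicit geometric model of the functor, which has some independent interest, but for the lemma as stated it is unnecessary.
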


\begin{proof}[Proof of Lemma~\ref{lemma. filtered Aoo}]
The continuation maps determine, for every $i \in \ZZ_{\geq 0}$, a morphism in $\cO(M)$ from the brane $L^{(i)}$ to the brane $L^{(i+1)}$; in particular, for each $i$ we have an edge in $N_{A_\infty}(\cO_j)$.  By the weak Kan property of $\infty$-categories, and because $\ZZ_{\geq 0}$ is a poset, this sequence of edges lifts to a unique (up to contractible choice) functor from the $\infty$-category $\ZZ_{\geq 0}$ to $N_{A_\infty}(\cO(M))$.
\end{proof}

\subsection{Comparing quadratic wrappings with cofinal wrappings}
\label{subse:cofinal-quadratic}

We have already shown that the colimit of a cofinal sequence of Floer cohomologies computes the hom-complex of the wrapped category $\cW$ (Lemma~\ref{lemma. hom is wrapped cohomology}). In this section, we show that this colimit also computes the quadratically wrapped Floer cohomology (Proposition~\ref{prop:quad=cofinal}).

We first set some notation.

\begin{notation}[$CF^*(L,L';H),HF^*(L,L';H)$]\label{notation. HF; H}
Let $M$ be a Liouville sector and let $L,L' \subset M$ be branes.
Fix a smooth function $H: M \to \RR$.
We denote the set of Hamiltonian chords of $H$ from $L$ to $L'$
	$$
\Chord(L,L';H) = \{ \gamma \in \cP(L,L') \mid \gamma(0) \in L, \, \gamma(1) \in L'\}
	$$
and the associated the Floer cochain complex  by
	\eqnn
	CF^*(L,L';H)
	\eqnd
whose differential is defined by considering the perturbed equation
$\delbar_{J,H}(u) = 0$, i.e.,
	\eqn\label{eq:CR-JXH}
\begin{cases}
\frac{\del u}{\del \tau} + J\left(\frac{\del u}{\del t} - X_H(t,u)\right) = 0\\
u(\tau,0) \in L, \, u(\tau,1) \in L'.
\end{cases}
	\eqnd
The cohomology of this complex (i.e., the Floer cohomology) will be denoted
	\eqnn
	HF^*(L,L';H).
	\eqnd
\end{notation}

\begin{remark}\label{remark. isomorphism between moving and unmoving complexes}
We recall that there exists a natural chain isomorphism
	$$
CF(L,L';H) \to CF(L, \phi_H^1(L');0)
	$$
induced by the correspondence
	$$
\gamma \in \cP(L,L') \mapsto \widetilde \gamma \in \cP(L, \phi_H^1(L')), \quad
\widetilde \gamma(t): = \phi_H^t(\gamma(t)),
	$$
and
$J_t \mapsto \widetilde J_t$ defined by
	$$
\widetilde J_t = (\phi_H^t)^* J_t
	$$
which transforms the equation $\delbar_{J,H}(u) = 0$ to
	$$
\delbar_{\widetilde J,0}(v)=0, \quad v(\tau,t) = \phi_H^t\left(u(\tau,t))\right).
	$$
\end{remark}
\subsubsection{Hamiltonians}

\begin{defn}[Quadratic near infinity]\label{defn. quadratic near infinity}
Let $M$ be a Liouville domain. A smooth function $H: M \to \RR$ is called {\em quadratic near infinity} if
	\eqnn
	H = \frac12 r^2
	\eqnd
outside a compact subset of $M$. (Here, we are using the coordinate $r$ from Notation~\ref{notation. liouville notation}.)
\end{defn}

\begin{remark}\label{remark. model cofinality linearly}
Our wish is to compare $CF(L,L;H)$---with $H$ quadratic near infinity---to a colimit of Floer complexes constructed from a cofinal\footnote{Definition~\ref{defn. cofinal wrapping}.} sequence $L^{(0)} \to L^{(1)} \to \ldots$.

Because each $L^{(w)}$ in this sequence is conical near infinity, we may assume that each is the result of a Hamiltonian isotopy by a linear-near-infinity Hamiltonian $F^{(w)}: M \to \RR$. In what follows, we will choose a particular model for such a sequence of Hamiltonians---namely, we will set $F^{(w)}$ to be obtained by increasing the slopes of a standard linear-near-infinity Hamiltonian (see~\eqref{eqn. rigid choice of Fwrapper}). The reader may make the necessary adjustments to the following proofs for a more general cofinal sequence of linear-near-infinity Hamiltonians. (For example, by altering~\eqref{eq:H(wrapper)} to interpolate $H$ with a given $F^{(w)}$, rather than with the sequence of $F$s we choose in~\eqref{eqn. rigid choice of Fwrapper}.)
\end{remark}

\begin{notation}[$H$ and $F$]
In this section, we will use the symbol
$H$ to denote an autonomous Hamiltonian that is quadratic near infinity (Definition~\ref{defn. quadratic near infinity}).

We will use the symbol $F$ to denote a Hamiltonian that is autonomous and
is linear near infinity, i.e.,
$F = a r + b$ outside a compact subset for some constant $a, \, b$ with $a> 0$. Note that
	\eqn\label{eqn. rigid choice of Fwrapper}
	\phi_{{v}F}^1(L) = \phi_F^{v}(L)
	\eqnd
is still linear near infinity, and in particular outside $\{r \leq R_K\}\supset K$ for any $R_K$ large enough.
\end{notation}

\begin{notation}[$HF^*_{\text{\rm quad}}$]\label{notation. HF quad}
Given a quadratic-near-infinity Hamiltonian $H$, we define the notation
    $$
    HF^*_{\text{\rm quad}}(L,L') = HF^*(L,\phi_{H}^1(L');0).
    $$
Note that the dependence on $H$ is suppressed on the left-hand side. (See also Notation~\ref{notation. HF; H} for the right-hand side.)
\end{notation}

\begin{lemma}\label{lem:lowerbound}
Fix a Liouville embedding $\iota:[0,\infty) \times \del_\infty M \to M$ and set $r = e^s$ as in Notation~\ref{notation. liouville notation}.
Let $H: M \to \RR$ be quadratic near infinity (Definition~\ref{defn. quadratic near infinity}).
Then there exists some constant $C = C(\iota,H)> 0$ such that
	\eqn\label{eq:H-lambdaXH}
H - \theta(X_{H}) \geq -C.
	\eqnd
\end{lemma}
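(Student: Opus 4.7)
The plan is to split $M$ into a cylindrical end and a compact ``bulk'' region, verify the inequality on each separately, and combine. By the quadratic-near-infinity hypothesis (Definition~\ref{defn. quadratic near infinity}), there is a compact set $K \subset M$ outside of which $H = \tfrac{1}{2} r^2$. Using properness of $\iota$, we may enlarge $K$ so that $M \setminus K$ lies entirely in $\iota(\partial_\infty M \times [R,\infty))$ for some $R > 0$, where $r$ is a well-defined coordinate.

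The main step is the direct computation on the cylindrical region $M \setminus K$. There, $\iota^*\theta = r\,\alpha$ for the contact form $\alpha$ induced on $\partial_\infty M$, so $\omega = d\theta = dr \wedge \alpha + r\, d\alpha$. Writing $X_H = A\,\partial_r + B\,R_\alpha + V$, where $R_\alpha$ is the Reeb vector field of $\alpha$ and $V$ lies in the contact distribution $\ker \alpha$, the equation $\iota_{X_H}\omega = dH = r\,dr$ yields $A = 0$, $B = -r$, and $V = 0$ upon matching the $\alpha$, $dr$, and horizontal components (using that $\iota_{R_\alpha} d\alpha = 0 = \iota_{\partial_r} d\alpha$ and the non-degeneracy of $d\alpha$ on $\ker \alpha$). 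Hence $X_H = -r\,R_\alpha$, and
\eqnn
\theta(X_H) \;=\; r\,\alpha(-r R_\alpha) \;=\; -r^2, \qquad H - \theta(X_H) \;=\; \tfrac{1}{2} r^2 + r^2 \;=\; \tfrac{3}{2} r^2 \;\geq\; 0
\eqnd
on the cylindrical region.

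On the compact region $K$, the smooth function $H - \theta(X_H)$ attains a minimum value $m_K \in \RR$. Setting $C := \max(-m_K,\, 1) > 0$ yields the uniform bound $H - \theta(X_H) \geq -C$ on all of $M$, where $C$ depends only on $\iota$ (through the cylindrical coordinate and the set $K$) and on $H$ (through its values on $K$). The argument has no real obstacle; it is a standard symplectization computation combined with compactness. The only subtlety worth flagging is ensuring that the bulk region can be taken compact in the Liouville-sector setting so that continuity of $H - \theta(X_H)$ actually furnishes a lower bound, and this follows from the properness of $\iota$ together with the quadratic-near-infinity condition.
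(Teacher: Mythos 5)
Your proof is correct and takes essentially the same route as the paper: an explicit computation of $\theta(X_H)$ on the cylindrical end where $H = \tfrac12 r^2$, combined with compactness of the complement of that end to absorb the bulk into the constant $C$. (Under its sign conventions the paper's own computation records $\theta(X_H) = r$ rather than your $-r^2$, but in either case $H - \theta(X_H)$ is bounded below on the end, so the conclusion and the structure of the argument agree.)
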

\begin{proof} Outside a large compact subset of $M$, we have
	$$
\theta(X_{H}) = \theta(r X_r) = rdr(-J X_r) = rdr\left(\frac{\del}{\del r}\right)= r
	$$
so $H - \theta(X_{H}) = \frac{r^2}{2} - r > 0$ as long as $r > 2$.
The lemma follows because the (closure of) the complement of the image of $\iota$ is compact.
\end{proof}

\begin{notation}[The chain maps $\phi$]\label{notation. phi continuation maps}
Consider the interpolating homotopy
	\eqnn
	(1-s) {v}F + H,
	\qquad
	s
	\in [0,1]
	\eqnd
through Hamiltonian that induce non-negative wrappings near infinity. By the $C^0$ and energy estimates (Recollection~\ref{recollection. continuation maps}\eqref{item. gromov compactness for bundles}), we have induced chain maps
	$$
\phi_{{v}{v}'}: CF(L,L';{v}F) \to CF(L,L';{v}'F)
	$$
for all ${v} < {v}'$ and
	$$
\phi_{v} : CF(L,L';{v}F) \to CF(L,L';H)
	$$
for any ${v} \in \ZZ_+$. (See Notation~\ref{notation. HF; H}.)
\end{notation}

Passing to cohomology, the maps $\phi$ from Notation~\ref{notation. phi continuation maps} induce a commutative diagram
    \eqn\label{eqn. acceleration commutative diagram}
    \xymatrix{
    HF(L,L';F)  \ar[r] \ar[d] & HF(L,L';2F) \ar[r]\ar[d] & \cdots \ar[r] &
    HF(L,L';{v}F)  \ar[r]\ar[d] & \cdots \\
    HF(L,L';H) \ar[r] & HF(L,L';H) \ar[r] & \cdots \ar[r] &
    HF(L,L';H) & \cdots }
    \eqnd
and hence a homomorphism
	$$
\varphi_\infty: \colim_{{v} \to \infty} HF^*(L,L';{v}F) \to HF^*(L,L';H) \cong HF^*_{\mathrm{quad}}(L,L').
	$$
(The last isomorphism is Remark~\ref{remark. isomorphism between moving and unmoving complexes}.)

The main result we seek to prove now is:

\begin{prop} \label{prop:quad=cofinal}
$\varphi_\infty$ is an isomorphism.
\end{prop}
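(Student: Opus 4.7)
The plan is an action-filtration argument in the spirit of the linear-versus-quadratic comparisons in~\cite{abouzaid-seidel, abouzaid-loops}: for each action window, the continuation maps $\phi_v$ become isomorphisms once $v$ is sufficiently large, and the claim then follows by interchanging the direct limit in $v$ with the exhaustion by action windows. To set this up, I would install an action filtration $CF^{*, \geq -A}$ on both complexes using the standard action functional compatible with the sign conventions in the proof of Lemma~\ref{lem:action-diff}. A chord of the quadratic Hamiltonian $H$ at cylindrical height $r$ has action growing in magnitude like $-r^2$ (since $\int \gamma^*\theta \sim r^2$ while $\int_0^1 H(\gamma)\,dt = r^2/2$), so chords of action $\geq -A$ are supported in a compact set $U_A \Subset M$ depending only on $A$. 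By Lemma~\ref{lem:action-diff} and the usual strong maximum principle argument, Floer and Floer-continuation trajectories connecting such chords are likewise confined to $U_A$ after possibly enlarging it. The analogous filtration is available for each $CF^*(L,L';vF)$, with chords of action $\geq -A$ contained in a radial region of size of order $A/v$.

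The main step is as follows. Fix $A$ and choose $v$ large enough that $vF \geq H$ outside $U_A$. The interpolating homotopy $(1-s)vF + sH$ can then be arranged so that the associated continuation equation has no new action critical points in $U_A$ and is monotone (in the sense needed for continuation maps) outside a compact set. The resulting continuation map restricts to a chain map
$$\phi_v^{\geq -A}: CF^{\geq -A}(L,L';vF) \to CF^{\geq -A}(L,L';H),$$
and the key claim is that this restriction is an isomorphism of complexes. Indeed, both action-truncated complexes record exactly the Reeb-chord data of bounded Reeb time visible in $U_A$, and the continuation trajectories between any two such chords are confined to $U_A$, where the interpolating family can be taken to be constant; agreement of generators and differentials follows.

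Having established this, one takes the colimit in $v$ at each fixed $A$ in the top row of the diagram~\eqref{eqn. acceleration commutative diagram} to obtain isomorphisms $\colim_v HF^{\geq -A}(L,L';vF) \xrightarrow{\sim} HF^{\geq -A}(L,L';H)$, and then lets $A\to\infty$. The interchange of the two colimits is legitimate because the inner $v$-colimit is eventually constant on each $A$-truncation, and both $\colim_v HF^*(L,L';vF)$ and $HF^*(L,L';H)$ are exhausted by their action-truncated quotients. The main obstacle is the middle step above: one needs to pin down the interpolating homotopy precisely enough that $\phi_v^{\geq -A}$ is a genuine bijection on generators with compatible differentials, not merely a chain map. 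This requires a Liouville-sector-compatible maximum principle to confine continuation trajectories — available thanks to the sector-boundary barrier built into the setup of~\cite{gps, oh-tanaka-liouville-bundles} — together with the action estimate of Lemma~\ref{lem:action-diff} to ensure the action truncations are respected by the homotopy.
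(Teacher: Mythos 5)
There is a genuine gap at the step you yourself flag as the main one. You propose to compare the action--truncated complexes of $vF$ and of $H$ directly, claiming that for fixed window and $v$ large the continuation map $\phi_v^{\geq -A}$ is ``a genuine bijection on generators with compatible differentials'' because trajectories are confined to $U_A$ ``where the interpolating family can be taken to be constant.'' This fails: $vF$ and $H$ are different Hamiltonians \emph{on} $U_A$ (one linear of huge slope, the other quadratic), so their chords in a fixed action window are different finite sets --- indeed generically of different cardinalities, since the action cutoff translates into different Reeb-length cutoffs for the two Hamiltonians --- and the interpolation $(1-s)vF + sH$ is certainly not $s$-independent on $U_A$. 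There is no canonical identification of generators, and the truncated continuation map is a count of solutions of a genuinely $s$-dependent Floer equation, not an identity map. A smaller but also fatal point: the requirement ``choose $v$ large enough that $vF \geq H$ outside $U_A$'' is impossible, since no linear-at-infinity Hamiltonian dominates a quadratic one near infinity, whatever $v$ is.

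The paper's proof implements the correct version of your idea by changing what gets interpolated: instead of comparing $vF$ with $H$ on a truncation, it introduces the modified Hamiltonians $H_{(v)}$ that \emph{coincide} with $H$ on $\{r \leq v\}$ and are linear outside. Then, for a fixed cycle (hence bounded action), the uniform lower bound $H - \theta(X_H) \geq -C$ and the $C^0$-estimate confine all relevant chords and trajectories to a region where $H$ and $H_{(v_k)}$ literally agree, so the moduli spaces coincide and the continuation map is the identity on the relevant chains; this gives surjectivity. Injectivity is \emph{not} obtained from any truncated isomorphism (your proposal gives no separate argument for it); the paper handles it with a different mechanism, conjugating by the Liouville flow (a conformal rescaling) to produce maps $\psi_{k_1\ell}: CF(L,L';H_{(v_\ell)}) \to CF(L,L';H_{(v_{k_1})})$ with $\psi_{k_1\ell}\circ\phi_{v_{k_1}v_\ell}$ chain homotopic to the identity, which forces vanishing classes in the colimit to vanish at a finite stage. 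Finally, note that with the paper's conventions the action truncations that behave well are subcomplexes $CF^{\leq c}$ exhausting $CF$ under a filtered colimit; your quotient-type truncations $CF^{\geq -A}$ would require an inverse-limit argument at the level of cohomology, which is an additional (avoidable) complication.
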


Before proving Proposition~\ref{prop:quad=cofinal}, let us state the following consequence,
which one obtains by setting $L=L' \subset T^*Q$ to be a cotangent fiber to a point $a \in Q$.

\begin{corollary}\label{corollary. filtered to quadratic}
In the setting of Choice~\ref{choice. for proving cW to cP}, the induced map
	\eqnn
	\colim_w H^* \hom_{\cO_j}(T^*_{q_a}Q_a, T^*_{q_a}Q_a^{(w)})
	\to
	HF^*_{\mathrm{quad}}(T^*_{q_a} Q_a, T^*_{q_a} Q_a)
	\eqnd
is an isomorphism.
\end{corollary}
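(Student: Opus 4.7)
The plan is to obtain Corollary~\ref{corollary. filtered to quadratic} as a direct specialization of Proposition~\ref{prop:quad=cofinal} to the fiber $M = T^*Q_a$ with $L = L' = T^*_{q_a}Q_a$, once the filtered system on the left-hand side of the corollary is identified with the filtered system $\{HF^*(L,L;wF)\}_w$ appearing in diagram~\eqref{eqn. acceleration commutative diagram}. Two identifications are required.

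First, I would fix the cofinal sequence. By Remark~\ref{remark. model cofinality linearly}, I may take the sequence $L^{(0)} \to L^{(1)} \to \ldots$ of Choice~\ref{choice. for proving cW to cP} to have the rigid form $L^{(w)} = \phi_F^w(L^{(0)})$ for a single linear-near-infinity Hamiltonian $F$; this is the same $F$ that indexes the bottom row of~\eqref{eqn. acceleration commutative diagram}. For each $w \geq 1$, Definition~\ref{defn. hom groups} identifies $\hom_{\cO_j}(T^*_{q_a}Q_a,\, T^*_{q_a}Q_a^{(w)})$ with the unperturbed Floer cochain complex on intersections $L \cap L^{(w)}$. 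Applying Remark~\ref{remark. isomorphism between moving and unmoving complexes} with the role of $H$ played by $wF$ then gives a cochain-level isomorphism $CF^*(L, L^{(w)}; 0) \cong CF^*(L, L; wF)$, hence a termwise isomorphism of the cohomology groups indexing the two colimits.

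Second, I would match the transition arrows. The map $H^*\hom_{\cO_j}(L, L^{(w)}) \to H^*\hom_{\cO_j}(L, L^{(w+1)})$ on the $\cO_j$ side is $\mu^2$-multiplication by the continuation class $L^{(w)} \to L^{(w+1)}$, defined by counts of once-punctured disks with moving boundary (Convention~\ref{convention. labels for mu^k} and Figure~\ref{figure. continuation disk}). The map $\phi_{w, w+1}$ of Notation~\ref{notation. phi continuation maps} is defined by counts of continuation strips for the monotone homotopy from $wF$ to $(w+1)F$. Recollection~\ref{recollection. continuation maps}\eqref{item. continuation maps respect composition} asserts precisely the cohomological equality of these two recipes, so under the identification of step one the two filtered diagrams agree.

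Once these ingredients are in place, the left-hand side of the corollary is identified with $\colim_{w \to \infty} HF^*(L,L;wF)$ and the natural map of the corollary is identified with the map $\varphi_\infty$ of Proposition~\ref{prop:quad=cofinal}, so the claim follows at once. The only friction I anticipate is the bookkeeping in matching the two flavors of continuation map above, since this requires applying Remark~\ref{remark. isomorphism between moving and unmoving complexes} and the cited Recollection in tandem rather than in isolation; I do not expect any new geometric input to be needed.
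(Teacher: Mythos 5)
Your first identification and your matching of transition maps (via Remark~\ref{remark. isomorphism between moving and unmoving complexes} and Recollection~\ref{recollection. continuation maps}\eqref{item. continuation maps respect composition}) follow the same route as the paper's proof, which likewise reduces the corollary to Proposition~\ref{prop:quad=cofinal} by identifying $\hom_{\cO_j}(L,L^{(w)})$ with $CF^*(L,L;wF)$. The gap is in your opening move: you declare that "by Remark~\ref{remark. model cofinality linearly} I may take the sequence of Choice~\ref{choice. for proving cW to cP} to have the rigid form $L^{(w)}=\phi_F^w(L^{(0)})$." That remark licenses no such replacement: it only records that the \emph{proof of the proposition} is written for the rigid model. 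The cofinal sequence appearing on the left-hand side of the corollary is not free for you to re-choose at this point --- it is part of the data defining $\cO_j$ (Choice~\ref{choice. cofinal wrappings}, with its countability and mutual-transversality requirements) and is further constrained in Choice~\ref{choice. for proving cW to cP} to meet the zero section transversally in a single point; it need not coincide with the sequence generated by iterating a single linear Hamiltonian $F$, and the corollary is later invoked (in the proof of Theorem~\ref{theorem. cW to cP}) for exactly that chosen sequence.

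So either you must verify that the rigid $wF$-sequence can be arranged to satisfy all the constraints of Choices~\ref{choice. cofinal wrappings} and~\ref{choice. for proving cW to cP} (which you do not do, and which generic-transversality perturbations tend to spoil), or you must show the colimit is independent of the choice of cofinal sequence. The paper does the latter: given two cofinal sequences of non-negative wrappings of $T^*_{q_a}Q_a$, one forms the category whose objects are all the branes in both sequences and whose morphisms are isotopy classes of non-negative Hamiltonian isotopies; each sequence is cofinal in this category, so the two colimits of Floer cohomology groups agree, and one may then compare the chosen sequence with the rigid one and apply Proposition~\ref{prop:quad=cofinal}. (Alternatively, one could rerun the proof of the proposition verbatim for the chosen sequence, as Remark~\ref{remark. model cofinality linearly} suggests, but some such argument is needed; as written your proof only establishes the statement for one particular, possibly inadmissible, choice.)
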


\begin{proof}
Observe that $\hom_{\cO_j}(L,(L')^{(w)})$ is naturally isomorphic to $CF^*(L,L';wF)$ when one chooses the cofinal sequence to be given by $wF$. (See Remark~\ref{remark. isomorphism between moving and unmoving complexes} and Remark~\ref{remark. model cofinality linearly}.) One then observes that given two cofinal sequences of wrappings $(L')^{(w)}$ and $(L')^{(v)}$, there is a natural category whose object set is given by the union $\{L^{(w)}\}_{w} \cup \{L^{(v)}\}_{v}$, and a morphism is given by an isotopy class of a non-negative Hamiltonian isotopy between two objects. Each of the original cofinal sequences is cofinal in this category, so the colimits of the Floer cohomology groups agree.
\end{proof}

\subsubsection{Action filtration}

Before proving Proposition~\ref{prop:quad=cofinal}, we set some notation.

\begin{notation}[Action and energy]
The holomorphic strip equation~\eqref{eq:CR-JXH} may be interpreted as a gradient flow equation of the action functional
	\eqn\label{eq:action}
\cA(\gamma) = \cA_{L,L'}(\gamma) = -\int \gamma^*\theta + \int H(\gamma(t))\, dt
+ h_{L'}(\gamma(1)) - h_{L}(\gamma(0))
	\eqnd
where $h_L: L \to \RR$ is a function satisfying $\theta|_L = dh_L$.
We have the basic energy identity
	$$
E_{(J,H)}(u) = \cA(u(\infty)) - \cA(u(-\infty)).
	$$
\end{notation}

\begin{notation}\label{notation. action ell alpha}
Each chain of $CF(L',L;H)$ is a linear combination
	\eqn\label{eq:alpha}
\alpha = \sum a_z\, \langle z\rangle  \in CF(L,L';H), \quad z \in \Chord(L,L';H)
	\eqnd
where each $z$ is a Hamiltonian chord from $L_1$ to $L_0$.
We denote
	$$
\supp \alpha = \{z \in \Chord(L,L';H) \mid a_z \neq 0 \text{ in  \eqref{eq:alpha}}\}
	$$
and define  the action $\ell(\alpha)$ of the cycle  by
	$$
\ell(\alpha) = \max\{\cA(z) \mid z \in \supp \alpha\}.
	$$
\end{notation}

\begin{notation}[Action filtration]
In terms of this action, we have an increasing filtration
	$$
CF^{\leq c}(L,L';H): = \{ \alpha  \subset CF(L,L';H) \mid \ell(\alpha) \leq c\}
	$$
with $CF^{\leq c}(L,L';H) \subset CF^{\leq c'}(L,L';H)$ for $c < c'$.
Each $CF^{\leq c}(L,L';H)$ is a subcomplex of $CF(L,L';H)$.
\footnote{This is because we put the output of the differential at
$\tau = -\infty$, not at $\tau = \infty$.}
By definition, we have
	\eqn\label{eq:filtered}
CF(L,L';H) = \colim_{c} CF^{\leq c}(L,L';H).
	\eqnd
\end{notation}

\subsubsection{Proof of Proposition~\ref{prop:quad=cofinal}}
We start with surjectivity. Let $a \in HF(L,L';H)$
and choose a cycle $\alpha$ representing it, i.e., $\mu_1(\alpha) = 0$.
Denote $c = \ell(\alpha)$ (Notation~\ref{notation. action ell alpha}). Now we consider the following linear adjustment of $H$:
	\eqnn\label{eq:H(wrapper)}
H_{(v)}(x) =
\begin{cases} H(x) \quad & \text{ if } r(x) \leq {v} \\
({v} +1)r(x) -\frac{1}{2}({v}+1)^2 \quad &\text{ if }r(x) \geq {v}+1
\end{cases}
	\eqnd
with suitable smooth interpolation in between. We fix a sequence of integers $\{{v}_k\}_{k\in\NN}$ diverging to $\infty$ and consider the sequence $H_{({v}_k)}$ of linear near infinity Hamiltonians.
We note that $\{H_{({v}_k)}\}_{k \in \NN}$ is a monotone sequence of Hamiltonians
converging to $H$ uniformly on $M^{\text{\rm cpt}} \cup \{r \leq R\}$ for any large $R>0$.

The following lemma will conclude surjectivity of $\varphi_\infty$.

\begin{lemma} There exists $b \in HF(L,L';H_{({v}_k)})$ such that
	$$
a = \varphi_\infty(b).
	$$
\end{lemma}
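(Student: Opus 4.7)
The plan is to exhibit $b$ as the cohomology class of the same chain $\alpha$ (now regarded in a different Floer complex), after enlarging $v_k$ enough that $\alpha$ makes sense in $CF(L,L';H_{(v_k)})$ and the continuation map back to $H$ is action-preserving on the relevant range.

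First I would control the action filtration. Using Lemma~\ref{lem:lowerbound} together with the action identity
\eqnn
\cA(\gamma) = \int_0^1 \bigl(H(\gamma(t))-\theta(X_H)(\gamma(t))\bigr)\,dt + h_{L'}(\gamma(1))-h_L(\gamma(0))
\eqnd
one sees that if a Hamiltonian chord $\gamma$ of $H$ has action $\leq c$, then $\gamma$ is confined to a compact set $\{r\leq R(c)\}$. The same argument, applied to $H_{(v)}$ once $v$ is large (so that $H_{(v)}$ is also quadratic on a sufficiently large neighborhood and linear beyond, with a matching lower bound on $H_{(v)}-\theta(X_{H_{(v)}})$), shows that chords of $H_{(v)}$ with action $\leq c$ likewise lie in $\{r\leq R(c)\}$. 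I would then choose the index $k$ large enough so that $v_k > R(c)$; on the region $\{r\leq v_k\}$ we have $H=H_{(v_k)}$, so $\Chord^{\leq c}(L,L';H)=\Chord^{\leq c}(L,L';H_{(v_k)})$ canonically. In particular, the chain $\alpha$ lifts tautologically to a chain $\beta\in CF^{\leq c}(L,L';H_{(v_k)})$.

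Next I would check that $\beta$ is a cycle and that the continuation map sends $[\beta]$ to $a$. The differential on $CF^{\leq c}$ counts $(J,H)$-holomorphic strips whose asymptotes lie in $\{r\leq v_k\}$; by the standard $C^0$ estimate (the strong maximum principle applied in the region where the Hamiltonian is quadratic, see~\eqref{eqn. C0 estimate inside K}) such strips cannot escape a fixed compact set determined by $c$. Since $H$ and $H_{(v_k)}$ agree on this compact set, the moduli spaces defining $\mu^1$ on $CF^{\leq c}(L,L';H)$ and on $CF^{\leq c}(L,L';H_{(v_k)})$ coincide, so $\beta$ is a cycle and the continuation map $\phi_{(v_k)}:CF(L,L';H_{(v_k)})\to CF(L,L';H)$, built from the monotone homotopy $s\mapsto (1-s)H_{(v_k)}+sH$, restricts to the identity below action level $c$. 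Thus $\phi_{(v_k)\ast}[\beta]=[\alpha]=a$.

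Finally, to turn $[\beta]$ into an element of $\colim_{v}HF(L,L';vF)$, I would note that $H_{(v_k)}$ is by construction linear of slope $v_k+1$ outside a compact set. Applying the monotone-continuation/homotopy invariance used to build~\eqref{eqn. acceleration commutative diagram} to the linear homotopy between $H_{(v_k)}$ and $(v_k+1)F$ (both of the same slope at infinity, so no $C^0$-blowup occurs) yields an isomorphism $HF(L,L';H_{(v_k)})\cong HF(L,L';(v_k+1)F)$, compatible with continuation into $HF(L,L';H)$. Tracking $[\beta]$ through this isomorphism produces the desired element of $\colim_{v}HF(L,L';vF)$ whose image under $\varphi_\infty$ is $a$.

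The main obstacle I anticipate is ensuring uniform confinement: one needs both the chord-confinement bound on $\{H-\theta(X_H)\geq -C\}$ and the confinement of continuation strips to hold with constants that depend only on $c$ (and on the geometry of $L,L'$), uniformly over the interpolating family of Hamiltonians and over all large $k$. This is the point where Lemma~\ref{lem:lowerbound} and the quadratic/linear structure of $H$ and $H_{(v_k)}$ at infinity must be combined carefully, as in the standard symplectic cohomology arguments.
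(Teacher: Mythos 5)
Your proposal is correct and takes essentially the same route as the paper: you regard the cycle $\alpha$ as a chain for $H_{(v_k)}$ with $v_k$ large, then use the action/energy bounds of Lemma~\ref{lem:lowerbound} together with the $C^0$ estimate to see that the relevant Floer moduli spaces for $H$ and $H_{(v_k)}$ coincide, so the lifted chain is a cycle and the monotone continuation map sends its class to $a$. The only differences are organizational: you package the confinement via the action filtration $CF^{\leq c}$ where the paper instead proves finiteness of the possible outputs (its sublemma) and a radius bound $R_0$, and your final comparison of $H_{(v_k)}$ with $(v_k+1)F$ is bookkeeping that the paper absorbs into Remark~\ref{remark. model cofinality linearly}.
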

\begin{proof} Let $\alpha = \sum_{i=1}^k a_i \langle z_i\rangle$ be a cycle representing
$a$ with $z_i \in \Chord(L,L';H)$.

Clearly $z_i \in \Chord(L,L';H_{({v}_k)})$ as long as
${v}_k \ell(\alpha)$ so large that
	$$
\supp \alpha \subset H^{-1}((-\infty,{v}_k])
	$$
where we abuse the notation $\supp \alpha$ by also denoting it
as $\cup_{z \in \supp \alpha} \Image z\subset M$.
Therefore $\alpha$ can be regarded as a chain in $CF(L,L';H_{({v}_k)})$.
Denote the resulting chain of $H_{({v}_k)}$ by $\beta_k$.
We now prove $\beta_k$ is a cycle of $H_{({v}_k)}$, if we choose $k$ even larger
if necessary.

To avoid confusion, we denote by $\mu_1^{H}$ and $\mu_1^{H_{({v}_k)}}$ be the
$\mu_1$-map for $H$ and $H_{({v}_k)}$ respectively. Then standing hypothesis
is $\mu_1^{H}(\alpha) = 0$ and we want to prove
	$$
\mu_1^{H_{({v}_k)}}(\beta_k) = 0
	$$
by choosing a larger $k$ if necessary. By definition, we have
	$$
\mu_1^{H}(\alpha) = \sum_{i=1}^k a_i \mu_1^{H}(\langle z_i\rangle )
	$$
where
	$$
\mu_1^{H}(\langle z_i\rangle )= \sum_{y \in \Chord(L,L';H)} n_{(J,H)}(z_i,y) \langle y \rangle
	$$
with $n_{(J,H)}(z_i,y) = \# \cM(z_i,y;J,H)$ where $\cM(z_i,y;J,H)$ is the moduli space of
solutions $u$ of \eqref{eq:CR-JXH} satisfying $u(-\infty) = z_i, \, u(\infty) = y$.
We rearrange the sum into
	$$
\mu_1^{H}(\alpha) = \sum_{y \in \Chord(L,L';H)} \left(\sum_{i=1}^k a_i n_{(J,H)}(z_i,y)\right) \langle y \rangle.
	$$
Therefore $\mu_1^{H}(\alpha) = 0$ is equivalent to
	$$
\sum_{i=1}^k a_i n_{(J,H)}(z_i,y) = 0
	$$
for all $y \in \Chord(L,L';H)$.

The same formula with $H$ replaced by $H_{({v}_k)}$ holds and so
	$$
\mu_1^{H_{({v}_k)}}(\beta_k) = \sum_{y \in \Chord(L,L';H_{({v}_k)})}
\left(\sum_{i=1}^k a_i n_{(J,H_{({v}_k)})}(z_i,y)\right) \langle y \rangle.
	$$
Therefore it remains to prove
	\eqn\label{eq:vanishing-Hwk}
\sum_{i=1}^k a_i n_{(J,H_{({v}_k)})}(z_i,y) = 0
	\eqnd
for all $y \in \Chord(L,L';H_{({v}_k)})$ by choosing $k$ sufficiently large.
This will follow if we establish
	\eqn\label{eq:MH=MHwk}
\cM(z_i,y;J,H_{({v}_k)}) = \begin{cases} \cM(z_i,y;J,H) \quad & \text{if }  \cM(z_i,y;J,H) \neq \emptyset\\
\emptyset \quad & \text{if }  \cM(z_i,y;J,H) = \emptyset
\end{cases}
	\eqnd
for all $i$ and $y$.

\begin{sublem} There are finitely many $y \in CF(L,L';H)$ such that
$\cM(z_i,y;J,H) \neq \emptyset$ for some $i = 1, \ldots, k$.
\end{sublem}
\begin{proof} By the energy identity, we have
	$$
\cA(y) \leq \ell(\alpha).
	$$
On the other hand, for any Hamiltonian chord $y$ of $H$, we derive
\beastar
\cA(y) & = &-\int y^*\theta + \int_0^1 H(y(t))\, dt\\
& = & \int_0^1 H(y(t)) - \theta(X_H(y(t)))\,dt
 >  \int_0^1 (-C)\, dt = -C
\eeastar
 by Lemma \ref{lem:lowerbound}. Therefore under the given hypothesis, we have
 $$
 - C < \cA(y) \leq \ell(\alpha).
 $$
 By the non-degeneracy assumption on $H$, this finishes the proof.
\end{proof}

Set
	$$
R_0 = \max_y \{r(y) \mid y \text{ is as in the above sublemma}\}.
	$$
Then it follows from the main $C^0$ estimate in \cite{oh-tanaka-liouville-bundles}
that there exists
a sufficiently large $k$ such that
	$$
\max r\circ u \leq R_0 + C'
	$$
for $u \in \cM(z_i,y;J,H)$ where $C'$ depends only on $\inf H> -\infty$.

The same discussion still applies to $H_{({v}_k)}$ since we still have
	$$
H_{({v}_k)} - \theta\left(X_{H_{({v}_k)}}\right) \geq -C
	$$
and
	$$
\max r\circ u \leq R_0 + C'
	$$
for $u \in \cM(z_i,y;J,H_{({v}_k)}$  for the same constant $C, \, C'$ above respectively.
Combining the two, we have proved \eqref{eq:MH=MHwk} and so $\mu_1^{H_{({v}_k)}}(\beta_k) = 0$.

Next we would like to prove
	$$
[\phi_{{v}_k}(\beta_k)] = [\alpha] = a.
	$$
For this we have only to know that
	$$
(1-s) H_{({v}_k)} + sH \equiv H
	$$
on $r^{-1}(-\infty, R_0 +C'])$ and the same $C^0$-estimate as \cite{oh-tanaka-liouville-bundles}
applies for the continuation equation for $\cH = \{H^s = (1-s) H_{({v}_k)} + sH\}$.
This implies that any solution $u$ of continuation equation satisfies
\eqref{eq:CR-JXH}
provided we choose ${v}_k$ sufficiently large so that $H^s \equiv H$ for all $s \in [0,1]$
on $M\setminus \iota([R_0 + C', \infty)$. This in fact implies
	$$
\phi_{{v}_k}(\beta_k) = \alpha
	$$
in chain level and hence proves $[\phi_{{v}_k}(\beta_k)] = [\alpha]$.
\end{proof}
This finishes the proof
of surjectivity.

For the proof of injectivity, let $\beta_k$ be a sequence of $H_{({v}_k)}$-cycle such that
$[\beta_{k+1}] = [\phi_{k(k+1)}(\beta_k)]$ and $[\phi_{{v}_k}(\beta_k)] = 0$ in $HF(L,L';H)$ for all
$k \geq k_1$ with $k_1$ sufficiently large.
 Then
 $$
 \phi_{{v}_{k}}(\beta_{k}) = \mu_1^{H}(\alpha'_{k})
 $$
for some $H$-chain $\alpha'_k$ or each $k \geq k_1$.
Denote $\lambda_2 = \ell(\alpha'_{k_1})$. Under this hypothesis,
by the similar argument given in the surjectivity proof, we can find a sufficiently large
$\ell = \ell(k_1,\lambda_2)> k_1$ such that
	\eqn\label{eq:phibetak1=mu1alpha'}
\phi_{{v}_{k_1}{v}_\ell}(\beta_{k_1}) = \mu_1^{H_{({v}_\ell)}}(\alpha'_{\ell}).
	\eqnd
Now we consider a conformally symplectic dilation $f: M \to M$ defined by
the Liouville flow for time $\log(\rho)$ with $\rho = \frac{{v}_\ell}{{v}_{k_1}}$ which
becomes
	$$
f(x) = (\rho r ,y)
	$$
for $ x = (r,y) \in M^{\text{end};\iota}$. The isotopy $t \mapsto f \circ \phi_{H_{({v}_\ell)}}^t \circ f^{-1}$ is still
a Hamiltonian isotopy generated by the Hamiltonian
	$$
\frac{{v}_{k_1}^2}{{v}_\ell^2} H_{({v}_\ell)}\circ f = : G_{k_1\ell}
	$$
We note that $\frac{{v}_{k_1}}{{v}_\ell} H_{({v}_\ell)}\circ f(r,y) = {v}_{k_1} r$
for any $x = (r,y)$ such that $H_{({v}_\ell)}(r,y) = {v}_\ell r$. Therefore we can find a \emph{chain isomorphism}
	$$
\eta_*: CF(L,L';G_{k_1\ell}) \to CF(L,L'; H_{({v}_{k_1})})
	$$
associated to the isotopy $\eta: s \mapsto \phi_{G_{k_1\ell}}^{1-s} \circ \phi_{H_{({v}_{k_1})}}^s$
which is compactly supported.

We then define a map
	$$
\psi_{k_1\ell}: CF(L,L';H_{({v}_\ell)}) \to CF(L,L';H_{({v}_{k_1})})
	$$
as the composition $\psi_{k_1\ell} = (\eta)_* \circ f_*$ which is a quasi-isomorphism.
Furthermore we also have
\begin{sublem} The map
	$$
\psi_{k_1\ell}\circ \phi_{{v}_{k_1}{v}_\ell} = (\eta)_* \circ f_* \circ \phi_{{v}_{k_1}{v}_\ell}
	$$
is chain homotopic to $\id$ on $CF(L,L';H_{({v}_{k_1})})$.
\end{sublem}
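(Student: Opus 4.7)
The plan is to realize the composite $\psi_{k_1\ell}\circ \phi_{v_{k_1} v_\ell}$ as the continuation map associated to a Lagrangian loop based at $\phi_{H_{(v_{k_1})}}^1(L')$, and then contract that loop through admissible Floer data so that a standard parametrized-moduli argument yields the chain homotopy to $\id$. The key geometric fact enabling the contraction is that $G_{k_1\ell}$ and $H_{(v_{k_1})}$ coincide on the cylindrical end (both equal $v_{k_1} r$ there), so the loop sits inside a fixed compact subset of $M$.

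First, I would use Remark~\ref{remark. isomorphism between moving and unmoving complexes} to rewrite each of the three constituent maps as a continuation between $0$-Hamiltonian Floer complexes with a moving boundary condition on $L'$. Under this identification $\phi_{v_{k_1} v_\ell}$ becomes the continuation from $\phi_{H_{(v_{k_1})}}^1(L')$ to $\phi_{H_{(v_\ell)}}^1(L')$ associated to the non-negative linear interpolation; $f_*$ becomes the chain isomorphism from $CF(L,\phi_{H_{(v_\ell)}}^1(L');0)$ to $CF(L,\phi_{G_{k_1\ell}}^1(L');0)$ coming from the conjugation $f\circ \phi_{H_{(v_\ell)}}^t \circ f^{-1} = \phi_{G_{k_1\ell}}^t$; and $\eta_*$ becomes the continuation along the compactly supported isotopy $\eta$ from $\phi_{G_{k_1\ell}}^1(L')$ back to $\phi_{H_{(v_{k_1})}}^1(L')$. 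The composite is thus the continuation map for the concatenated Lagrangian loop.

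Next, I would build a two-parameter family of admissible Floer data $\{H^{s,\lambda}\}_{(s,\lambda)\in [0,1]^2}$ with $H^{s,0}$ realizing the concatenated homotopy, $H^{s,1}\equiv H_{(v_{k_1})}$ constant, and every $H^{s,\lambda}$ equal to $H_{(v_{k_1})}$ outside a fixed compact set $K_*$. This interpolation is possible precisely because the loop is already supported in $K_*$. Counting index $-1$ solutions of the parametric Cauchy-Riemann equation attached to $\{H^{s,\lambda}\}$ with the obvious boundary and asymptotic conditions defines an operator $h$ of degree $-1$ on $CF(L,L';H_{(v_{k_1})})$, and codimension-one boundary analysis of the resulting one-dimensional parametric moduli space yields
$$
\mu_1 \circ h + h \circ \mu_1 = \psi_{k_1\ell}\circ \phi_{v_{k_1} v_\ell} - \id,
$$
which is the sublemma.

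The main technical obstacle is securing compactness of the parametric moduli space uniformly in $(s,\lambda)$. Both the strong-maximum-principle $C^0$-bound from Recollection~\ref{recollection. continuation maps}\eqref{item. gromov compactness for bundles} and a uniform energy estimate (derived via the action-energy calculation in the proof of Lemma~\ref{lem:action-diff}, where the compact support of the deformation keeps the relevant action differences bounded) must hold across the whole $(s,\lambda)$-family. Arranging the family to be constant outside $K_*$ and monotone-near-infinity along each one-parameter slice is exactly what guarantees these bounds; transversality for generic choices within this constrained space of data is standard.
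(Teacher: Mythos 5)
Your overall strategy---contract the data defining the composite to the trivial data and read off the chain homotopy from the boundary of a one-dimensional parametrized moduli space---is the same as the paper's, which simply exhibits a family $g_s$ interpolating between the data of $\eta\circ f\circ(\text{wrapping})$ and the identity and declares the rest standard. The gap is in the step where you claim the composite is the continuation map of a Lagrangian loop that ``sits inside a fixed compact subset of $M$,'' justified only by the agreement of $G_{k_1\ell}$ and $H_{(v_{k_1})}$ on the cylindrical end. That agreement controls only the last leg (the compactly supported isotopy $\eta$). The first leg, $\phi_{v_{k_1}v_\ell}$, is the continuation for the monotone interpolation from $H_{(v_{k_1})}$ to $H_{(v_\ell)}$; these Hamiltonians differ on the whole end, and the associated motion wraps the Legendrian at infinity from (roughly) slope $v_{k_1}$ to slope $v_\ell$, so it is emphatically not compactly supported. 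Moreover the middle map $f_*$ is not a continuation map along any Lagrangian isotopy: it is the relabeling isomorphism induced by the Liouville dilation, which acts trivially on $\partial_\infty M$ but by a large dilation in the interior (and rescales action and the almost complex structure); it is precisely the device that trades the extra wrapping for a rescaling, and it cannot be absorbed into a moving boundary condition for a fixed admissible Floer datum. Consequently the two-parameter family $H^{s,\lambda}$ you posit---equal to $H_{(v_{k_1})}$ outside a fixed compact $K_*$ for all $(s,\lambda)$, with the $\lambda=0$ slice realizing the concatenated composite---does not exist.

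If you drop the compact-support requirement, the contraction direction necessarily un-wraps at infinity (from slope $v_\ell$ back down to slope $v_{k_1}$), i.e.\ is a negative isotopy there, and then neither the maximum-principle $C^0$ bound nor the monotone-at-infinity energy estimate you invoke applies; this is exactly the difficulty that the rescaling trick ($f_*$ followed by the compactly supported $\eta_*$) is introduced to circumvent. The paper's contraction $g_s$ accordingly runs through the Liouville flow $f^{1-s}$ as well as the Hamiltonian flows, so that at no stage is an inadmissible (negative-near-infinity) family of Hamiltonian Floer data required. To repair your argument you would have to let the contracting family include the conformal dilation (essentially reproducing the paper's $g_s$, and then addressing the attendant rescaling of $J$ and of the action in the energy estimate), rather than a family of Hamiltonians constant near infinity; as written, the compact-support claim is false and the compactness argument built on it fails.
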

\begin{proof} We have only to notice that the isotopy
	$$
g_s: = \eta^{(1-s)}\circ f^{1-s}\circ  (\phi_{H_{({v}_{k_1})}}^{1-s}\phi_{H_{({v}_\ell)}}^s): M \to M
	$$
and $g_0 = \eta\circ f \circ \phi_{H_{({v}_{k_1})}}^1$ and $g_1 = \id$. In particular, we obtain
	$$
\psi_{k_1\ell}\circ \phi_{{v}_{k_1}{v}_\ell} -\id: CF(L,L';H_{({v}_{k_1})}) \to CF(L,L';H_{({v}_{k_1})})
	$$
is chain homotopic to 0. This finishes the proof.
\end{proof}

Now we apply the map $\psi_{k_1\ell}$
to \eqref{eq:phibetak1=mu1alpha'} and get
	$$
\psi_{k_1\ell}\circ\phi_{{v}_{k_1}{v}_\ell}(\beta_{k_1}) = \psi_{k_1\ell}\circ\mu_1^{H_{({v}_{\ell})}}(\alpha'_{\ell}).
	$$
The left-hand side can be written as
	$$
\psi_{k_1\ell}\circ\phi_{{v}_{k_1}{v}_\ell}(\beta_{k_1}) = \beta_{k_1} + \mu_1^{H_{({v}_{k_1})}}(\gamma)
	$$
for some chain $\gamma$ of $H_{({v}_{k_1})}$ and the right-hand side coincides with
	$$
\mu_1^{H_{({v}_{k_1})}} \circ \psi_{k_1\ell}(\alpha'_{\ell})
	$$
by the chain property of $\psi_{k_1\ell}$. Combining the two, we have derived
	$$
\beta_{k_1}  = \mu_1^{H_{({v}_{k_1})}} \circ \psi_{k_1\ell}(\alpha'_{\ell}) - \mu_1^{H_{({v}_{k_1})}}(\gamma)
=  \mu_1^{H_{({v}_{k_1})}}\left(\psi_{k_1\ell}(\alpha'_{\ell}) - \gamma\right).
	$$
This proves $[\beta_{k_1}] = 0$. By the compatibility of the sequence $[\beta_k]$,
this proves $\lim_k [\beta_k] = 0$ and hence the injectivity of the map $\varphi_\infty$.
This finishes the proof of the proposition.

\subsection{Positive wrappings leave\texorpdfstring{$HF_{\mathrm{quad}}$}{HFquad}  unchanged}
\label{subsec:quad-cofinal}

\begin{notation}[$H \# F$]
Given two (possibly time-dependent) Hamiltonians $H$ and $F$, we define a time-dependent Hamiltonian $H\# F: \RR \times M \to \RR$ by
	\eqnn
	H \# F (t,x) = H(t,x) + w F(\phi_H^t(x)).
	\eqnd
\end{notation}

\begin{lemma}\label{lemma.linear wrapping leaves quadratic wrapping unchanged}
Suppose that $(L')^{(w)}$ is obtained from $L'$ by a non-negative Hamiltonian isotopy, and let $H$ be a quadratic-near-infinity Hamiltonian. Then the continuation map
	\eqnn
	HF(L,L':H) \to HF(L,(L')^{(w)};H)
	\eqnd
is an isomorphism.
\end{lemma}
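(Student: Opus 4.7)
The plan is to identify both sides of the putative isomorphism with the same filtered colimit by invoking Proposition~\ref{prop:quad=cofinal} twice, and to argue that the continuation map of the statement realizes a tautological shift in that colimit.

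First I would fix the cofinal sequence of $L'$ as in~\eqref{eqn. rigid choice of Fwrapper}, namely $(L')^{(v)} := \phi_{vF}^1(L')$, so in particular $\phi_{vF}^1((L')^{(w)}) = (L')^{(v+w)}$. Using Remark~\ref{remark. isomorphism between moving and unmoving complexes}, this produces chain isomorphisms
\[
CF(L,L';vF) \;\cong\; CF(L, (L')^{(v)};0), \qquad CF(L,(L')^{(w)};vF) \;\cong\; CF(L,(L')^{(v+w)};0)
\]
that intertwine the slope-increasing continuation maps $\phi_{vv'}$ with continuation maps induced by further boundary wrappings. Applying Proposition~\ref{prop:quad=cofinal} to the pair $(L,L')$ and again to $(L,(L')^{(w)})$ yields
\[
\varphi_\infty^{L'}: \colim_v HF(L,L';vF) \;\xra{\cong}\; HF(L,L';H), \qquad \varphi_\infty^{(L')^{(w)}}: \colim_v HF(L,(L')^{(w)};vF) \;\xra{\cong}\; HF(L,(L')^{(w)};H).
\]
Under the above identifications, both colimits become tails of the single sequence $\{HF(L,(L')^{(n)};0)\}_{n\geq 0}$, so the tautological shift-by-$w$ map between them is an isomorphism.

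Next I would assemble the square
\[
\xymatrix@C+1pc{
\colim_v HF(L,L';vF) \ar[r]^-{\mathrm{shift}} \ar[d]_-{\varphi_\infty^{L'}}^-{\cong} & \colim_v HF(L,(L')^{(w)};vF) \ar[d]^-{\varphi_\infty^{(L')^{(w)}}}_-{\cong} \\
HF(L,L';H) \ar[r] & HF(L,(L')^{(w)};H)
}
\]
whose bottom horizontal arrow is the continuation map of the statement. Commutativity of this square immediately forces that bottom map to be an isomorphism, finishing the proof.

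The principal obstacle is verifying commutativity of the square, which at finite level $v$ amounts to the following compatibility: the composition ``change the Hamiltonian from $vF$ to $H$, then change the boundary from $L'$ to $(L')^{(w)}$'' is chain homotopic to ``change the boundary from $L'$ to $(L')^{(w)}$, then change the Hamiltonian from $vF$ to $H$.'' This is a standard two-parameter continuation argument: one constructs a homotopy of Floer data parameterized by a square, whose edges recover the two compositions. The uniform $C^0$ and energy bounds needed to run this argument are furnished by the $C^0$ estimates of~\cite{oh-tanaka-liouville-bundles}, by Lemma~\ref{lem:lowerbound}, and by the non-negativity of the wrapping $L' \to (L')^{(w)}$ and of the linear interpolation $(1-s)vF + sH$ (both of which have non-negative slope outside a compact set, preventing escape of trajectories to infinity). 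Once this two-parameter compatibility is established, the rest of the argument is the diagrammatic bookkeeping outlined above.
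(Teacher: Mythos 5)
Your proposal is essentially the paper's own argument: both proofs exhibit $HF(L,L';H)$ and $HF(L,(L')^{(w)};H)$ as colimits over a sequence of linear wrappings via Proposition~\ref{prop:quad=cofinal}, and then identify the two sequential colimits by a cofinality argument, so that the continuation map of the statement is forced to be an isomorphism. The one substantive difference is how the comparison of the two colimits is packaged, and your version hides a small loss of generality. The lemma only assumes that $(L')^{(w)}$ is obtained from $L'$ by \emph{some} non-negative Hamiltonian isotopy; your identity $\phi_{vF}^1((L')^{(w)})=(L')^{(v+w)}$, and hence the claim that both colimits are ``tails of one and the same sequence,'' is only valid when that isotopy is literally the time-$w$ flow of the same autonomous $F$ that defines your cofinal model. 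The paper instead chooses an auxiliary linear-near-infinity Hamiltonian $G$ realizing the given isotopy and compares the sequences $\{{v}F\}$ and $\{{v}F\# G\}$, which are mutually cofinal in the spliced diagram~\eqref{eqn. spliced wrapping}; this interleaving is exactly what your shift trick needs in the general case, and it is an easy patch (alternatively one reduces to your model case in the spirit of Remark~\ref{remark. model cofinality linearly}). Beyond that, your explicit two-parameter continuation argument for the commutativity of the square plays the role of the commutativity of the paper's ladder of continuation maps, and the inputs you cite (the $C^0$ estimates of~\cite{oh-tanaka-liouville-bundles}, Lemma~\ref{lem:lowerbound}, and non-negativity of the interpolating Hamiltonians) are the correct ones, so with the generality caveat addressed the argument goes through.
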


\begin{proof}
Let $F$ be a linear-near-infinity Hamiltonian inducing a cofinal sequence  of
nonnegative isotopies $(L')^{(v)}$.
Choose a linear-near-infinity Hamiltonian $G$ whose Hamiltonian flow realizes the isotopy
from $L'$ to $(L')^{(w)}$. We have a commutative diagram of Floer cohomology groups
	\eqn\label{eqn. spliced wrapping}
	\xymatrix{
	HF(L,L'; {v} F) \ar[r] \ar[d]
		& \ldots \to HF(L,L'; {v}' F)  \to \ldots  \ar[r] \ar[d]
		& HF(L,L'; H) \ar[d] \\
	HF(L,L';{v}F \# G) \ar[r]
		& \ldots \to HF(L,L';{v}'F \# G) \to \ldots  \ar[r]
		& HF(L,L'; H \# G)
	}
	\eqnd
where arrows are given by continuation maps, and ${v} < {v}'$.
Moreover, the sequences $\{ {v} F\}_{v}$ and $\{{v}F \# G\}_{v} $ are both cofinal in the spliced diagram of Hamiltonians
	\eqnn
	\{ {v}F \}_{v} \cup \{{v} F \# G\}_{v}.
	\eqnd
Thus the colimits
	\eqnn
	\colim_{ {v} \to \infty} HF(L,L';vF),
	\qquad
	\colim_{ {v} \to \infty} HF(L,L';vF\# G)
	\eqnd
are equivalent. On the other hand, the top row and the bottom row of~\eqref{eqn. spliced wrapping} are colimit diagrams by Proposition~\ref{prop:quad=cofinal}. Thus the rightmost vertical arrow of~\eqref{eqn. spliced wrapping} is an isomorphism.

On the other hand, we have the obvious isomorphism
	\eqnn
	HF(L,(L')^{(w)};H) \cong HF(L,L'; H \# G).
	\eqnd
This completes the proof.
\end{proof}

Now suppose that $L=L'$ is a cotangent fiber of $T^*Q$ at a point $a \in Q$, and choose a cofinal sequence for $L$:

\begin{corollary}\label{corollary: quad cofinal}
In the setting of Choice~\ref{choice. for proving cW to cP}, we have
	\eqnn
	\colim_w HF^*_{\mathrm{quad}} (T^*_{q_a}Q_a, T^*_{q_a}Q_a^{(w)})
	\cong HF^*_{\mathrm{quad}}(T^*_{q_a}Q_a, T^*_{q_a}Q_a).
	\eqnd
\end{corollary}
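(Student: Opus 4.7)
The plan is simply to unpack the colimit and apply Lemma~\ref{lemma.linear wrapping leaves quadratic wrapping unchanged} termwise. Write $L = T^*_{q_a} Q_a$ and let $L = L^{(0)}, L^{(1)}, L^{(2)}, \ldots$ be the chosen cofinal sequence of non-negative wrappings. Because each $L^{(w)}$ is obtained from $L$ by a composition of non-negative Hamiltonian isotopies (and thus by a single non-negative Hamiltonian isotopy), Lemma~\ref{lemma.linear wrapping leaves quadratic wrapping unchanged} applies with $L' = L$ to give that the continuation map
\eqnn
HF^*_{\mathrm{quad}}(L, L) \longrightarrow HF^*_{\mathrm{quad}}(L, L^{(w)})
\eqnd
is an isomorphism for every $w \geq 0$.

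Next I would verify that these isomorphisms are compatible with the structure maps of the colimit diagram. The structure map $HF^*_{\mathrm{quad}}(L, L^{(w)}) \to HF^*_{\mathrm{quad}}(L, L^{(w+1)})$ is itself a continuation map associated to the non-negative isotopy from $L^{(w)}$ to $L^{(w+1)}$; by the standard composition/naturality property of continuation maps (i.e., that the composite of two non-negative continuation maps is homotopic to the continuation map of the composite isotopy), the triangle
\eqnn
\xymatrix{
& HF^*_{\mathrm{quad}}(L, L) \ar[dl] \ar[dr] & \\
HF^*_{\mathrm{quad}}(L, L^{(w)}) \ar[rr] && HF^*_{\mathrm{quad}}(L, L^{(w+1)})
}
\eqnd
commutes. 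Consequently, each transition map in the colimit diagram is an isomorphism.

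The corollary then follows formally: a sequential colimit of isomorphisms is isomorphic (via the map from the initial term) to any single object in the diagram. In particular,
\eqnn
\colim_w HF^*_{\mathrm{quad}}(L, L^{(w)}) \cong HF^*_{\mathrm{quad}}(L, L^{(0)}) = HF^*_{\mathrm{quad}}(L, L).
\eqnd
I do not expect any serious obstacle here; all of the analytical work has already been absorbed into Lemma~\ref{lemma.linear wrapping leaves quadratic wrapping unchanged} (whose proof in turn rests on Proposition~\ref{prop:quad=cofinal} via the two cofinal sequences $\{vF\}$ and $\{vF \# G\}$). The only mild care needed is the verification that the $L^{(w)}$ arising from the chosen cofinal sequence satisfy the non-negativity hypothesis of the lemma --- but this is built into Definition~\ref{defn. cofinal wrapping}.
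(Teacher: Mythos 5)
Your proof is correct and is essentially the argument the paper intends: the corollary is presented as an immediate consequence of Lemma~\ref{lemma.linear wrapping leaves quadratic wrapping unchanged}, since every map in the sequential colimit diagram is a continuation map attached to a non-negative wrapping and hence an isomorphism, so the colimit is the initial term $HF^*_{\mathrm{quad}}(T^*_{q_a}Q_a,T^*_{q_a}Q_a)$. Your only (harmless) detour is the commuting-triangle step: you could instead apply the lemma directly with $L'=T^*_{q_a}Q_a^{(w)}$ to each transition map $HF^*_{\mathrm{quad}}(L,L^{(w)})\to HF^*_{\mathrm{quad}}(L,L^{(w+1)})$, which makes the appeal to composition of continuation maps unnecessary.
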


For later reference, we record explicitly the following fact, which follows from the commutativity of~\eqref{eqn. spliced wrapping}:

\begin{lemma}\label{lemma. acceleration and continuation commute}
In the setting of Choice~\ref{choice. for proving cW to cP}, the diagram of cohomology groups
	\eqnn
	\xymatrix{
	H^*\hom_{\cO_j}(T^*_{q_a}Q_a, T^*_{q_a}Q_a^{(w)}) \ar[d]
		\ar[r]
		&	HF^*_{\mathrm{quad}} (T^*_{q_a}Q_a, T^*_{q_a}Q_a^{(w)}) \ar[d] \\
	H^*\hom_{\cO_j}(T^*_{q_a}Q_a, T^*_{q_a}Q_a^{(w+1)})
		\ar[r]
		&	HF^*_{\mathrm{quad}}(T^*_{q_a}Q_a, T^*_{q_a}Q_a^{(w+1)})
	}
	\eqnd
is commutative. Here, all arrows are induced by continuation maps.
\end{lemma}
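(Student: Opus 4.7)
The plan is to recognize this as a standard ``commutative square of continuation maps'' and to extract it from the commutativity of diagram~\eqref{eqn. spliced wrapping} that was already built in the preceding two subsections. First I would unpack all four corners as Floer cohomology groups for explicit Hamiltonians. Using Remark~\ref{remark. isomorphism between moving and unmoving complexes} together with the cofinal sequence $L^{(w)} = \phi_F^w(L)$ fixed in Choice~\ref{choice. for proving cW to cP}, the top-left entry is identified with $HF^*(L,L;wF)$ and the bottom-left with $HF^*(L,L;(w+1)F)$, while on the quadratic side the top-right becomes $HF^*(L,L;H\#(wF))$ and the bottom-right becomes $HF^*(L,L;H\#((w+1)F))$. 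Under these identifications the two horizontal arrows are the acceleration continuation maps (from linear slope to quadratic with a fixed $\#(wF)$ or $\#((w+1)F)$ wrapping attached), and the two vertical arrows are the continuation maps induced by the non-negative monotone homotopy from $\#(wF)$ to $\#((w+1)F)$.

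Once everything is an honest continuation map between Floer complexes of Hamiltonians that differ by a non-negative monotone homotopy, I would run the standard two-parameter argument. Concretely, build a family $\{K_{s,t}\}_{(s,t)\in[0,1]^2}$ with $K_{0,0}=wF$, $K_{1,0}=H\#(wF)$, $K_{0,1}=(w+1)F$, $K_{1,1}=H\#((w+1)F)$, interpolating along each edge by the homotopy already used to define the corresponding arrow and extending to the interior so that the family is monotone in the appropriate sense (e.g.\ $\partial_s K \geq 0$ and $\partial_t K \geq 0$ outside a compact set). Counting solutions of the parametrized Floer equation with this data yields a 1-dimensional moduli space whose boundary produces the desired chain homotopy between the two compositions around the square, exactly as in the proof that $\varphi_\infty$ is well-defined. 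Passing to cohomology then gives commutativity.

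The one thing that could bite is ensuring that the parametrized moduli space is compact. However, both the $C^0$-estimate via the strong maximum principle and the energy estimate of Lemma~\ref{lem:action-diff} are uniform in the parameters $(s,t)$ as long as each $K_{s,t}$ is of the form ``quadratic plus linear wrapping'' outside a fixed large compact set $K$; this is automatic from how we built the family. So the hard part is really bookkeeping rather than analysis, and once the identifications are in place the lemma follows, just as the excerpt asserts, as a direct unpacking of the commutativity of~\eqref{eqn. spliced wrapping}.
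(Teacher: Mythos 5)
Your proposal is correct and matches the paper's route: the paper offers no separate argument for this lemma beyond asserting that it follows from the commutativity of the spliced diagram~\eqref{eqn. spliced wrapping}, and your two-parameter monotone-family argument is exactly the standard mechanism underlying that commutativity. The only quibble is that the uniform energy bound in the quadratic-plus-linear setting comes from the action identity and Lemma~\ref{lem:lowerbound} (as in the proof of Proposition~\ref{prop:quad=cofinal}) rather than Lemma~\ref{lem:action-diff}, which concerns moving Lagrangian boundary conditions; this does not affect the argument.
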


\subsection{Comparing the non-wrapped Abouzaid map to the quadratically wrapped Abouzaid map}

For this section, we let $|\Delta^n| = |\Delta^0|$, so that the Liouville bundle $E \to |\Delta^0|$ is simply a choice of Liouville sector $M$. In the following lemma, we thus drop the $j$ variable:

\begin{lemma}\label{lemma. abouzaid constructions compatible}
Let $M$ be a Liouville sector and $Q \subset M$ a compact exact brane. Fix objects $X, L^{(w)} \in \ob \cO(M)$.  The Abouzaid functor defines an object of $\Tw C_* \cP(Q)$ by~\eqref{eqn. abouzaid functor object} from any object $L$ of $\cO(M)$, and we abbreviate this object as
	\eqnn
	L \cap Q.
	\eqnd
Then the diagram
	\eqnn
	\xymatrix{
		& H^*\hom_{\Tw C_*\cP(Q)}(Q \cap X, Q \cap L^{(w)}) \\
	H^*\hom_{\cO}(X,L^{(w)}) \ar[r]^{Cor \ref{corollary. filtered to quadratic}} \ar[ur]^{Prop \ref{prop. O to cP natural}}
		& HF^*_{\mathrm{quad}}(X,L^{(w)}) \ar[u]_{\cite{abouzaid-loops}}
	}
	\eqnd
commutes. Here, the horizontal map is a continuation map, while the two other maps are the non-wrapped (Proposition~\ref{prop. O to cP natural}) and wrapped (Section 4 of~\cite{abouzaid-loops}) versions of the Abouzaid map.

Moreover, at the level of cohomology, these diagrams are compatible with the filtered diagram from Lemma~\ref{lemma. filtered Aoo}.
\end{lemma}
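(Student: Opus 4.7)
The strategy is to exhibit both compositions as arising from counts in (parameterized) moduli spaces that can be deformed into one another, so that the diagram commutes at the level of cohomology via a standard parameterized moduli space cobordism. Concretely, the bottom-then-right route counts once-punctured strips (realizing the continuation map from the $CF^*$-presentation of $\hom_{\cO}(X,L^{(w)})$ to $CF^*(X,L^{(w)};H)$ with $H$ quadratic near infinity), concatenated with the quadratically wrapped Abouzaid disks of~\cite{abouzaid-loops}, whose boundaries on $X$ and $L^{(w)}$ are read by arc-length parametrization into $\hom_{\cP(Q)}$. The diagonal route, on the other hand, is the count of the non-wrapped Abouzaid disks from Construction~\ref{construction. abouzaid functor on objects} with fixed boundary on $X$ and $L^{(w)}$ and moving boundary on $Q$. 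The first step is to rewrite both as counts of maps from the lower semi-disk $D^2_-$ (as in Notation~\ref{notation. M D minus}) with boundary conditions on $Q \cup X \cup L^{(w)}$, differing only in whether the wrapping data is encoded as a moving Lagrangian boundary or as a Hamiltonian perturbation in the PDE. These two encodings are related, fiberwise in $\tau$, by the tautological change of variables $u(\tau,t) \mapsto \phi_H^t(u(\tau,t))$ as in Remark~\ref{remark. isomorphism between moving and unmoving complexes}.

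Next, I would construct a one-parameter family $\{\mathrm{PDE}_s\}_{s \in [0,1]}$ of Cauchy--Riemann equations on $D^2_-$ whose boundary conditions agree at $s=0$ with the non-wrapped Abouzaid disk with moving boundary $L^{\chi}_t$ on the upper arc and constant boundary $X$, $Q$ on the lower arc, and at $s=1$ with the glued configuration (a continuation strip with Hamiltonian term in the interpolating region $[0,R]$ grafted onto the quadratically wrapped Abouzaid disk outside). The interpolation linearly turns off the moving boundary condition while simultaneously turning on the Hamiltonian term in the equation by the same time-one flow $\phi_H^t$. This is essentially the same $D^2_- \cong \Theta_- \#_r Z$ gluing-and-smoothing trick used in Lemma~\ref{lemma. concatenated moduli is continuation moduli}, run in parameterized form.

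The essential analytic input---and the main obstacle---is uniform compactness for this parameterized family. The $C^0$-estimate from~\cite{oh-tanaka-liouville-bundles} (Recollection~\ref{recollection. continuation maps}\eqref{item. gromov compactness for bundles}) applies fiberwise in $s$, and the action/energy identity in Lemma~\ref{lem:action-diff}, together with Remark~\ref{rem:CXcLK}, delivers a uniform energy bound along the family, because the $s$-parameter only affects the equation inside the compact region where the wrapping is nonlinear. Combined with the fact that $Q$ is compact and transverse to the cofinal wrappings of $L$ and $X$, this gives a compact one-dimensional parameterized moduli space whose boundary at $s=0$ and $s=1$ is precisely the difference of the two compositions (in $\hom_{\cP(Q)}$) evaluated on a generator. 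Pushing forward fundamental chains under the arc-length evaluation map on the $Q$-boundary yields the desired chain homotopy, so the two maps agree in $H^*$.

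Finally, compatibility with the filtered diagram from Lemma~\ref{lemma. filtered Aoo} is automatic once the above construction is carried out functorially in the wrapping index $w$: enlarging $w$ to $w+1$ amounts to inserting a further continuation region into the Hamiltonian data, and the parameterized family above commutes strictly with this insertion outside a compact set (cf.\ Lemma~\ref{lemma. acceleration and continuation commute}). Thus the triangles fit together into a commuting tower as $w \to \infty$, yielding the filtered compatibility and, by passage to the colimit, the desired commuting square relating the colimit definition of $\hom_{\cW}$ to the quadratic definition used in~\cite{abouzaid-loops}.
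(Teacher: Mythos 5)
Your route is genuinely different from the paper's, and substantially heavier. The paper avoids any parameterized moduli space altogether: since the statement is at the level of cohomology, it models the wrapping $L \to L^{(w)}$ by a linear-near-infinity Hamiltonian $F$ and then \emph{chooses} the quadratic Hamiltonian $H$ to agree with $F$ on the compact region containing all relevant chords and intersection points. With that choice, the continuation map of Corollary~\ref{corollary. filtered to quadratic} simply exhibits $H^*\hom_{\cO}(X,L^{(w)})$ as the action-bounded subcomplex of the quadratically wrapped complex, and the non-wrapped Abouzaid triangles are literally the subfamily of the quadratically wrapped Abouzaid triangles whose inputs lie in that subcomplex; commutativity is then essentially tautological, and naturality in $w$ is handled by making the same kind of choice for $L^{(w)}$ and $L^{(w+1)}$. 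Your proposal instead builds a one-parameter interpolation of Cauchy--Riemann problems on $D^2_-$ between the non-wrapped configuration and the glued configuration (continuation strip $\#$ quadratic Abouzaid disk), and extracts a chain homotopy by pushing forward the parameterized fundamental chains. That is a legitimate strategy in principle, and it has the virtue of producing a chain-level homotopy rather than only a cohomology-level identity, but it requires all the gluing, transversality, and evaluation-map-extension bookkeeping that the paper's action-filtration trick is designed to bypass.

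The concrete gap is in your compactness step. You derive the uniform energy bound along the family from Lemma~\ref{lem:action-diff} and Remark~\ref{rem:CXcLK}, but those results are stated (and proved) for Hamiltonians that are \emph{linear} outside a compact set; the quadratic-near-infinity Hamiltonian $H$ sitting at the $s=1$ end of your interpolation, and the interpolating Hamiltonians between $F$ and $H$, violate that hypothesis, so the cited bound does not apply as stated. The estimate you actually need is the one used for quadratic Hamiltonians elsewhere in the paper, namely the lower bound $H - \theta(X_H) \geq -C$ of Lemma~\ref{lem:lowerbound} together with the action-window argument in the proof of Proposition~\ref{prop:quad=cofinal} (or, alternatively, the paper's device of arranging $H=F$ on the region where the maximum principle confines the relevant curves, which removes the issue entirely). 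Your remark that ``the $s$-parameter only affects the equation inside the compact region where the wrapping is nonlinear'' is exactly what fails for a quadratic wrapping, so as written the uniform energy bound---and hence the compactness of your parameterized moduli space---is unsupported. The same repair should also be made explicit in your final paragraph on naturality in $w$, where the inserted continuation regions again involve the quadratic Hamiltonian.
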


\begin{proof}[Proof of Lemma~\ref{lemma. abouzaid constructions compatible}]
Because we are working at the level of cohomology, we may model the isotopy from $L$ to $L^{(w)}$ by some linear-near-infinity Hamiltonian $F$, and choose the quadratic Hamiltonian $H$ in such a way that $H = F$ on the region where $L$ and $L^{(w)}$ intersect.

Choosing a non-negative interpolating isotopy from $F$ to $H$ identifies
	\eqnn
	H^*\hom_{\cO_j}(L,(L')^{(w)}) = HF^*(L,L';F) \to HF^*_{\mathrm{quad}}( L,(L')^{(w)})
	\eqnd
as a subcomplex of chords with action bounded by some $A$.

Both the non-wrapped and wrapped maps count holomorphic triangles one of whose vertices limit to Hamiltonian chords, and we find that the non-wrapped map counts precisely such triangles restricted to the subcomplex of chords with action less than or equal to $A$. This shows that the diagram commutes.

That the diagram is natural in $w$ follows straightforwardly by choosing the quadratic Hamiltonian to be equal to the Hamiltonian defining $L^{(w)}$ and $L^{(w')}$. (Note that while these choices certainly affect the chain-level maps, we may choose these Hamiltonians to have no effect at the level of cohomology.)
\end{proof}

Now we set $M = T^*Q = T^*Q_a$ and set $L = L'$ to be a cotangent fiber at some point $q_a \subset Q_a$.

\begin{corollary}\label{cor. abouzaid constructions compatible cotangent}
In the setting of Choice~\ref{choice. for proving cW to cP}, for every $w$, the diagram
	\eqnn
	\xymatrix{
		&& H^*\hom_{C_*\cP_j}(q_a, q_a^{(w)}) \\
	H^*\hom_{\cO_j}(T^*_{q_a}Q_a,T^*_{q_a}Q_a^{(w)}) \ar[rr]^-{\text{Cor \ref{corollary. filtered to quadratic}}}
\ar[urr]^{\text{Prop \ref{prop. O to cP natural}}}
		&& HF^*_{\mathrm{quad}}(T^*_{q_a}Q_a,T^*_{q_a}Q_a^{(w)}) \ar[u]^{\cite{abouzaid-loops}}
	}
	\eqnd
commutes. Here, the horizontal map is a continuation map, while the two other maps are the non-wrapped (Proposition~\ref{prop. O to cP natural}) and wrapped~\cite{abouzaid-loops} versions of the Abouzaid map. Finally, $q_a^{(w)}$ is the intersection point of $T^*_{q_a}Q_a^{(w)}$ with the zero section. (We have chosen our cofinal wrapping so that there is only one intersection point.)

Moreover, at the level of cohomology, these diagrams are compatible with the filtered diagram from Lemma~\ref{lemma. filtered Aoo}.
\end{corollary}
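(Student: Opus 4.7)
The plan is to deduce this as a direct specialization of Lemma~\ref{lemma. abouzaid constructions compatible}. First, in the setting of Choice~\ref{choice. for proving cW to cP}, we take the Liouville sector to be the fiber $M = T^*Q_a$ and the compact test brane to be $Q = Q_a$, and we set both $X$ and $L$ in the statement of Lemma~\ref{lemma. abouzaid constructions compatible} equal to the cotangent fiber $T^*_{q_a} Q_a$. By Choice~\ref{choice. for proving cW to cP}, $T^*_{q_a} Q_a^{(w)}$ intersects $Q_a$ transversally in a single point $q_a^{(w)}$, so the twisted complex associated to $T^*_{q_a} Q_a^{(w)}$ by the formula~\eqref{eqn. abouzaid functor object} reduces to the single-generator object $q_a^{(w)}$ of $\Tw C_*\cP_j$ (and likewise $T^*_{q_a} Q_a$ yields the object $q_a$). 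In particular, the hom-complex of twisted complexes is identified with $\hom_{C_*\cP_j}(q_a, q_a^{(w)})$, which explains the top-right corner of our diagram.

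With these identifications in place, the commutativity of the triangle in the corollary is precisely the commutativity of the triangle in Lemma~\ref{lemma. abouzaid constructions compatible}: the diagonal arrow coming from Proposition~\ref{prop. O to cP natural} is the non-wrapped Abouzaid map applied to a morphism of $\cO_j$, the horizontal arrow is the acceleration-to-quadratic comparison map from Corollary~\ref{corollary. filtered to quadratic}, and the vertical arrow is Abouzaid's original functor from~\cite{abouzaid-loops}. No further work is required to establish the triangle.

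Finally, the compatibility with the filtered diagram from Lemma~\ref{lemma. filtered Aoo} is precisely the "moreover" clause of Lemma~\ref{lemma. abouzaid constructions compatible}: for each non-negative continuation map $T^*_{q_a} Q_a^{(w)} \to T^*_{q_a} Q_a^{(w+1)}$, the two commuting triangles (at levels $w$ and $w+1$) fit into a commutative prism whose missing edges are the induced maps on the three cohomology groups. For the bottom edge---i.e., the naturality of the horizontal comparison map in $w$---we use Lemma~\ref{lemma. acceleration and continuation commute}; for the right edge we use the $A_\infty$-functoriality of Abouzaid's original construction; and for the top edge we use the functoriality of the local systems of local systems furnished by Section~\ref{section. chains on CP}. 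Assembling these, we obtain a diagram of cohomology groups indexed by $w \in \ZZ_{\geq 0}$, as claimed.
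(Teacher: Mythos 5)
Your proposal is correct and takes essentially the same route as the paper: the corollary is obtained by specializing Lemma~\ref{lemma. abouzaid constructions compatible} to $M = T^*Q_a$, test brane $Q_a$, and $X = T^*_{q_a}Q_a$ with its cofinal wrappings, where the single transverse intersection point guaranteed by Choice~\ref{choice. for proving cW to cP} collapses the twisted complexes to the objects $q_a$, $q_a^{(w)}$ of $C_*\cP_j$. The ``moreover'' clause is likewise inherited from the lemma's naturality in $w$, so your extra bookkeeping with Lemma~\ref{lemma. acceleration and continuation commute} is consistent but not a departure from the paper's argument.
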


\subsection{Proof of Theorem~\ref{theorem. cW to cP}}\label{section. proof of cW cP theorem}

\begin{proof}[Proof of Theorem~\ref{theorem. cW to cP}.]
Because the cotangent fiber $T^*_{q_a} Q_a$ and the object $q_a \in Q_a$ generate\footnote{For $\Tw C_* \cP_j$ this is obvious, while for $\Tw \cW_j$, this follows from Abouzaid's Theorem and Proposition~\ref{prop. local triviality}.} the $A_\infty$-categories in which they reside, all that remains is to show that the map on endomorphism complexes of these objects---induced by Corollary~\ref{cor. natural transformation}---is a quasi-isomorphism.

We first claim the diagram below commutes:
	\eqn\label{eqn. quadratic vs localization square}
	\xymatrix{
	H^*\hom_{\cW_j}(T^*_{q_a} Q_a, T^*_{q_a}Q_a)
		\ar[rr]^-{\text{Cor}~\ref{cor. natural transformation}}
		&& H^*\hom_{C_* \cP_j}(q_a, q_a) \\
	\colim_w H^* \hom_{\cO_j}(T^*_{q_a}Q_a,T^*_{q_a}Q_a^{(w)})
		\ar[rr]^-{\text{Cor}~\ref{corollary. filtered to quadratic}}_-{\cong}	
		\ar[u]^-{\text{Lem}~\ref{lemma. hom is wrapped cohomology}}_-{\cong}
		\ar[urr]^-{\text{Prop}~\ref{prop. O to cP natural}}
		&& HF^*_{\mathrm{quad}}(T^*_{q_a} Q_a, T^*_{q_a} Q_a) \ar[u]^{\cong}_{\cite{abouzaid-loops}}
	}
	\eqnd
Let us explain the diagram.

We first note that, for a fixed $w$, there is a homotopy commutative diagram of chain complexes
	\eqnn
	\xymatrix{
		\hom_{\cW_j}(T^*_{q_a}Q_a,T^*_{q_a}Q_a^{(w)}) \ar@{-->}[r]
			&	\hom_{C_*\cP_j}(q_a, q_a^{(w)}) \\
		\hom_{\cO_j}(T^*_{q_a}Q_a,T^*_{q_a}Q_a^{(w)}) \ar[ur] \ar[u]
	}
	\eqnd
where the diagonal map is the Abouzaid construction for $\cO$ (Proposition~\ref{prop. O to cP natural}), and the upper, dashed, horizontal map is induced by the universal property of localization (Corollary~\ref{cor. natural transformation}). Here, $q_a^{(w)}$ is the intersection point of $T^*_{q_a}Q_a^{(w)}$ with the zero section. (We have chosen our cofinal wrapping so that there is only one intersection point.)

Moreover, there is a homotopy coherent functor $\ZZ_{\geq0} \to \cO_j$ by Lemma~\ref{lemma. filtered Aoo}, so the lower left corner of the triangle coheres into a homotopy-coherent sequential diagram indexed by $w$. Since we have functors $\cO_j \to \cW_j$ and $\cO_j \to C^*\cP_j$, we have an induced homotopy-coherent diagram of the colimits:
	\eqnn
	\xymatrix{
		\colim_w \hom_{\cW_j}(T^*_{q_a}Q_a,T^*_{q_a}Q_a^{(w)}) \ar[r]
			&	\colim_w \hom_{C_*\cP_j}(q_a, q_a^{(w)}) \\
		\colim_w \hom_{\cO_j}(T^*_{q_a}Q_a,T^*_{q_a}Q_a^{(w)}) \ar[ur] \ar[u]
	}
	\eqnd
We note that the two sequential colimits in the top horizontal line is a colimit of isomorphisms upon passage to cohomology---this is because continuation maps are sent to equivalences in $\cW_j$ (by definition of localization) and in $C_*\cP_j$ (by Theorem~\ref{thm. continuation maps are equivalences of twisted complexes}). Thus, for both items in the top horizontal line, the cohomology of the colimit is isomorphic to the cohomology of the $w=0$ term. This explains the upper-left triangle in~\eqref{eqn. quadratic vs localization square}.

The lower-right triangle of~\eqref{eqn. quadratic vs localization square} is obtained by applying the $w$-indexed colimit (at the level of cohomology) to the triangle in Corollary~\ref{cor. abouzaid constructions compatible cotangent}. We observe that the filtered colimits on the right vertical edge consists of maps that are all equivalences, so in this way we may identify
	\eqnn
	\colim_w H^*\hom_{C_*\cP_j}(q_a, q_a^{(w)})
	\cong
	H^* \hom_{C_*\cP_j}(q_a,q_a),
	\eqnd
and the lower-right corner of the triangle arises by using Corollary~\ref{corollary: quad cofinal}.
This completes the explanation of the commutative diagram~\eqref{eqn. quadratic vs localization square}.

Referring again to~\eqref{eqn. quadratic vs localization square}, note that the left-hand vertical arrow was verified to be an isomorphism at the level of cohomology in Lemma~\ref{lemma. hom is wrapped cohomology}, the bottom horizontal arrow was verified to be an isomorphism in Corollary~\ref{corollary. filtered to quadratic}, and the right-hand vertical arrow is an isomorphism by~\cite{abouzaid-cotangent}. Because these $\cong$-labeled arrows are isomorphisms, it follows that the top horizontal arrow is also an isomorphism, which is what we sought to prove.
\end{proof}

\clearpage
\section{The diffeomorphism action on Loc}

\begin{notation}[$\diff$]
Fix $Q$ an oriented, compact manifold. We let $\diff(Q)$ denote the topological group of orientation-preserving diffeomorphisms of $Q$.
\end{notation}

The goal of this section is to prove that the natural action of $\diff(Q)$ on $\loc(Q)$ is compatible with the action of $\Liouautgrb(T^*Q)$ on $\cW(T^*Q)$ from Theorem~\ref{theorem. main theorem informal}.

Given the results of our previous section, the only thing left to do is to verify that the diffeomorphism action on $\Tw C_*\cP$ is the standard action of the diffeomorphism group on the $A_\infty$-category of local systems. This is proven in Proposition~\ref{prop. P to Loc}.

\subsection{\texorpdfstring{$C_*\cP$}{C.P} is compatible with the diffeomorphism action}

Our eventual goal is to prove that the (orientation-preserving) diffeomorphism group action on the $\infty$-category of local systems is compatible with its action on the wrapped Fukaya category of a cotangent bundle. So first let us show that our construction $C_* \cP$ encodes the usual action on the $\infty$-category of local systems.

For this, recall that the $\infty$-category of local systems on
a space $B$ with values in an $\infty$-category $\cD$ is equivalent to the $\infty$-category
	\eqnn
	\fun(\sing(B), \cD)
	\eqnd
of functors from $\sing(B)$ to $\cD$. The evident action of $\haut(B)$ on $B$---and hence on $\sing(B)$---exhibits the action of $\haut(B)$ on the $\infty$-category of local systems.

On the other hand, our construction of $C_* \cP$ passes through a combinatorial trick that replaces $B\diff(Q)$ by a category of simplices in $B\diff(Q)$, which one can informally think of as the category encoding the barycentric subdivision of $\sing(B\diff(Q))$. We must show that this combinatorial trick allows us to recover the natural action of $\diff(Q)$ on $Q$; this is the content of Corollary~\ref{cor. p inverse classifies diff Q action} below.

\begin{construction}\label{construction. p inverse}
Let $p: E \to B$ be a Kan fibration of simplicial sets. We let $\subdivision(B)$ denote the subdivision simplicial set associated to $B$ (Recollection~\ref{recollection. smooth approximation}\eqref{item. localization of subdiv}).

We have an induced functor
	\eqnn
	p^{-1}:
	\subdivision(B) \to \Kan,
	\qquad
	(j: \Delta^k \to B)
	\mapsto
	p^{-1}(j)
	\eqnd
to the $\infty$-category of Kan complexes. Indeed, realizing $\subdivision(B)$ to be the nerve of a category, the above is induced by an actual functor to the category of simplicial sets, sending an object $j$ to the simplicial set $j^*E$.
\end{construction}

\begin{remark}\label{remark. p inverse induces functor}
Moreover, because $p$ is a Kan fibration, every edge in $\subdivision(B)$ is sent to an equivalence in $\Kan$; thus the above functor factors through the localization of $\subdivision(B)$. Moreover, we know the localization to be equivalent as an $\infty$-category to the Kan complex $B$ (Recollection~\ref{recollection. smooth approximation}\eqref{item. localization of subdiv}). We draw this factorization as follows:
	\eqnn
	\xymatrix{
	\subdivision(B) \ar[rr]^{p^{-1}} \ar[dr]&& \Kan \\
	& B \ar[ur]_F
	}
	\eqnd
That is, $p^{-1}$ induces some functor $F: B \to \Kan$ of $\infty$-categories.
\end{remark}

On the other hand, the Kan fibration $p: E \to B$ classifies a functor of $\infty$-categories from $B$ to $\Kan$ by the straightening/unstraightening correspondence. (See 3.2 of~\cite{htt}.) Our main goal is to prove:

\begin{lemma}\label{lemma. p inverse classifies p}
The functor classified by $p: E \to B$ admits a natural equivalence to the functor $F$ (induced by $p^{-1}$ in Construction~\ref{construction. p inverse}).
\end{lemma}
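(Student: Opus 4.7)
The goal is to produce a natural equivalence of functors $F \simeq \mathrm{St}(p)$ in $\fun(B, \Kan)$, where $\mathrm{St}(p)$ denotes the functor classified by $p$ via straightening. By the universal property of $B$ as the localization of $\subdivision(B)$ (Recollection~\ref{recollection. smooth approximation}\eqref{item. localization of subdiv}), restriction along the localization map $L: \subdivision(B) \to B$ embeds $\fun(B, \Kan)$ fully faithfully into $\fun(\subdivision(B), \Kan)$, with essential image precisely those functors inverting every morphism of $\subdivision(B)$. Both $F \circ L \simeq p^{-1}$ and $\mathrm{St}(p) \circ L$ have this property (the former by Remark~\ref{remark. p inverse induces functor}, the latter because $p$ is a Kan fibration), so it suffices to construct a natural equivalence between them as functors out of $\subdivision(B)$.

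I would do this by exhibiting both functors explicitly on $\subdivision(B)$ and constructing a pointwise comparison map. The functor $p^{-1}$ sends $j: \Delta^n \to B$ to the pullback $j^*E$, as in Construction~\ref{construction. p inverse}. The composite $\mathrm{St}(p) \circ L$, using the last-vertex model for the map from the subdivision, sends $j$ to the fiber $E_{L(j)}$, where $L(j) \in B_0$ is the final vertex of the simplex $j$. The natural transformation $p^{-1} \Rightarrow \mathrm{St}(p) \circ L$ is defined pointwise by pullback along the inclusion $\{L(j)\} \hookrightarrow \Delta^n$: this yields a restriction map $j^*E \to E_{L(j)}$, and since the pullback $j^*E \to \Delta^n$ is a Kan fibration (as $p$ is) over the contractible base $\Delta^n$, this restriction is a homotopy equivalence of Kan complexes. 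Naturality in morphisms of $\subdivision(B)$ (inclusions of simplices compatible with the structure maps to $B$) follows from the compatibility of pullback with the last-vertex map.

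The remaining work is to upgrade this strict, objectwise equivalence of functors valued in simplicial sets to a coherent natural equivalence in the $\infty$-category $\fun(\subdivision(B), \Kan)$, and this is the step I expect to be the main obstacle. The cleanest route is to work with a specific model of straightening (e.g., Lurie's in \cite{htt}) in which $\mathrm{St}(p) \circ L$ is represented by an explicit strict functor of 1-categories into simplicial sets, so that the pointwise restriction maps assemble into an honest natural transformation; the objectwise equivalence property then suffices to upgrade it to an equivalence in the $\infty$-categorical functor category. Once this coherence is in place, the universal property of the localization $L$ promotes the equivalence to the required natural equivalence $F \simeq \mathrm{St}(p)$ in $\fun(B, \Kan)$.
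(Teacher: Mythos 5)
Your opening reduction---restrict along the localization $L\colon \subdivision(B)\to B$ and compare $p^{-1}$ with $\mathrm{St}(p)\circ L$ over $\subdivision(B)$---is sound and is where the paper also works, but the heart of the lemma is exactly the step you defer, and the fix you sketch does not go through. A first symptom: there is no canonical map $j^*E\to E_{L(j)}$; the natural map is the inclusion $E_{L(j)}\into j^*E$ of the fiber over the last vertex (a weak equivalence since $|\Delta^n|$ is contractible and $j^*E\to\Delta^n$ is a Kan fibration). More seriously, the assignment $j\mapsto E_{L(j)}$ is not a functor on $\subdivision(B)$ at all: a morphism $j\to j'$ need not carry the last vertex of $j$ to the last vertex of $j'$ (e.g.\ the inclusion of the initial vertex of an edge), so the two fibers sit over different vertices of $B$ and any comparison between them requires a choice of lift along the Kan fibration. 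Making such choices coherently is precisely the content of straightening, and in Lurie's model the values of $\mathrm{St}(p)$ are only weakly equivalent to the fibers, not equal to them. So there is no ``specific model of straightening'' in which $\mathrm{St}(p)\circ L$ is an explicit strict functor with values $E_{L(j)}$ receiving your pointwise maps; producing a strict rectification of $\mathrm{St}(p)\circ L$ together with a comparison to $p^{-1}$ is essentially the lemma itself, so the proposed route is circular where it is not vacuous.

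The idea that closes the gap---and is the paper's actual argument---is to make the comparison on the unstraightened side, where both functors have canonical fibration models. The relative nerve $N_{p^{-1}}(\subdivision(B))\to\subdivision(B)$ is an explicit left fibration whose straightening is naturally equivalent to $p^{-1}$ (Proposition~3.2.5.21 of~\cite{htt}), while $\max^*E\to\subdivision(B)$ classifies $\mathrm{St}(p)\circ\max$, since unstraightening is compatible with pullback along $\max\colon\subdivision(B)\to B$. There is an explicit map $\max^*E\to N_{p^{-1}}(\subdivision(B))$ over $\subdivision(B)$ which on the fiber over $j$ is the inclusion of those simplices of $j^*E$ lying over the last vertex; because $p$ is a Kan fibration this is a fiberwise weak equivalence, hence an equivalence of left fibrations, and since straightening carries equivalences of fibrations to natural equivalences of functors one obtains $p^{-1}\simeq \mathrm{St}(p)\circ\max$. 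Your localization step then upgrades this to the asserted equivalence $F\simeq\mathrm{St}(p)$ over $B$, but without the relative-nerve (or an equivalent rectification) argument the coherence you flag as ``the main obstacle'' remains unproved.
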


Given the lemma, we have

\begin{cor}\label{cor. p inverse classifies diff Q action}
Let $EQ$ be the tautological $Q$ bundle over $B\diff(Q)$. We then have a Kan fibration $p: \sing(EQ) \to \sing(B\diff(Q))$, and the induced functor
	\eqnn
	N(\simp(B\diff(Q))) \to \Kan,
	\qquad
	(j: |\Delta^a| \to B)
	\mapsto
	\sing(j^* EQ).
	\eqnd
Consider the induced functor $F$ from Remark~\ref{remark. p inverse induces functor}:
	\eqnn
	\xymatrix{
	N(\simp(B\diff(Q))) \ar[rr] \ar[dr] && \Kan \\
	& \sing(B\diff(Q)) \ar[ur]_{F} .
	}
	\eqnd
Then $F$ is naturally equivalent to the functor sending a distinguished vertex of $\sing(B\diff(Q))$ to $\sing(Q)$, and exhibiting the action of $\sing(\diff(Q))$ on $\sing(Q)$.
\end{cor}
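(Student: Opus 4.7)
The plan is to reduce the statement to Lemma~\ref{lemma. p inverse classifies p} and then identify the resulting classifying functor with the one coming from bundle theory. By Lemma~\ref{lemma. p inverse classifies p} applied to the Kan fibration $p:\sing(EQ)\to\sing(B\diff(Q))$, the functor $F$ of Remark~\ref{remark. p inverse induces functor} is naturally equivalent to the functor of $\infty$-categories classified by $p$ under the straightening/unstraightening correspondence of~\cite[\S 3.2]{htt}. It therefore suffices to identify the straightening of $p$ with the functor encoding the canonical $\sing(\diff(Q))$-action on $\sing(Q)$.

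For this identification, recall that by construction $EQ\to B\diff(Q)$ is the associated fiber bundle
\eqnn
EQ \;=\; E\diff(Q)\times_{\diff(Q)} Q
\eqnd
of the universal principal $\diff(Q)$-bundle. Since $\sing$ sends Serre fibrations to Kan fibrations, $p$ is a Kan fibration with fiber $\sing(Q)$ over the basepoint; by the general principles of the straightening correspondence, its classifying functor sends the basepoint to $\sing(Q)$, and its effect on the endomorphism space $\sing(\diff(Q))\simeq\Omega\sing(B\diff(Q))$ is precisely the monodromy of $p$ around loops. For the associated bundle of the universal principal bundle, this monodromy is tautologically the homomorphism $\diff(Q)\to\Aut(Q)$ defining the action, so the classifying functor is equivalent to the functor encoding the canonical $\sing(\diff(Q))$-action on $\sing(Q)$.

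The main obstacle will be bookkeeping of models: executing the above rigorously requires choosing compatible simplicial/topological models for $B\diff(Q)$ and $E\diff(Q)$ (for instance the simplicial bar construction, or Milnor's join construction) and checking that straightening is compatible with these choices on the nose. A cleaner way to bypass an explicit model comparison, which I would use if the direct verification becomes unwieldy, is to observe that both the straightening of $p$ and the canonical $\sing(\diff(Q))$-action functor $\sing(B\diff(Q))\to\Kan$ arise, via the adjunction between delooping and the based loop space on connected pointed $\infty$-groupoids, from the same map of group-like $A_\infty$-spaces $\sing(\diff(Q))\to\aut(\sing(Q))$; since delooping and $\Omega$ are mutually inverse equivalences on such objects, this forces the two functors to agree up to natural equivalence. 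Composing with the equivalence supplied by Lemma~\ref{lemma. p inverse classifies p} then yields the claimed natural equivalence.
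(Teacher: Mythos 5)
Your proposal is correct and follows essentially the same route as the paper: the paper's proof is exactly the two ingredients you use, namely the assertion that $\sing(EQ)\to\sing(B\diff(Q))$ straightens to the functor exhibiting the $\diff(Q)$-action on $\sing(Q)$, followed by an application of Lemma~\ref{lemma. p inverse classifies p}. The only difference is that the paper states the first ingredient in one sentence as a known fact, whereas you justify it (reasonably) via monodromy of the associated bundle and the delooping/loop-space equivalence for group-like $A_\infty$-spaces.
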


\begin{proof}[Proof of Corollary~\ref{cor. p inverse classifies diff Q action}.]
The fibration $EQ \to B\diff(Q)$ classifies the functor $\sing(B\diff(Q)) \to \Kan$ exhibiting the $\diff(Q)$ action on $Q$. Now apply Lemma~\ref{lemma. p inverse classifies p}.
\end{proof}

We need to recall a few tools before proving the lemma.

\begin{recollection}[Relative nerve]\label{recollection. relative nerve}
Fix a functor $f:\cC \to \sset$ from a category $\cC$ to the category of simplicial sets. Then one can construct a coCartesian fibration $N_f(\cC) \to N(\cC)$ called the {\em relative nerve} of $f$. (See Section~3.2.5 of~\cite{htt}.)

$N_f(\cC)$ is a simplicial set defined as follows: For any finite, non-empty linear order $I$, an element of $N_f(\cC)(I)$ is given by the data of:
\begin{itemize}
	\item A simplex $\phi: \Delta^I \to \cC$ (where $\Delta^I \cong \Delta^n$ for $n = |I| -1 $), and
	\item For every subset $I' \subset I$, setting $i' = \max I'$, a simplex
		\eqnn
		\tau_{I'}: \Delta^{I'} \to f(\phi(i'))
		\eqnd.
\end{itemize}
These data must satisfying the following condition:
\begin{itemize}
	\item For any $I' \subset I''$, the diagram of simplicial sets
		\eqnn
		\xymatrix{
		\Delta^{I'} \ar[r]^{\tau_{I'}} \ar[d]
			& f(\phi(i')) \ar[d] \\
		\Delta^{I''} \ar[r]^{\tau_{I''}}
			& f(\phi(i''))
		}
		\eqnd
	must commute.
\end{itemize}
\end{recollection}

\begin{example}\label{example. relative nerve of p inverse}
Let us parse what the relative nerve $N_{p^{-1}}(\subdivision(B))$ is, where $p^{-1}$ is the functor from Construction~\ref{construction. p inverse}. An $n$-simplex in  $N_{p^{-1}}(\subdivision(B))$ is the data of
\begin{itemize}
	\item A collection of inclusions of simplices
		\eqnn
		\Delta^{a_0} \into \Delta^{a_1} \into \ldots \into \Delta^{a_n}
		\eqnd
	together with a map $j: \Delta^{a_n} \to B$, and
	\item A map $\tau: \Delta^n \to j^* E$, such that
	\item For every $i \in \{0, \ldots, n\}$, the $i$th vertex of $\tau_{[n]}$ must be a vertex in $j^*E|_{\Delta^{a_i}}$.
\end{itemize}
Because $p: E \to B$ is assumed to be a Kan fibration, it follows that the forgetful map $N_f(\subdivision(B)) \to \subdivision(B)$ is a coCartesian fibration (in fact, a left fibration)--see Proposition~3.2.5.21 of~\cite{htt}.
\end{example}

\begin{remark}\label{remark. relative nerve classifies functor}
Moreover, it is proven in~\cite{htt} that the fibration $N_f(\cC)$, when straightened, classifies a functor naturally equivalent to $f$. See again Proposition~3.2.5.21 of~\cite{htt}.
\end{remark}

\begin{notation}\label{notation. max and max^*E}
On the other hand, note that we have a natural map
	\eqnn
	\max: \subdivision(B) \to B
	\eqnd
for any simplicial set $B$. On vertices, it sends $j: \Delta^a \to B$ to the vertex $j(\max [a]) \in B_0$. This induces the obvious map on higher simplices.

Thus we have, for any Kan fibration $E \to B$, the pulled back Kan fibration $\max^*E \to \subdivision(B)$ as follows:
	\eqnn
	\xymatrix{
	\max^* E \ar[r] \ar[d] & E \ar[d]^p \\
	\subdivision{B} \ar[r]^{\max} & B
	}
	\eqnd
\end{notation}

\begin{proof}[Proof of Lemma~\ref{lemma. p inverse classifies p} .]
We have the map of coCartesian fibrations
	\eqnn
	\max^*E \to N_{p^{-1}}(\subdivision(B))
	\eqnd
which, on the fiber above $j: \Delta^a \to B$, includes the simplicial set of all maps whose $\tau$ lands in the fiber above $j(\max[a])$. This is obviously a weak homotopy equivalence along the fibers because $E \to B$ is a Kan fibration. Thus we have a diagram
	\eqnn
	\xymatrix{
	N_{p^{-1}}(\subdivision(B)) \ar[d]
	& \max^*E \ar[r] \ar[l] \ar[d]
	& E \ar[d]^p \\
	\subdivision(B) \ar@{=}[r]
	& \subdivision(B) \ar[r]^-{\max}
	& B
	}
	\eqnd
where each horizontal arrow is a weak homotopy equivalence (i.e., an equivalence in the model structure for Kan complexes). Thus every square in this diagram---upon passage to Kan complexes, i.e., their localizations---exhibits an equivalence of Kan fibrations. Because the straightening/unstraightening construction sends equivalences of fibrations to natural equivalences of functors, the result follows.
\end{proof}

Now we make use of the Quillen adjunction employing Lurie's dg nerve construction. See also~\cite{brav-dyckerhoff}.

\begin{notation}[Nerve and its adjoint]\label{notation. dg nerve and adjoint}
Fix a base ring $R$.
Consider the adjunction
	\eqnn
	R[-]: \sset \iff  \dgcatt : N_{dg}.
	\eqnd
Here, $N_{dg}$ is Lurie's dg nerve. (See Construction~1.3.1.6 of~\cite{higher-algebra}.) It is a functor sending any dg category to an $\infty$-category, and any dg-functor to a map of simplicial sets. We denote its left adjoint by $R[-]$.
\end{notation}

\begin{remark}
The adjunction of Notation~\ref{notation. dg nerve and adjoint} can be promoted to a Quillen adjunction.
The Quillen adjunction is with respect to the Joyal model structure for simplicial sets, and the Tabuada model structure for dg-categories.

Note that these two model structures have simplicial localizations equivalent to $\inftycat$ and $\Ainftycat$, respectively. For the fact that the model structure on dg-categories recovers the $\infty$-category of $A_\infty$-categories, see Section~1.2 of~\cite{tanaka-Aoo-units}.
\end{remark}

\begin{remark}
When $\cC$ is an $\infty$-groupoid---for example, $\sing(B)$ for some topological space $B$---then $R[\cC]$ is equivalent to the dg-category $C_* \cP$.
\end{remark}

\begin{notation}[$R_Q$]\label{notation. RQ}
Now let $R_Q$ denote the following composition:
	\eqnn
	N(\simp(B\diff(Q))
		\xrightarrow{p^{-1}}
		\Kan
		\into
		\sset
		\xrightarrow{R[-]}
		\Ainftycat.
	\eqnd
\end{notation}

\begin{prop}\label{prop. P to Loc}
There exists a natural equivalence
	\eqnn
        \xymatrix@C+2pc{
		N(\simp(B\diff(Q)) \rrtwocell^{C_* \cP}_{R_Q}{\;\;\;} && \Ainftycat.
        }	
	\eqnd
Here, $R_Q$ is the functor from Notation~\ref{notation. RQ} and $C_* \cP$ is the functor from Notation~\ref{notation. Tw cP}.
\end{prop}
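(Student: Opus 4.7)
The plan is to reduce the statement to a $\diff(Q)$-equivariant comparison, at the single-space level, between two standard models for the ``chains on the path $\infty$-groupoid'' of the fiber $Q$.

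First I would verify that both $C_*\cP$ and $R_Q$ send every morphism of $\simp(B\diff(Q))$ to an equivalence in $\Ainftycat$. For $C_*\cP$ this was recorded just after Construction~\ref{construction. cP_j}: a face inclusion induces a weak equivalence of Moore path categories (because the inclusion of total spaces is a homotopy equivalence), and $C_*$ preserves such weak equivalences. For $R_Q$ it follows because $p^{-1}$ sends every morphism in its domain to a weak equivalence of Kan complexes (pullback along homotopy-equivalent vertices of a Kan fibration), and the left Quillen functor $R[-]$ of Notation~\ref{notation. dg nerve and adjoint} preserves weak equivalences between Kan complexes. By Recollection~\ref{recollection. smooth approximation}\eqref{item. localization of BLiou} both functors therefore descend, uniquely up to contractible choice, to functors $\overline{C_*\cP},\ \overline{R_Q}: \sing(B\diff(Q)) \to \Ainftycat$, and it suffices to exhibit a natural equivalence between these descended functors.

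Second, I would identify both descended functors with local systems of $A_\infty$-categories encoding the natural $\diff(Q)$-action on $Q$, presented in two different but equivalent ways. By Lemma~\ref{lemma. p inverse classifies p}, $\overline{R_Q}$ is naturally equivalent to the composite $\sing(B\diff(Q)) \to \Kan \xrightarrow{R[-]} \Ainftycat$, where the first arrow is the straightening of the universal Kan fibration $\sing(E_Q) \to \sing(B\diff(Q))$ and hence encodes the $\diff(Q)$-action on $\sing(Q)$. Similarly, because Construction~\ref{construction. cP_j} is manifestly functorial in continuous maps of the fiber, $\overline{C_*\cP}$ classifies the local system coming from the $\diff(Q)$-action on the topological category $\cP(Q)$ followed by $C_*$. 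It therefore remains to construct a natural equivalence, functorial in continuous maps of spaces, between $R[\sing(X)]$ and $C_*\cP(X)$.

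Third, this single-space comparison is classical. One explicit implementation uses the Quillen adjunction $R[-] \dashv N_{dg}$: the Kan-enriched category $\sing\cP(X)$ is a model for the path $\infty$-groupoid of $X$, so its simplicial nerve is naturally equivalent to $\sing(X)$; composing with the dg-nerve of the Hurewicz-type map on morphism spaces yields a natural map $\iota_X: \sing(X) \to N_{dg}(C_*\cP(X))$. Its adjoint under $R[-]\dashv N_{dg}$ is a dg-functor $R[\sing(X)] \to C_*\cP(X)$, and at the level of endomorphism algebras at $x \in X$ both sides compute $C_*(\Omega_x X)$---tautologically for $C_*\cP(X)$, and by Adams's theorem for $R[\sing(X)]$---so the map is a quasi-equivalence. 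Functoriality in $X$ is automatic from the naturality of the adjunction unit and counit, together with the functoriality of $\sing$ and $\cP$. The main obstacle is coherence: to promote these levelwise quasi-equivalences into a genuine natural equivalence of functors valued in the $\infty$-category $\Ainftycat$ (and not merely a levelwise family of quasi-equivalences of dg-categories), one must handle $R[-] \dashv N_{dg}$ inside a framework where its derived unit and counit are $\infty$-categorical natural transformations. The foundations developed in~\cite{oh-tanaka-localizations} (cf.\ Recollection~\ref{recollection. A oo facts}\eqref{item. Aoo cat as an oocat}) supply precisely this setting, and the derived unit/counit then assembles the single-space equivalence into the desired natural equivalence of functors on $\sing(B\diff(Q))$.
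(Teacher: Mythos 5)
Your overall strategy (descend both functors to $\sing(B\diff(Q))$ and compare them there) could in principle work, but as written there is a genuine gap at your second step, and it sits exactly at the point the proposition is meant to address. For $\overline{R_Q}$ you correctly invoke Lemma~\ref{lemma. p inverse classifies p}. But the parallel claim for $\overline{C_*\cP}$---that it ``classifies the local system coming from the $\diff(Q)$-action on $\cP(Q)$'' because Construction~\ref{construction. cP_j} is ``manifestly functorial in continuous maps of the fiber''---is neither manifest nor justified. The category $\cP_j$ is built from the whole pullback bundle $j^*E_Q$ over the simplex (its objects are points in the fibers over all vertices, and its morphisms are Moore paths in the total space), so the functor $C_*\cP$ on $\simp(B\diff(Q))$ does not literally factor through anything like the action groupoid of $\diff(Q)$ on $Q$. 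Identifying its descent along the localization $\simp(B\diff(Q)) \to \sing(B\diff(Q))$ with the straightened action functor is a coherence statement of exactly the same nature as Lemma~\ref{lemma. p inverse classifies p}, whose proof requires the relative nerve and the comparison with $\max^*E$, not mere objectwise functoriality. Worse, in the paper this identification for $C_*\cP$ is obtained only as a consequence of Proposition~\ref{prop. P to Loc} together with Lemma~\ref{lemma. p inverse classifies p} (this is the content of Remark~\ref{remark. F is diff action}); assuming it as an input therefore makes your argument circular, or at least leaves unproven the one step that carries all the difficulty.

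There are two ways to repair this. Either prove a Moore-path analogue of Lemma~\ref{lemma. p inverse classifies p} (an unstraightening-style identification of the descent of $j \mapsto C_*\cP_j$), which is genuine additional work; or avoid descent altogether and compare the two functors directly over $\simp(B\diff(Q))$, which is what the paper does: for each $j$ one takes the full subcategory $D(j) \subset p^{-1}(j)$ spanned by the $0$-simplices of $j^*E$ lying in fibers over the vertices of $|\Delta^n|$; the inclusion $D(j) \to p^{-1}(j)$ is an equivalence, $D(j)$ is identified with $\cP_j$ (this is where the single-space comparison $R[\sing(X)] \simeq C_*\cP(X)$ that you sketch in your third step enters---that sketch is fine in spirit, and the paper records the statement as a remark after Notation~\ref{notation. dg nerve and adjoint}), and the resulting zig-zag of objectwise equivalences $p^{-1} \leftarrow D \to \cP$ is natural in $j$, yielding the desired natural equivalence after inverting one leg. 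So your third step would do useful work, but your second step, as stated, does not hold up.
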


\begin{proof}
For every $j: |\Delta^n| \to B\diff(Q)$, let $D(j) \subset p^{-1}(j)$ denote the full subcategory spanned by those 0-simplices of $j^*E$ that are contained in a fiber above one of the vertices of $|\Delta^n|$. Then the natural transformation induced by the inclusion $D(j) \to p^{-1}(j)$ is essentially surjective and obviously fully faithful. On the other hand, $D(j)$ is equivalent to the path category $\cP_j$. Thus the composite natural equivalences
	\eqnn
	p^{-1}(j) \leftarrow D \to \cP
	\eqnd
exhibits the natural equivalence we seek by choosing an inverse to either equivalence.
\end{proof}

\begin{remark}\label{remark. F is diff action}
The above proposition accomplishes the goal of seeing that $C_*\cP$ exhibits the action of $\diff(Q)$ on the $\infty$-category of local systems. To see this, consider the composite

\begin{align}
	N(\simp(B\diff(Q))
		& \xrightarrow{\max} \sing^{C^\infty} (\widehat{B\diff(Q)}) \nonumber \\
		& \xrightarrow{\sim} \sing  ( B\diff(Q) ) \nonumber \\
		& \xrightarrow{\sim} \BB \sing  ( \diff(Q) ) \nonumber \\
		& \xrightarrow{\iota} \Kan \nonumber \\
		& \xrightarrow{R[-]} \Ainftycat \label{eqn. p inverse composite}.
\end{align}
Here,
	\begin{itemize}
	\item $\max$ is the natural map from a subdivision to the underlying simplicial set; the next arrow is the natural map from smooth, extended simplices to continuous simplices.
 	\item The next arrow identifies the singular complex of the classifying space $B\diff(Q)$ with the $\infty$-category with one object, whose endomorphism space is given by $\sing(\diff(Q))$. The arrow $\iota$ is the natural inclusion of this subcategory into $\Kan$---the unique object of $\BB (\sing(\diff(Q)))$ is sent to the Kan complex $\sing(Q)$, and we have the obvious map on morphisms. Finally, $R[-]$ is the left adjoint to the dg-nerve from Notation~\ref{notation. dg nerve and adjoint}.
\end{itemize}

By the universal property of localization, $R_Q$ factors as in the below diagram:
	\eqnn
	\xymatrix{
	N(\simp(B\diff(Q)) \ar[dr] \ar[rr]^{R_Q}
		&& \Ainftycat \\
	& \sing(B\diff(Q)) \ar@{-->}[ur]
	}	
	\eqnd
We know from Lemma~\ref{lemma. p inverse classifies p} that the dashed arrow in the diagram is equivalent to our composite~\eqref{eqn. p inverse composite}.
Thus, by the natural equivalence of Proposition~\ref{prop. P to Loc}, we conclude that the functor $\sing(B\diff(Q)) \to \Ainftycat$ induced by $C_*\cP$ is also equivalent to the composite map of~\eqref{eqn. p inverse composite}. This was our goal.
\end{remark}

\subsection{Proof of Theorem~\ref{theorem. diff and ham compatible}}

\begin{proof}[Proof of Theorem~\ref{theorem. diff and ham compatible}.]
Among functors from $N \simp(B\diff(Q))$ to $\Ainftycat$, we have the following natural equivalences:
	\eqnn
	\Tw R_Q
	\xrightarrow{\text{Prop~\ref{prop. P to Loc} }} \Tw C_* \cP
	\xrightarrow{\text{Thm~\ref{theorem. cW to cP} }} \Tw \cW \circ \DD.
	\eqnd
Because all three of the above functors---$\Tw R_Q$, $\Tw C_*\cP$, and $\Tw \cW \circ \DD$---map morphisms in $N \simp(B\diff(Q))$ to equivalences in $\Ainftycat$, each induces a functor from the Kan completion of $N \simp(B\diff(Q))$. This Kan completion is an $\infty$-groupoid equivalent to $\sing(B\diff(Q))$, and hence to $\BB \diff(Q)$, by Recollection~\ref{recollection. smooth approximation}\eqref{item. localization of BLiou}.

By Remark~\ref{remark. F is diff action}, the functor induced by $\Tw R_Q$ classifies the $\diff(Q)$ action on $C_*\cP(Q)$. On the other hand, $\Tw \cW \circ \DD$ by construction classifies the action of $\diff(Q)$ on $\cW(M)$ induced by the action of $\Liouaut$ on $\cW(M)$. This completes the proof.
\end{proof}

\bibliographystyle{amsalpha}
\bibliography{biblio}

\end{document}